\definecolor{LMUblue}{RGB}{0, 17, 88}
\definecolor{LMUlightblue}{RGB}{92,177,235}
\definecolor{LMUgreen}{RGB}{0,136,58}
\definecolor{LMUlightgreen}{RGB}{170, 173, 0}
\definecolor{LMUred}{RGB}{190,25,8}
\definecolor{LMUpurple}{RGB}{176, 32, 121}
\definecolor{LMUorange}{RGB}{241, 135, 0}
\definecolor{bggray}{RGB}{245, 245, 245}
\definecolor{LMUgray}{RGB}{0.2,0.2,0.2}%
\newtheoremstyle{new}
  {12pt}      %
  {12pt}      %
  {\itshape}  %
  {}          %
  {\bfseries\color{black}} %
  {.}         %
  { }         %
  {}          %
\theoremstyle{new}
\newtheorem{theorem}{Theorem}[section]
\newtheorem{corollary}[theorem]{Corollary}
\newtheorem{proposition}[theorem]{Proposition}
\newtheorem{lemma}[theorem]{Lemma}
\newtheorem{definition}[theorem]{Definition}
\newtheorem{example}[theorem]{Example}
\newtheorem{remark}[theorem]{Remark}
\newtheorem*{theorem*}{Theorem}
\newtheorem*{example*}{Example}
\newtheorem*{definition*}{Definition}
\newtheorem*{lemma*}{Lemma}
\newtheorem*{proposition*}{Proposition}
\newtheorem*{corollary*}{Corollary}
\Crefname{theorem}{Theorem}{Theorems}
\Crefname{corollary}{Corollary}{Corollaries}
\Crefname{proposition}{Proposition}{Propositions}
\Crefname{lemma}{Lemma}{Lemmas}
\Crefname{definition}{Definition}{Definitions}
\Crefname{example}{Example}{Examples}
\Crefname{remark}{Remark}{Remarks}
\Crefname{claim}{Claim}{Claims}
\definecolor{shadethmcolor}{cmyk}{0,0,0,0.075}    
\definecolor{shaderulecolor}{rgb}{1,1,1}   %
\newtheoremstyle{shad}
  {12pt}      %
  {12pt}      %
  {\itshape }  %
  {}          %
  {\bfseries\color{black}} %
  {.}         %
  { }         %
  {}          %
\theoremstyle{shad} 
\newtheoremstyle{shad*}
  {12pt}      %
  {12pt}      %
  {\itshape }  %
  {}          %
  {\bfseries\color{black}} %
  {.}         %
  { }         %
  {}          %
\theoremstyle{shad*} 
\Crefname{theorem}{Theorem}{Theoreme}
\Crefname{corbox}{Korollar}{Korollare}
\Crefname{propbox}{Proposition}{Propositionen}
\Crefname{lembox}{Lemma}{Lemmas}
\Crefname{exbox}{Beispiel}{Beispiele}
\Crefname{defbox}{Definition}{Definitionen}
\Crefname{rembox}{Anmerkung}{Anmerkungen}
\newcommand{\ind}{\mathds{1}}
\newcommand{\R}{\mathds{R}}
\newcommand{\N}{\mathds{N}}
\providecommand{\P}{}
\renewcommand{\P}{\mathds{P}}
\newcommand{\Z}{\mathds{Z}}
\newcommand{\E}{\mathds E}
\newcommand{\G}{\mathbb{G}}
\newcommand{\D}{\mathds{D}}
\newcommand{\Ccal}{\mathcal{C}}
\newcommand{\Fcal}{\mathcal{F}}
\newcommand{\Gcal}{\mathcal{G}}
\newcommand{\Scal}{\mathcal{S}}
\newcommand{\Vcal}{\mathcal{V}}
\newcommand{\Wcal}{\mathcal{W}}
\newcommand{\Xcal}{\mathcal{X}}
\newcommand{\eps}{\varepsilon}
\newcommand{\BL}{\operatorname{BL}}
\renewcommand{\bar}{\overline}
\providecommand{\Pr}{}
\renewcommand{\Pr}{\mathbb{P}}
\newcommand{\var}{{\mathds{V}\mathrm{ar}}}
\newcommand{\cov}{{\mathds{C}\mathrm{ov}}}
\newcommand{\wh}[1]{\widehat{#1}}
\newcommand{\dc}{\to_d}
\newcommand{\rd}{\leftrightarrow_d}
\newcommand{\sumin}{\sum_{i = 1}^n}
\newcommand{\fig}[1][1]{
  \includegraphics[width = #1\textwidth]
}
\def\blfootnote{\gdef\@thefnmark{}\@footnotetext}
\title{Uniform central limit theorems for non-stationary processes via relative weak convergence}
\author{Nicolai Palm and Thomas Nagler}
\begin{document}

\maketitle

\begin{abstract}
	Statistical inference for non-stationary data is hindered by the failure of classical central limit theorems (CLTs), 
	not least because there is no fixed Gaussian limit to converge to.	
	To resolve this, we introduce \emph{relative weak convergence}, 
	an extension of weak convergence that compares a statistic or process to a sequence of evolving processes.
	Relative weak convergence retains the essential consequences of classical weak convergence and coincides with it under stationarity.
	Crucially, it applies in general non-stationary settings where classical weak convergence fails.
	We establish concrete relative CLTs for random vectors and empirical processes, 
	along with sequential, weighted, and bootstrap variants, that parallel the state-of-the-art in stationary settings. 	
	Our framework and results offer simple, plug-in replacements for classical CLTs whenever stationarity is untenable, 
	as illustrated by applications in nonparametric trend estimation and hypothesis testing.

\end{abstract}
\tableofcontents
\newpage
\section{Introduction}
This paper studies uniform central limit theorems (CLTs) for empirical processes 
arising from non-stationary triangular arrays. 
Let $X_{n,1},\ldots, X_{n,k_n}$ be stochastic processes with bounded sample paths, indexed by a set $T$,
and define the empirical process 
$$\G_n\in \ell^\infty(T), \quad \G_n(t)=\frac{1}{\sqrt{k_n}}\sum_{i=1}^{k_n}\left(X_{n,i}(t)-\E[X_{n,i}(t)]\right)$$
where $\ell^\infty(T)$ denotes the space of bounded functions $T\to \R$.
We allow the laws of $X_{n,i}$ to vary in both $n$ and $i$, thereby 
covering genuinely non-stationary regimes in which even the covariance structure
of $\G_n$ need not converge. 
In this setting, a classical weak limit of $\G_n$ need not exist.
We introduce a framework that provides a systematic asymptotic theory
for $\G_n$ in such regimes, together with practical tools for statistical inference.

Originating with Donsker's theorem, classical results establish weak convergence of 
$\G_n$ to a tight Gaussian limit 
$\G\in \ell^\infty(T)$, 
and form the backbone of modern inference in high- and infinite-dimensional problems \citep{van2023weak}. 
For stationary data, a comprehensive theory now provides such uniform CLTs under high-level conditions \citep{dehling2002empirical, kosorok2008introduction, van2023weak}. 
Our interest is in genuinely non-stationary settings, where the law of the observations may vary with both $n$ and $i$. 
To understand the resulting obstacle, it is useful to view weak convergence through its two defining components:
asymptotic tightness and marginal convergence.
Although establishing asymptotic tightness under non-stationarity is technically demanding, 
it can still be ensured under high-level conditions; see \Cref{thm:asy-tightness-sup-br}. 
The difficulty lies instead in the marginals of $\G_n$. 
When the data-generating mechanism evolves, the finite-dimensional distributions of 
$\G_n$ typically drift rather than approach a fixed limit.

A simple illustration is provided by 
the empirical distribution process of a non-stationary sequence $X_{n}\in \R$. 
Let 
\begin{align*}
    \G_n\in \ell^\infty(\R), \quad \G_n(x)=\frac{1}{\sqrt{n}}\sum_{i=1}^{n}\left(\ind\{X_i\leq x\}-\E[\ind\{X_i\leq x\}]\right).
\end{align*}
The process $\G_n$ measures the fluctuation of the empirical distribution around its 
mean and gives rise to familiar statistics such as 
Kolmogorov-Smirnov-type suprema and Cramer-von Mises-type integrated quadratic 
functionals. 
The underlying function class of left-tail indicators is sufficiently small that, under 
general assumptions, $\G_n$ remains asymptotically tight, 
even in non-stationary settings.
The covariances of $\G_n$, however, may not converge and $\G_n$ need not 
converge weakly to any element of $\ell^\infty(\R)$ if the underlying covariance structure drifts.
As a consequence, the usual route of approximating $\G_n$ by its Gaussian weak limit and constructing critical values 
based on such approximation is unavailable. 
This illustrates the main difficulty addressed in the present paper: 
in non-stationary problems, $\G_n$ may remain tight yet without being governed by a single limiting distribution.

\paragraph*{Existing approaches} 
A common ad-hoc fix to this conundrum is to de-trend, difference, 
or otherwise transform the data to remove the most glaring effects of 
non-stationarity \citep[e.g.,][]{shumway2000time}. The pre-processed data is assumed 
to be stationary, and classical CLTs are applied. This approach is sometimes successful in practice, but it has its pitfalls. The pre-processing is unlikely to remove all non-stationarity issues, and potential uncertainty and data dependence in the pre-processing are not accounted for in the subsequent analysis. 
A likely reason for the popularity of these heuristics is the apparent 
scarcity of limit theorems for non-stationary data that could support 
the development of inferential methods.

In certain cases, non-stationarity issues can be bypassed through standardization.
For example \citet{merlevede2020functional} 
establish a univariate, uniform-in-time CLT by standardizing the 
sample average by its standard deviation, 
which ensures a fixed standard Gaussian limit. 
Extending this approach to multivariate settings is straightforward 
by additionally de-correlating the coordinates. 
However, standardization is doomed to fail in general infinite-dimensional settings, 
where a decorrelated Gaussian limit process has unbounded sample paths (see \Cref{sec:standardization} for more details). 

Another recent line of work \citep{zhang2017gaussian, karmakar2020optimal, mies2023sequential, 
gpapproximation2024} couples the sample average to an explicit sequence of 
Gaussian variables on an enriched probability space. 
Such results are stronger than necessary for most statistical applications and 
apply only to multivariate sample averages with extensions 
to more general empirical processes being an open problem.

An alternative approach relies on the \emph{local stationarity} assumption, 
as summarized in \cite{dahlhaus2012locally} and recently extended to empirical 
processes by \cite{10.3150/21-BEJ1351}. Here, asymptotic results become 
possible by considering a hypothetical sequence of data-generating processes 
providing increasingly many, increasingly stationary observations in a given time window. 
This framework is well suited to localized procedures, but it addresses a different problem from the one studied here. 
It does not apply, for example, to a simple (non-localized) sample average 
over non-stationary random variables.

\paragraph*{Contribution 1: Relative weak convergence}
This paper takes a different approach. 
Fundamentally, 
a CLT compares the distribution of a sample quantity 
to a fixed Gaussian law. 
In case of non-stationary data, where marginals don't stabilize, there is no fixed distribution to converge to. 
The idea is to simply compare to an appropriate sequence of Gaussian processes, whose parameters can vary with the sample size. 
The resulting notion of weak convergence of two sequences of processes retains the essential properties of 
classical weak convergence — such as the continuous mapping theorem, 
the functional delta method, and quantile convergence — while being flexible enough 
to handle non-stationary data. 
Conceptually, this is similar to the idea of Gaussian approximation \citep{chernozhukov2016empirical, chang2024central}, however, within a genuine weak convergence framework for stochastic processes. 
Despite its generality, we will demonstrate that
the required conditions are comparable to those used in stationary CLTs, 
and that the framework can substitute uniform CLTs in statistical applications.

To formalize the idea, we introduce relative weak convergence.
\begin{definition*}
Let $X_n,Y_n\in \ell^\infty(T)$ be two sequences of stochastic processes indexed by some set $T$.
 We say $X_n,Y_n$ are \emph{relatively weakly convergent}
 if $$\bigl|\E^*[f(X_n)]-\E^*[f(Y_n)]\bigr|\to 0$$
 for all $f:\ell^\infty(T)\to \R$ bounded and continuous.
\end{definition*}
Note that if $Y_n=Y$ is constant, this is simply the definition of weak convergence. If $T$ is finite and the distributions of $Y_n$ are suitably continuous, relative weak convergence is equivalent to convergence of the difference of distribution functions.
Our notion of asymptotic normality looks as follows.
\begin{definition*}
A sequence $Y_n\in \ell^\infty(T)$ of stochastic processes indexed by some set $T$ satisfies a
 \emph{relative central limit theorem} 
 if there exists a relatively compact sequence of centered, tight, and Borel measurable Gaussian processes $N_{n}\in \ell^\infty(T)$ with 
 $$\cov[Y_n(s),Y_n(t)]=\cov[N_n(s),N_n(t)]$$ such that $Y_n$ and $N_{n}$ are relatively weakly convergent.
\end{definition*}
Let us explain the definition.
The covariance structure of the approximating Gaussian process $N_{n}$ is the natural Gaussian process to compare $Y_n$ to.
Relatively compact sequences are the appropriate analog of tight limits in the relative context and defined formally in \Cref{sec:rwc}.
In stationary settings, the sequence of approximating Gaussian processes stabilizes 
and relative and classical CLTs are essentially equivalent.

\paragraph*{Contribution 2: Relative CLTs for sample averages and empirical processes}

Both finite- and infinite-dimensional relative CLTs hold under high-level assumptions similar to those in classical stationary CLTs.
One of our main results looks as follows (\Cref{thm:multiplier-rel-clt}).
\begin{theorem*}
    Let $X_{n,1},\ldots, X_{n,k_n}$ be a triangular array of random variables with values in a Polish space $\Xcal$.
    Let $\Fcal$ be a set of measurable functions $\Xcal\to \R$ and $w_{n,i}:S \to \R$ be a family of uniformly bounded weights.
    Under some moment, mixing, and bracketing entropy conditions
    the weighted empirical process
    $\mathbb{G}_n\in \ell^\infty(S\times \Fcal)$ defined by
    $$\G_n(s,f)=\frac{1}{\sqrt{k_n}}\sum_{i=1}^{k_n}w_{n,i}(s)\bigl(f(X_{n,i})-\E[f(X_{n,i})]\bigr)$$
    satisfies a relative CLT.
\end{theorem*}
The weights $w_{n,i}$ can be specified to cover a variety of different CLT flavors. For example, $w_{n,i} \equiv 1$ yields a CLT for the unweighted empirical process;
taking $w_{n,i}(s)=\ind_{i\leq \lfloor sn\rfloor}$ yields a relative sequential CLT (\Cref{thm:rel-sequ-clt}); 
taking $ w_{n,i}(s) = K((i/n - s) / b_n)$ for some kernel $K$ and bandwidth $b_n$, we obtain localized CLTs similar to those established under local stationarity by \citet{10.3150/21-BEJ1351}. 

To prove the theorem, we rely on a characterization of relative CLTs
in terms of relative compactness and marginal relative CLTs of $\G_n$,
similar to classical empirical process theory. 
Regarding the marginals, we provide a relative version of Lyapunov's multivariate CLT (\cref{thm:multi-rel-clt}).
Relative compactness is established under bracketing conditions tailored
to non-stationary $\beta$-mixing sequences (\cref{thm:asy-tightness-sup-br}).
Generally, such entropy conditions also ensure the existence of the sequence of approximating Gaussians.
The proof relies on a chaining argument with adaptive coupling and truncation.
An important intermediate step is a new maximal inequality (\Cref{thm:chaining-in-text}) that may be of independent interest.

\paragraph*{Contribution 3: Bootstrap inference under non-stationarity}
Our third contribution addresses the key practical obstacle: 
the covariance structure of the approximating Gaussian process is unknown and generally difficult to estimate.
The bootstrap offers a natural solution, but existing procedures 
are mostly tailored to specific types of non-stationarity \citep{buhlmann1998sieve,synowiecki2007consistency}.
Extending \cite{bucher2019note}, we characterize bootstrap consistency 
through relative weak convergence (\Cref{thm:bstrap-rwc}) 
and establish new consistency results for generic multiplier bootstrap procedures 
for non-stationary time-series solely under moment, mixing, and entropy conditions (\Cref{thm:bstrap-univ}).
The framework also makes transparent a fundamental limitation of genuinely non-stationary settings: 
the mean function $t\mapsto \E[X_{n,i}(t)]$ generally cannot be estimated consistently. 
We show that the bootstrap nonetheless can provide valid (though possibly conservative) inference, 
and that it achieves full consistency when a consistent mean estimator is available, 
e.g., under common null hypothesis.

\paragraph*{Summary and outline}
In summary, our main contributions are as follows:
\begin{itemize}
 \item We introduce an extension of weak convergence, which is general enough to explain the asymptotics of non-stationary processes and convenient to use in statistical applications.
 \item We derive asymptotic normality of empirical processes for non-stationary time series under the same high-level assumptions as for classical stationary CLTs. 
 \item We facilitate the practical use of our theory by establishing consistency of a multiplier bootstrap for non-stationary time series.
\end{itemize}
This provides a new perspective on the asymptotic theory of non-stationary processes and a comprehensive framework of concepts and tools for statistical inference with general non-stationary data.  
Our results are applicable to a wide range of statistical methods and provide effective drop-in replacements for classical CLTs in non-stationary settings.

The remainder of this paper is structured as follows. 
Relative weak convergence and CLTs are developed in \Cref{sec:rwc}.
We provide tools for proving relative CLTs and foster their intuition by connecting relative with classical CLTs.
\Cref{sec:ts-rel-clts} develops 
relative CLTs for non-stationary $\beta$-mixing sequences.
\Cref{sec:bstrap} establishes multiplier bootstrap consistency 
for non-stationary time series, with applications to non-parametric trend estimation and hypothesis testing discussed in \Cref{sec:applications}.

\section{Relative Weak Convergence and CLTs}
\label{sec:rwc}

This section introduces definitions, characterizations, and basic properties of relative weak convergence and CLTs.

\subsection{Background and notation}
Throughout this paper, we make use of Hoﬀmann-Jørgensen’s theory of weak convergence. We first recall some definitions and central statements about weak convergence in general
metric spaces and in the space of bounded functions taken from Chapter 1 of \cite{van2023weak}, abbreviated as VdV below.

\paragraph*{Weak convergence in general metric spaces}
In what follows we denote by $X_n\colon\Omega_n \to \D$ sequences of (not necessarily measurable) maps with $\Omega_n$ probability spaces
and $\mathbb{D}$ some metric space. We write $\E^*$ and $\E_*$ for outer and inner expectation, respectively, and $\Pr^*$ and $\Pr_*$ for outer and inner probability, respectively (see Chapter 1.2 of VdV).
We will assume 
$\Omega_n=\Omega$ without loss of generality (see discussion above Theorem 1.3.4 in VdV).
To avoid clutter, we omit the domain $\Omega$ and write $X_n\in \D$ whenever it is clear from context.

\begin{definition}%
    The sequence $X_n$ converges weakly to some Borel measurable map $X\in \D$
    if
    $$\E^*[f(X_n)]\to \E[f(X)]$$
    for all bounded and continuous functions $f\colon \mathbb{D}\to \R$.
    We write
    $$X_n\dc X.$$
\end{definition}
Weak convergence of (measurable) random vectors agrees with the usual notation
in terms of distribution functions.

\begin{definition}
    A sequence $X_n$ is called \emph{asymptotically measurable}
    if $$\E^*[f(X_n)]-\E_*[f(X_n)]\to 0$$
    for all $f:\D\to \R$ bounded and continuous. %
    The sequence is called \emph{asymptotically tight} if,
    for all $\eps>0$, a compact set $K\subseteq \D$ exists such that
    $$\liminf_{n \to \infty} \Pr_*(X_n\in K^{\delta})\geq 1-\eps,$$
    for all $\delta>0$ with $K^{\delta}=\{y\in \D\colon \exists x\in K \text{ s.t. }d(x,y)<\delta\}$. 
\end{definition}

Weak convergence to some tight Borel measure implies
asymptotic measurability and asymptotic tightness.
Conversely, Prohorov's theorem 
ensures weak convergence along subsequences
whenever the sequence is asymptotically tight and measurable.

\paragraph*{Weak convergence of stochastic processes}
A stochastic process indexed by a set $T$ is a collection
$\{Y(t)\colon t\in T\}$ of random variables $Y(t)\colon \Omega\to \R$ defined on the same probability space.
A \emph{Gaussian process (GP)} is a stochastic process $\{N(t)\colon t\in T\}$
such that $(N(t_1),\ldots,N(t_k))$ is multivariate Gaussian for all $t_1,\ldots,t_k\in T$.

Weak convergence of processes is typically considered in (a subspace of) the space
of bounded functions 
$$\ell^\infty(T)=\left\{f:T\to \R\colon \|f\|_T=\sup_{t\in T}|f(t)|<\infty \right\},$$
equipped with the uniform metric $d(f,g)=\sup_{t\in T}|f(t)-g(t)|=\|f-g\|_T$.
If the sample paths $t\mapsto Y(t)(\omega)$ of a stochastic process $Y$ are bounded for all $\omega\in \Omega$, it induces a map 
$$Y\colon \Omega\to \ell^\infty(T),\quad \omega \mapsto (t\mapsto Y(t)(\omega)).$$
To abbreviate notation, we say $Y\in \ell^\infty(T)$ is a stochastic process (with bounded sample paths)
indicating that $Y(t)\colon \Omega\to \R$ are random variables with common domain.

A sequence $Y_n\in \ell^\infty(T)$ converges weakly to some tight and
Borel measurable $Y\in \ell^\infty(T)$ if and only if $Y_n$ is asymptotically tight, asymptotically measurable 
and all marginals converge weakly, i.e., 
$$(Y_n(t_1),\ldots,Y_n(t_k))\dc (Y(t_1),\ldots,Y(t_k))$$ 
in $\R^k$ for all $t_1,\ldots,t_k\in T$. 

\subsection{Non-stationary CLTs via weak convergence along subsequences}\label{sec:standardization}
Let us first discuss some difficulties with non-stationary CLTs.
Consider a sequence of stochastic processes 
$X_1, X_2, \ldots\in \ell^{\infty}(T)$.
CLTs establish weak convergence of the empirical process $\G_n\in \ell^\infty(T)$ defined by
$$\G_n(t)=\frac{1}{\sqrt{n}}\sum_{i=1}^n X_i(t)-\E[X_i(t)]$$
to some tight and measurable Gaussian process $\G\in \ell^\infty(T)$.
The convergence is characterized in terms of asymptotic tightness and marginal CLTs. 
The former is independent of stationarity (e.g., Theorems 2.11.1 and 2.11.9 of \cite{van2023weak}).
The latter involves weak convergence of the marginals. In particular,
$$\G_n(t)\dc \G(t) \quad \mbox{for all } t \in T.$$
This also implies convergence of the variances under mild conditions.
Here a certain degree of stationarity of the samples is required.
For non-stationary observations,
the convergence $$\var[\G_n(t)]\to \var[\G(t)]$$ fails in general.
As an example consider the empirical distribution process 
\begin{align}\label{ex:emp-distr}
    \G_n\in \ell^\infty(\R), \quad \G_n(x)=\frac{1}{\sqrt{n}}\sum_{i=1}^{n}\left(\ind\{X_i\leq x\}-\E[\ind\{X_i\leq x\}]\right).
\end{align}
of a non-stationary sequence $X_{n}\in \R$. 
Even under the common null hypothesis of equal distribution functions
\begin{align}\label{ex:h0-emp-distr}
    H_0 \colon \; \Pr(X_i\leq x) = \Pr(X_1\leq x) \quad \forall i\in \N, x\in \R.
\end{align}
the variances 
$$\var[\G_n(x)]=\Pr(X_1\leq x)(1-\Pr(X_1\leq x))+\frac{2}{n}\sum_{1\leq i< j\leq n}\cov[\ind\{X_i\leq x\},\ind\{X_j\leq x\}].$$
need not converge if the underlying covariance structure of $X_n$ drifts.

For this reason, some non-stationary CLTs 
rely on standardization such that the covariance matrix equals the identity.
Such standardization can only work in finite dimensions, however.
The intuitive reason is a decorrelated Gaussian limit process $\G$
cannot have bounded sample paths (nor be tight) unless $T$ is finite.
\begin{lemma}\label{lem:bounded-sample-paths}
    If a Gaussian process $\G\in \ell^\infty(T)$ satisfies
    $\cov[\G(s),\G(t)]=0$ for $s\neq t$ and $\var[\G(s)]=1$, then, $T$ is finite.
\end{lemma}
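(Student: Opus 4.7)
The plan is to argue by contradiction: assume $T$ is infinite and exhibit a null set on which the sample paths cannot be bounded, contradicting $\G \in \ell^\infty(T)$.

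First I would extract a countably infinite subset of pairwise distinct indices $t_1, t_2, \ldots \in T$. The random variables $\G(t_1), \G(t_2), \ldots$ are jointly Gaussian (by definition of a Gaussian process), pairwise uncorrelated (by hypothesis), and hence jointly independent. Combined with $\var[\G(t_i)] = 1$, they form an i.i.d.\ sequence of standard normals.

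Next I would use the second Borel--Cantelli lemma. For any $M > 0$, we have $\Pr(|\G(t_i)| > M) = c_M > 0$ for all $i$, so $\sum_{i=1}^\infty \Pr(|\G(t_i)| > M) = \infty$. Independence then yields $\Pr(|\G(t_i)| > M \text{ for infinitely many } i) = 1$, so in particular $\sup_{i \in \N} |\G(t_i)| > M$ almost surely. Taking $M \to \infty$ along integers and intersecting the corresponding full-measure events shows $\sup_{t \in T} |\G(t)| \geq \sup_{i \in \N} |\G(t_i)| = \infty$ almost surely. This contradicts the assumption that every sample path $t \mapsto \G(t)(\omega)$ lies in $\ell^\infty(T)$, which requires $\|\G(\omega)\|_T < \infty$ for all $\omega \in \Omega$.

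The only subtle point is the measurability issue underlying $\G \in \ell^\infty(T)$: the paper's convention (stated just before the lemma) is that a stochastic process is in $\ell^\infty(T)$ if every sample path is bounded, so the contradiction $\sup_i |\G(t_i)| = \infty$ a.s.\ is immediate without worrying about outer measures. If instead one only assumed asymptotic tightness/boundedness in probability of a single $\G$, the same Borel--Cantelli argument would still work with a trivially minor adjustment. I do not expect any real obstacle here; the argument is essentially a one-line application of independence plus Borel--Cantelli.
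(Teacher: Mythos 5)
Your proof is correct and takes essentially the same approach as the paper's: both extract a countable collection $t_1,t_2,\ldots$, use the fact that pairwise uncorrelated jointly Gaussian variables are independent to obtain i.i.d.\ standard normals, and conclude that the supremum is unbounded. The only cosmetic difference is that you invoke the second Borel--Cantelli lemma to get $\sup_i|\G(t_i)|=\infty$ a.s., whereas the paper computes $\Pr(\max_{1\le i\le n}|\G(t_i)|<M)=[\phi(M)-\phi(-M)]^n\to 0$ directly and concludes $\Pr^*(\|\G\|_T<M)=0$ for every $M$.
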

\begin{proof}
    \Cref{proof:bounded-sample-paths}.
\end{proof}
In summary, the problem with uniform CLTs is that marginal CLTs generally require a 
certain degree of stationarity, 
which cannot be bypassed through (naive) standardization.

The basic result we want to put into a larger context is a version of Lyapunov's CLT.
Write
$$S_n=\frac{1}{\sqrt{k_n}}\sum_{i=1}^{k_n}X_{n,i}-\E[X_{n,i}].$$
for $X_{n,1}\ldots X_{n,k_n}\in \R$ a triangular array of independent random variables.
Assume the general moment condition $\sup_{n,i}\E[|X_{n,i}|^{2+\delta}]<\infty$ for some $\delta>0$.
Lyapunov's CLT asserts asymptotic normality whenever variances converge.
\begin{proposition}
    Suppose $\sup_{n,i}\E[|X_{n,i}|^{2+\delta}]<\infty$ for some $\delta>0$. Then
    $\var[S_n]\to \sigma_\infty^2$ if and only if $S_n\dc \mathcal{N}(0,\sigma_\infty^2)$.
\end{proposition}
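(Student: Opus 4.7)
The plan is to prove both directions separately, using the uniform $(2+\delta)$-moment bound as the link between convergence in distribution and convergence of second moments. Without loss of generality, we may center the variables and assume $\E[X_{n,i}]=0$, since both sides of the equivalence are unaffected.

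For the direction $S_n\dc \mathcal{N}(0,\sigma_\infty^2) \Rightarrow \var[S_n]\to \sigma_\infty^2$, I would argue via uniform integrability. The family $\{S_n^2\}$ is uniformly integrable as soon as $\sup_n \E[|S_n|^{2+\delta}]<\infty$, and this bound follows from a Rosenthal-type (equivalently Marcinkiewicz--Zygmund) inequality applied to the independent centered summands $X_{n,i}/\sqrt{k_n}$: one obtains
\[
\E[|S_n|^{2+\delta}] \;\lesssim\; \frac{1}{k_n^{1+\delta/2}}\sum_{i=1}^{k_n}\E[|X_{n,i}|^{2+\delta}] + \Bigl(\frac{1}{k_n}\sum_{i=1}^{k_n}\E[X_{n,i}^2]\Bigr)^{1+\delta/2},
\]
both terms being bounded by $\sup_{n,i}\E[|X_{n,i}|^{2+\delta}]$ up to constants. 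Combined with weak convergence, this yields $\E[S_n^2]\to \E[Z^2]=\sigma_\infty^2$ for $Z\sim \mathcal{N}(0,\sigma_\infty^2)$, which is exactly $\var[S_n]\to \sigma_\infty^2$.

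For the converse, I would split according to whether $\sigma_\infty^2>0$ or $\sigma_\infty^2=0$. If $\sigma_\infty^2=0$, then $\E[S_n^2]=\var[S_n]\to 0$, so $S_n\to 0$ in $L^2$ and hence in distribution to $\delta_0=\mathcal{N}(0,0)$. If $\sigma_\infty^2>0$, I would verify Lyapunov's condition for the array $\{X_{n,i}/\sqrt{k_n}\}_{i\leq k_n}$. Writing $s_n^2 = \sum_{i=1}^{k_n}\var[X_{n,i}/\sqrt{k_n}] = \var[S_n]\to \sigma_\infty^2>0$, the $(2+\delta)$-th absolute moment sum is bounded by
\[
\sum_{i=1}^{k_n} \E\bigl[\bigl|X_{n,i}/\sqrt{k_n}\bigr|^{2+\delta}\bigr] \;\leq\; k_n^{-\delta/2}\,\sup_{n,i}\E[|X_{n,i}|^{2+\delta}],
\]
so Lyapunov's ratio is $O(k_n^{-\delta/2})\to 0$, and the classical Lyapunov CLT gives $S_n\dc \mathcal{N}(0,\sigma_\infty^2)$.

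The step I expect to need the most care is the uniform-integrability argument in the forward direction: making sure that the Rosenthal/Marcinkiewicz--Zygmund constants are universal in $n$ and that the centering incurs no loss, so that the uniform $(2+\delta)$-moment assumption on the $X_{n,i}$ genuinely transfers to a uniform $(2+\delta)$-moment bound on $S_n$. Everything else reduces to routine invocations of classical limit theorems, together with the trivial treatment of the degenerate case $\sigma_\infty^2=0$.
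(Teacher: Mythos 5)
Your proof is correct and follows essentially the same route as the paper, which simply cites the Lindeberg--Feller/Lyapunov CLT (Proposition 2.27 of van der Vaart, 2000) for the sufficiency and the moment-convergence example (Example 1.11.4 of van der Vaart and Wellner, 2023) for the necessity. Your Rosenthal-inequality argument for the uniform $(2+\delta)$-moment bound on $S_n$, and the resulting uniform integrability of $S_n^2$, is exactly the content hidden behind that second citation, so the two proofs match in substance.
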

\begin{proof}
    The sufficiency is a special case of Proposition 2.27 of \cite{van2000asymptotic}
    and the necessity is Example 1.11.4 of \cite{van2023weak}.
\end{proof}
When $\var[S_n]$ doesn't converge, the moment condition still
implies that $\var[S_n]$ is uniformly bounded.
Thus, every subsequence of $\var[S_n]$ contains a converging subsequence $\var[S_{n_k}]$
along which Lyapunov's CLT implies asymptotic normality of $S_{n_k}$.
The weak limit of $S_{n_k}$ however depends on the limit of $\var[S_{n_k}]$.
In other words, $S_n$ converges weakly to some Gaussian along subsequences, yet not globally.
To obtain a global Gaussian to compare $S_n$ to, the complementary observation is the following fact:
\begin{align*}
    \var[S_{n_k}]\to \sigma_\infty^2 \qquad \text{if and only if} \qquad \mathcal{N}(0,\var[S_{n_k}])\dc \mathcal{N}(0,\sigma_\infty^2).
\end{align*}
Thus, along subsequences, $S_{n}$ and $\mathcal{N}(0,\var[S_{n}])$ have the same weak limits. 
Because any subsequence of $S_n$ contains a weakly convergent subsequence, i.e., $S_n$ is relatively compact,
some thought reveals their difference of distribution functions converges to zero.
\begin{proposition}[Relative Lyapunov's CLT]
    Denote by $F_n$ resp. $\Phi_n$ the distribution function of 
    $S_n$ resp. $\mathcal{N}(0,\var[S_n])$.
    Then,
    $|F_n(t)-\Phi_n(t)|\to 0$
    for all $t\in \R$.
\end{proposition}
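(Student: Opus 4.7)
The plan is a standard subsequence extraction argument, exploiting that the moment bound $\sup_{n,i}\E[|X_{n,i}|^{2+\delta}] < \infty$ forces $\var[S_n]$ to be uniformly bounded. Since convergence of a bounded real sequence to zero is equivalent to every subsequence admitting a further subsequence along which convergence holds, it suffices to show that for arbitrary $(n_k)$ there is a refinement $(n_{k_l})$ with $|F_{n_{k_l}}(t)-\Phi_{n_{k_l}}(t)|\to 0$.

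Given $(n_k)$, Bolzano--Weierstrass produces a further subsequence $(n_{k_l})$ with $\var[S_{n_{k_l}}]\to\sigma_\infty^2\in[0,\infty)$. Along this subsequence, the classical Lyapunov CLT (the preceding proposition) gives
$$S_{n_{k_l}}\dc \mathcal{N}(0,\sigma_\infty^2),$$
while the pointwise convergence of characteristic functions $u\mapsto \exp(-u^2\var[S_{n_{k_l}}]/2)$ to $\exp(-u^2\sigma_\infty^2/2)$ combined with L\'evy's continuity theorem yields $\mathcal{N}(0,\var[S_{n_{k_l}}])\dc \mathcal{N}(0,\sigma_\infty^2)$ as well. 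Writing $\Phi_\infty$ for the CDF of the common weak limit, the Portmanteau theorem applied to both convergences forces $F_{n_{k_l}}(t)\to\Phi_\infty(t)$ and $\Phi_{n_{k_l}}(t)\to\Phi_\infty(t)$ at every continuity point $t$ of $\Phi_\infty$, and the claim follows from the triangle inequality.

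The only real delicacy lies in the degenerate case $\sigma_\infty^2 = 0$: then $\Phi_\infty$ is the point mass at $0$ with a discontinuity at $t=0$. For $t\ne 0$ the argument above is unaffected, since such $t$ is still a continuity point. At $t=0$ one has $\Phi_{n_{k_l}}(0) = 1/2$ identically, so one needs a separate argument that $F_{n_{k_l}}(0)\to 1/2$. This can be handled by rescaling: if $\var[S_{n_{k_l}}]>0$, then $S_{n_{k_l}}/\sqrt{\var[S_{n_{k_l}}]}$ is a centered sum whose Lyapunov ratio is controlled by the same $(2+\delta)$-moment bound (up to dependence on how fast $\var[S_{n_{k_l}}]$ decays), giving weak convergence to $\mathcal{N}(0,1)$ and hence $F_{n_{k_l}}(0)\to 1/2$; alternatively the statement may be read at continuity points only. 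I expect this to be the principal technical nuisance in making the short proof fully rigorous; the main body of the argument is just the subsequence principle combined with Portmanteau.
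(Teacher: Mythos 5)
Your subsequence-extraction-plus-Portmanteau argument is precisely the route the paper takes: its one-line proof defers to \cref{thm:rel-lind-feller}, whose own proof is exactly the subsequence extraction you describe, followed by the reduction from relative weak convergence to distribution functions. The main body of your proposal therefore matches the paper.

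Your concern about the degenerate case $\sigma_\infty^2=0$ is genuine, but your rescaling fix does not close it: the Lindeberg ratio for $S_{n_{k_l}}/\sqrt{\var[S_{n_{k_l}}]}$ carries a factor $\var[S_{n_{k_l}}]^{-(2+\delta)/2}k_{n_{k_l}}^{-\delta/2}$, which need not vanish when the variance decays quickly. In fact, the statement as written fails at $t=0$: let $W$ be centered with $\Pr(W=2)=1/4$, $\Pr(W=-2/3)=3/4$, and set $X_{n,1}=W$ and $X_{n,i}=0$ for $i\ge 2$. Then $S_n=W/\sqrt{k_n}$, $F_n(0)=\Pr(W\le 0)=3/4$, while $\Phi_n(0)=1/2$ since $\var[S_n]=4/(3k_n)>0$, so $|F_n(0)-\Phi_n(0)|=1/4$ for all $n$ even though the moment condition holds. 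Your alternative reading---restriction to (asymptotic) continuity points, equivalently the anti-concentration condition \eqref{eq:continuity-set} of \cref{prop:portmanteau}---is the correct resolution. Note that the paper's own proof inherits the same implicit restriction: $S_n\rd\mathcal{N}(0,\var[S_n])$ translates into $|F_n(t)-\Phi_n(t)|\to 0$ only at such $t$, which is what the paper's earlier qualifier about the distributions of $Y_n$ being ``suitably continuous'' signals but the proposition statement omits.
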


\begin{proof}
    This is a special case of \Cref{thm:rel-lind-feller}.
\end{proof}

In other words, even in non-stationary settings, the sample average remains approximately Gaussian,
but with the notion of a
limiting distribution replaced by a sequence of Gaussians. 
This mode of convergence and the resulting type of asymptotic normality extend naturally to 
stochastic processes.

\subsection{Relative weak convergence}
All proofs of the remaining section are found in \Cref{ap:rwc-rclts}.
Let $X_n, Y_n$ be sequences of arbitrary maps from probability spaces $\Omega_n,\Omega^\prime_n$
into a metric space $\mathbb{D}$.
Throughout the rest of this paper it is tacitly understood that all such maps have a common probability space as their domain.

\begin{definition}[Relative weak convergence]
    We say $X_n$ and $Y_n$ are \emph{relatively weakly convergent}
    if 
    $$\left|\E^*[f(X_n)]-\E^*[f(Y_n)]\right|\to 0$$
    for all $f:\mathbb{D}\to \R$ bounded and continuous.
    We write $$X_n\rd Y_n.$$
\end{definition}

\begin{remark}
    For $X$ a fixed Borel law, we have $X_n\rd X$ if and only if 
    $X_n\dc X$. 
\end{remark}

Contrary to weak convergence, relative weak convergence 
implies neither measurability nor tightness.
Any sequence is relatively weakly convergent to itself.
For the purpose of this paper, we restrict to relative weak convergence of 
relatively compact sequences.
Those turn out to be the natural analog 
of tight, measurable limits in classical weak convergence.  

\begin{definition}[Relative asymptotic tightness and compactness]
    We call $X_n$ \emph{relatively asymptotically tight}
    if every subsequence contains a further subsequence which is
    asymptotically tight.
    We call $X_n$ \emph{relatively compact}
    if every subsequence contains a further subsequence which converges weakly to a tight Borel law.
\end{definition}

We see from the definition that, if $X_n\rd Y_n$ and $Y_n\dc Y$,
then $X_n\dc Y$. 
Thus, if $Y_n$ is relatively compact, $X_n\rd Y_n$ 
essentially states that $X_n$ and $Y_n$ have the same weak limits 
along subsequences.

\begin{proposition}\label{prop:chara-rc-rwc}
    Assume that $X_n$ is relatively compact. The following are equivalent:
    \begin{enumerate}
        \item \label{A1:chara-rc-rwc}$X_n\rd Y_n$.
        \item \label{A2:chara-rc-rwc}For all subsequences $n_{k}$ such that $X_{n_{k}}\dc X$ with $X$ a tight Borel law it follows $Y_{n_{k}}\dc X$.
        \item \label{A3:chara-rc-rwc}For all subsequences $n_{k}$ there exists a further subsequence $n_{k_i}$ such that both $X_{n_{k_i}}$ and $Y_{n_{k_i}}$ converge weakly to the same tight Borel law.
    \end{enumerate}
    In such case, $Y_n$ is relatively compact as well.
\end{proposition}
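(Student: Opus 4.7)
The plan is to prove the chain $\ref{A1:chara-rc-rwc} \Rightarrow \ref{A2:chara-rc-rwc} \Rightarrow \ref{A3:chara-rc-rwc} \Rightarrow \ref{A1:chara-rc-rwc}$, then derive the auxiliary relative compactness of $Y_n$ directly from \ref{A3:chara-rc-rwc}. All three directions reduce to standard subsequence arguments, using only the definition of weak convergence in terms of outer expectations and the triangle inequality; the main subtlety is that relative weak convergence gives no information by itself, so every argument must be routed through an already-known weak limit of $X_n$.

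For \ref{A1:chara-rc-rwc} $\Rightarrow$ \ref{A2:chara-rc-rwc}, I fix a subsequence $n_k$ with $X_{n_k}\dc X$ and a bounded continuous $f\colon \D\to\R$. Passing to a subsequence preserves $|\E^*[f(X_{n_k})]-\E^*[f(Y_{n_k})]|\to 0$, and weak convergence gives $\E^*[f(X_{n_k})]\to \E[f(X)]$, so the triangle inequality yields $\E^*[f(Y_{n_k})]\to \E[f(X)]$; since $f$ was arbitrary and $X$ is Borel, this is precisely $Y_{n_k}\dc X$. For \ref{A2:chara-rc-rwc} $\Rightarrow$ \ref{A3:chara-rc-rwc} I invoke relative compactness of $X_n$: any subsequence $n_k$ admits a further subsequence $n_{k_i}$ along which $X_{n_{k_i}}\dc X$ for some tight Borel $X$, and \ref{A2:chara-rc-rwc} then forces $Y_{n_{k_i}}\dc X$ along the same indices.

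The only direction requiring a real argument is \ref{A3:chara-rc-rwc} $\Rightarrow$ \ref{A1:chara-rc-rwc}, which I would carry out by contradiction. If $X_n\not\rd Y_n$, there is a bounded continuous $f$, $\eps>0$, and a subsequence $n_k$ with
\[
 \bigl|\E^*[f(X_{n_k})]-\E^*[f(Y_{n_k})]\bigr|\ \geq\ \eps
 \qquad\text{for all }k.
\]
Applying \ref{A3:chara-rc-rwc} to $n_k$ yields a further subsequence $n_{k_i}$ and a tight Borel law $Z$ with $X_{n_{k_i}}\dc Z$ and $Y_{n_{k_i}}\dc Z$, so both $\E^*[f(X_{n_{k_i}})]$ and $\E^*[f(Y_{n_{k_i}})]$ converge to $\E[f(Z)]$ and their difference tends to $0$, contradicting the lower bound $\eps$.

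For the final assertion, I fix any subsequence of $Y_n$. Applying \ref{A3:chara-rc-rwc} to the corresponding subsequence of indices delivers a further subsequence along which $Y_n$ converges weakly to a tight Borel law, which is exactly relative compactness of $Y_n$. The hard part, if any, is simply making sure the outer-expectation definition of weak convergence is enough to chain the limits together without needing a joint measurability assumption on $X_n$ or $Y_n$; the triangle inequality on $\E^*$ handles this cleanly since $f$ is the same on both sides.
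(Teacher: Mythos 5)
Your proof is correct and follows essentially the same route as the paper: both prove the cycle \ref{A1:chara-rc-rwc}$\Rightarrow$\ref{A2:chara-rc-rwc}$\Rightarrow$\ref{A3:chara-rc-rwc}$\Rightarrow$\ref{A1:chara-rc-rwc} via triangle-inequality and subsequence arguments, and both extract relative compactness of $Y_n$ directly from \ref{A3:chara-rc-rwc}. The only cosmetic difference is that for \ref{A3:chara-rc-rwc}$\Rightarrow$\ref{A1:chara-rc-rwc} you argue by contradiction, whereas the paper invokes the subsequence principle for real sequences directly; these are logically the same step.
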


The characterization of relative weak convergence via weak convergence on subsequences is very convenient. 
As a result, many useful properties of weak convergence
can be transferred to relative weak convergence.
The following results are particularly relevant for 
statistical applications and show that relative 
weak convergence supports conclusions comparable 
to those obtained under weak convergence.

In statistical inference, we usually make statements about event probabilities. Relative weak convergence implies convergence of probabilities for certain sequences of events.
Let $S^\delta = \{x \in \mathds D\colon d(x, S) < \delta\}$ denote the $\delta$-enlargement and $\partial S$ the boundary (closure minus interior) of a set $S$.

\begin{proposition} \label{prop:portmanteau}
    Let  $X_n \rd Y_n$ with $Y_n$ relatively compact, and $S_n$ be Borel sets such that 
    every subsequence of $S_n$ contains a further subsequence $S_{n_k}$ such that $\lim_{k \to \infty} S_{n_k} = S$ for some $S$ with 
    \begin{align} \label{eq:continuity-set}
        \liminf_{\delta \to 0}\limsup_{k \to \infty} \Pr^*(Y_{n_k} \in (\partial S)^\delta) = 0.
    \end{align}
    Then
    \begin{align*}
        \lim_{n \to \infty} |\Pr^*( X_n \in S_n) - \Pr^*( Y_n \in S_n)| = 0.
    \end{align*}
\end{proposition}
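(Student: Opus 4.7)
The plan is to argue by contradiction, reducing the statement to a classical portmanteau-type argument via the subsequence characterization in \Cref{prop:chara-rc-rwc}. Suppose there exist $\eta>0$ and a subsequence $n_k$ with $|\Pr^*(X_{n_k}\in S_{n_k})-\Pr^*(Y_{n_k}\in S_{n_k})|>\eta$. First I pass to a further subsequence (still denoted $n_k$) along which the hypothesis on $(S_n)$ gives $S_{n_k}\to S$ together with the boundary condition \eqref{eq:continuity-set}. Since $X_n\rd Y_n$ and $Y_n$ is relatively compact, a second application of \Cref{prop:chara-rc-rwc} then yields a further subsequence along which both $X_{n_k}\dc Y$ and $Y_{n_k}\dc Y$ for a common tight Borel limit $Y$.

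Next I verify that $S$ is a $Y$-continuity set. Since $(\partial S)^\delta$ is open and $Y_{n_k}\dc Y$, the open-set portmanteau inequality gives
\begin{align*}
 \Pr(Y\in(\partial S)^\delta)\leq \liminf_{k}\Pr_*(Y_{n_k}\in(\partial S)^\delta)\leq \limsup_{k}\Pr^*(Y_{n_k}\in(\partial S)^\delta).
\end{align*}
Letting $\delta\downarrow 0$, the right-hand side vanishes by \eqref{eq:continuity-set}, while the left-hand side decreases to $\Pr(Y\in\partial S)$ by continuity from above. Thus $\Pr(Y\in\partial S)=0$.

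The key step is then to prove a portmanteau theorem for varying sets,
\begin{align*}
 \lim_{k\to\infty}\Pr^*(Y_{n_k}\in S_{n_k})=\Pr(Y\in S)=\lim_{k\to\infty}\Pr^*(X_{n_k}\in S_{n_k}),
\end{align*}
which contradicts the bound $\eta$. For fixed $\delta>0$, the set convergence $S_{n_k}\to S$ sandwiches $S_{n_k}$ between the closed set $S\setminus(\partial S)^\delta$ and the open set $S\cup(\partial S)^\delta$ for $k$ large, where both sandwich sets differ from $S$ only inside the tube $(\partial S)^\delta$. Subadditivity of outer probability then bounds $\Pr^*(Y_{n_k}\in S_{n_k})$ above and below by the outer probabilities of these sandwich sets plus a remainder $\leq\Pr^*(Y_{n_k}\in(\partial S)^\delta)$. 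The main terms are controlled by the classical portmanteau theorem (via Urysohn-type bounded continuous sandwich functions interpolating between the closed and open sides), and their limits as $\delta\downarrow 0$ equal $\Pr(Y\in S)$ because $\Pr(Y\in\partial S)=0$; the remainder vanishes as $\delta\downarrow 0$ by \eqref{eq:continuity-set}. The identical chain of inequalities applies verbatim with $X_{n_k}$ in place of $Y_{n_k}$, yielding the contradiction.

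The main obstacle is the simultaneous interleaving of the limits in $k$ and $\delta$ together with the handling of outer probabilities of varying sets $S_{n_k}$. The boundary condition \eqref{eq:continuity-set} is precisely engineered to trap the mass of $Y_{n_k}$ (and, via relative weak convergence, of $X_{n_k}$) away from $\partial S$, which is what makes the reduction to the classical portmanteau theorem possible away from that boundary.
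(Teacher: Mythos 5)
Your proof follows the same contradiction-by-subsequence strategy as the paper: pass to a subsequence along which $X_{n_k}, Y_{n_k}$ both converge weakly to a common tight Borel law $Y$ (via \Cref{prop:chara-rc-rwc}) and $S_{n_k}\to S$ with condition \eqref{eq:continuity-set}; verify $\Pr(Y\in\partial S)=0$ by the open-set Portmanteau inequality; then transfer the event $\{ \cdot \in S_{n_k}\}$ to the fixed continuity set $S$. The continuity-set step is essentially identical to the paper's. The difference is in the transfer step: the paper replaces $S_{n_{k}}$ by the set-theoretic $\limsup_k S_{n_k}$ and $\liminf_k S_{n_k}$ and identifies both with $S$ before invoking Portmanteau, whereas you sandwich $S_{n_k}$ between the closed set $S\setminus(\partial S)^\delta$ and the open set $S\cup(\partial S)^\delta$ and let $\delta\downarrow 0$.

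There is a gap in your transfer step that you should flag or repair. The inclusion $S\setminus(\partial S)^\delta\subseteq S_{n_k}\subseteq S\cup(\partial S)^\delta$ for $k$ large is a Hausdorff-type stability statement, not a consequence of set-theoretic convergence $\liminf_k S_{n_k}=\limsup_k S_{n_k}=S$. For example, $S_{n_k}=S\cup\{z_k\}$ with $z_k$ escaping every bounded set has the same set-theoretic limit $S$, but $S_{n_k}\not\subseteq S\cup(\partial S)^\delta$; and a point $x$ deep inside $S$ need not lie in $S_{n_k}$ for all large $k$ even though it lies in all but finitely many $S_{n_k}$, uniformity in $x$ being exactly what the sandwich asserts. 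Since the proposition leaves the notion of $\lim_k S_{n_k}=S$ unspecified, you should state explicitly what mode of set convergence you assume (e.g.\ $S_{n_k}\triangle S\subseteq(\partial S)^\delta$ eventually, for every $\delta>0$) or argue pointwise via $\ind_{S_{n_k}}(x)\to\ind_S(x)$ and control the boundary mass as in \eqref{eq:continuity-set}. The paper's own proof has a parallel unargued inclusion ($S_{n_{k_i}}\subseteq\limsup_j S_{n_{k_j}}$), so you are in the same boat there, but you should not treat the sandwich as automatic.
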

Condition \eqref{eq:continuity-set} ensures that $S_n$ are continuity sets of $Y_n$ asymptotically in a strong form. This prevents the laws from accumulating too much mass near the boundary of $S_n$. In statistical applications, $Y_n$ is typically (the supremum of) a non-degenerate Gaussian process, for which appropriate anti-concentration properties can be guaranteed for any set $S$ \citep[e.g.,][]{giessing2023anticoncentrationsupremagaussianprocesses}. We conclude that relative weak convergence is indeed sufficient for most statistical applications, particularly for assessing validity of confidence sets and hypothesis tests.

Next, we discuss tools for establishing relative weak convergence in the space of bounded functions $\ell^\infty(T)$.
A core tool for establishing classical weak convergence in $\ell^\infty(T)$ is its characterization in terms of marginal weak convergence and asymptotic tightness \citep[Theorem 1.5.7]{van2023weak}. 
A similar result holds for relative weak convergence.

\begin{theorem}\label{thm:rwc-marginals}
    Let $X_n,Y_n\in \ell^\infty(T)$ with $Y_n$ relatively compact.
    The following are equivalent:
    \begin{enumerate}
        \item $X_n\rd Y_n$.
        \item $X_n$ is relatively asymptotically tight and
                all marginals satisfy $$\bigl(X_n(t_1),\ldots,X_n(t_d)\bigr)\rd \bigl(Y_n(t_1),\ldots,Y_n(t_d)\bigr)$$
                for all finite subsets $\{t_1,\ldots,t_d\}\subset T$. 
    \end{enumerate}
\end{theorem}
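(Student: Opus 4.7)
The plan is to leverage the subsequence characterization of relative weak convergence (Proposition \ref{prop:chara-rc-rwc}) to reduce both implications to classical results, in particular the analogous VdV Theorem 1.5.7 in $\ell^\infty(T)$.

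For the direction (1) $\Rightarrow$ (2), I first observe that relative compactness of $Y_n$ combined with $X_n \rd Y_n$ forces $X_n$ to be relatively compact as well, by the last sentence of Proposition \ref{prop:chara-rc-rwc}; in particular $X_n$ is relatively asymptotically tight. For the marginal statement, I fix $\{t_1, \ldots, t_d\} \subset T$ and use that the evaluation map $\pi \colon \ell^\infty(T) \to \R^d$, $f \mapsto (f(t_1), \ldots, f(t_d))$, is continuous with respect to the uniform metric. For any bounded continuous $g \colon \R^d \to \R$, the composition $g \circ \pi$ is bounded continuous on $\ell^\infty(T)$, so
\begin{equation*}
 \bigl|\E^*[g(\pi(X_n))] - \E^*[g(\pi(Y_n))]\bigr| \to 0,
\end{equation*}
which is precisely the relative weak convergence of the marginals.

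For the direction (2) $\Rightarrow$ (1), I want to verify condition \ref{A3:chara-rc-rwc} of Proposition \ref{prop:chara-rc-rwc}. Given any subsequence $n_k$, I first use relative compactness of $Y_n$ to pass to a further subsequence $n_{k_j}$ along which $Y_{n_{k_j}} \dc Y$ for some tight Borel limit $Y \in \ell^\infty(T)$. Shrinking further if necessary, I use relative asymptotic tightness of $X_n$ to guarantee that $X_{n_{k_j}}$ is asymptotically tight. By the classical continuous mapping theorem, the marginals of $Y_{n_{k_j}}$ converge weakly to those of $Y$, which are tight Borel laws on $\R^d$. Combining this with the relative convergence of marginals (which passes to subsequences trivially) via Proposition \ref{prop:chara-rc-rwc}(ii) applied in $\R^d$, I obtain $(X_{n_{k_j}}(t_1),\ldots,X_{n_{k_j}}(t_d)) \dc (Y(t_1),\ldots,Y(t_d))$ for every finite set of indices.

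At this point $X_{n_{k_j}}$ is asymptotically tight with all marginals converging weakly to those of the tight Borel process $Y$, so the classical characterization (Theorem 1.5.7 of VdV) yields $X_{n_{k_j}} \dc Y$. Thus $X_{n_{k_j}}$ and $Y_{n_{k_j}}$ share the same weak limit $Y$, which establishes condition \ref{A3:chara-rc-rwc} and hence $X_n \rd Y_n$. The main conceptual point is that the subsequence machinery of Proposition \ref{prop:chara-rc-rwc} lets us import VdV Theorem 1.5.7 almost verbatim; the only delicate spot is checking that the marginal relative convergence actually upgrades to weak marginal convergence on the chosen subsequence, which requires invoking Proposition \ref{prop:chara-rc-rwc} at the finite-dimensional level together with continuous mapping applied to $Y_{n_{k_j}} \dc Y$.
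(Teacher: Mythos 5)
Your proof is correct and follows essentially the same route as the paper's: both directions reduce to the subsequence characterization of Proposition \ref{prop:chara-rc-rwc}, with (1) $\Rightarrow$ (2) obtained via relative compactness plus the (relative) continuous mapping theorem for the evaluation maps, and (2) $\Rightarrow$ (1) obtained by passing to a subsequence where $Y_{n_k}$ converges weakly and $X_{n_k}$ is asymptotically tight, upgrading the marginal relative convergence to weak marginal convergence, and invoking the classical VdV characterization of weak convergence in $\ell^\infty(T)$. The only differences are cosmetic (the paper cites VdV Theorem 1.5.4 rather than 1.5.7 and routes relative asymptotic tightness through Lemma \ref{lem:rc-asy-meas-and-rat}).
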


The remaining results are useful for transferring relative weak convergence of given sequences to others.

\begin{proposition}[Relative continuous mapping]\label{prop:rcm}
    If $X_n \rd Y_n$ and $g\colon \D\to \E$ is continuous then $g(X_n) \rd g(Y_n)$.
\end{proposition}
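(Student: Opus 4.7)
The plan is to reduce directly to the defining condition of relative weak convergence. Pick any bounded continuous test function $h\colon \E\to\R$; the aim is to show $|\E^*[h(g(X_n))]-\E^*[h(g(Y_n))]|\to 0$. The key observation is that $h\circ g\colon \D\to\R$ is bounded (inheriting the bound from $h$) and continuous (as a composition of continuous maps), so it is admissible as a test function in the hypothesis $X_n\rd Y_n$. Since $\E^*[h(g(X_n))]$ is by definition the outer integral of the real-valued map $\omega\mapsto h(g(X_n(\omega)))=(h\circ g)(X_n(\omega))$, it equals $\E^*[(h\circ g)(X_n)]$, and similarly for $Y_n$. Applying the hypothesis to $f:=h\circ g$ therefore yields $|\E^*[h(g(X_n))]-\E^*[h(g(Y_n))]|\to 0$, and since $h$ was arbitrary this is precisely $g(X_n)\rd g(Y_n)$.

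I do not expect any real obstacle. The classical continuous mapping theorem in VdV needs extra care only because it typically allows $g$ to be continuous merely on a set $\D_0$ of full measure under the limit law, which forces one to argue via the portmanteau lemma and an approximation by continuous functions. In the present proposition $g$ is assumed continuous everywhere, so the composition $h\circ g$ is itself globally continuous and bounded, making it a legitimate test function in the defining class for relative weak convergence. The conclusion drops out without any further machinery, and no measurability or tightness considerations enter since outer expectations handle possibly non-measurable maps uniformly.

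For perspective, one could alternatively route the argument through the subsequence characterization in \Cref{prop:chara-rc-rwc}, at the cost of assuming that $Y_n$ is relatively compact: for any subsequence, a further subsequence makes $X_{n_{k_i}}$ and $Y_{n_{k_i}}$ converge weakly to the same tight Borel limit $Y$; the classical continuous mapping theorem then gives $g(X_{n_{k_i}}),\,g(Y_{n_{k_i}})\dc g(Y)$, and \Cref{prop:chara-rc-rwc} closes the argument. This detour is more elaborate and demands a hypothesis that the direct proof avoids, which underscores the design choice of defining relative weak convergence through bounded continuous test functions: the property transfers through continuous maps essentially for free.
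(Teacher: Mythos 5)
Your proposal is correct and follows essentially the same argument as the paper: compose a bounded continuous test function $h$ on $\E$ with $g$ to obtain $h\circ g$, a bounded continuous test function on $\D$, and apply the definition of $X_n \rd Y_n$ directly. The remarks about the alternative subsequence route are accurate but not needed, exactly as you observe.
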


\begin{proposition}[Extended relative continuous mapping]\label{prop:ext-cont-map}
    Let $g_n\colon \D\to \E$ be a sequence of functions and $Y_n$ be relatively compact.
    Assume that for all subsequences of $n$ there exists another subsequence $n_k$ and some
    $g\colon \D\to \E$ such that $g_{n_k}(x_k)\to g(x)$ for all $x_k\to x$ in $\D$.
    Then,
    \begin{enumerate}
        \item $g_n(Y_n)$ is relatively compact.
        \item if $X_n \rd Y_n$ then $g_n(X_n)\rd g_n(Y_n)$.
    \end{enumerate}
\end{proposition}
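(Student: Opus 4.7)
The plan is to reduce both claims to the classical extended continuous mapping theorem (Theorem~1.11.1 of VdV) via the subsequence characterization of relative weak convergence in \Cref{prop:chara-rc-rwc}. The hypothesis on $(g_n)$ is tailor-made for that theorem: whenever we pass to the subsequence $(n_k)$ supplied by the assumption, we have continuous convergence of $g_{n_k}$ to some $g$ on all of $\D$, which in metric spaces forces $g$ to be continuous, and hence $g(Z)$ to be a tight Borel law whenever $Z$ is.

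For claim (i), I would pick an arbitrary subsequence of $g_n(Y_n)$ and extract in two stages: first a subsubsequence $(n_k)$ along which $g_{n_k}(x_k) \to g(x)$ for every $x_k \to x$ (by the hypothesis on $(g_n)$), and then, by relative compactness of $Y_n$, a further subsubsequence $(n_{k_j})$ along which $Y_{n_{k_j}} \dc Y$ for some tight Borel law $Y$. The classical extended continuous mapping theorem then yields $g_{n_{k_j}}(Y_{n_{k_j}}) \dc g(Y)$, which establishes relative compactness of $g_n(Y_n)$.

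For claim (ii), I would first observe that $X_n \rd Y_n$ together with the relative compactness of $Y_n$ implies the relative compactness of $X_n$ (symmetry of $\rd$ combined with the last clause of \Cref{prop:chara-rc-rwc}). Starting from an arbitrary subsequence, I would extract $(n_k)$ as in (i), so that $g_{n_k} \to g$ continuously and $Y_{n_k} \dc Y$; by characterization \ref{A2:chara-rc-rwc} of \Cref{prop:chara-rc-rwc}, applied with the roles of $X$ and $Y$ swapped using the symmetry of $\rd$, it follows that $X_{n_k} \dc Y$ along the same subsequence. Two applications of the classical extended continuous mapping theorem then deliver $g_{n_k}(X_{n_k}) \dc g(Y)$ and $g_{n_k}(Y_{n_k}) \dc g(Y)$. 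Since this can be carried out starting from every subsequence and the two weak limits coincide, characterization \ref{A3:chara-rc-rwc} of \Cref{prop:chara-rc-rwc}---whose relative-compactness hypothesis is supplied by part (i)---yields $g_n(X_n) \rd g_n(Y_n)$.

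The main obstacle, really the only nontrivial piece, is keeping the nested subsequence bookkeeping airtight so that at each extraction we simultaneously retain continuous convergence of $g_{n_k}$ and weak convergence of $X_{n_k}$ and $Y_{n_k}$ to the \emph{same} tight Borel limit. The symmetric form of \Cref{prop:chara-rc-rwc} and the full strength of the continuous-convergence assumption (which supplies a genuine limit $g$ along every subsubsequence, not merely a cluster point) are exactly what make this bookkeeping possible.
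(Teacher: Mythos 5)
Your proposal is correct and follows essentially the same route as the paper: extract a subsequence along which both the continuous convergence $g_{n_k}(x_k)\to g(x)$ and the weak convergence $Y_{n_k}\dc Y$ (and, for part (ii), $X_{n_k}\dc Y$) hold simultaneously, apply the classical extended continuous mapping theorem (Theorem 1.11.1 of VdV), and conclude via \Cref{prop:chara-rc-rwc}. The paper's proof is just a terser version of the same subsequence bookkeeping you spell out.
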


\begin{proposition}[Relative delta-method]\label{prop:rel-delta-meth}
    Let $\D,\E$ be metrizable topological vector spaces 
    and $\theta_n\in \D$ be relatively compact. 
    Let $\phi\colon \D\to \E$ be continuously Hadamard-differentiable (see \cref{def:hadamard-diff}) in an open subset $\D_0\subset \D$ with $\theta_n\in \D_0$ for all $n$.
    Assume $$r_n\left(X_n-\theta_n\right)\rd Y_n$$
    for some sequence of constants $r_n\to \infty$
    with $Y_n$ relatively compact.
    Then,
    \begin{align*}
        r_n\bigl(\phi(X_n)-\phi(\theta_n)\bigr)\rd \phi^\prime_{\theta_n}(Y_n).
    \end{align*}
\end{proposition}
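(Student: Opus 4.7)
The plan is to reduce to the extended relative continuous mapping theorem (\Cref{prop:ext-cont-map}) via the difference quotient and derivative maps
\begin{align*}
    g_n(h) \;:=\; r_n\bigl(\phi(\theta_n + h/r_n) - \phi(\theta_n)\bigr), \qquad \tilde g_n(h) \;:=\; \phi^\prime_{\theta_n}(h), \qquad h \in \D.
\end{align*}
The identity $r_n(\phi(X_n) - \phi(\theta_n)) = g_n\bigl(r_n(X_n-\theta_n)\bigr)$ reduces the claim to showing that $g_n\bigl(r_n(X_n-\theta_n)\bigr)$ and $\tilde g_n(Y_n)$ are relatively weakly convergent.

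Fix an arbitrary subsequence of $n$. Relative compactness of $(\theta_n)$ lets me pass to a further subsequence with $\theta_{n_k}\to \theta$ for some $\theta\in \D_0$ (I address this $\D_0$-issue below). With $t_k := 1/r_{n_k} \to 0$, continuous Hadamard differentiability yields, for any $h_k \to h$ in $\D$,
\begin{align*}
    g_{n_k}(h_k) \;=\; \frac{\phi(\theta_{n_k} + t_k h_k) - \phi(\theta_{n_k})}{t_k} \;\longrightarrow\; \phi^\prime_\theta(h),
\end{align*}
while joint continuity of $(\vartheta, h)\mapsto \phi^\prime_\vartheta(h)$ on $\D_0\times \D$ gives $\tilde g_{n_k}(h_k)\to \phi^\prime_\theta(h)$ as well. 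Thus both $(g_n)$ and $(\tilde g_n)$ satisfy the hypothesis of \Cref{prop:ext-cont-map} with the common subsequential limit $\phi^\prime_\theta$.

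Applying \Cref{prop:ext-cont-map} to $g_n$ and to $\tilde g_n$ separately shows that $g_n(Y_n)$ and $\phi^\prime_{\theta_n}(Y_n)$ are both relatively compact, and that $g_n\bigl(r_n(X_n-\theta_n)\bigr) \rd g_n(Y_n)$. Along any subsequence with $\theta_{n_k}\to \theta$ and $Y_{n_k}\dc Y$, the classical extended continuous mapping theorem gives $g_{n_k}(Y_{n_k})\dc \phi^\prime_\theta(Y)$ and $\phi^\prime_{\theta_{n_k}}(Y_{n_k})\dc \phi^\prime_\theta(Y)$; by \Cref{prop:chara-rc-rwc}(iii) this implies $g_n(Y_n) \rd \phi^\prime_{\theta_n}(Y_n)$. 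Since $\rd$ is transitive (triangle inequality applied to the defining functional), chaining
\begin{align*}
    r_n\bigl(\phi(X_n) - \phi(\theta_n)\bigr) \;=\; g_n\bigl(r_n(X_n-\theta_n)\bigr) \;\rd\; g_n(Y_n) \;\rd\; \phi^\prime_{\theta_n}(Y_n)
\end{align*}
completes the proof.

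The delicate point I anticipate is the $\D_0$-issue: relative compactness of $(\theta_n)$ in $\D$ does not a priori force the subsequential limit $\theta$ to lie in the open set $\D_0$, yet continuous Hadamard differentiability is only given there, and $\phi^\prime_\theta$ must be available to identify the subsequential limit. I would resolve this by reading the hypothesis as ``$(\theta_n)$ relatively compact in $\D_0$,'' or equivalently by assuming $(\theta_n)$ is bounded away from $\partial \D_0$; this is anyway implicit in any formulation in which the right-hand side $\phi^\prime_{\theta_n}(Y_n)$ is meaningful. Beyond this, the argument is a routine assembly of the tools already developed in the preceding propositions, with \Cref{prop:ext-cont-map} doing the structural work.
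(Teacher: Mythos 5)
Your proof is correct and arrives at the same conclusion via a mildly different decomposition. The paper applies \Cref{prop:ext-cont-map} only to the linear maps $x\mapsto\phi'_{\theta_n}(x)$ to establish relative compactness of $\phi'_{\theta_n}(Y_n)$, then descends to a subsequence with $\theta_{n_k}\to\theta$, $Y_{n_k}\dc Y$, and invokes the classical functional delta method (Theorem 3.10.4 in \citealp{van2023weak}) as a black box to get $r_{n_k}(\phi(X_{n_k})-\phi(\theta_{n_k}))\dc\phi'_\theta(Y)$, finishing with \Cref{prop:chara-rc-rwc}\ref{A3:chara-rc-rwc}. You instead apply \Cref{prop:ext-cont-map} a second time, to the difference-quotient maps $g_n$, which directly gives $r_n(\phi(X_n)-\phi(\theta_n))\rd g_n(Y_n)$, and then establish $g_n(Y_n)\rd\phi'_{\theta_n}(Y_n)$ by a subsequence argument, chaining via transitivity of $\rd$. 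In effect you are re-deriving the content of VdV's Theorem 3.10.4 (the moving-base-point convergence $\frac{\phi(\theta_{n_k}+t_kh_k)-\phi(\theta_{n_k})}{t_k}\to\phi'_\theta(h)$ is Lemma 3.10.5 there) rather than citing it; this buys self-containedness at the cost of a slightly longer argument. The $\D_0$ issue you flag is real, but the paper's proof has exactly the same implicit assumption: when it passes to $\theta_{n_k}\to\theta$ and writes $\phi'_\theta(Y)$, it is silently assuming $\theta\in\D_0$. Reading "relatively compact" as "relatively compact in $\D_0$" is the intended fix for both arguments.
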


\subsection{Relative central limit theorems}

Fix a sequence of stochastic processes $Y_n \in \ell^\infty(T)$ with 
$\sup_{t \in T}\E[Y_n(t)^2]<\infty$. 
CLTs assert weak convergence $Y_n\dc N$ with $N$ some tight and 
measurable GP. 
Naturally, we define (relative) asymptotic normality 
as $Y_n\rd N_n$ with $N_n$ some sequences of GPs. 
Contrary to CLTs with a fixed limit, there is no unique sequence of `limiting' GPs, however. 
It shall be convenient to specify the covariance structure of $N_n$
to mirror that of $Y_n$.
This allows for a tractable Gaussian approximation. 

\begin{definition}[Corresponding GP]
    Let $Y\in \ell^\infty(T)$ be a stochastic process with finite second moments. 
    A \emph{Gaussian process (GP) corresponding to $Y$} is a map $N_Y$ with values in $\ell^\infty(T)$
    such that
    $$\{N_Y(t)\colon t\in T\}$$ is a centered GP with covariance function
    given by $(s,t)\mapsto \cov[Y(s),Y(t)]$.
\end{definition}
Similar to classical CLTs and in view of \Cref{prop:chara-rc-rwc}, we further restrict to 
relatively compact sequences of tight and Borel measurable GPs 
and define a relative CLT as follows.

\begin{definition}[Relative CLT]
    We say that the sequence $Y_n$ satisfies a
    \emph{relative central limit theorem}
    if
         a relatively compact sequence of tight and Borel measurable GPs $N_{Y_n}$ corresponding to $Y_n$ with
        $Y_n\rd N_{Y_n}$ exists.
\end{definition}

\Cref{thm:rwc-marginals} characterizes relative CLTs
in terms of marginal relative CLTs and tightness.
\begin{corollary}\label{lem:rclt-marginal-rclt}
    The sequence $Y_n$ satisfies a 
    relative CLT if and only if 
    \begin{enumerate}
        \item \label{A1:rclt-marginal-rclt}there exist tight and Borel measurable GPs $N_{Y_n}$ corresponding to $Y_n$,
        \item \label{A2:rclt-marginal-rclt}$Y_n$ and $N_{Y_n}$ are relatively asymptotically tight and
        \item \label{A3:rclt-marginal-rclt}all marginals $\left(Y_n(t_1),\ldots,Y_n(t_k)\right)\in \R^k$ satisfy a relative CLT.
    \end{enumerate}
\end{corollary}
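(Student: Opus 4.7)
The plan is to deduce the corollary from \Cref{thm:rwc-marginals} by unpacking both directions of the definition of a relative CLT and matching conditions with the finite-dimensional counterparts. In both directions, conditions \ref{A1:rclt-marginal-rclt} and \ref{A2:rclt-marginal-rclt} align almost immediately with the definition of a relative CLT and the premises of \Cref{thm:rwc-marginals}; the real content lies in passing between full process-level relative weak convergence and its marginal analog, which is exactly what \Cref{thm:rwc-marginals} provides.

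For the forward implication, suppose $Y_n$ satisfies a relative CLT. Then a relatively compact sequence $N_{Y_n}$ of tight, Borel measurable GPs corresponding to $Y_n$ exists with $Y_n\rd N_{Y_n}$, giving \ref{A1:rclt-marginal-rclt} directly. Relative compactness of $N_{Y_n}$ implies it is relatively asymptotically tight, and applying \Cref{thm:rwc-marginals} to $Y_n \rd N_{Y_n}$ yields relative asymptotic tightness of $Y_n$, establishing \ref{A2:rclt-marginal-rclt}. The same theorem gives marginal relative weak convergence $(Y_n(t_1),\ldots,Y_n(t_k)) \rd (N_{Y_n}(t_1),\ldots,N_{Y_n}(t_k))$. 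Since the right-hand side is a centered Gaussian vector whose covariance equals that of the left-hand side by construction of $N_{Y_n}$, this is precisely a marginal relative CLT for $(Y_n(t_1),\ldots,Y_n(t_k))$, giving \ref{A3:rclt-marginal-rclt}.

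For the converse, assume \ref{A1:rclt-marginal-rclt}--\ref{A3:rclt-marginal-rclt}. It suffices to verify the hypotheses of \Cref{thm:rwc-marginals} with $X_n = Y_n$ and $Y_n$ replaced by $N_{Y_n}$. First, I would upgrade relative asymptotic tightness of $N_{Y_n}$ to relative compactness: along any subsequence, pass to a further subsequence that is asymptotically tight by \ref{A2:rclt-marginal-rclt}; since each $N_{Y_n}$ is tight and Borel measurable, Prohorov's theorem delivers weak convergence along a further subsequence to a tight Borel law, which is exactly relative compactness. Next, for the marginals, condition \ref{A3:rclt-marginal-rclt} provides, for each finite $\{t_1,\ldots,t_k\}$, a relatively compact sequence of centered Gaussian vectors with covariance $\cov[Y_n(t_i),Y_n(t_j)]$ to which $(Y_n(t_1),\ldots,Y_n(t_k))$ is relatively weakly convergent. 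Since $(N_{Y_n}(t_1),\ldots,N_{Y_n}(t_k))$ is a centered Gaussian with the same covariance matrix, the two marginal Gaussian sequences coincide in law, so by \Cref{prop:chara-rc-rwc} (equivalence of relative weak convergence with matching weak limits along subsequences) we obtain $(Y_n(t_1),\ldots,Y_n(t_k)) \rd (N_{Y_n}(t_1),\ldots,N_{Y_n}(t_k))$. With relative asymptotic tightness of $Y_n$ from \ref{A2:rclt-marginal-rclt}, \Cref{thm:rwc-marginals} concludes $Y_n \rd N_{Y_n}$.

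The only mildly delicate step is the marginal identification in the converse direction: one must confirm that an arbitrary ``corresponding GP'' furnished by the marginal relative CLT is interchangeable with the $k$-dimensional marginal of the process-level $N_{Y_n}$. This is where the requirement that $N_{Y_n}$ corresponds to $Y_n$ \emph{in the sense of matching covariances} is essential, reducing the identification to the triviality that two centered Gaussians with the same covariance share a law.
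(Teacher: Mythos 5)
Your proof is correct and follows essentially the same route as the paper: both directions reduce to \Cref{thm:rwc-marginals}, with the marginal identification resting on the fact that centered Gaussians with matching covariances coincide in law. The only cosmetic difference is that you upgrade relative asymptotic tightness of $N_{Y_n}$ to relative compactness by invoking Prohorov's theorem directly, whereas the paper routes this through \Cref{lem:equ-rel-comp}; the two arguments are equivalent.
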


The restriction to relatively compact sequences of tight and Borel measurable GPs
enables inference just as in 
classical weak convergence theory (see previous section and \Cref{sec:applications}). 
Such sequences exist under mild assumptions.
In finite dimensions, corresponding tight
Gaussians always exist.
Any such sequence converges iff its covariances 
converge.
Hence, a sequence of Gaussians is relatively compact 
iff covariances converge along subsequences.
Equivalently, 
the sequences of variances are
uniformly bounded (\Cref{cor:gaussian-dominated-entropy}).
As a result, multivariate relative CLTs can be characterized as follows.

\begin{proposition}\label{prop:rel-clt-chara}
    Let $Y_n$ be a sequence of $\R^d$-valued random variables.
    Denote by $\Sigma_n$ the covariance matrix of $Y_n$.
    Then, the following are equivalent:
    \begin{enumerate}
        \item \label{A1:rel-clt-chara}$Y_n$ satisfies a relative CLT.
        \item \label{A2:rel-clt-chara}for all subsequences $n_k$ with $\Sigma_{n_{k}}\to \Sigma$ it holds
              $$Y_{n_{k}}\dc\mathcal{N}(0,\Sigma)$$ and $\sup_{n\in \N,i\leq d}\var[Y_{n}^{(i)}]<\infty$ where $Y_{n}^{(i)}$ denotes the $i$-th component of $Y_n$.
        \item \label{A3:rel-clt-chara}all subsequences $n_k$ contain a subsequence $n_{k_i}$ such that $\Sigma_{n_{k_i}}\to \Sigma$
        and
        $$Y_{n_{k_i}}\dc\mathcal{N}(0,\Sigma).$$
    \end{enumerate}
\end{proposition}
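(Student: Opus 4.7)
The plan is to use the preceding subsection's characterisation of relative weak convergence via subsequences (\Cref{prop:chara-rc-rwc}), together with the observation that in finite dimensions the tight Borel measurable GPs corresponding to $Y_n$ are exactly the Gaussian vectors $N_{Y_n}\sim\mathcal N(0,\Sigma_n)$, and that weak convergence of centred Gaussians on $\R^d$ is equivalent to convergence of their covariance matrices. Throughout, I will freely identify ``relatively compact sequence of tight Borel GPs corresponding to $Y_n$'' with ``sequence $\mathcal N(0,\Sigma_n)$ such that every subsequence admits a further subsequence along which $\Sigma_n$ converges to some $\Sigma$''.

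For (\ref{A1:rel-clt-chara})$\Rightarrow$(\ref{A2:rel-clt-chara}), assume $Y_n\rd N_{Y_n}$ with $N_{Y_n}\sim\mathcal N(0,\Sigma_n)$ relatively compact. Relative compactness of the Gaussian sequence forces every subsequence of $(\Sigma_n)$ to contain a convergent further subsequence; otherwise some diagonal entry would be unbounded along a subsequence and no further subsequence of $N_{Y_n}$ could converge tightly. This yields $\sup_{n,i}\var[Y_n^{(i)}]<\infty$. If in addition $\Sigma_{n_k}\to\Sigma$, then $N_{Y_{n_k}}\dc\mathcal N(0,\Sigma)$ (continuity of characteristic functions in the covariance), and since $Y_{n_k}\rd N_{Y_{n_k}}$, the remark that relative weak convergence plus a weakly convergent right-hand side transfers to $Y_{n_k}\dc\mathcal N(0,\Sigma)$ finishes this direction.

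For (\ref{A2:rel-clt-chara})$\Rightarrow$(\ref{A3:rel-clt-chara}), the uniform variance bound together with Cauchy--Schwarz bounds every entry of $\Sigma_n$, so the matrix sequence lies in a compact subset of $\R^{d\times d}$. Bolzano--Weierstrass applied inside any given subsequence produces a further subsequence with $\Sigma_{n_{k_i}}\to\Sigma$, and (\ref{A2:rel-clt-chara}) then gives $Y_{n_{k_i}}\dc\mathcal N(0,\Sigma)$.

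For (\ref{A3:rel-clt-chara})$\Rightarrow$(\ref{A1:rel-clt-chara}), take $N_{Y_n}\sim\mathcal N(0,\Sigma_n)$, defined on an auxiliary probability space; these are tight and Borel measurable. Condition (\ref{A3:rel-clt-chara}) immediately implies that $Y_n$ is relatively compact. Given any subsequence $n_k$, pick by (\ref{A3:rel-clt-chara}) a further subsequence $n_{k_i}$ with $\Sigma_{n_{k_i}}\to\Sigma$ and $Y_{n_{k_i}}\dc\mathcal N(0,\Sigma)$; the continuity of the Gaussian law in its covariance then also gives $N_{Y_{n_{k_i}}}\dc\mathcal N(0,\Sigma)$. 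In particular, $N_{Y_n}$ is relatively compact, and both $Y_n$ and $N_{Y_n}$ admit, along every subsequence, a further subsequence converging to the same tight Borel law. Invoking the equivalence (\ref{A1:chara-rc-rwc})$\Leftrightarrow$(\ref{A3:chara-rc-rwc}) of \Cref{prop:chara-rc-rwc} with $Y_n$ in the role of the relatively compact sequence yields $Y_n\rd N_{Y_n}$, i.e.\ a relative CLT.

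There is no genuine obstacle here: the only non-trivial input beyond bookkeeping is the equivalence between convergence of Gaussian laws and convergence of their covariance matrices in $\R^d$, which is standard. The argument is essentially a packaging exercise around \Cref{prop:chara-rc-rwc} and will transfer verbatim, with ``covariance matrix'' replaced by ``covariance function'', to the infinite-dimensional setting whenever the sequence of corresponding GPs is known to be relatively compact.
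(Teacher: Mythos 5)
Your proposal is correct and follows essentially the same route as the paper: the paper outsources the equivalence ``uniformly bounded variances $\Leftrightarrow$ relatively compact sequence of corresponding Gaussians'' to \Cref{cor:gaussian-dominated-entropy}, which you re-derive inline, and both arguments then combine the fact that centered Gaussians on $\R^d$ converge weakly iff their covariances converge with the subsequence characterization in \Cref{prop:chara-rc-rwc}. No gaps.
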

In other words, multivariate relative CLTs are essentially equivalent to 
CLTs along subsequences where covariances converge. 
From this it is straightforward to generalize classical multivariate CLTs to relative multivariate CLTs 
(e.g., Lindeberg's CLT, \Cref{thm:rel-lind-feller}).

For infinite dimensional index sets $T$, 
Kolmogorov's extension theorem implies existence of GPs $\{N_n(t)\colon t\in T\}$
with
$$\cov[N_n(s),N_n(t)]=\cov[Y_n(s),Y_n(t)],$$
but potentially unbounded sample paths. 
Bounded sample paths, tightness and asymptotic tightness can be established under entropy conditions, as shown in the following.

Define the $\epsilon$-covering number $N(\epsilon,T,d)$ of a semi-metric space $(T,d)$
as the minimal number of $\epsilon$-balls needed to cover $T$.
Denote by $$\rho_{n}(s,t)=\var[Y_n(s)-Y_n(t)]^{1/2}, \quad s,t \in T,$$
the standard deviation semi-metric on $T$ induced by $Y_n$.

\begin{proposition}\label{prop:ex-tight-GPs} 
    If for all $n$ it holds $$\int_{0}^{\infty}\sqrt{\ln N(\epsilon,T,\rho_{n})}d\epsilon<\infty,$$ 
    then a sequence of tight and Borel measurable GPs $N_{Y_n}$ corresponding to $Y_n$ exists.
\end{proposition}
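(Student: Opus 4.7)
The plan is to construct, for each $n$ separately, a tight and Borel measurable centered Gaussian process $N_{Y_n}\in\ell^\infty(T)$ with the prescribed covariance via a classical Dudley chaining argument, and then to assemble them on a common product probability space. First, Kolmogorov's extension theorem produces a centered Gaussian process $\{\wt N_n(t)\colon t\in T\}$ satisfying $\cov[\wt N_n(s),\wt N_n(t)]=\cov[Y_n(s),Y_n(t)]$, whose intrinsic standard-deviation semi-metric is therefore exactly $\rho_n$. The finiteness of the entropy integral forces $N(\eps,T,\rho_n)<\infty$ for every $\eps>0$, so $(T,\rho_n)$ is totally bounded; I would then fix a countable $\rho_n$-dense subset $T_0\subset T$.

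Next I would apply the standard Gaussian chaining bound to $\wt N_n$ restricted to $T_0$ to obtain
$$\E\Bigl[\sup_{s,t\in T_0,\,\rho_n(s,t)\le\delta}|\wt N_n(s)-\wt N_n(t)|\Bigr]\lesssim \int_0^{\delta}\sqrt{\ln N(\eps,T,\rho_n)}\,d\eps,$$
which tends to $0$ as $\delta\downarrow 0$ by the integrability hypothesis. Hence almost surely $\wt N_n|_{T_0}$ is bounded and $\rho_n$-uniformly continuous and thus admits a unique $\rho_n$-uniformly continuous extension $N_n$ defined on all of $T$. Because the finite-dimensional distributions of $N_n$ arise as $L^2$-limits of Gaussian vectors along sequences in $T_0$, they remain centered Gaussian with the desired covariance, so $N_n$ is a GP corresponding to $Y_n$.

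The sample paths of $N_n$ lie in the separable Banach subspace $U_{\rho_n}\subset\ell^\infty(T)$ of bounded $\rho_n$-uniformly continuous functions on $T$, separability following from total boundedness of $(T,\rho_n)$. Tightness in $\ell^\infty(T)$ is then obtained via Arzel\`a--Ascoli: standard Gaussian tail bounds for suprema and moduli of continuity, applied to the chaining estimate above, produce for every $\eta>0$ a compact subset $K_\eta\subset\ell^\infty(T)$ consisting of functions with a prescribed uniform bound and $\rho_n$-modulus of continuity such that $\Pr(N_n\in K_\eta)\ge 1-\eta$. Borel measurability of $N_n$ into $\ell^\infty(T)$ then follows from Lemma 1.5.3 of \cite{van2023weak}, since the image lies in the separable subspace $U_{\rho_n}$, on which the Borel $\sigma$-algebra is generated by the coordinate projections $t\mapsto N_n(t)$ --- each of which is measurable by construction. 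Realising the $N_n$ on the product probability space yields the claimed sequence.

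The main technical obstacle is the chaining step together with the promotion from ``bounded and uniformly continuous on $T_0$'' to a tight, Borel measurable random element of $\ell^\infty(T)$; both are classical and rely crucially on the finiteness of the Dudley integral. No interaction between different $n$ needs to be controlled, since each $N_n$ is constructed in isolation and only assembled into a sequence at the very end.
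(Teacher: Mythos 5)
Your proof is correct and takes essentially the same route as the paper: Kolmogorov extension to produce the raw Gaussian process, a Dudley entropy/chaining bound (Corollary 2.2.9 of VdV in the paper) to control the supremum and the modulus of continuity, and then tightness from $\rho_n$-uniform equicontinuity together with total boundedness of $(T,\rho_n)$. The only difference is bookkeeping: the paper passes to a separable version and cites VdV's Example 1.5.10 to conclude that a tight Borel measurable version exists, whereas you build the tight version explicitly by extending from a countable $\rho_n$-dense subset and verifying Arzel\`a--Ascoli and Borel measurability by hand.
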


\begin{proposition}\label{prop:asy-tight-GPs}
    Let $N_{Y_n}$ be a sequence of Borel measurable GPs corresponding to $Y_n$.
    Assume that there exists a semi-metric $d$ on $T$ such that 
    \begin{enumerate}
        \item \label{A1:asy-tight-GPs}$(T,d)$ is totally bounded.
        \item \label{A2:asy-tight-GPs}$\lim_{n\to \infty}\int_{0}^{\delta_n}\sqrt{\ln N(\epsilon,T,\rho_{n})}d\epsilon= 0$ for all $\delta_n\downarrow 0.$
        \item \label{A3:asy-tight-GPs}$\lim_{n\to \infty}\sup_{d(s,t)<\delta_n}\rho_{n}(s,t)=0$ for every $\delta_n\downarrow 0.$
    \end{enumerate}
    If further $\sup_n \sup_{t \in T}\var[Y_n(t)]<\infty$, the sequence $N_{Y_n}$ is asymptotically tight.
\end{proposition}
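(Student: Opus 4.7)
The plan is to verify the classical characterization of asymptotic tightness in $\ell^\infty(T)$ (VdV, Theorems~1.5.6--1.5.7): tightness of each one-dimensional marginal, combined with asymptotic uniform equicontinuity in probability with respect to some totally bounded semi-metric. Given the Gaussian structure of $N_{Y_n}$, the equicontinuity modulus will be controlled by a Dudley-type entropy bound, with the metric $d$ from the hypotheses playing the role of the equicontinuity metric and the intrinsic metrics $\rho_n$ entering only through the entropy bound.

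\textbf{Step 1 (Marginal tightness).} For each fixed $t\in T$, the random variable $N_{Y_n}(t)$ is centered Gaussian with variance $\var[Y_n(t)]$. The uniform bound $\sup_n\sup_{t\in T}\var[Y_n(t)]<\infty$ immediately renders $\{N_{Y_n}(t)\}_{n\in\N}$ a tight family in $\R$. The required total boundedness of $(T,d)$ is assumption~(i).

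\textbf{Step 2 (Asymptotic equicontinuity via chaining).} Set $\sigma_n(\delta):=\sup_{d(s,t)<\delta}\rho_n(s,t)$. A routine diagonal-subsequence argument shows that assumption~(iii) is equivalent to $\lim_{\delta\downarrow 0}\limsup_n\sigma_n(\delta)=0$, and assumption~(ii) to $\lim_{\delta\downarrow 0}\limsup_n\int_0^{\sigma_n(\delta)}\sqrt{\ln N(\epsilon,T,\rho_n)}\,d\epsilon=0$. Since the increments of $N_{Y_n}$ are Gaussian with standard deviation $\rho_n$, Dudley's chaining inequality for sub-Gaussian processes (e.g.\ Corollary~2.2.8 of VdV, applied to a separable modification of $N_{Y_n}$) yields
\begin{align*}
 \E^*\sup_{\rho_n(s,t)\leq\sigma}\bigl|N_{Y_n}(s)-N_{Y_n}(t)\bigr|\leq K\int_0^{\sigma}\sqrt{\ln N(\epsilon,T,\rho_n)}\,d\epsilon
\end{align*}
for a universal constant $K$ and every $\sigma>0$. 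Because $\{(s,t):d(s,t)<\delta\}\subseteq\{(s,t):\rho_n(s,t)\leq\sigma_n(\delta)\}$, choosing $\sigma=\sigma_n(\delta)$ together with Markov's inequality gives
\begin{align*}
 \Pr^*\Bigl(\sup_{d(s,t)<\delta}|N_{Y_n}(s)-N_{Y_n}(t)|>\eps\Bigr)\leq \frac{K}{\eps}\int_0^{\sigma_n(\delta)}\sqrt{\ln N(\epsilon,T,\rho_n)}\,d\epsilon,
\end{align*}
which vanishes upon first letting $n\to\infty$ and then $\delta\downarrow 0$. This is precisely the required asymptotic $d$-equicontinuity in probability of $N_{Y_n}$.

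\textbf{Step 3 (Conclusion and main obstacle).} Steps~1--2 together with the VdV characterization of asymptotic tightness then yield the claim. The principal technical wrinkle I anticipate is the measurability bookkeeping when invoking Dudley's bound: the textbook statement requires a separable Gaussian process, whereas the hypothesis only gives Borel measurability of $N_{Y_n}$ as a map into $\ell^\infty(T)$. This is handled by passing to a separable modification, which preserves the covariance function and hence all modulus quantities of interest, or, equivalently, by restricting suprema to countable $\rho_n$-dense subsets whose existence is guaranteed by the near-zero finiteness of the entropy integral enforced by~(ii). A second, minor point is the equivalence between the ``for all $\delta_n\downarrow 0$'' phrasings of~(ii)--(iii) and the $\lim_{\delta\downarrow 0}\limsup_n$ forms used above, which is a standard diagonalization.
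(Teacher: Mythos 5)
Your proposal is correct and follows essentially the same route as the paper's proof: marginal tightness from the uniform variance bound, Dudley's entropy bound (Corollary 2.2.9 in VdV, applied to a separable version) to control the $\rho_n$-modulus of continuity, assumption (iii) to pass from the $d$-modulus to the $\rho_n$-modulus, assumption (ii) plus Markov to get asymptotic uniform $d$-equicontinuity in probability, and Theorem 1.5.7 of VdV to conclude. The only differences are bookkeeping: you phrase (ii)--(iii) via $\sigma_n(\delta)$ and $\lim_{\delta}\limsup_n$, whereas the paper picks a sequence $\epsilon(\delta)\to 0$ directly; both reductions are the standard diagonalization you flag.
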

Note that condition \ref{A3:asy-tight-GPs} requires the existence of a global 
semi-metric with respect to which the sequence of standard 
deviation semi-metrics are asymptotically uniformly continuous. 
In many practical settings, one can simply take $d(t, s)=\sup_n\rho_n(t, s)$. 

\section{Relative CLTs for non-stationary time-series}\label{sec:ts-rel-clts}

With a suitable notion of relative asymptotic normality on hand, 
this section provides specific instances of relative CLTs for non-stationary time-series.
The assumptions of these CLTs are relatively weak, making them applicable in a wide range of statistical problems. 
As in classical theory, however, we cannot expect a relative CLT to hold under arbitrary dependence structures.
Several measures exist to constrain the dependence between observations, such as $\alpha$-mixing or $\phi$-mixing coefficients \citep{bradley2005basicproperties}, or the functional dependence measure of \citet{wu2005nonlinear}. 
In the following, we will focus on $\beta$-mixing, because it is widely applicable and allows for sharp coupling inequalities.

\begin{definition}
    Let $(\Omega,\mathcal{A},P)$ be a probability space and $\mathcal{A}_1,\mathcal{A}_2\subset \mathcal{A}$
    sub-$\sigma$-algebras.
    The \emph{$\beta$-mixing coefficient} is defined as
    $$\beta(\mathcal{A}_1,\mathcal{A}_2)=\frac{1}{2}\sup\sum_{(i,j)\in I\times J}|\Pr(A_i\cap B_j)-\Pr(A_i)\Pr(B_j)|,$$
    where the supremum is taken over all finite partitions $\cup_{i\in I}A_i=\cup_{j\in J}B_j=\Omega$ with $A_i\in \mathcal{A}_1,B_j\in \mathcal{A}_2$.
    For a triangular array $X_{n,i}$ of random variables with common (co)domain, $p,n\in \N$ and $p<k_n$ define $$\beta_n(p)=\sup_{k\leq k_n-p}\beta\left(\sigma(X_{n,1},\ldots,X_{n,k}),\sigma(X_{n,k+p},\ldots,X_{n,k_n})\right).$$
\end{definition}
The $\beta$-mixing coefficients quantify how 
independent events become when they are temporally separated.
If the $\beta$-mixing coefficients become zero as
$n$ and $p$ approach infinity, the events become 
close to independent. Note that the mixing coefficients themselves are indexed by $n$, reflecting the triangular array setup.

\subsection{Multivariate relative CLT}

We start with a multivariate relative CLT for triangular arrays of random variables.
\Cref{prop:rel-clt-chara} extends classical to relative multivariate CLTs.
The following result builds on Lyapunov's CLT in combination with 
a coupling argument. 
Let $X_{n,1},\ldots,X_{n,k_n}$ be a triangular array of $\R^d$-valued random variables.

\begin{theorem}[Multivariate relative CLT]\label{thm:multi-rel-clt}
    For some $\gamma>2$ and $\alpha<(\gamma-2)/2(\gamma-1)$ 
    assume
    \begin{enumerate}
        \item \label{A1:multi-rel-clt} $k_n^{-1}\sum_{i,j=1}^{k_n}|\cov[X_{n,i}^{(l_1)},X_{n,j}^{(l_2)}]|\leq K$ for all $n$ and $l_1,l_2=1,\ldots d$.
        \item \label{A2:multi-rel-clt} $\sup_{n,i}\E\left[|X_{n,i}^{(l)}|^\gamma\right]<\infty$ for all $l = 1, \dots, d$.
        \item \label{A3:multi-rel-clt}$k_n\beta_{n}(k_n^{\alpha})^\frac{\gamma-2}{\gamma}\to 0$.
    \end{enumerate}
    Then, the scaled sample average $k_n^{-1/2}\sum_{i=1}^{k_n}(X_{n,i}-\E[X_{n,i}])$ satisfies a relative CLT. 
\end{theorem}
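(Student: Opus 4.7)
The plan is to invoke the characterization in \Cref{prop:rel-clt-chara}. Write $S_n := k_n^{-1/2}\sum_{i=1}^{k_n}(X_{n,i}-\E[X_{n,i}])$ and $\Sigma_n := \cov[S_n]$. Assumption \ref{A1:multi-rel-clt} bounds every entry of $\Sigma_n$ uniformly in $n$ (in particular the coordinate variances), so $\Sigma_n$ lies in a compact set and any subsequence admits a further subsequence with $\Sigma_{n_k}\to \Sigma$ for some positive semi-definite $\Sigma$. By part \ref{A3:rel-clt-chara} of \Cref{prop:rel-clt-chara} it therefore suffices to establish the classical CLT $S_{n_k}\dc\mathcal{N}(\bnull,\Sigma)$ along any such subsequence. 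The Cramér--Wold device reduces this to a univariate CLT for $\la\lambda,S_{n_k}\ra$ with $\lambda\in\R^d$ arbitrary, and I set $Y_{n,i}:=\la\lambda,X_{n,i}-\E[X_{n,i}]\ra$; assumptions \ref{A1:multi-rel-clt}--\ref{A3:multi-rel-clt} transfer to the real-valued centered sequence $\{Y_{n,i}\}$ via Cauchy--Schwarz and Minkowski.

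For this univariate CLT I would run a Bernstein-style big-block/gap-block decoupling. Choose $p_n:=\lfloor k_n^{\alpha}\rfloor$ and partition $\{1,\ldots,k_n\}$ into $M_n\asymp k_n^{1-\alpha}$ alternating big blocks $B_b$ of length $p_n$ and separator blocks $G_b$ of comparable length. Let $W_{n,b}:=\sum_{i\in B_b}Y_{n,i}$. An iterative application of Berbee's coupling lemma on this enlarged probability space produces mutually independent variables $\tilde W_{n,b}\stackrel{d}{=}W_{n,b}$ with $\Pr(W_{n,b}\neq \tilde W_{n,b})\leq \beta_n(p_n)$. Write $\tilde S_n := k_n^{-1/2}\sum_b \tilde W_{n,b}$.

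Two errors must be shown to be $o_{L^2}(1)$. For the coupling error, Hölder's inequality yields
\begin{align*}
    \|W_{n,b}-\tilde W_{n,b}\|_2 \leq 2\|W_{n,b}\|_\gamma\,\beta_n(p_n)^{1/2-1/\gamma},
\end{align*}
and Minkowski together with \ref{A2:multi-rel-clt} give $\|W_{n,b}\|_\gamma\leq C p_n$. Summing the $M_n\leq k_n/p_n$ blocks by the triangle inequality, normalizing by $\sqrt{k_n}$, and squaring produces the key estimate
\begin{align*}
    \Bigl\|k_n^{-1/2}\textstyle\sum_b(W_{n,b}-\tilde W_{n,b})\Bigr\|_2^{2} \leq C\,k_n\,\beta_n(p_n)^{(\gamma-2)/\gamma}=o(1)
\end{align*}
by \ref{A3:multi-rel-clt}, while the separator-block contribution is handled by combining the summability \ref{A1:multi-rel-clt} with the standard $\beta$-mixing covariance bound between distant blocks under \ref{A2:multi-rel-clt}. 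For the independent array $\{k_n^{-1/2}\tilde W_{n,b}\}$ the Lyapunov ratio satisfies
\begin{align*}
    k_n^{-\gamma/2}\textstyle\sum_b\E|\tilde W_{n,b}|^\gamma \leq C\,M_n p_n^{\gamma}\,k_n^{-\gamma/2} \leq C\,k_n^{\,1+\alpha(\gamma-1)-\gamma/2}\to 0
\end{align*}
precisely because $\alpha<(\gamma-2)/(2(\gamma-1))$. Classical Lyapunov then gives $\tilde S_n\dc\mathcal{N}(0,\sigma^2)$ with $\sigma^2:=\lim\var[\la\lambda,S_{n_k}\ra]$, and Slutsky transfers the conclusion back to $\la\lambda,S_{n_k}\ra$.

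The main obstacle is the simultaneous calibration of the block exponent $\alpha$. The coupling $L^2$ bound produces exactly the quantity $k_n\beta_n(k_n^\alpha)^{(\gamma-2)/\gamma}$ appearing in \ref{A3:multi-rel-clt}, while the Lyapunov condition via the crude Minkowski bound $\|W_{n,b}\|_\gamma\leq Cp_n$ forces $\alpha<(\gamma-2)/(2(\gamma-1))$. The stated upper bound on $\alpha$ is precisely what is needed to reconcile both estimates using only the moment condition \ref{A2:multi-rel-clt}; a Rosenthal-type block-variance refinement would relax the range of $\alpha$, but the simple Minkowski-Hölder bookkeeping already yields exactly the assumptions stated.
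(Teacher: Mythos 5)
Your overall strategy---reduction to a classical CLT along subsequences with convergent covariances via \Cref{prop:rel-clt-chara} and Cram\'er--Wold, then a Bernstein blocking argument with Berbee/maximal coupling and Lyapunov's CLT---is the same as the paper's, and several individual estimates are correct: the $L^2$ coupling bound does produce exactly $k_n\beta_n(k_n^{\alpha})^{(\gamma-2)/\gamma}$ (and, as a bonus, handles the variance matching between the coupled and uncoupled block sums in one stroke, which the paper does separately). However, there is a genuine gap in the block calibration. You take big blocks \emph{and} separator blocks both of length $\asymp k_n^{\alpha}$. Since $\alpha<1/2$, the separator blocks then contain a constant fraction (roughly half) of the observations, so the separator contribution $k_n^{-1/2}\sum_{i\in\cup_b G_b}Y_{n,i}$ is \emph{not} asymptotically negligible: condition \ref{A1:multi-rel-clt} only bounds its variance by $O(1)$, not $o(1)$, and generically (e.g.\ for iid data) that variance is about half the total. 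Consequently $\tilde S_n$ does not converge to $\mathcal{N}(0,\sigma^2)$ with $\sigma^2=\lim\var[\la\lambda,S_{n_k}\ra]$ but to a Gaussian with the (strictly smaller, possibly non-convergent) big-block variance, and the final Slutsky step back to $\la\lambda,S_{n_k}\ra$ fails. The claim that the separator contribution is "handled by the summability \ref{A1:multi-rel-clt} and covariance bounds between distant blocks" does not hold up: those bounds control cross-block covariances, not the order-one within-separator variance.

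The repair requires three scales, which is what the paper's proof of its univariate version (\Cref{thm:rel-univ-clt}) does: separators of length $q_n=k_n^{\alpha}$, so that the coupling error is controlled by \ref{A3:multi-rel-clt}, and big blocks of the much larger length $p_n\approx k_n^{1/2-\delta}$ for some $\delta$ with $1/(2(\gamma-1))<\delta<1/2-\alpha$. The upper bound on $\delta$ makes the separators negligible (they contain $k_n^{1/2+\delta+\alpha}=o(k_n)$ observations, so their normalized variance vanishes), while the lower bound is exactly the Lyapunov condition $r_np_n^{\gamma}/k_n^{\gamma/2}\to 0$ for the big blocks; the existence of such a $\delta$ is precisely the hypothesis $\alpha<(\gamma-2)/2(\gamma-1)$. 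That your single-scale Lyapunov computation with $p_n=k_n^{\alpha}$ reproduces the same threshold on $\alpha$ is a coincidence of exponents, not a validation of the construction.
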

\begin{proof}
    \Cref{sec:multi-rel-clt}
\end{proof}
The summability condition \ref{A1:multi-rel-clt} on the covariances can be seen as a minimal requirement for any 
general CLT under mixing conditions \citep{bradley1999growth}.
Condition \ref{A1:multi-rel-clt} and \ref{A3:multi-rel-clt} restrict the 
dependence where \ref{A1:multi-rel-clt} essentially bounds the variances of the scaled sample average.
Condition \ref{A2:multi-rel-clt} excludes heavy tails of $X_{n,i}$.
Note that \ref{A2:multi-rel-clt} and \ref{A3:multi-rel-clt} exhibit a trade-off. 
Uniform bounds on higher moments weaken the conditions on the mixing coefficients' decay rate.

\begin{example}
    Assuming the moment condition \ref{A2:multi-rel-clt} with $\gamma=4$, 
    the $\beta$-mixing condition reads $k_n^2\beta_n(k_n^{\alpha})\to 0$ for some 
    $\alpha<1/3$. This holds, for example, if $\beta_n(p)\leq Cp^{-\rho}$ for some $C < \infty, \rho>6$ and all $n, p \in \N$.
\end{example}

\subsection{Asymptotic tightness under bracketing entropy conditions}\label{sec:asy-tightness}

In order to extend the multivariate relative CLT to an empirical process CLT, we need a way to ensure relative compactness.
Consider the following general setup:
\begin{itemize}
	\item $X_{n,1},\ldots,X_{n,k_n}$ is a triangular array of random variables in some Polish space $\mathcal{X}$,
	\item $\mathcal{F}_n=\{f_{n,t}\colon t\in T\}$ is a set of measurable functions from $\mathcal{X}$ to $\R$ for all $n$, 
	\item $\Fcal = \bigcup_{n\in \N}\mathcal{F}_n$ admits a finite envelope function $F\colon \mathcal{X}\to \R$, i.e., $\sup_{n\in \N,f\in \Fcal_n}|f(x)|\leq F(x)$ for all $x\in \Xcal$.
\end{itemize}
Define the empirical process $\G_n$ on $T$
by $$\mathbb{G}_n(t)=\frac{1}{\sqrt{k_n}}\sum_{i=1}^{k_n}f_{n,t}(X_{n,i})-\E[f_{n,t}(X_{n,i})].$$
The envelope guarantees that $\G_n$ has bounded sample paths and we obtain a map
$\mathbb{G}_n$ with values in $\ell^\infty(T)$.
This empirical process can be seen as a triangular version of the (classical) empirical process 
indexed by a single set of functions. 

To ensure asymptotic tightness, we rely on bracketing entropy conditions, with norms tailored to the nonstationary time-series setting.
Given a semi-norm $\|\cdot \|$ on (an extension of) $\Fcal$, define the bracketing number 
$N_{[]}(\epsilon,\mathcal{F}, \| \cdot \|)$ as the minimal number 
of brackets $$[l_i,u_i]=\{f\in \mathcal{F}\colon l_i\leq f\leq u_i\}$$
such that $\mathcal{F}=\cup_{i=1}^{N_{\epsilon}}[l_i,u_i]$ with $l_i,u_i\colon \Xcal\to \R$ measurable
and $\|l_i-u_i\|\le\epsilon$.
For stationary observations $X_{n,i}\sim P$, bracketing entropy is usually measured with respect to 
an $L_p(P)$-norm, $p \ge 2$.
In case of non-stationary observations the brackets need to be measured with respect 
to all underlying laws of the samples. 
It turns out that a scaled average of $L_p(P_{X_{n,i}})$-norms is sufficient.

\begin{definition}
    Let $\gamma\geq 1$.
    Define the semi-norms $\|\cdot\|_{\gamma,n}$ resp. $\|\cdot\|_{\gamma,\infty}$
    on $\Fcal$ by 
    \begin{align*}
        \|h\|_{\gamma,n}=\left(\frac{1}{k_n}\sum_{i=1}^{k_n}\E\left[|h(X_{n,i})|^\gamma\right]\right)^{1/\gamma},
        \qquad
        \|h\|_{\gamma,\infty}=\sup_{n\in \N}\|h\|_{\gamma,n}.
    \end{align*}
\end{definition}
Note that $\|\cdot\|_{\gamma,n}$ is a composition of semi-norms, hence,
a semi-norm itself (\Cref{lem:properties-norm}).
The following result shows that $\|\cdot\|_{\gamma,n}$-bracketing entropy conditions imply asymptotic tightness 
of $\G_n$ under mixing assumptions.
Because $\|\cdot\|_{\gamma,n}$-bracketing entropy bounds covering entropy (\Cref{rem:br-bound-cn}) the existence 
of an approximating sequence of GPs is also guaranteed.

\begin{theorem}\label{thm:asy-tightness-sup-br}
    Assume that for some $\gamma>2$
    \begin{enumerate}
        \item \label{A1:asy-tightness-sup-br}$\|F\|_{\gamma,\infty}<\infty$,
        \item \label{A2n:asy-tightness-sup-br} $\sup_{n\in \N} \max_{m\leq k_n} m^{\rho}\beta_n(m)<\infty$ for some $\rho>\gamma/(\gamma-2)$,
        \item \label{A3n:asy-tightness-sup-br}$\int_{0}^{\delta_n}\sqrt{\ln N_{[]}(\epsilon,\mathcal{F}_n,\|\cdot\|_{\gamma,n})}d\epsilon \to 0$ for all $\delta_n\downarrow 0$ and the integrals are finite for all $n$. 
    \end{enumerate}
    Write $$d_n(s,t)=\|f_{n,s}-f_{n,t}\|_{\gamma,n}$$
    for $s,t\in T$.
    Assume that there exists a semi-metric $d$ on $T$ such that 
    $$\lim_{n\to \infty}\sup_{d(s,t)<\delta_n}d_{n}(s,t)=0$$ for all $\delta_n\downarrow 0$ and 
    $(T,d)$ is totally bounded.
    Then, 
    \begin{itemize}
        \item $\mathbb{G}_n$ is asymptotically tight.
        \item there exists an asymptotically tight sequence of tight Borel measurable GPs $N_n$
        corresponding to $\mathbb{G}_n$.
    \end{itemize}
\end{theorem}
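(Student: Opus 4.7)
The plan is to first establish asymptotic tightness of $\G_n$ through asymptotic $d$-equicontinuity, and then obtain existence and asymptotic tightness of the approximating Gaussian processes $N_n$ by applying \Cref{prop:ex-tight-GPs} and \Cref{prop:asy-tight-GPs}. Since $(T,d)$ is totally bounded, asymptotic tightness of $\G_n \in \ell^\infty(T)$ is equivalent to the statement that, for every $\eta > 0$,
\begin{align*}
\lim_{\delta \downarrow 0} \limsup_{n \to \infty} \Pr^*\Big(\sup_{d(s,t) < \delta} |\G_n(s) - \G_n(t)| > \eta\Big) = 0.
\end{align*}
A key auxiliary comparison is to relate the standard-deviation semi-metric $\rho_n(s,t) = \var[\G_n(s)-\G_n(t)]^{1/2}$ to the bracketing semi-norm: under the $\beta$-mixing condition \ref{A2n:asy-tightness-sup-br}, a Davydov-type covariance inequality combined with Jensen yields $\rho_n(s,t) \le C \|f_{n,s}-f_{n,t}\|_{\gamma,n}$ with $C$ independent of $n$. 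Thus $\|\cdot\|_{\gamma,n}$-bracketing entropy dominates $\rho_n$-covering entropy, which will be needed on both the process and the Gaussian side.

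For the chaining, I would fix dyadic scales $\epsilon_k = 2^{-k}$, build minimal $\epsilon_k$-bracket covers of $\Fcal_n$ under $\|\cdot\|_{\gamma,n}$ at each scale, and assign to each $t \in T$ a nested chain of brackets $[\ell_k^n(t), u_k^n(t)]$ containing $f_{n,t}$ of width $\le \epsilon_k$. Applying the chaining maximal inequality \Cref{thm:chaining-in-text} with adaptive truncation at each level (truncating bracket differences against the envelope $F$) gives a bound of the form
\begin{align*}
\E^* \sup_{d_n(s,t) \le \epsilon_{k_1}} |\G_n(s) - \G_n(t)| \le C \int_0^{\epsilon_{k_1}} \sqrt{\log N_{[]}(\epsilon, \Fcal_n, \|\cdot\|_{\gamma,n})} \, d\epsilon + r_n,
\end{align*}
with residual $r_n = o(1)$ controlled by the envelope integrability \ref{A1:asy-tightness-sup-br} and the mixing rate \ref{A2n:asy-tightness-sup-br}. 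Given $\eta > 0$, condition \ref{A3n:asy-tightness-sup-br} lets me pick $k_1$ large so the entropy integral is below $\eta^3$; the uniform-continuity hypothesis on $d$ then allows me to choose $\delta > 0$ with $\sup_{d(s,t) < \delta} d_n(s,t) \le \epsilon_{k_1}$ for all large $n$. A Markov argument closes the equicontinuity.

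For the Gaussian side, the semi-metric comparison from the first paragraph translates \ref{A3n:asy-tightness-sup-br} into the entropy premise of \Cref{prop:ex-tight-GPs}, providing tight Borel measurable GPs $N_n$ corresponding to $\G_n$. Asymptotic tightness of $\{N_n\}$ then follows from \Cref{prop:asy-tight-GPs} applied with the same $d$: total boundedness is assumed, the local entropy integral over $[0,\delta_n]$ vanishes by \ref{A3n:asy-tightness-sup-br}, and uniform $\rho_n$-continuity under $d$ is inherited from uniform $d_n$-continuity via the covariance bound; the side condition $\sup_n\sup_t\var[\G_n(t)] < \infty$ follows from \ref{A1:asy-tightness-sup-br} and the same bound. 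The hard part is the chaining step: adaptive truncation must be calibrated at every chain level so that the truncated part is controlled by the Bernstein-type estimate embedded in \Cref{thm:chaining-in-text} (whose variance and moment bounds themselves depend on $\beta_n$ through the polynomial decay in \ref{A2n:asy-tightness-sup-br}), while the tail part is absorbed into $\|F\|_{\gamma,\infty} < \infty$. Making both residuals telescope into the bracketing-entropy integral at exactly the mixing threshold $\rho > \gamma/(\gamma-2)$ is the subtle calibration that distinguishes this proof from the classical i.i.d.\ chaining argument; the remaining steps are standard applications of the tools developed in \Cref{sec:rwc}.
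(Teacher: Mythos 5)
Your proposal is correct and follows essentially the same route as the paper's proof: compare $\rho_n$ to $d_n$ via a Davydov-type covariance bound under $\beta$-mixing (the paper's \Cref{rem:sum-implies-cov-bounded-by-moment}), invoke the maximal inequality \Cref{thm:chaining-in-text}/\ref{thm:bracketing-coupled} on the class of small differences to get asymptotic $d$-equicontinuity of $\G_n$, and then feed the same entropy and semi-metric comparisons into \Cref{prop:ex-tight-GPs} and \Cref{prop:asy-tight-GPs} for the Gaussian side. The only cosmetic differences are the order of the two bullets and that you re-describe the chaining construction before invoking the maximal inequality, whereas the paper simply applies \Cref{thm:bracketing-coupled} to $\Fcal_{n,\delta}=\{f-g\colon f,g\in\Fcal_n,\ \|f-g\|_{\gamma,n}<\delta\}$ together with the bracket-doubling bound $N_{[]}(\epsilon,\Fcal_{n,\delta},\|\cdot\|_{\gamma,n})\le N_{[]}(\epsilon/2,\Fcal_n,\|\cdot\|_{\gamma,n})^2$; making this reduction to the difference class explicit (rather than asserting the modulus-of-continuity bound directly) is the one step you should spell out.
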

The proof, given in \Cref{sec:proof-bracketing}, relies on a chaining argument with adaptive coupling and truncation.
The techniques are related to those in \cite{doukhan1995invariance} and 
\citet[Chapter 8]{rio2017asymptotic}, 
who consider a stationary $\beta$-mixing setting. 
However, since we work in a non-stationary framework, 
their results and arguments cannot be applied directly. 
In particular, the non-stationarity necessitates the use of a 
different norm to quantify the size of the brackets.

An important intermediate step is a new maximal inequality (see \Cref{thm:bracketing-coupled}) that may be of independent interest:
\begin{theorem}\label{thm:chaining-in-text}
	Let $\Fcal$ be a class of functions $f\colon \Xcal \to \R$ with envelope $F$, and
	\begin{align*}
		\| f \|_{\gamma,n} \le \delta, \qquad \frac{1}{n} \sum_{i,j= 1}^{n} |\cov[h(X_i),h(X_j)]|\leq K_1 \|h\|_{\gamma,n}^2,
	\end{align*}
	for some $\gamma > 2$, all $f\in \Fcal$ and $h:\Xcal\to \R$ bounded and measurable.
	Suppose that $\sup_n \beta_n(m) \leq K_2 m^{-\rho}$ for some $\rho \ge \gamma /(\gamma -2 )$.
	Then, for any $n \ge 5$ and $\delta \in (0,1)$,
	\begin{align*}
		\E\|\G_n \|_{\Fcal}   \lesssim  \int_0^\delta  \sqrt{\ln_+ N_{\left[\right]}(\epsilon) } d \epsilon + \frac{  \|F\|_{\gamma, n} [\ln N_{\left[\right]}(\delta)]^{ [1 - 1/(\rho + 1)](1 - 1/\gamma)}}{n^{-1/2 + [1 - 1/(\rho + 1)](1 - 1/\gamma)}} + \sqrt{n} N_{[]}^{-1}(e^n),
	\end{align*}
    where $N_{[]}(\eps) = N_{[]}(\epsilon, \Fcal, \|\cdot\|_{\gamma,n})$ and $\ln_+ x= \ln(x+1)$.
\end{theorem}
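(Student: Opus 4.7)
The plan is to combine a dyadic bracketing chain with adaptive truncation and a $\beta$-mixing Bernstein-type inequality obtained via Berbee's coupling. The chaining integral produces the first term of the stated bound; the truncation--coupling step produces the middle term (with the exponent composed of a moment factor and a mixing factor); and the residual at the end of the chain produces the last term.

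For each $q \ge 0$, choose a minimal bracketing cover of $\Fcal$ at scale $\eta_q := 2^{-q}\delta$, let $N_q := N_{[]}(\eta_q, \Fcal, \|\cdot\|_{\gamma,n})$, and, for each $f \in \Fcal$, associate a bracket endpoint $\pi_q f$ and a nonnegative dominator $\Delta_q f$ with $|f - \pi_q f| \le \Delta_q f$ and $\|\Delta_q f\|_{\gamma, n} \le \eta_q$. Fix a terminal index $Q$ with $\eta_Q$ of order $N_{[]}^{-1}(e^n)$ and telescope
\begin{align*}
\G_n f \;=\; \G_n(\pi_0 f) \;+\; \sum_{q=0}^{Q-1} \G_n\bigl(\pi_{q+1}f - \pi_q f\bigr) \;+\; \G_n\bigl(f - \pi_Q f\bigr).
\end{align*}
The residual $\G_n(f - \pi_Q f)$ is dominated by $\G_n\Delta_Q f$, and a direct $L_1$-bound via $\|\Delta_Q f\|_{1,n} \le \|\Delta_Q f\|_{\gamma,n} \le \eta_Q$ produces a term of order $\sqrt{n}\, N_{[]}^{-1}(e^n)$, which is the third term in the claimed bound. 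The anchor $\G_n(\pi_0 f)$ is folded into the first chaining level.

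For each chain increment $\xi_q := \pi_{q+1}f - \pi_q f$ I would split $\xi_q = \xi_q \mathbf{1}\{|\xi_q|\le a_q\} + \xi_q \mathbf{1}\{|\xi_q|>a_q\}$ at a truncation level $a_q$. The tail part is bounded pointwise by the envelope $F$ and controlled by Markov at exponent $\gamma$: combined over at most $N_{q+1}^2$ increments it contributes a term polynomial in $\|F\|_{\gamma,n}$ and $(\eta_q/a_q)^{\gamma-1}$. For the bounded part, apply Berbee's lemma to couple the $\beta$-mixing array to an independent block sequence with block length $m$, at probability cost $\lesssim (n/m)\beta_n(m)$. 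On the coupled independent blocks, a Bennett--Bernstein inequality applies; the variance is controlled by the covariance hypothesis $n^{-1}\sum_{i,j}|\cov(h(X_i),h(X_j))| \le K_1 \|h\|_{\gamma,n}^2$ with $h = \xi_q$, while the Lipschitz constant is bounded by $a_q m$. Taking a maximum over $N_{q+1}^2$ increments gives a sub-Gaussian-like piece of order $\eta_q \sqrt{\ln N_{q+1}}$ plus a Bernstein correction involving $a_q m \ln N_{q+1}/\sqrt{n}$, whose summation over $q$ produces the entropy integral $\int_0^\delta \sqrt{\ln_+ N_{[]}(\epsilon)}\,d\epsilon$ plus a residual.

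The Bernstein correction and the truncation tail are then balanced by choosing $a_q \propto \eta_q(n/m)^{1/\gamma}$, while $m$ is chosen of order $(n/\ln N_{[]}(\delta))^{1/(\rho+1)}$ so the coupling loss $n\beta_n(m)/m \lesssim m^{-\rho-1}n$ matches the Bernstein sub-exponential contribution. Substituting produces the exponent $[1 - 1/(\rho+1)](1 - 1/\gamma)$: the factor $(1-1/\gamma)$ coming from $L_\gamma$-truncation via Markov and $\rho/(\rho+1) = 1 - 1/(\rho+1)$ coming from the block-length optimization. The main obstacle is executing these optimizations \emph{uniformly across chaining levels} $q$ and verifying that the geometric series in $q$ produced by the Bernstein tail converges. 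This convergence is equivalent to the condition $[1 - 1/(\rho+1)](1 - 1/\gamma) \ge 1/2$, which is met precisely under the assumed mixing rate $\rho \ge \gamma/(\gamma-2)$; any slackness in the inequalities above would either blow up the sum or fail to produce the claimed exponent, so careful bookkeeping of constants and the interplay between $a_q$, $m$, and $Q$ is essential.
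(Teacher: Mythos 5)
The high-level strategy you describe (bracketing chain, Berbee coupling to independent blocks, Bernstein inequality, adaptive truncation, and a final optimization of block length) matches the paper's, but there is a structural gap: you use a \emph{single, level-independent} coupling block length $m$ throughout the chain, whereas the paper's proof of \Cref{thm:chaining} crucially uses a \emph{level-adaptive} block length $m_r = \min\{\sqrt{\ln_+ N_r / n},\, 1\}^{-(\gamma-2)/(\gamma-1)}$ at chaining level $r$ (together with a level-adaptive threshold $\tau_r$), while the fixed $m$ from the theorem statement is reserved for the \emph{anchor} level $T_1$ only.

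To see why a fixed $m$ breaks the chain summation, consider the Bernstein correction term $m\, a_q \ln N_{q+1}/\sqrt{n}$ you obtain at level $q$. With your choice $a_q \propto \eta_q (n/m)^{1/\gamma}$, summing over $q$ yields, up to constants,
\begin{align*}
\frac{m^{1-1/\gamma} n^{1/\gamma - 1/2}}{1}\sum_{q} \eta_q \ln N_{q+1}.
\end{align*}
Finiteness of the entropy integral only forces $\sqrt{\ln N_{[]}(\eta)}= o(\eta^{-1})$, so $\eta_q \ln N_{q+1}$ is permitted to grow like $\eta_q^{-1+o(1)}$ and the sum $\sum_q \eta_q \ln N_{q+1}$ may diverge. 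The same issue affects the level-$q$ coupling error $\sqrt{n}\,a_q\,\beta_n(m)$: it is not proportional to $\eta_q\sqrt{\ln_+ N_{q+1}}$ and cannot be absorbed into the entropy integral. The paper resolves this by shrinking $m_r$ as $\ln_+ N_r$ grows, so that the three quantities (Bernstein correction $m_r\tau_{r-1}\ln_+ N_r/\sqrt{n}$, truncation tail $\sqrt{n}\,2^{-r\gamma}\tau_r^{-(\gamma-1)}$, coupling loss $\sqrt{n}\tau_{r-1}\beta_n(m_r)$) are all exactly of order $2^{-r}\sqrt{\ln_+ N_r}$, and thus sum to the entropy integral. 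Only the anchor $T_1$, handled with the fixed $m$ and global truncation level $B$, then produces the middle term, and the final optimization over $m$ and $B$ yields the exponent $[1-1/(\rho+1)](1-1/\gamma)$. Your balance condition $\rho\geq\gamma/(\gamma-2)$ is also not the root cause of convergence of the chain sum; in the paper this condition is what ensures $m_r^{\frac{2(\gamma-1)}{\gamma-2}-\rho-1}\leq 1$ so the coupling-loss calibration holds at every level.
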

\citet[Theorem 4]{scholze2024weakconvergencefunctionindexedsequential} concurrently proved a similar result under a weaker $\alpha$-mixing assumption, but much
stricter entropy conditions. 

In many applications, $\|\cdot\|_{\gamma,n}$-bracketing numbers
can be replaced by $L_\gamma(Q)$-bracketing numbers whenever all $P_{X_{n,i}}$ are 
simultaneously dominated by some measure $Q$ (\Cref{sec:comp-brackets}), or simply the $L_\infty$-bracketing numbers (which coincide with $L_\infty$-covering numbers) whenever the function class is uniformly bounded.
Many bounds on $L_\gamma(Q)$-bracketing and $L_\infty$-covering numbers are well known \citep[Section 2.7]{van2023weak}.

\subsection{Weighted uniform relative CLT}\label{sec:weighted-unif-rclt}
With a multivariate relative CLT and conditions for asymptotic tightness, we have all we need to establish relative empirical process CLTs. Our main result below should cover many statistical applications.

Let
$\Fcal$ be a set of measurable functions with finite envelope $F$.
Let $w_{n,i}\colon S\to \R$, $n\in \N,1 \le i\leq k_n,$ be a family of weights
satisfying
$\sup_{n,i,x}|w_{n,i}(x)|<\infty$ and define the weighted empirical process as
$\mathbb{G}_n\in \ell^\infty(S\times \Fcal)$ 
$$\G_n(s,f)=\frac{1}{\sqrt{k_n}}\sum_{i=1}^{k_n}w_{n,i}(s)\bigl(f(X_{n,i})-\E[f(X_{n,i})]\bigr).$$

A relative CLT for $\G_n$ can be established in terms of bracketing entropy for the function class
$$\Wcal_n = \{g_{n,s}: \{1, \dots, k_n \}\to \R, i\mapsto w_{n,i}(s)\colon s\in S\}.$$
For $s,t\in S$ define the semi-metric 
$$d^w_n(s,t)=\|g_{n,s}-g_{n,t}\|_{\gamma,n}=\left(\frac{1}{k_n}\sum_{i=1}^{k_n}|w_{n,i}(s)-w_{n,i}(t)|^{\gamma}\right)^{1/\gamma},$$
 and assume the following entropy conditions on the weights: 
\begin{enumerate}[label=(W\arabic*)]
    \item \label{A1:weights} $\int_{0}^{\delta_n}\sqrt{\ln N_{\left[\right]}\left( \epsilon,\Wcal_n,\|\cdot\|_{\gamma,n}\right)}d\epsilon \to 0$ for all $\delta_n\downarrow 0$ and finite for all $n$.
    \item \label{A2:weights} there exists a semi-metric $d^w$ on $S$ such that for all $\delta_n\downarrow 0$
    $$\lim_{n\to \infty}\sup_{d^w(s,t)<\delta_n}d^w_{n}(s,t)=0.$$
    \item \label{A3:weights}$(S,d^w)$ is totally bounded.
\end{enumerate}
These assumptions cover the constant case $w_{n,i}(s)=1$ as well as
more sophisticated weights; see the next sections.

\begin{theorem}[Weighted relative CLT]\label{thm:multiplier-rel-clt}
    For some $\gamma>2$ assume that \ref{A1:weights}--\ref{A3:weights} and the following hold:
    \begin{enumerate}
        \item \label{uniformcltA1}$\sup_{i,n}\|F(X_{n,i})\|_{\gamma}<\infty$,
        \item \label{uniformcltA2n} $\sup_{n\in \N} \max_{m\leq k_n} m^{\rho}\beta_n(m)<\infty$ for some $\rho>2\gamma(\gamma-1)/(\gamma-2)^2$,
        \item \label{uniformcltA4}$\int_{0}^{\infty}\sqrt{\ln N_{[]}(\epsilon,\mathcal{F},\|\cdot\|_{\gamma,\infty})}d\epsilon<\infty.$
    \end{enumerate}
    Then, $\G_n$ satisfies a relative CLT in $\ell^\infty(S\times \Fcal)$.
\end{theorem}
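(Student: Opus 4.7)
The plan is to apply the characterization \Cref{lem:rclt-marginal-rclt} of relative CLTs via marginal relative CLTs and (relative) asymptotic tightness. For the marginals I would use the multivariate relative CLT \Cref{thm:multi-rel-clt}, and for tightness together with the existence of an approximating sequence of tight GPs I would invoke the bracketing tightness result \Cref{thm:asy-tightness-sup-br}. Since the weights depend on $i$ while the function class in \Cref{thm:asy-tightness-sup-br} does not, the first bookkeeping step is to recast $\G_n$ as an unweighted empirical process over the augmented array $\widetilde X_{n,i} = (X_{n,i}, i)$ with function class
\[
\widetilde{\Fcal}_n = \bigl\{(x,i)\mapsto w_{n,i}(s)f(x)\colon s\in S,\ f\in\Fcal\bigr\}
\]
and envelope $\widetilde F(x,i) = W F(x)$, where $W = \sup_{n,i,x}|w_{n,i}(x)| < \infty$. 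Since the second coordinate is deterministic, $\sigma(\widetilde X_{n,i})=\sigma(X_{n,i})$, so \ref{uniformcltA2n} transfers verbatim, and \ref{uniformcltA1} yields $\|\widetilde F\|_{\gamma,\infty}<\infty$.

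The marginal relative CLT for vectors $(\G_n(s_j,f_j))_{j=1}^d$ is obtained by applying \Cref{thm:multi-rel-clt} to the triangular array $V_{n,i} \in \R^d$ with $V_{n,i}^{(j)} = w_{n,i}(s_j)(f_j(X_{n,i}) - \E[f_j(X_{n,i})])$. Condition \ref{A2:multi-rel-clt} follows from $\|V_{n,i}^{(j)}\|_\gamma \le 2 W \sup_{n,i}\|F(X_{n,i})\|_\gamma < \infty$; condition \ref{A3:multi-rel-clt} is implied by \ref{uniformcltA2n} because $\rho > 2\gamma(\gamma-1)/(\gamma-2)^2$ allows the choice $\alpha < (\gamma-2)/(2(\gamma-1))$ with $\alpha\rho(\gamma-2)/\gamma > 1$; and the covariance summability \ref{A1:multi-rel-clt} comes from Davydov's covariance inequality, $|\cov[V_{n,i}^{(l)},V_{n,j}^{(m)}]|\lesssim \beta_n(|i-j|)^{1-2/\gamma}$, combined with $\sum_p p^{-\rho(1-2/\gamma)} < \infty$ (which holds because $\rho(1-2/\gamma) > 1$ under \ref{uniformcltA2n}).

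The heart of the argument is the bracketing entropy condition for $\widetilde{\Fcal}_n$. The key technical observation is that for any $h\colon \{1,\dots,k_n\}\to \R$,
\[
\|h \cdot F\|_{\gamma,n}^\gamma = \frac{1}{k_n}\sum_{i=1}^{k_n} |h(i)|^\gamma \E[F(X_{n,i})^\gamma] \le \bigl(\sup_{n,i}\|F(X_{n,i})\|_\gamma\bigr)^\gamma\,\|h\|_{\gamma,n}^\gamma,
\]
so the envelope enters only through a multiplicative constant at the same power $\gamma$. Combined with the pointwise split $|wf - w'f'|\le W|f-f'|+|w-w'|\,F$ and a positive/negative-part decomposition to handle signs, product brackets give
\[
\ln N_{[]}(\epsilon,\widetilde{\Fcal}_n,\|\cdot\|_{\gamma,n}) \lesssim 1 + \ln N_{[]}(c_1\epsilon,\Wcal_n,\|\cdot\|_{\gamma,n}) + \ln N_{[]}(c_2\epsilon,\Fcal,\|\cdot\|_{\gamma,\infty}),
\]
and subadditivity $\sqrt{a+b}\le \sqrt a + \sqrt b$ inside the entropy integral reduces the required condition to \ref{A1:weights} and \ref{uniformcltA4}. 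The same triangle-inequality split, applied to $\widetilde d_n((s_1,f_1),(s_2,f_2)) = \|w_{\cdot}(s_1)f_1 - w_{\cdot}(s_2)f_2\|_{\gamma,n}$, yields uniform continuity with respect to the product semi-metric $\widetilde d((s_1,f_1),(s_2,f_2)) = d^w(s_1,s_2) + \|f_1-f_2\|_{\gamma,\infty}$, which is totally bounded on $S\times\Fcal$ by \ref{A3:weights} and the finiteness of $N_{[]}(\cdot,\Fcal,\|\cdot\|_{\gamma,\infty})$.

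With these ingredients, \Cref{thm:asy-tightness-sup-br} delivers asymptotic tightness of $\G_n$ and an asymptotically tight sequence of tight Borel measurable GPs $N_{\G_n}$ corresponding to $\G_n$; both are a fortiori relatively asymptotically tight, so \Cref{lem:rclt-marginal-rclt} completes the proof. The main obstacle I anticipate is the product-bracketing step: aligning the exponents so that the weight-side entropy in $\|\cdot\|_{\gamma,n}$, the function-side entropy in $\|\cdot\|_{\gamma,\infty}$, and the envelope moment can all be exploited at the single power $\gamma$ is what makes the argument close, and it works only because the envelope contribution can be controlled by $\sup_{n,i}\|F(X_{n,i})\|_\gamma$ rather than by a sup-norm over $\Xcal$.
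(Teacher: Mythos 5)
Your proposal is correct and follows essentially the same route as the paper: the paper also recasts $\G_n$ as an unweighted empirical process over the augmented array $(X_{n,i},i)$ with function class $\{(x,k)\mapsto w_{n,k}(s)f(x)\}$, verifies the marginals via \Cref{thm:multi-rel-clt} (with the same choice of $\alpha$ and the same Davydov-type covariance summability), and establishes tightness via \Cref{thm:asy-tightness-sup-br} using exactly your product-bracketing bound in which the envelope enters through $\|F\|_{\gamma,\infty}$ and the semi-metric $d^w(s_1,s_2)+\|f_1-f_2\|_{\gamma,\infty}$. The only cosmetic difference is that the paper packages the tightness-plus-marginals step as an intermediate general theorem (\Cref{thm:rel-uniform-clt}) before specializing to the weighted class.
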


\begin{proof}
    \Cref{sec:multiplier-rel-clt}
\end{proof}

The moment condition \ref{uniformcltA1} ensures that all $\gamma$-moments 
$\E[|f(X_{n,i})|^\gamma]$ are uniformly bounded. 
Again, \ref{uniformcltA1} and \ref{uniformcltA2n} entail a trade-off. 
Higher moments allow for a slower decay of the mixing coefficients. 

\begin{example}\label{ex:gamma-rho}
    Assuming the moment condition \ref{uniformcltA1} with $\gamma=4$, 
    the $\beta$-mixing coefficients must decay as
    $\sup_n\beta_n(m)\leq Cm^{-\rho}$ for some $\rho>6$ and 
    all $m$. 
\end{example}

The weighted relative CLT covers the empirical distribution process \eqref{ex:emp-distr} as a special case.

\begin{example}
    Let $X_n\in \R$ be a sequence of random variables and set 
    $\Fcal=\{\ind_{(-\infty,x]}\colon x\in \R\}$ as the class of left-tail indicators.
    Under the null of equal distribution function \eqref{ex:h0-emp-distr}, 
    the entropy condition \ref{uniformcltA4} of \Cref{thm:multiplier-rel-clt} is satisfied.
    Clearly, the moment condition is also satisfied and under the mixing assumption \ref{uniformcltA2n}
    $\G_n$ satisfies a relative CLT in $\ell^\infty(\Fcal)$.
\end{example}

Thus, even when the covariances of $X_i$ drift
the empirical distribution process is still asymptotically equivalent, in the sense of relative weak convergence, to a sequence of centered Gaussian processes with matching covariance structure. 
In particular, this example shows that the theory applies to one of the most classical empirical processes precisely in regimes where ordinary weak convergence may fail because there is no fixed Gaussian limit.
We return to a more sophisticated version of this example in \Cref{sec:applications}.

\subsection{Sequential relative CLT}

The famous invariance principle of Donsker asserts weak convergence of the partial sum process over \emph{iid} data.
A generalization to sequential empirical processes $\Z_n\in \ell^\infty([0,1]\times \Fcal)$
indexed by functions, defined as
    $$\Z_n(s,f)=\frac{1}{\sqrt{k_n}}\sum_{i=1}^{\lfloor sk_n\rfloor}f(X_{n,i})-\E[f(X_{n,i})],$$
can be found in Theorem 2.12.1 of \citet{van2023weak}. 
Beyond the \emph{iid} case, asymptotic normality of $\Z_n$
is hard to prove and requires additional technical assumptions even for finite function classes \citep{dahlhaus2019towards, 10.3150/18-BEJ1088}. 
Specifying $w_{n,i}(s)=\ind\{i\leq \lfloor sk_n\rfloor\}$, relative sequential CLTs are simple corollaries of weighted relative CLTs.

\begin{corollary}[Sequential relative CLT]\label{thm:rel-sequ-clt}
    Under conditions \ref{uniformcltA1}--\ref{uniformcltA4} of \Cref{thm:multiplier-rel-clt},
    the sequential empirical process $\mathbb{Z}_n\in \ell^\infty([0,1]\times \Fcal)$
    satisfies a relative CLT.
\end{corollary}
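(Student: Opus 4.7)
The plan is to recognize $\mathbb{Z}_n$ as a weighted empirical process and apply \Cref{thm:multiplier-rel-clt}. Set $S=[0,1]$ and take the deterministic weights
\[
w_{n,i}(s) = \ind\{i \le \lfloor s k_n \rfloor\}, \qquad s\in[0,1],\ i=1,\dots,k_n.
\]
Then $\G_n(s,f) = \Z_n(s,f)$ by construction, and $\sup_{n,i,s}|w_{n,i}(s)|\le 1$. Since conditions \ref{uniformcltA1}--\ref{uniformcltA4} on $\Fcal$ and the observations are assumed, all that remains is to verify the entropy conditions \ref{A1:weights}--\ref{A3:weights} for the weight class $\Wcal_n = \{g_{n,s}\colon s\in[0,1]\}$ with $g_{n,s}(i)=w_{n,i}(s)$.

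The key computation is that, because $s\mapsto g_{n,s}(i)$ is nondecreasing (it is a step function jumping from $0$ to $1$ at $s=i/k_n$), for $s \le t$ one has $g_{n,s}\le g_{n,t}$ pointwise, and
\[
\|g_{n,s}-g_{n,t}\|_{\gamma,n}^\gamma = \frac{1}{k_n}\sum_{i=1}^{k_n}\bigl|\ind\{i\le \lfloor tk_n\rfloor\}-\ind\{i\le \lfloor sk_n\rfloor\}\bigr|
= \frac{\lfloor tk_n\rfloor-\lfloor sk_n\rfloor}{k_n} \le (t-s) + \tfrac{1}{k_n}.
\]
Hence $d^w_n(s,t) \le \bigl(|t-s|+1/k_n\bigr)^{1/\gamma}$. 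Taking $d^w(s,t)=|t-s|^{1/\gamma}$ gives a totally bounded metric on $[0,1]$ (verifying \ref{A3:weights}) and yields \ref{A2:weights}, since for any $\delta_n\downarrow 0$
\[
\sup_{|s-t|^{1/\gamma}<\delta_n} d^w_n(s,t) \le \bigl(\delta_n^\gamma + 1/k_n\bigr)^{1/\gamma}\to 0.
\]

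For \ref{A1:weights}, partition $[0,1]$ by the points $0=s_0<s_1<\dots<s_M=1$ with $s_j-s_{j-1}\le \epsilon^\gamma$; using monotonicity, the brackets $[g_{n,s_{j-1}},g_{n,s_j}]$ cover $\Wcal_n$, and each has $\|\cdot\|_{\gamma,n}$-width bounded by $(\epsilon^\gamma + 1/k_n)^{1/\gamma}$. Hence there is a constant $C$ such that
\[
N_{[]}(\epsilon,\Wcal_n,\|\cdot\|_{\gamma,n}) \le C\epsilon^{-\gamma} \quad \text{for }\epsilon\ge k_n^{-1/\gamma},
\]
and $N_{[]}(\epsilon,\Wcal_n,\|\cdot\|_{\gamma,n})\le k_n+1$ otherwise. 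In either regime $\sqrt{\ln N_{[]}(\epsilon,\Wcal_n,\|\cdot\|_{\gamma,n})} = O(\sqrt{|\ln \epsilon|+\ln k_n})$, so the bracketing integral is finite for each $n$, and for any $\delta_n\downarrow 0$ one readily sees $\int_0^{\delta_n}\sqrt{|\ln\epsilon|}\,d\epsilon \to 0$.

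With \ref{A1:weights}--\ref{A3:weights} checked, \Cref{thm:multiplier-rel-clt} applies and yields a relative CLT for $\G_n=\Z_n$ in $\ell^\infty([0,1]\times \Fcal)$. The only nontrivial step is the bracketing bound for $\Wcal_n$, but the monotonicity of $s\mapsto g_{n,s}$ makes it routine; no additional structural assumptions on $\Fcal$ beyond those already stated are required.
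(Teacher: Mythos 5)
Your proposal is correct and follows essentially the same route as the paper: specialize $w_{n,i}(s)=\ind\{i\le\lfloor sk_n\rfloor\}$, bound $d_n^w(s,t)$ by a power of $|s-t|$ to get \ref{A2:weights}--\ref{A3:weights}, use monotonicity of $s\mapsto g_{n,s}$ to get polynomial bracketing numbers for \ref{A1:weights}, and invoke \Cref{thm:multiplier-rel-clt}. If anything, you are slightly more careful than the paper about the $1/k_n$ discretization error from the floor function and the resulting small-$\epsilon$ regime of the bracketing bound.
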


\begin{proof}
    \Cref{prf:rel-sequ-clt}
\end{proof}

\section{Bootstrap inference}\label{sec:bstrap}

To make practical use of relative CLTs, we need a way to approximate the distribution of limiting GPs. Their covariance operators are a moving target, however, and generally difficult to estimate. The bootstrap is a convenient way to approximate the distribution of limiting GPs, and easy to implement in practice. This section provides some general results on the consistency of multiplier bootstrap schemes for non-stationary time-series.

\subsection{Bootstrap consistency and relative weak convergence}
To define bootstrap consistency in the context of empirical processes, we follow the setup and notation of \cite{bucher2019note}.
We shall see that the usual definition of bootstrap consistency can be equivalently expressed in terms of relative weak convergence.
Let $\mathds{X}_n$ be some sequence of random variables with values in $\Xcal_n$ and
$\mathds{V}_n$ an additional sequence of random variables, independent of $\mathds{X}_n$, with values in $\Vcal_n$ with $\mathds{V}_n^{(j)}$ denoting independent copies 
of $\mathds{V}_n$.
Denote by $\G_n=\G_n(\mathds{X}_n)$ resp. $\G_n^{(j)}=\G_n(\mathds{X}_n,\mathds{V}_n^{(j)})$ a sequence of maps constructed from $\mathds{X}_n$ resp. $\mathds{X}_n,\mathds{V}_n^{(j)}$ with values in $\ell^\infty(T)$
such that each $\G_n(t),\G_n^{(j)}(t)$ is measurable. 
All proofs of the remaining section are found in \Cref{ap:bootstrap} and we omit the asterisk for better readability.

\begin{proposition}\label{thm:bstrap-rwc}
    Assuming that $\G_n$ is relatively compact, the following are equivalent:
    \begin{enumerate}
        \item \label{A1:bstrap-rwc}for $n\to \infty$
        \begin{align*}
            \sup_{h\in \BL_1(\ell^\infty(\Fcal))}\left|\E\left[h(\G_n^{(1)})| \mathds{X}_n \right]-\E\left[h(\G_n)\right]\right|\overset{\Pr^*}{\to} 0,
        \end{align*}
        and $\G_n^{(1)}$ is asymptotically measurable,
        \item \label{A2:bstrap-rwc} it holds $$\left(\G_n,\G_n^{(1)},\G_n^{(2)}\right)\rd\G_n^{\otimes 3}.$$
    \end{enumerate}
    where $\BL_1(\ell^\infty(\Fcal))$ denotes the set of bounded and $1$-Lipschitz
    continuous functions from $\ell^\infty(\Fcal)$ to $\R$.
    Call $\G_n^{(j)}$ a \emph{consistent bootstrap scheme} in any such case.
\end{proposition}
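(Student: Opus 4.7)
The plan is to reduce relative weak convergence to the existence of common weak limits along subsequences via Proposition~\ref{prop:chara-rc-rwc}, and then to invoke the classical equivalence between bootstrap consistency and joint unconditional weak convergence to independent copies of the limit, as established by \cite{bucher2019note}, along each such subsequence. Since $\G_n$ is assumed relatively compact, every subsequence contains a further subsequence $n_k$ along which $\G_{n_k}\dc \G$ for some tight Borel-measurable $\G\in \ell^\infty(T)$; by independence of the three copies, this automatically gives $\G_{n_k}^{\otimes 3}\dc \G^{\otimes 3}$ on the product space, where $\G^{\otimes 3}$ denotes three independent copies of $\G$.

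For the direction \ref{A1:bstrap-rwc}$\Rightarrow$\ref{A2:bstrap-rwc}, I would fix an arbitrary subsequence, extract $n_k$ with $\G_{n_k}\dc \G$ as above, and apply \cite{bucher2019note} to the pair $(\G_{n_k},\G_{n_k}^{(1)})$ to deduce the unconditional joint weak convergence $(\G_{n_k},\G_{n_k}^{(1)})\dc (\G,\tilde \G)$, where $\tilde \G$ is an independent copy of $\G$. Extending from one to two bootstrap replicates via the conditional independence of $\mathds{V}_n^{(1)}$ and $\mathds{V}_n^{(2)}$ given $\mathds{X}_n$ then yields
\begin{align*}
    (\G_{n_k},\G_{n_k}^{(1)},\G_{n_k}^{(2)})\dc \G^{\otimes 3}.
\end{align*}
Since this matches the weak limit of $\G_{n_k}^{\otimes 3}$ along the same subsequence, Proposition~\ref{prop:chara-rc-rwc} delivers the desired $(\G_n,\G_n^{(1)},\G_n^{(2)})\rd \G_n^{\otimes 3}$.

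For \ref{A2:bstrap-rwc}$\Rightarrow$\ref{A1:bstrap-rwc}, I would again extract $n_k$ with $\G_{n_k}\dc \G$, so that $\G_{n_k}^{\otimes 3}\dc \G^{\otimes 3}$. Combined with \ref{A2:bstrap-rwc}, Proposition~\ref{prop:chara-rc-rwc} forces
\begin{align*}
    (\G_{n_k},\G_{n_k}^{(1)},\G_{n_k}^{(2)})\dc \G^{\otimes 3}.
\end{align*}
The marginal convergence $\G_{n_k}^{(1)}\dc \G$ supplies asymptotic measurability of $\G_n^{(1)}$ along $n_k$, while the joint pair $(\G_{n_k},\G_{n_k}^{(1)})\dc (\G,\tilde \G)$ with independent components triggers the reverse direction of \cite{bucher2019note} and yields the conditional $\BL_1$-distance convergence in probability along $n_k$. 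Since this argument applies to every subsequence of every subsequence, the standard subsubsequence principle promotes both the convergence in (outer) probability and the asymptotic measurability of $\G_n^{(1)}$ from subsequences to the full sequence.

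The main obstacle is the correct application of \cite{bucher2019note}, which translates conditional-in-probability bootstrap consistency into unconditional joint weak convergence to independent copies of the limit, and vice versa; its extension from one to two independent bootstrap replicates should follow by iterating the argument conditionally on $(\mathds{X}_n,\mathds{V}_n^{(1)})$ using the conditional independence of $\mathds{V}_n^{(1)},\mathds{V}_n^{(2)}$ given $\mathds{X}_n$. Everything else is routine subsequence bookkeeping made transparent by Proposition~\ref{prop:chara-rc-rwc}, which exactly bridges the gap between the stationary-style assumption of a fixed Gaussian limit in \cite{bucher2019note} and the evolving approximating sequence $\G_n^{\otimes 3}$ that appears in the relative setting.
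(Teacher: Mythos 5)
Your proposal is correct and follows essentially the same route as the paper: reduce to subsequences along which $\G_n$, hence $\G_n^{\otimes 3}$, converges weakly (via Proposition~\ref{prop:chara-rc-rwc}), then invoke the B\"ucher--Kojadinovic equivalence between the conditional $\BL_1$ statement and joint unconditional weak convergence of the triple to $\G^{\otimes 3}$. The one superfluous step is your proposed ``extension from one to two bootstrap replicates'': Lemma 3.1 of \cite{bucher2019note}, which the paper cites, already states the equivalence directly in terms of the triple $(\G_n,\G_n^{(1)},\G_n^{(2)})$, so no iterated conditioning argument is required.
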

Classically, $\G_n$ is some (transformation of an) empirical process and
consistency of the bootstrap is derived from CLTs for $\G_n$ and $\G_n^{(j)}$.
In view of \Cref{thm:bstrap-rwc}, this approach generalizes to relative CLTs (\Cref{cor:rclt-bstrap}).

\subsection{Multiplier bootstrap}
Now fix some triangular array $\mathds{X}_n=(X_{n,1},\ldots,X_{n,k_n})\in \Xcal^{k_n}$ of random variables with values in a Polish space $\mathcal{X}$
and some family of uniformly bounded functions $w_{n,i}\colon S\to \R$.
Let $\Fcal$ be a set of measurable functions from $\mathcal{X}$ to $\R$ with finite envelope $F$.
Denote by $\mathds{V}_n=(V_{n,1},\ldots, V_{n,k_n})\in \R^{k_n}$ a triangular array of random variables
and by $\mathds{V}_n^{(i)}=(V_{n,1}^{(i)},\ldots,V_{n,k_n}^{(i)})$ independent copies of $\mathds{V}_n$. 
Define $\G_n,\G_n^{(j)}\in \ell^\infty(S\times \Fcal)$ by
\begin{align*}
    \G_n(s,f)       & =\frac{1}{\sqrt{k_n}}\sum_{i=1}^{k_n}w_{n,i}(s)\bigl(f(X_{n,i})-\E[f(X_{n,i})]\bigr), \\
    \G_n^{(j)}(s,f) & =\frac{1}{\sqrt{k_n}}\sum_{i=1}^{k_n}V_{n,i}^{(j)}w_{n,i}(s)\bigl(f(X_{n,i})-\E[f(X_{n,i})]\bigr).
\end{align*}

\begin{proposition}\label{thm:bstrap-univ}
    Let $X_{n,i}$ satisfy the conditions of
    \Cref{thm:multiplier-rel-clt} for some $\gamma>2$ and $\rho$.
    For every $\epsilon>0$, let $\nu_n(\epsilon)$ be such that
     $$\max_{|i-j|\leq \nu_n(\epsilon)}\left|\cov[V_{n,i},V_{n,j}]-1\right|\le\epsilon.$$
    Assume that
    \begin{enumerate}
        \item \label{A1:bstrap-univ}$V_{n,1},\ldots, V_{n,k_n}$ are identically distributed and independent of $(X_{n,i})_{i \in N}$,
        \item \label{A2:bstrap-univ}$\E[V_{n,i}]=0$, $\var[V_{n,i}]=1$, and $\sup_{n}\E[|V_{n,i}|^\gamma]<\infty$,
        \item \label{A3:bstrap-univ}$k_n\beta_n^{X}\bigl(\nu_n(\epsilon)\bigr)^{\frac{\gamma-2}{\gamma}},k_n\beta_n^{V}\bigl(k_n^{\alpha}\bigr)^{\frac{\gamma-2}{\gamma}}\to 0$ for every $\epsilon>0$ and some $\alpha<(\gamma-2)/2(\gamma-1)$.
    \end{enumerate}
    Then, $\G_n^{(j)}$ is a consistent bootstrap scheme.
\end{proposition}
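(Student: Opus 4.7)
The strategy is to combine the bootstrap-to-relative-weak-convergence equivalence of \Cref{thm:bstrap-rwc} with the empirical-process relative CLT of \Cref{thm:multiplier-rel-clt}. By \Cref{thm:bstrap-rwc}, it is enough to prove
$(\G_n,\G_n^{(1)},\G_n^{(2)}) \rd \G_n^{\otimes 3}$.
Since $\G_n$ is relatively compact by \Cref{thm:multiplier-rel-clt}, so is $\G_n^{\otimes 3}$ (three independent copies), and \Cref{prop:chara-rc-rwc} reduces the task to showing that along any subsequence with $\G_n\dc N$ for some Gaussian $N$, the triple converges jointly in law to three independent copies of $N$. By \Cref{lem:rclt-marginal-rclt} this splits into (a) relative asymptotic tightness of the joint process, and (b) relative CLTs for all finite-dimensional marginals whose limiting covariance is block-diagonal with each diagonal block asymptotically matching that of $\G_n$.

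For (a) I would view each bootstrap replicate $\G_n^{(j)}$ as a weighted empirical process over the augmented array $\bigl((X_{n,i},V_{n,i}^{(j)})\bigr)_{i\le k_n}$ with deterministic weights $w_{n,i}(s)$ and enlarged function class $\widetilde{\Fcal}=\{(x,v)\mapsto vf(x)\colon f\in\Fcal\}$, modulo a low-rank correction $\frac{1}{\sqrt{k_n}}\sum_i w_{n,i}(s)V_{n,i}^{(j)}\E[f(X_{n,i})]$ that is handled separately via the entropy of $\Wcal_n$ and the moment bounds on $V$. The envelope $(x,v)\mapsto |v|F(x)$ has uniformly bounded $\gamma$-moments by independence of $V$ and $X$, the joint $\beta$-mixing is dominated by $\beta_n^X+\beta_n^V$, and bracketing numbers of $\widetilde\Fcal$ with respect to the augmented $\|\cdot\|_{\gamma,n}$-norm are controlled by those of $\Fcal$ via standard bracket products. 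Hence \Cref{thm:multiplier-rel-clt} applies to each $\G_n^{(j)}$ and gives its asymptotic tightness; tightness of the triple then follows coordinatewise.

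The core argument is (b). Because $V_{n,i}^{(j)}$ is centered and independent of $X$ and of $V^{(j')}$ for $j\neq j'$, a direct computation shows that the cross-covariances $\cov[\G_n,\G_n^{(j)}]$ and $\cov[\G_n^{(j)},\G_n^{(j')}]$ vanish \emph{exactly}, so the corresponding GP of the triple has block-diagonal covariance. The autocovariance of $\G_n^{(j)}$ equals
\[
\frac{1}{k_n}\sum_{i,k}w_{n,i}(s)w_{n,k}(s')\E[V_{n,i}V_{n,k}]\cov[f(X_{n,i}),f'(X_{n,k})],
\]
which I would match to $\cov[\G_n(s,f),\G_n(s',f')]$ by splitting the double sum at the window $\nu_n(\epsilon)$: the near-diagonal piece $|i-k|\le\nu_n(\epsilon)$ is within $\epsilon$ of the $\G_n$ counterpart by the definition of $\nu_n(\epsilon)$ combined with the uniform covariance-summability bound (condition \ref{A1:multi-rel-clt} of \Cref{thm:multi-rel-clt}), while the far piece $|i-k|>\nu_n(\epsilon)$ is controlled by Davydov's covariance inequality and monotonicity of $\beta_n^X$, contributing at most a constant times $k_n\beta_n^X(\nu_n(\epsilon))^{(\gamma-2)/\gamma}$, which tends to zero by the hypothesis of the proposition. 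The finite-dimensional marginal relative CLTs then follow from \Cref{thm:multi-rel-clt} applied to arbitrary linear combinations of the triple, treated as triangular arrays in $(X_{n,i},V_{n,i}^{(1)},V_{n,i}^{(2)})$ with joint mixing coefficient bounded by $\beta_n^X+2\beta_n^V$.

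The principal obstacle is the variance-matching step in (b): it is the only place where the precise trade-off between the multiplier dependence window $\nu_n(\epsilon)$ and the mixing tail of $X$ enters, and it requires careful simultaneous use of the definition of $\nu_n$ and a sharp $\beta$-mixing covariance inequality. A secondary technical point is tightness of the bootstrap process despite the random weights $V_{n,i}^{(j)}w_{n,i}(s)$; the augmented-data viewpoint above sidesteps this by reducing directly to \Cref{thm:multiplier-rel-clt} rather than re-running the bracketing chaining argument with random weights.
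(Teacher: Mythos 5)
Your overall architecture (reduce via \Cref{thm:bstrap-rwc} to relative weak convergence of the triple, split into tightness plus finite-dimensional marginal relative CLTs plus covariance matching) is sound and matches the paper's structure via \Cref{cor:rclt-bstrap}. Parts (b) — cross-covariance vanishing by independence and centering of $V$, autocovariance matching by a near/far split at $\nu_n(\epsilon)$ with a Davydov bound on the far piece, and marginal CLTs from \Cref{thm:multi-rel-clt} — are essentially the paper's \Cref{lem:eps-of-n} and the marginal argument in the proof, and are correct.

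The gap is in your tightness step (a). You propose to establish asymptotic tightness of $\G_n^{(j)}$ by applying \Cref{thm:multiplier-rel-clt} to the augmented array $(X_{n,i}, V_{n,i}^{(j)})$ with function class $\widetilde\Fcal = \{(x,v)\mapsto vf(x)\}$, invoking the joint mixing bound $\beta_n^X+\beta_n^V$. But \Cref{thm:multiplier-rel-clt} requires the polynomial decay condition \ref{uniformcltA2n}: $\sup_{n}\max_{m\le k_n} m^\rho \beta_n(m)<\infty$ for a specific $\rho$. The hypotheses of \Cref{thm:bstrap-univ} only give $k_n\beta_n^V(k_n^\alpha)^{(\gamma-2)/\gamma}\to 0$, which is far weaker and does not imply polynomial decay of $\beta_n^V$. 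Concretely, the block-multiplier construction of \Cref{cor:block-bstrap} has $\beta_n^V(m)=1$ for $m<m_n$ with $m_n\to\infty$, so $\sup_n \max_{m\le k_n} m^\rho\beta_n^V(m)=\infty$; your reduction would exclude exactly the canonical example. This is not an incidental technicality: the paper notes explicitly (in the discussion preceding \Cref{thm:multiplier-asy-tightness}) that a direct application of the general weighted CLT to the augmented array is "inapplicable, e.g., for proving multiplier bootstrap consistence." The paper's fix is a separate tightness theorem (\Cref{thm:multiplier-asy-tightness}) whose proof couples the multiplier process into independent blocks first and only then runs the chaining bound, so that the bracketing/Bernstein machinery never sees the slow decay of $\beta_n^V$ — all that is needed of $V$ is the moment bound, the identical-distribution assumption, and the one coupling probability $k_n^{1-\alpha}\beta_n^V(k_n^\alpha)\to 0$. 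Your proposal would need to be repaired by replacing the appeal to \Cref{thm:multiplier-rel-clt} for $\G_n^{(j)}$ with an argument of this kind.
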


\begin{example}[Block bootstrap with exponential weights]\label{cor:block-bstrap}
    Let $\xi_{i}\sim \mathrm{Exp}(1)$ be \emph{iid}
    for $i\in \Z$  and define $$V_{n,i}=\frac{1}{\sqrt{m_n}}\sum_{j=i+1}^{i+m_n}(\xi_{j}-1).$$ 
    Then $V_{n, i}$ are $m_n$-dependent and it holds $|\cov[V_{n,i}, V_{n, j}] - 1|\le |i-j|/m_n$ for all $|i-j|\leq m_n$. 
    Choosing $\nu_n(\eps) = \lfloor \eps m_n \rfloor$, 
    we see that if 
    \begin{enumerate}
        \item $m_n < k_n^\alpha$ for some $\alpha<(\gamma-2)/2(\gamma-1)$,
        \item $k_n\beta_n^X(\epsilon m_n)^{\frac{\gamma-2}{\gamma}} \to 0$ for every $\epsilon>0$,
    \end{enumerate}
    conditions \ref{A1:bstrap-univ}--\ref{A3:bstrap-univ} of 
    \Cref{thm:bstrap-univ} are satisfied. 
    As in \Cref{ex:gamma-rho} with $\gamma=5$ and $\sup_n\beta_n(m)\leq Cm^{-\rho}$ for some 
    $\rho>6$, we can pick $m_n=\mathcal{O}(k_n^{1/3})$. 
\end{example}

\subsection{Practical inference} \label{sec:bstrap-practical-inference}
The bootstrap process $\G_n^{(j)}$ in the previous section depends on the unknown quantity $\mu_n(i, f) =\E[f(X_{n,i})]$. 
In many testing applications, we have $\E[f(X_{n,i})] = 0$ at least under the null hypothesis; see \cref{sec:applications}. If this is not the case, estimating $\mu_n(i, f)$ consistently may still be possible in simple problems (e.g., fixed-degree polynomial trend), or under triangular array asymptotics where $\mu_n(i, f)$ approaches a simple function (e.g., local stationarity).
For a general, observed non-stationary process $(X_i)_{i \in \N}$, it is impossible to distinguish a random series $(X_i)_{i \in \N}$ with $\E[f(X_i)] = 0$ from a deterministic one with $X_i = \E[f(X_i)] \neq 0$ for all $i \in \N$ a.s. As a consequence, it is generally impossible to quantify the uncertainty in $\G_n$ consistently.
This is a fundamental problem in non-stationary time series analysis, which the relative CLT framework makes transparent. 
A modified bootstrap can still provide valid, but possibly conservative, inference. 

Let $\wh \mu_n(i, f)$ be a potentially non-consistent estimator of $\mu_n(i, f)$,  $\bar \mu_n(i, f)=\E[\wh \mu_n(i, f)]$ its expectation, and define the processes
\begin{align*}
    \wh \G_n^*(s, f)  & = \frac{1}{\sqrt{n}} \sumin V_{n,i} w_{n,i}(s) (f(X_i) - \wh \mu_n(i, f)),  \\
    \bar \G_n^*(s, f) & = \frac{1}{\sqrt{n}} \sumin V_{n,i} w_{n,i}(s) (f(X_i) - \bar \mu_n(i, f)).
\end{align*}
If bias and variance of the mean estimator vanish at an appropriate rate, the approximated bootstrap process $\wh \G_n^*$ is, in fact, consistent.
\begin{proposition}\label{prop:bstr-mean-estim-MSE-rate}
    Suppose the conditions of \cref{thm:bstrap-univ} are satisfied, $\wh \G_n^*$ is relatively compact, and for every $\epsilon > 0$,
    \begin{align*}
        \max_{1 \le i \le n}\E\left[(\wh \mu_n(i, f) - \mu_n(i,f ))^2\right] = o(\nu_n(\epsilon)^{-1})
    \end{align*}
    for all $f\in \Fcal$. 
    Then $\wh \G_n^*$ is consistent for $\G_n$.
\end{proposition}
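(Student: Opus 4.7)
The strategy is to compare $\wh\G_n^*$ with the \emph{ideal} bootstrap process $\G_n^{(j)}$ built from the true mean $\mu_n(i,f)=\E[f(X_{n,i})]$, for which \Cref{thm:bstrap-univ} already gives consistency. Setting
\[
R_n^{(j)}(s,f)=\wh\G_n^{*(j)}(s,f)-\G_n^{(j)}(s,f)=\frac{1}{\sqrt n}\sum_{i=1}^{n} V_{n,i}^{(j)} w_{n,i}(s)\bigl(\mu_n(i,f)-\wh\mu_n(i,f)\bigr),
\]
it suffices, by the characterization of bootstrap consistency in \Cref{thm:bstrap-rwc} together with transitivity of $\rd$, to show that $\|R_n^{(j)}\|_{S\times\Fcal}\to 0$ in outer probability for $j=1,2$. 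Once this is established, a Slutsky-type argument based on the relative continuous mapping theorem (\Cref{prop:rcm}) applied to addition yields $(\G_n,\wh\G_n^{*(1)},\wh\G_n^{*(2)})\rd(\G_n,\G_n^{(1)},\G_n^{(2)})\rd \G_n^{\otimes 3}$, which is exactly the condition of \Cref{thm:bstrap-rwc}.

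The first main step is to prove marginal negligibility $R_n^{(j)}(s,f)\to 0$ in probability for every fixed $(s,f)$. Since $(V_{n,i}^{(j)})_i$ is independent of $(X_{n,i})_i$ and centered, conditioning on $\mathds X_n$ shows $R_n^{(j)}(s,f)$ is conditionally mean zero with
\[
\E\bigl[R_n^{(j)}(s,f)^2\bigr]\le \|w\|_\infty^{2}\max_{i}\E\bigl[(\wh\mu_n(i,f)-\mu_n(i,f))^2\bigr]\cdot \frac{1}{n}\sum_{i,k=1}^{n}\bigl|\cov[V_{n,i}^{(j)},V_{n,k}^{(j)}]\bigr|.
\]
Fix any $\eps>0$. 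Splitting the covariance sum into $|i-k|\le \nu_n(\eps)$ and its complement, the near-diagonal part contributes at most $(2\nu_n(\eps)+1)(1+\eps)$ by the definition of $\nu_n$, while the tail is bounded via the standard covariance inequality $|\cov[V_{n,i},V_{n,k}]|\lesssim \beta_n^{V}(|i-k|)^{(\gamma-2)/\gamma}\sup_{n,i}\E[|V_{n,i}|^{\gamma}]^{2/\gamma}$ together with condition (iii) of \Cref{thm:bstrap-univ} (which forces $\beta_n^V$ to be summable after raising to the power $(\gamma-2)/\gamma$). Hence the full sum is $O(\nu_n(\eps))$, and the MSE hypothesis $\max_i\E[(\wh\mu_n(i,f)-\mu_n(i,f))^2]=o(\nu_n(\eps)^{-1})$ gives $\E[R_n^{(j)}(s,f)^2]=o(1)$.

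The second main step is to upgrade marginal convergence to uniform convergence in $\ell^\infty(S\times \Fcal)$. By \Cref{thm:bstrap-univ}, $\G_n^{(j)}$ is a consistent bootstrap scheme and therefore relatively compact; combined with the standing assumption that $\wh\G_n^*$ is relatively compact, one concludes, via subsequential extraction and the triangle inequality, that $R_n^{(j)}$ is relatively asymptotically tight. Since the constant process $0$ is trivially relatively compact, \Cref{thm:rwc-marginals} applies to $R_n^{(j)}\rd 0$: the two requirements are precisely the relative asymptotic tightness just obtained and the marginal convergence $R_n^{(j)}(s,f)\to_p 0$ from the previous step. This gives $R_n^{(j)}\dc 0$ in $\ell^\infty(S\times \Fcal)$, i.e., $\|R_n^{(j)}\|_{S\times\Fcal}\to 0$ in outer probability, completing the argument.

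The step I expect to be most delicate is the upgrade to a uniform bound: the conditional calculation only controls $R_n^{(j)}$ pointwise, and turning this into a statement in $\ell^\infty$ requires careful use of the relative compactness of $\wh\G_n^*$ supplied by the hypothesis, together with measurability care inherent to the outer-expectation framework. The reduction via \Cref{thm:rwc-marginals} circumvents an explicit chaining bound by exploiting the already-established tightness of $\wh\G_n^*$ and $\G_n^{(j)}$.
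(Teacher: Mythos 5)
Your overall strategy mirrors the paper's own proof: compare $\wh\G_n^*$ to the ``ideal'' bootstrap process (the paper uses $\G_n^*$ with $V_{n,i}$; you use $\G_n^{(j)}$, which is distributionally the same), establish pointwise convergence of the difference via a variance bound driven by the MSE hypothesis, upgrade to uniform convergence by relative compactness of the difference, and conclude through \cref{thm:bstrap-rwc}. Your explicit invocation of \cref{thm:rwc-marginals} to get $R_n^{(j)}\rd 0$ makes the tightness-to-uniform step slightly more transparent than the paper's phrasing, and the Slutsky step is sound.

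The one place where your argument has a genuine gap is the tail of the variance bound. You factor $\max_i\E[(\wh\mu_n(i,f)-\mu_n(i,f))^2]$ out of the double sum and then try to show $\frac{1}{n}\sum_{i,k}|\cov[V_{n,i}^{(j)},V_{n,k}^{(j)}]|=O(\nu_n(\epsilon))$. For the off-diagonal part you invoke condition (iii) of \cref{thm:bstrap-univ}, claiming it ``forces $\beta_n^V$ to be summable after raising to the power $(\gamma-2)/\gamma$.'' That is not what condition (iii) says: it only requires $k_n\beta_n^V(k_n^\alpha)^{(\gamma-2)/\gamma}\to 0$, which controls the $V$-mixing coefficients at the single scale $k_n^\alpha$ and does not imply $\sum_m\beta_n^V(m)^{(\gamma-2)/\gamma}<\infty$ uniformly in $n$ (e.g.\ $\beta_n^V(m)\equiv 1$ for $m<k_n^\alpha$ and $=0$ beyond satisfies (iii) but has diverging sum). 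The paper avoids this pitfall by keeping $\cov[V_{n,i},V_{n,j}]$ and $\E[(\wh\mu_n(i,f)-\mu_n(i,f))(\wh\mu_n(j,f)-\mu_n(j,f))]$ together in the double sum and splitting as in the proof of \cref{lem:eps-of-n}, obtaining a bound of the form $\nu_n(\epsilon)\sup_i\E[(\wh\mu_n(i,f)-\mu_n(i,f))^2]+\epsilon$ which is driven to zero by first sending $n\to\infty$, then $\epsilon\to 0$. Your factorization loses the coupling between the two covariance structures that makes that argument work, so the tail bound as written does not follow from the stated hypotheses. The rest of your proof is correct and matches the paper's approach.
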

In particular, this allows to derive bootstrap consistency under local stationarity asymptotics under standard conditions. 

As explained above, this type of consistency should not be expected for the asymptotics of the observed process. However, the bootstrap still provides valid, but conservative inference, 
as shown in the following propositions.

Assume that $\wh \mu_n(i, f)$ converges to $\bar \mu_n(i, f)$ in the following sense:
\begin{align}\label{ass:multiplied-consistent-estimator-of-E}
    \|\wh \G_n^* - \bar \G_n^*\|_{S \times \Fcal}=\sup_{(s,f)\in S\times \Fcal}\Bigl|\frac{1}{\sqrt{n}} \sumin V_{n,i} w_{n,i}(s) \bigl(\wh \mu_n(i, f) - \bar \mu_n(i, f)\bigr)\Bigr|  \overset{\Pr^*}{\to} 0.
\end{align}
Such type of consistency typically holds if $\wh \mu_n - \bar \mu_n$ converges uniformly to zero (e.g., \Cref{prop:multiplier-asy-equivalent}). 

\begin{proposition} \label{prop:boot-conservative-1}
    Define $\wh q_{n, \alpha}^*$ as the $(1 - \alpha)$-quantile of $\|\wh \G_n^*\|_{S \times \Fcal}$.
    Suppose that \eqref{ass:multiplied-consistent-estimator-of-E} and the conditions of \cref{thm:bstrap-univ} hold,
    $\bar \G_n^*$ satisfies a relative CLT and
    $$\var[\bar \G_n^*(s, f)], \var[\G_n(s, f)] \ge \underline \sigma > 0,$$ for all $(s, f) \in S \times \Fcal$ and $n$ large.
    Then,
    \begin{align*}
        \liminf_{n \to \infty} \Pr(\|\G_n\|_{S \times \Fcal} \le \wh q_{n, \alpha}^*) \ge 1 - \alpha.
    \end{align*}
\end{proposition}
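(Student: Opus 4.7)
The plan is to bound $\wh q_{n,\alpha}^*$ from below by the $(1-\alpha)$-quantile of $\|\G_n\|_{S\times \Fcal}$ through three reductions: first pass from $\wh \G_n^*$ to the idealized bootstrap $\bar \G_n^*$ using \eqref{ass:multiplied-consistent-estimator-of-E}, then decompose $\bar \G_n^*$ into an oracle bootstrap plus an uncorrelated bias term, and finally apply Anderson's inequality on the level of the approximating Gaussian processes. To set up the decomposition, introduce the deterministic bias $b_{n,i}(f) := \mu_n(i,f) - \bar \mu_n(i,f)$ and write $\bar \G_n^* = \G_n^* + B_n^*$, where
\begin{align*}
\G_n^*(s,f) = \frac{1}{\sqrt n}\sumin V_{n,i} w_{n,i}(s)\bigl(f(X_{n,i}) - \mu_n(i,f)\bigr), \quad B_n^*(s,f) = \frac{1}{\sqrt n}\sumin V_{n,i} w_{n,i}(s) b_{n,i}(f).
\end{align*}
Here $\G_n^*$ is exactly the oracle multiplier bootstrap of \cref{thm:bstrap-univ}, while $B_n^*$ depends on the $V_{n,i}$ alone. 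Combining independence of $V$'s and $X$'s with $\E[f(X_{n,i}) - \mu_n(i,f)] = 0$ gives $\cov[\G_n^*(s,f), B_n^*(s',f')] = 0$ for all $(s,f),(s',f') \in S\times \Fcal$.

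By \cref{thm:bstrap-univ}, $\G_n^*$ is a consistent bootstrap scheme for $\G_n$, so by \cref{thm:bstrap-rwc} both $\G_n$ and $\G_n^*$ admit relative CLTs whose approximating GPs $N_{\G_n}, N_{\G_n^*}$ have asymptotically identical covariances. Together with the hypothesized relative CLT for $\bar \G_n^*$ and the uncorrelatedness just established, we may choose the approximating GP of $\bar \G_n^*$ in the form $N_{\bar \G_n^*} \stackrel{d}{=} N_{\G_n^*} + N_{B_n^*}$ with $N_{B_n^*}$ an independent centered GP matching the covariance of $B_n^*$. The set $\{g \in \ell^\infty(S\times \Fcal) : \|g\|_{S\times \Fcal} \le t\}$ is symmetric and convex, so Anderson's inequality for Gaussian measures yields
\begin{align*}
\Pr\bigl(\|N_{\bar \G_n^*}\|_{S\times \Fcal} \le t\bigr) \le \Pr\bigl(\|N_{\G_n^*}\|_{S\times \Fcal} \le t\bigr) \qquad \text{for all } t > 0.
\end{align*}
Consequently, the $(1-\alpha)$-quantile of $\|N_{\bar \G_n^*}\|_{S\times \Fcal}$ dominates that of $\|N_{\G_n^*}\|_{S\times \Fcal}$.

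To transfer this Gaussian comparison back to the statistics themselves, I use the variance lower bound $\underline \sigma > 0$ together with anti-concentration for suprema of Gaussian processes (cf.\ \citet{giessing2023anticoncentrationsupremagaussianprocesses}) to obtain uniform equicontinuity of the CDFs of $\|N_{\G_n}\|, \|N_{\G_n^*}\|, \|N_{\bar \G_n^*}\|$ near their $(1-\alpha)$-quantiles. Applying \cref{prop:portmanteau} to the sublevel sets of $\|\cdot\|_{S\times \Fcal}$, the relative CLTs give quantile convergence $q_\alpha(\|\G_n\|) - q_\alpha(\|N_{\G_n}\|) \to 0$ and likewise for $\G_n^*, \bar \G_n^*$; bootstrap consistency yields $q_\alpha(\|\G_n^*\|) - q_\alpha(\|\G_n\|) \to_p 0$; and assumption \eqref{ass:multiplied-consistent-estimator-of-E} with anti-concentration yields $\wh q_{n,\alpha}^* - q_\alpha(\|\bar \G_n^*\|) \to_p 0$. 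Chaining these comparisons with the Anderson quantile inequality gives $\wh q_{n,\alpha}^* \ge q_\alpha(\|\G_n\|) + o_p(1)$, and a final use of anti-concentration of $\|\G_n\|$ at its own quantile delivers $\liminf_n \Pr(\|\G_n\|_{S\times \Fcal} \le \wh q_{n,\alpha}^*) \ge 1-\alpha$. The main obstacle throughout is making all quantile comparisons uniform in $n$, since the approximating Gaussians (and their CDFs) move with $n$; this is precisely where the uniform variance lower bound $\underline \sigma > 0$ enters, providing the uniform anti-concentration needed both to certify the sublevel sets as continuity sets in \cref{prop:portmanteau} and to upgrade distributional convergence along subsequences to quantile convergence.
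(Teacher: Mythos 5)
Your argument is correct in its essentials, but it routes the crucial Gaussian comparison differently from the paper. Both proofs share the same outer architecture: reduce $\wh \G_n^*$ to $\bar \G_n^*$ via \eqref{ass:multiplied-consistent-estimator-of-E}, use the variance lower bound $\underline\sigma>0$ with the anti-concentration result of \citet{giessing2023anticoncentrationsupremagaussianprocesses} and \cref{prop:portmanteau} to pass between statistics, approximating Gaussians, and their quantiles, and finish with a Gaussian comparison showing the bootstrap quantile dominates. The paper's comparison step works \emph{directly} with $\Z_n$ (the GP corresponding to $\G_n$) and $\bar\Z_n^*$ (the GP corresponding to $\bar\G_n^*$): it shows the increment variances satisfy $\var[\Z_n(s,f)-\Z_n(t,g)]\le\var[\bar\Z_n^*(s,f)-\bar\Z_n^*(t,g)]$ and invokes Fernique's comparison inequality for suprema of separable Gaussian processes. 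You instead split $\bar\G_n^*=\G_n^*+B_n^*$ into the oracle multiplier bootstrap and a pure-multiplier bias term, note their orthogonality (immediate since, conditionally on the multipliers, $B_n^*$ is constant and $\G_n^*$ is centered), realize the corresponding GP as an independent sum, and apply Anderson's inequality to the symmetric convex norm ball, linking back to $\G_n$ through \cref{thm:bstrap-univ}. Your route buys a particularly transparent orthogonal decomposition and replaces the increment comparison by the more elementary Anderson inequality, at the cost of one extra intermediate process and two points that should be made explicit: (i) Anderson's inequality must be applied to finite marginals of separable versions and then passed to the full supremum by a monotone limit, since $N_{B_n^*}$ is not a priori given as a tight element of $\ell^\infty(S\times\Fcal)$; (ii) the quantile convergence for $\G_n^*$ and $N_{\G_n^*}$ needs a variance lower bound for the oracle bootstrap, which follows from the assumed bound on $\var[\G_n(s,f)]$ together with \cref{lem:eps-of-n}. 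Note also that only one-sided quantile comparisons are needed at each link of your chain, which somewhat eases the uniformity-in-$n$ issue you rightly flag.
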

If, on the other hand, $\bar \G_n^*$ is \emph{not} relatively compact, it usually holds
\begin{align} \label{eq:boot-inconsistent}
    \Pr(\|\bar \G_n^*\|_{S \times \Fcal} > t_n) \to 1,
\end{align}
for some $t_n \to \infty$. In this case, the bootstrap is over-conservative.

\begin{proposition} \label{prop:boot-conservative-2}
    If \eqref{ass:multiplied-consistent-estimator-of-E}, \eqref{eq:boot-inconsistent} and the conditions of \cref{thm:bstrap-univ}  hold, then
    \begin{align*}
        \lim_{n \to \infty} \Pr(\|\G_n\|_{S \times \Fcal}  \le \wh q_{n, \alpha}^*) = 1.
    \end{align*}
\end{proposition}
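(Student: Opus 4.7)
The plan is to argue that the critical value $\wh q^*_{n,\alpha}$ diverges in probability while $\|\G_n\|_{S\times\Fcal}$ remains stochastically bounded; these two facts together force the event $\{\|\G_n\|_{S\times\Fcal}\le \wh q^*_{n,\alpha}\}$ to be asymptotically certain.

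First I would verify that $\|\G_n\|_{S\times\Fcal} = O_p(1)$. The hypotheses of \Cref{thm:bstrap-univ} subsume those of \Cref{thm:multiplier-rel-clt}, so $\G_n$ satisfies a relative CLT in $\ell^\infty(S\times\Fcal)$ and is in particular relatively compact by \Cref{prop:chara-rc-rwc}. Applying the relative continuous mapping theorem (\Cref{prop:rcm}) to the continuous functional $g\mapsto \|g\|_{S\times\Fcal}$ transfers relative compactness to the real-valued sequence $\|\G_n\|_{S\times\Fcal}$, which then is tight in $\R$.

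Next, combining \eqref{eq:boot-inconsistent} with \eqref{ass:multiplied-consistent-estimator-of-E} through the reverse triangle inequality
\begin{align*}
    \|\wh\G_n^*\|_{S\times\Fcal}\ge \|\bar\G_n^*\|_{S\times\Fcal} - \|\wh\G_n^* - \bar\G_n^*\|_{S\times\Fcal}
\end{align*}
yields $\|\wh\G_n^*\|_{S\times\Fcal}\to_p \infty$. The main obstacle is translating this \emph{unconditional} divergence into divergence of the \emph{conditional} quantile $\wh q^*_{n,\alpha}$. For fixed $M>0$, set $Z_n(M) := \Pr(\|\wh\G_n^*\|_{S\times\Fcal}\le M\mid \mathds{X}_n)\in [0,1]$. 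Then $\E[Z_n(M)] = \Pr(\|\wh\G_n^*\|_{S\times\Fcal}\le M)\to 0$, and because $Z_n(M)$ is $[0,1]$-valued, Markov's inequality gives $Z_n(M)\to_p 0$. By the defining property of the conditional $(1-\alpha)$-quantile, $\{\wh q^*_{n,\alpha}\le M\}\subseteq \{Z_n(M)\ge 1-\alpha\}$, so $\Pr(\wh q^*_{n,\alpha}\le M)\to 0$, i.e., $\wh q^*_{n,\alpha}\to_p \infty$.

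Finally, for any $M>0$ the elementary bound
\begin{align*}
    \Pr(\|\G_n\|_{S\times\Fcal}\le \wh q^*_{n,\alpha})\ge 1 - \Pr(\|\G_n\|_{S\times\Fcal} > M) - \Pr(\wh q^*_{n,\alpha} < M)
\end{align*}
holds; letting $n\to\infty$ and then $M\to\infty$ and invoking the two previous steps yields the claim.
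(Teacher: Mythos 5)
Your proposal is correct and follows the same underlying strategy as the paper's proof: show that the bootstrap critical value $\wh q^*_{n,\alpha}$ diverges while $\|\G_n\|_{S\times\Fcal}$ stays stochastically bounded, then combine these. There are two places where your exposition differs from, and is in fact more explicit than, the paper's. First, the paper simply asserts that \eqref{ass:multiplied-consistent-estimator-of-E} and \eqref{eq:boot-inconsistent} ``imply that $\wh q_{n, \alpha}^* \ge t_n$ with probability tending to 1,'' without spelling out the translation from an \emph{unconditional} divergence of $\|\wh \G_n^*\|_{S\times\Fcal}$ to divergence of the \emph{conditional} quantile; your $Z_n(M)$/Markov argument fills in exactly this bridge cleanly. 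Second, for the concluding step the paper appeals to the relative Portmanteau theorem (\Cref{prop:portmanteau}) applied to the growing balls $S_n = \{z : |z| \le t_n\}$, whereas you argue more directly from tightness of $\|\G_n\|_{S\times\Fcal}$; both are valid, and your route avoids having to check the continuity-set condition. One small imprecision: to pass from relative compactness of $\G_n$ to relative compactness of $\|\G_n\|_{S\times\Fcal}$ the cleaner citation is \Cref{prop:ext-cont-map}(i) (or a direct subsequence argument via the classical continuous mapping theorem), since \Cref{prop:rcm} as stated transfers relative weak convergence of pairs rather than relative compactness; the conclusion you need is correct, just slightly mis-attributed.
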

    Although the bootstrap quantiles 
    may be conservative, they are still informative: as $n$ tends to infinity, it usually holds
    $\wh q_{n, \alpha}^*/\sqrt{n}\to 0$ (see the proof of \cref{lem:vn-exp-zero}).
    In this sense, the bootstrap quantiles yield potentially too large, 
    yet asymptotically vanishing, uniform confidence intervals for the weighted mean 
    $n^{-1}\sum_{i=1}^{n}w_{n,i}\mu_n(i,\cdot)$.

\section{Applications}
\label{sec:applications}

To end, we explore some exemplary applications that cannot be handled by previous results. The methods are illustrated by monthly mean temperature anomalies in the Northern Hemisphere from 1880 to 2024 provided by NASA \citep{GISTEMP2025,Lenssen2024}, and shown in \cref{fig:temperature-anomalies}.
All proofs are found in \Cref{app:applications}.

\begin{figure}
    \centering
    \fig[0.8]{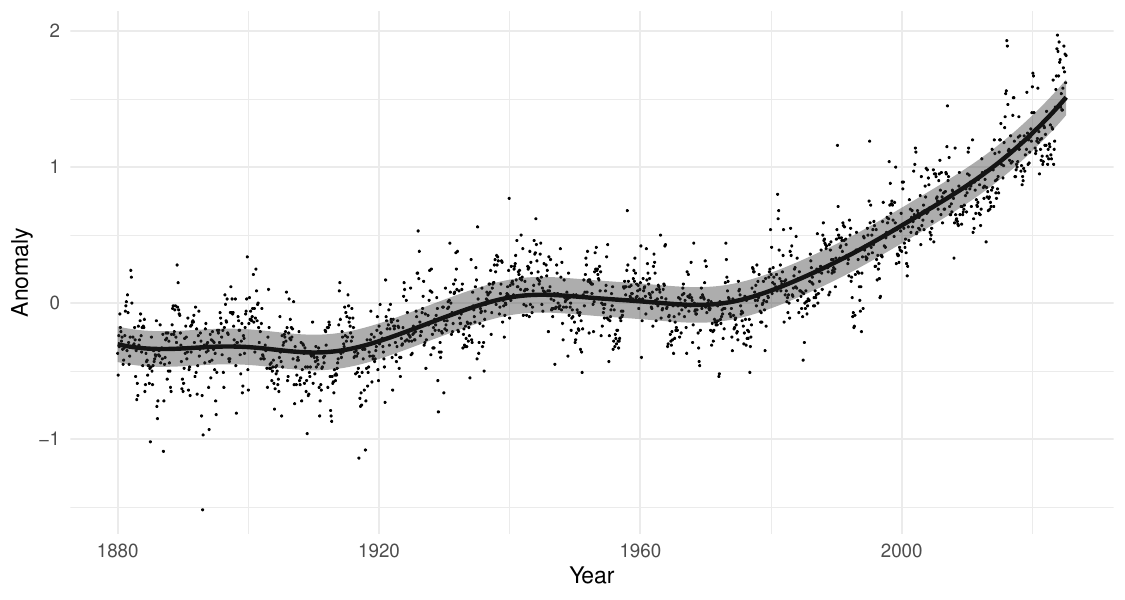}
    \caption{Monthly mean temperature anomalies in the Northern Hemisphere from 1880 to 2024 (dots), kernel estimate of the trend (solid line), and uniform 90\%-confidence interval (shaded area).}
    \label{fig:temperature-anomalies}
\end{figure}

\subsection{Uniform confidence bands for a nonparametric trend}
\label{par:trend-estimation}

Consider a sequence $X_n$ of random variables in $\R$. 
Nonparametric estimation of the trend function $\mu(i) = \E[X_i]$ is a key problem in non-stationary time series analysis.
As explained in \cref{sec:bstrap}, $\mu(i) = \E[X_i]$ is not estimable consistently in general, at least not under the asymptotics of the observed process $(X_i)_{i \in \N}$.
The local stationarity assumption \citep{dahlhaus2012locally} resolves this issue by considering the asymptotics of a hypothetical sequence of time series $(X_{n, i})_{i \in \N}$ in which $\mu_n(i) = \E[X_{n, i}]$ becomes flat as $n \to \infty$. 

The relative CLT framework allows us to consider the asymptotics of the observed process $(X_i)_{i \in \N}$, but we have to accept the fact that $\mu(i) = \E[X_i]$ cannot be estimated consistently. Instead, we aim for estimating the smoothed mean
\begin{align*}
    \mu_b(i) = \frac{1}{nb} \sum_{j=1}^{n} K\left(\frac{j - i}{nb} \right) \mu(j) \quad \text{by} \quad
        \wh \mu_b(i) & = \frac{1}{nb} \sum_{j=1}^{n} K\left(\frac{j - i}{nb} \right) X_j, 
\end{align*}
where $K$ is a kernel function supported on $[-1, 1]$ and $b > 0$ is the bandwidth parameter.
In our setting, the bandwidth $b$ plays a different role than in the local stationarity framework. Since $\E[\wh \mu_b(i)] = \mu_b(i)$ for every value of $b$, we do not require $b \to 0$ as $n \to \infty$. Instead, we may choose $b$ to be a fixed value, quantifying the scale (as a fraction of the overall sample) at which we look at the series. 
\cref{fig:temperature-anomalies} shows the kernel estimate of the trend function $\mu_b(i)$ for $b = 0.05$ as a solid line. Here, $b = 0.05$ means that the smoothing window spans $0.1 \times n$ months, or roughly $15$ years.
Holding $b$ fixed conveniently allows for uniform-in-time asymptotics as shown in the following.

\begin{corollary} \label{prop:kernel1}
    Suppose the kernel $K$ is a twice continuously differentiable probability density function supported on $[-1, 1]$, $\sup_i\E[|X_i|^5] < \infty$, and $\sup_n \beta_n^X(k) \lesssim k^{-7}$. Then for any $b > 0$, the process
    \begin{align*}
       s \mapsto \sqrt{n} \left( \wh \mu_b( sn ) - \mu_b( sn) \right) 
    \end{align*}
    satisfies a relative CLT in $\ell^{\infty}([0, 1])$.
\end{corollary}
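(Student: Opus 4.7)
The strategy is to recognize this estimator as a special case of the weighted empirical process machinery from \Cref{thm:multiplier-rel-clt}. Observe that $\E[\wh\mu_b(i)] = \mu_b(i)$ exactly (no bias in the non-local-stationarity sense), so
\begin{align*}
    \sqrt{n}\bigl(\wh\mu_b(sn)-\mu_b(sn)\bigr)=\frac{1}{\sqrt{n}}\sum_{i=1}^{n} w_{n,i}(s)\bigl(X_i-\E[X_i]\bigr),\qquad w_{n,i}(s)=\frac{1}{b}K\!\left(\frac{i-sn}{nb}\right).
\end{align*}
This is $\G_n(s,f)$ of \Cref{sec:weighted-unif-rclt} with the singleton class $\Fcal=\{f\colon x\mapsto x\}$, envelope $F(x)=|x|$, and index set $S=[0,1]$. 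The plan is then to verify the five conditions of \Cref{thm:multiplier-rel-clt} with $\gamma=5$ and the supplied mixing rate $\rho=7$.

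I would check the $X$-side conditions first, as they follow directly from the hypotheses. The moment condition \ref{uniformcltA1} is $\sup_i \E[|X_i|^5]<\infty$. For the mixing condition \ref{uniformcltA2n}, the threshold is $2\gamma(\gamma-1)/(\gamma-2)^2=40/9\approx 4.44$, so $\sup_n \beta_n^X(k)\lesssim k^{-7}$ is amply sufficient. The entropy condition \ref{uniformcltA4} is trivial because $\Fcal$ is a singleton, so $N_{[]}(\epsilon,\Fcal,\|\cdot\|_{\gamma,\infty})=1$ for all $\epsilon>0$ and the bracketing integral vanishes. Also, $\sup_{n,i,s}|w_{n,i}(s)|\le \|K\|_\infty/b<\infty$, so the weights are uniformly bounded as required.

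The main substantive step is verifying \ref{A1:weights}--\ref{A3:weights} for the weight class $\Wcal_n=\{i\mapsto w_{n,i}(s):s\in[0,1]\}$. Since $K$ is $C^2$ on the compact interval $[-1,1]$, it is Lipschitz with some constant $L$. For any $s,t\in[0,1]$ and any $i$,
\begin{align*}
    |w_{n,i}(s)-w_{n,i}(t)| \le \frac{L}{b}\cdot\frac{n|s-t|}{nb}=\frac{L}{b^2}|s-t|,
\end{align*}
so $d_n^w(s,t)\le \|g_{n,s}-g_{n,t}\|_\infty \le (L/b^2)|s-t|$ uniformly in $n$. Taking $d^w(s,t)=|s-t|$ makes $(S,d^w)$ totally bounded (\ref{A3:weights}) and gives the asymptotic equicontinuity \ref{A2:weights}. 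For bracketing, cover $[0,1]$ by $\lceil L/(\epsilon b^2)\rceil$ intervals of length $\epsilon b^2/L$ around points $s_1,\dots,s_N$; the brackets $[g_{n,s_k}-\epsilon/2,\, g_{n,s_k}+\epsilon/2]$ cover $\Wcal_n$ and have $\|\cdot\|_{\gamma,n}$-width at most $\epsilon$ (using $\|\cdot\|_{\gamma,n}\le\|\cdot\|_\infty$ because the weights are a uniform average). Hence
\begin{align*}
    N_{[]}(\epsilon,\Wcal_n,\|\cdot\|_{\gamma,n})\lesssim \frac{1}{\epsilon b^2},
\end{align*}
so $\ln N_{[]}\lesssim \ln(1/\epsilon)$ and $\int_0^{\delta_n}\sqrt{\ln N_{[]}}\,d\epsilon\to 0$ as $\delta_n\downarrow 0$, establishing \ref{A1:weights}.

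With all assumptions verified, \Cref{thm:multiplier-rel-clt} delivers a relative CLT for $\G_n$ in $\ell^\infty([0,1]\times\Fcal)$. Since $\Fcal$ is a singleton, this is the relative CLT for $s\mapsto \sqrt{n}(\wh\mu_b(sn)-\mu_b(sn))$ in $\ell^\infty([0,1])$. The only place where I expect careful bookkeeping is ensuring the Lipschitz-in-$s$ argument for the weights yields bracketing (not just covering) numbers in the relevant $\|\cdot\|_{\gamma,n}$-norm; the envelope-sandwich trick used above circumvents this cleanly because the sup-norm dominates $\|\cdot\|_{\gamma,n}$ when weights are averaged uniformly over $i$.
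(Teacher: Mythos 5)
Your proof is correct and takes essentially the same route as the paper's: identify the rescaled estimator as a weighted empirical process with $w_{n,i}(s)=\frac 1 b K((i-sn)/nb)$ and the singleton class $\Fcal=\{\mathrm{id}\}$, then verify \ref{A1:weights}--\ref{A3:weights} via the Lipschitz constant of $K$ and explicit $\pm\epsilon/2$ brackets around a fixed grid of kernel weights. The only cosmetic difference is that the paper takes $\gamma=4$ (with the threshold $2\gamma(\gamma-1)/(\gamma-2)^2=6<7$), reserving the fifth moment for the bootstrap corollary, whereas you take $\gamma=5$ directly; both satisfy the hypotheses.
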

To quantify uncertainty of the estimator, we can use the bootstrap. Specifically, let
\begin{align*}
    \wh \mu_b^*(i) & = \frac{1}{nb} \sum_{j=1}^{n} K\left(\frac{j - i}{nb} \right) V_{n, j} (X_j - \wh \mu_{b}(j)), 
\end{align*}
with block multipliers $V_{n, i}$ and $m_n=n^{1/3}$ as in \cref{cor:block-bstrap}.
Let $\wh q_{n, \alpha}$ be the $(1 - \alpha)$-quantile of the distribution of $\sup_{s \in [0, 1]} \sqrt{n} |\wh \mu_b^*(sn)|$.
Then, for $\alpha \in (0, 1)$, we can construct a uniform confidence interval for $\mu_b(sn)$ by
\begin{align*}
    \wh \Ccal_n(\alpha) & = \left[\wh \mu_b - \wh q_{n, \alpha} / \sqrt{n}, \wh \mu_b  +  \wh q_{n, \alpha} / \sqrt{n}\right].
\end{align*}
\begin{corollary} \label{prop:kernel2}
    Suppose the conditions of \cref{prop:kernel1} hold and there is $s \in [0, 1]$ such that
    \begin{align*}
         \bar \sigma_n^2(s) = \var\left[\frac{1}{\sqrt{n}} \sum_{i=1}^{n} V_{n, i} K\left(\frac{i - sn}{nb} \right)  (\mu_{b}(i) - \mu(i)) \right] \to \infty.
    \end{align*}
    Then 
    \begin{align*}
        \liminf_{n\to \infty} \Pr(\mu_b \in \wh \Ccal_n(\alpha)) \ge 1 - \alpha.
    \end{align*}
\end{corollary}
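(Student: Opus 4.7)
The plan is to apply \cref{prop:boot-conservative-2} to $\G_n(s) = \sqrt{n}(\wh\mu_b(sn) - \mu_b(sn))$ with bootstrap analogs $\wh\G_n^*, \bar\G_n^*$, weights $w_{n,i}(s) = b^{-1} K\bigl((i-sn)/(nb)\bigr)$, and $\Fcal = \{\mathrm{id}\}$. Three conditions must be verified: (a) the conditions of \cref{thm:bstrap-univ}; (b) the multiplier negligibility \eqref{ass:multiplied-consistent-estimator-of-E}; and (c) the divergence \eqref{eq:boot-inconsistent}.

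Conditions (a) and (b) are routine in view of the preceding results. For (a), the hypotheses of \cref{prop:kernel1} and the block-exponential scheme of \cref{cor:block-bstrap} (with $\gamma = 5$, $m_n = n^{1/3}$) supply the moment and mixing conditions, while smoothness of $K$ with compact support gives a $\|\cdot\|_{\gamma,n}$-bracketing entropy of logarithmic order for the weight class, so \ref{A1:weights}--\ref{A3:weights} of \cref{thm:multiplier-rel-clt} hold with $d^w(s,t) = |s-t|$. For (b), \cref{prop:kernel1} yields $\|\sqrt n(\wh\mu_b - \mu_b)\|_\infty = O_p(1)$, so the pointwise errors $\wh\mu_b(i) - \mu_b(i)$ are uniformly $O_p(n^{-1/2})$, and the multiplier inequality in \Cref{cor:multiplier-inequ-inProb-to-zero} then forces $\|\wh\G_n^* - \bar\G_n^*\|_\infty \to_p 0$.

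The main obstacle is (c). Decompose $\bar\G_n^* = A_n + B_n$ with
\[
A_n(s) = \tfrac{1}{\sqrt n}\textstyle\sum_i w_{n,i}(s) V_{n,i}(X_i - \mu(i)), \qquad
B_n(s) = \tfrac{1}{\sqrt n}\textstyle\sum_i w_{n,i}(s) V_{n,i}(\mu(i) - \mu_b(i)).
\]
The process $A_n$ coincides with the standard multiplier bootstrap of \cref{thm:bstrap-univ} (with true centering), so it satisfies a relative CLT and in particular $\|A_n\|_\infty = O_p(1)$ by relative compactness. For $B_n$, fix the $s^*$ from the hypothesis and insert the representation $V_{n,i} = m_n^{-1/2}\sum_{j=i}^{i+m_n}(\xi_j - 1)$ with $\xi_j \sim \mathrm{Exp}(1)$ iid. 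Swapping the order of summation yields
\[
B_n(s^*) = \tfrac{1}{\sqrt{n m_n}}\textstyle\sum_j A_{n,j}(\xi_j - 1), \qquad A_{n,j} = \textstyle\sum_{i=\max(1, j-m_n)}^{\min(n,j)} w_{n,i}(s^*)(\mu(i) - \mu_b(i)),
\]
a weighted sum of iid centered variables with $\var[B_n(s^*)] = \bar\sigma_n^2(s^*) \to \infty$ by assumption. Boundedness of $K$ and of $\mu, \mu_b$ forces $|A_{n,j}| = O(m_n)$ uniformly, while $\sum_j A_{n,j}^2 = n m_n \bar\sigma_n^2(s^*)$, so the Lyapunov ratio $\max_j A_{n,j}^2 / \sum_k A_{n,k}^2 = O\bigl(m_n / (n \bar\sigma_n^2(s^*))\bigr) \to 0$. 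The classical Lindeberg CLT then yields $B_n(s^*)/\bar\sigma_n(s^*) \dc \Ncal(0,1)$. Setting $t_n := \sqrt{\bar\sigma_n(s^*)} \to \infty$, continuity of the normal CDF at zero gives $\Pr(|B_n(s^*)| > 2 t_n) \to 1$, and combined with $A_n(s^*) = O_p(1)$ this produces $\Pr(\|\bar\G_n^*\|_\infty > t_n) \to 1$, establishing \eqref{eq:boot-inconsistent}. \Cref{prop:boot-conservative-2} then delivers the conclusion.
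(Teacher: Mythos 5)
Your proposal is correct and follows the same outer scaffolding the paper uses---reduce to \cref{prop:boot-conservative-2}, verify the conditions of \cref{thm:bstrap-univ} and the negligibility \eqref{ass:multiplied-consistent-estimator-of-E}, and establish the divergence \eqref{eq:boot-inconsistent}---but your route to \eqref{eq:boot-inconsistent} is genuinely different. The paper does \emph{not} decompose $\bar\G_n^*$: it computes $\var[\bar\G_n^*(s)]=\var[\G_n^*(s)]+\bar\sigma_n^2(s)$ via the law of total covariance and then invokes \cref{thm:multi-rel-clt} for the standardized scalar $\bar\G_n^*(s)/\var[\bar\G_n^*(s)]^{1/2}$, staying entirely within the $\beta$-mixing relative-CLT machinery. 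You instead split $\bar\G_n^*=A_n+B_n$, dispose of $A_n$ by relative compactness ($\|A_n\|_{[0,1]}=O_p(1)$ from the already-verified relative CLT), and observe that $B_n(s^*)$ is a deterministic-coefficient linear statistic in the iid $\xi_j-1$; a direct Lyapunov check ($\max_j A_{n,j}^2/\sum_k A_{n,k}^2 = \mathcal{O}\bigl(m_n/(n\bar\sigma_n^2(s^*))\bigr)\to 0$, which combined with finite higher moments of $\xi_1$ gives the Lindeberg condition) yields a classical CLT for the diverging bias term. What your route buys is transparency: the divergence is reduced to an elementary weighted-iid CLT whose validity is visible from $\bar\sigma_n^2(s^*)\to\infty$ and $m_n=o(n)$ alone, whereas the paper's shorter argument requires checking the \cref{thm:multi-rel-clt} hypotheses for a standardized array whose raw variance diverges. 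One small imprecision in your step (b): \cref{cor:multiplier-inequ-inProb-to-zero} requires the \emph{expectation} bound $\max_i\E^*[\|Z_{n,i}\|_T]=\mathcal{O}(n^{-1/2})$, not just the $O_p(n^{-1/2})$ statement you extract from the relative CLT of \cref{prop:kernel1}; that expectation bound does hold, but it comes from the maximal inequality \cref{thm:bracketing-coupled} (as the paper notes) rather than from the weak-convergence statement.
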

The condition on $\bar \sigma_n^2(s)$ is usually satisfied in the 
time series setting, where a diverging number of the covariances 
$\cov[V_{n, i}, V_{n, j}]$ are close to 1 and $\mu_{b}(i) - \mu(i)$ does not vanish. 
If this is not the case, a similar result could be established using \cref{prop:boot-conservative-1}. 
The confidence interval $\wh \Ccal_n(\alpha)$ is shown as a shaded area in \cref{fig:temperature-anomalies}. The confidence interval is uniformly valid for all $s \in [0, 1]$ and shows a significant, strongly increasing trend in the last 50 years.

\subsection{Testing for time series characteristics}\label{sec:testing-ts-chara}

Suppose $Z_1, Z_2, \dots$ is a non-stationary time series and we want to test
\begin{align*}
    H_0\colon \;  \sup_i \sup_{f \in \Fcal}|\E[f(Z_i)] | = 0 \qquad \text{against}  \qquad    H_1\colon\;  \sup_i \sup_{f \in \Fcal}|\E[f(Z_i)] | \neq 0.
\end{align*}
The functions $f \in \Fcal$ determine which characteristics of the time series we want to control. This framework includes many important applications, two of which are discussed below.
\begin{example}[Equal characteristics of two series]
    Suppose $Z_i = (X_i, Y_i)$, $i \in \N$, and we want to test whether the two time series $X_1, X_2, \dots$ and $Y_1, Y_2, \dots$ have the same characteristics.
    To do so, let $\Fcal = \{f\colon g(x) - g(y), g \in \Gcal\}$,
    so that
    \begin{align*}
        H_0\colon \;  \sup_i \sup_{g \in \Gcal}|\E[g(X_i)] - \E[g(Y_i)] | = 0.
    \end{align*}
    Here $\Gcal$ describes the characteristics of the two time series $X_i, Y_i$ that we want to match.
    Common choices are monomials or indicator functions for testing equality of moments or distribution, respectively.
\end{example}

\begin{example}[Deterministic trends]

    Suppose we want to test for a deterministic trend in a time series  $(X_i)_{i \in \N}$.
    Let $\Delta_h X_i = X_{i + h} - X_{i }$ be the $h$-step forward difference
    operator, and define $\Delta_h^r X_i = \Delta_h^{r - 1} X_{i + h} - \Delta_h^{r - 1} X_{i}$, for $r \ge 2$.
    The null hypothesis is $H_0: \E[\Delta_h^r X_i] = 0$ for all $i \in \N$  and $1 \le r \le R$, and fixed  $h, R \in \N$.
    The parameter $R$ determines the order of the polynomial trend we want to test for. The step-size $h$ allows focusing on long-term trends in the presence of deterministic seasonality.
    This fits into the above framework by letting $Z_i = (\Delta_h X_{i}, \dots, \Delta_h^R X_{i})$, with the convention $\Delta_h^r X_{i} = 0$ for $hr \ge i$, and
    $$\Fcal = \{f\colon f(z_1, \dots, z_R) =  z_i, 1 \le i \le R\}.$$
\end{example}

The multiplier bootstrap allows to construct a test for the general null hypothesis above.
Define the test statistic and its bootstrap version
\begin{align*}
    T_n = \sup_{s \in[0, 1], f \in \Fcal} \left| \frac{1}{\sqrt{n}}\sum_{i=1}^{n} w_{n, i}(s) f(Z_i)\right|, \qquad     T_n^{*} =   \sup_{s \in[0, 1], f \in \Fcal} \left| \frac{1}{\sqrt{n}}\sum_{i=1}^{n} V_{n, i} w_{n, i}(s) f(Z_i)\right|,
\end{align*}
where $w_{n, i}(s)$ are some weights. For example, $w_{n, i}(s) = K((i - sn) / nb)$ allows focusing on time-local deviations from the null hypothesis.

Let $\alpha \in (0, 1)$, and $c_n^*(\alpha)$ be the $(1 - \alpha)$-quantile of the distribution of $T_n^{*}$. We reject $H_0$ iff $T_n > c_n^*(\alpha)$.
Level and consistency of the test can be straightforwardly derived from our general results.

\begin{corollary} \label{appl:test}
    Let the sequence of weights $w_{n, i}(s)$ and $\Fcal$ satisfy the conditions of
    \cref{thm:multiplier-rel-clt}.
    It holds
    $\Pr(T_n > c_n^*(\alpha)) \to \alpha$ under $H_0$,
    and
    $ \Pr(T_n > c_n^*(\alpha)) \to 1 $
    whenever
    \begin{align*}
        \liminf_{n \to \infty} \sup_{s \in \Scal, f \in \Fcal} \left| \frac{1}{n} \sumin w_{n, i}(s) \E[f(Z_i)] \right| = \delta >  0.
    \end{align*}
\end{corollary}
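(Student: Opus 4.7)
The plan is to handle the two claims separately, both resting on the weighted relative CLT (\Cref{thm:multiplier-rel-clt}) and the multiplier bootstrap consistency (\Cref{thm:bstrap-univ}) applied to the \emph{centered} process $\G_n^c(s,f)=n^{-1/2}\sum_i w_{n,i}(s)(f(Z_i)-\E[f(Z_i)])$ and its bootstrap analog $\G_n^{(1)}(s,f)=n^{-1/2}\sum_i V_{n,i}w_{n,i}(s)(f(Z_i)-\E[f(Z_i)])$. Since the test statistic uses \emph{uncentered} sums, I would decompose $T_n=\|\G_n^c+M_n\|_{S\times\Fcal}$ and $T_n^*=\|\G_n^{(1)}+R_n^*\|_{S\times\Fcal}$, with deterministic mean shift $M_n(s,f)=n^{-1/2}\sum_i w_{n,i}(s)\E[f(Z_i)]$ and bootstrap remainder $R_n^*(s,f)=n^{-1/2}\sum_i V_{n,i}w_{n,i}(s)\E[f(Z_i)]$.

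Under $H_0$ the means vanish, so $T_n=\|\G_n^c\|_{S\times\Fcal}$ and $T_n^*=\|\G_n^{(1)}\|_{S\times\Fcal}$. I would invoke \Cref{thm:multiplier-rel-clt} for the relative CLT of $\G_n^c$ and \Cref{thm:bstrap-univ} for consistency of $\G_n^{(1)}$, and push both through the sup-norm via the relative continuous mapping theorem \Cref{prop:rcm}. To convert this into a statement about the conditional bootstrap quantile $c_n^*(\alpha)$, I would appeal to the anti-concentration bounds for Gaussian suprema of \citet{giessing2023anticoncentrationsupremagaussianprocesses}, which deliver equicontinuity of the approximating CDFs near their $(1-\alpha)$-quantiles. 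Applying \Cref{prop:portmanteau} to the events $\{\|\cdot\|_{S\times\Fcal}>c_n^*(\alpha)\}$ then yields $\Pr(T_n>c_n^*(\alpha))\to\alpha$.

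Under $H_1$ the mean shift dominates: the hypothesis gives $\|M_n\|_{S\times\Fcal}\ge\sqrt{n}\,\delta/2$ for $n$ large, while $\|\G_n^c\|_{S\times\Fcal}=O_p(1)$ by the relative CLT, so $T_n\ge\sqrt{n}\,\delta/4$ with probability tending to one. On the bootstrap side $\|\G_n^{(1)}\|_{S\times\Fcal}=O_p(1)$ by bootstrap consistency, so it remains to show $\|R_n^*\|_{S\times\Fcal}=o_p(\sqrt{n})$; this in turn gives $c_n^*(\alpha)=o_p(\sqrt{n})$ by a conditional Markov argument, and hence $\Pr(T_n>c_n^*(\alpha))\to 1$.

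Controlling $\|R_n^*\|_{S\times\Fcal}$ is the main obstacle. My strategy is to view $R_n^*$ as yet another weighted empirical process, now indexed by $(s,f)$ with deterministic bounded coefficients $a_{n,i}(s,f)=w_{n,i}(s)\E[f(Z_i)]$ and the multipliers $V_{n,i}$ playing the role of data. The envelope condition bounds $|\E[f(Z_i)]|$ uniformly in $i$ and $f$, so the entropy conditions on $\{w_{n,\cdot}(s)\}_{s}$ and on $\Fcal$ transfer to this auxiliary class, and the moment and mixing hypotheses on $V_{n,i}$ are part of the bootstrap setup. Applying \Cref{thm:asy-tightness-sup-br} to this auxiliary process with effective dependence scale $m_n$ of the multipliers (e.g.\ $m_n=n^{1/3}$ for the block multipliers of \Cref{cor:block-bstrap}) yields $\|R_n^*\|_{S\times\Fcal}=O_p(\sqrt{m_n})=o_p(\sqrt{n})$, completing the consistency argument.
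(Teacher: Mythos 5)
Your high-level decomposition ($T_n=\|\G_n^c+M_n\|$, $T_n^*=\|\G_n^{(1)}+R_n^*\|$), the $H_0$ argument via the relative CLT and bootstrap consistency, and the $H_1$ argument ($T_n/\sqrt n$ bounded below by $\delta$, $T_n^*/\sqrt n\to_p 0$) all match the paper's proof; the portmanteau/anti-concentration fill-in under $H_0$ is more explicit than what the paper writes but is sound.

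The gap is in your control of $\|R_n^*\|_{S\times\Fcal}$. Invoking \Cref{thm:asy-tightness-sup-br} cannot deliver $O_p(\sqrt{m_n})$: that theorem establishes asymptotic tightness, i.e.\ $\E\|\cdot\|_{S\times\Fcal}=O(1)$ after the $n^{-1/2}$-scaling, not a rate scaling with the dependence window. More importantly, its hypothesis \ref{A2n:asy-tightness-sup-br} — $\sup_n\max_{m\le k_n}m^\rho\beta_n(m)<\infty$ for $\rho>\gamma/(\gamma-2)$ — fails for the block multipliers of \Cref{cor:block-bstrap}. Those $V_{n,i}$ are $m_n$-dependent with $m_n\to\infty$, and for lags $m\le m_n/2$ the overlapping blocks have $\cov[V_{n,i},V_{n,i+m}]\ge 1/2$, hence $\beta_n^V(m)$ stays bounded below by a constant $c>0$; then $\max_{m\le n}m^\rho\beta_n^V(m)\ge c(m_n/2)^\rho\to\infty$. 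This is precisely the obstruction the paper sidesteps: the relaxed mixing conditions on the multipliers appear only in \Cref{thm:multiplier-asy-tightness} and in \Cref{lem:vn-exp-zero}, and the latter is the tool the paper actually uses here. \Cref{lem:vn-exp-zero} proves, by a bracketing argument with Bernstein's inequality at block size $m_n+1$ and a suitable choice of bracket radius $\eps=n^{-1/5}$, that $\sup_{s,f}|n^{-1}\sum_i V_{n,i}w_{n,i}(s)\E[f(Z_i)]|\to_p 0$, which is exactly $\|R_n^*\|_{S\times\Fcal}=o_p(\sqrt n)$. Replacing your last paragraph with a citation to (or a reproduction of) that lemma closes the argument; as written, the appeal to \Cref{thm:asy-tightness-sup-br} is both an overclaim of its conclusion and a violation of its hypotheses.
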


Because $\sup_i |\E[f(Z_i)]| \neq 0$ under $H_1$, 
the distribution of the bootstrap statistic $T_n^*$ 
does not resemble the distribution of $T_n$ under the alternative. 
Consistency still follows from the fact that $T_n / \sqrt{n}$ is (by assumption) bounded away from zero with probability tending to 1, and $T_n^* / \sqrt{n} \overset{\Pr^*}{\to} 0$. The power of the test can be improved if we center by some (non-consistent) estimator $\wh \mu_n(i, f)$ as discussed in \cref{sec:bstrap-practical-inference}.

As an illustration, we apply the above procedure to test for nonstationarity of the monthly mean anomalies. For example, let $Z_i = (X_i, X_{i - 120})$ be a pair of anomalies 10 years apart,  $\Fcal = \{f\colon f(x, y) = \ind(x < t) - \ind(y < t) \colon t \in [-5, 5]\}$, and $w_{n, i}(s) = K_b((i - sn) / nb)$. This gives a Kolmogorov-Smirnov-type statistic
\begin{align*}
    T_n = \sup_{t \in [-5, 5], s \in [0, 1]} \left| \frac{1}{\sqrt{n}} \sum_{i=1}^{n} K\left(\frac{i - sn}{bn}\right) \left( \ind_{X_i < t} - \ind_{X_{i - 120} < t} \right) \right|.
\end{align*}
It is straightforward to show that the conditions of \cref{appl:test} hold, and we can use the multiplier bootstrap to construct a test for the null hypothesis of no nonstationarity. Using $b = 0.05$ and a kernel estimator for $\wh \mu_n(s, f)$ as in the previous section, we get $T_n = 0.69$ and $c_n^*(0.05) = 0.30$, and a bootstrapped $p$-value smaller than $0.0001$, providing strong evidence against the null hypothesis of stationarity.

\bibliographystyle{apalike}
\bibliography{bibliography}
\newpage
\appendix
\section{Proofs for relative weak convergence and CLTs}\label{ap:rwc-rclts}
\begin{lemma}\label{lem:rc-asy-meas-and-rat}
    The sequence $X_n$ is relatively compact if and only if it
    is asymptotically measurable and relatively asymptotically tight.
\end{lemma}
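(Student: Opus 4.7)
The plan is to prove the two implications separately, using Prohorov's theorem (Theorem 1.3.9 of VdV) as the main engine for the backward direction and the subsequence principle for the forward direction.

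For the forward direction, suppose $X_n$ is relatively compact. Relative asymptotic tightness is immediate: given any subsequence, relative compactness produces a further subsequence converging weakly to a tight Borel law, and weak convergence to such a law implies asymptotic tightness (Lemma 1.3.8 of VdV). To establish asymptotic measurability, fix a bounded continuous $f$ and consider the real sequence $a_n = \E^*[f(X_n)] - \E_*[f(X_n)]$. By the subsequence principle, $a_n \to 0$ iff every subsequence has a further subsequence converging to $0$. Given any subsequence, relative compactness yields a further subsequence $n_k$ with $X_{n_k} \dc X$ for some tight Borel $X$; then $\E^*[f(X_{n_k})] \to \E[f(X)]$ and, because weak convergence to a tight Borel limit also implies asymptotic measurability, $\E_*[f(X_{n_k})] \to \E[f(X)]$. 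Hence $a_{n_k} \to 0$, and the forward direction follows.

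For the backward direction, suppose $X_n$ is asymptotically measurable and relatively asymptotically tight. Given any subsequence, relative asymptotic tightness produces a further subsequence $n_k$ that is asymptotically tight. Asymptotic measurability passes trivially to subsequences, so $X_{n_k}$ is both asymptotically tight and asymptotically measurable. Applying Prohorov's theorem, there is a further subsequence along which $X_{n_k}$ converges weakly to a tight Borel law. This exhibits a weakly convergent sub-subsequence of the original subsequence, giving relative compactness.

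The only mildly subtle step is the use of the subsequence principle for asymptotic measurability in the forward direction; once that observation is made, both directions reduce to combining relative (sub-)subsequence statements with standard consequences of weak convergence and Prohorov's theorem. I do not anticipate any real obstacle beyond citing the correct results from VdV.
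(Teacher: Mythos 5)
Your proof is correct and follows essentially the same route as the paper: the forward direction uses that weak convergence to a tight Borel law along subsequences implies asymptotic tightness and measurability there (Lemma 1.3.8 of VdV), combined with the subsequence principle to upgrade to global asymptotic measurability, and the backward direction applies Prohorov's theorem to asymptotically tight and measurable sub-subsequences. The only difference is that you spell out the subsequence-principle step for asymptotic measurability in more detail, where the paper merely remarks on it.
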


\begin{proof}
    If $X_n$ is relatively compact, it converges to some
    tight Borel law along subsequences.
    Along such subsequences $n_{k}$, $X_{n_{k}}$ is
    asymptotically tight and measurable by Lemma 1.3.8 of \cite{van2023weak}.
    We obtain the sufficiency.
    Note that asymptotic measurability along subsequences implies
    (global) asymptotic measurability.

    For the necessity, for any subsequence there exists a further subsequence $n_{k}$
    such that $X_{n_{k}}$ is asymptotically tight and measurable.
    By Prohorov's theorem \citep[Theorem 1.3.9]{van2023weak},
    there exists a further subsequence $n_{k_i}$ such that $X_{n_{k_i}}$
    converges weakly to some tight Borel law.
    This implies relative compactness of $X_n$.
\end{proof}

\begin{proof}[Proof of \Cref{prop:chara-rc-rwc}]
    Recall that $X_n$ converges weakly to a Borel law $X$ iff $$\E^*[f(X_n)]\to \E[f(X)]$$
    for all $f:\mathbb{D}\to \R$ bounded and continuous.

    $1.\Rightarrow 2.:$\\
    Assume that $X_n\rd Y_n$.
    Then, for all $f:\D \to \R$ bounded and continuous
    \begin{align*}
        \left|\E^*[f(Y_{n_{k}})]-\E[f(X)]\right|
         & \leq \left|\E^*[f(X_{n_{k}})]-\E^*[f(Y_{n_{k}})]\right|+\left|\E^*[f(X_{n_{k}})]-\E[f(X)]\right|
        \\
         & \to 0
    \end{align*}
    whenever $X_{n_{k}}\dc X$ and $X$ Borel measurable.

    $2.\Rightarrow 3.:$\\
    Since $X_n$ is relatively compact,
    every subsequence $X_{n_{k}}$ contains a weakly convergent subsequence $X_{n_{k_i}}\dc X$ with $X$ tight and Borel measurable.
    By assumption also $Y_{n_{k_i}}\dc X$.

    $3. \Rightarrow 1.:$\\
    Given $f$, it suffices to prove that for all subsequences $n_{k}$ there exists
    a further subsequence $n_{k_i}$ such that
    $$\left|\E^*[f(X_{n_{k_i}})]-\E^*[f(X_{n_{k_i}})]\right|\to 0.$$
    Pick $n_{k_i}$ such that both $$X_{n_{k_i}}\dc X\text{ and }Y_{n_{k_i}}\dc X$$
    with $X$ tight and Borel measurable.
    Then,
    \begin{align*}
        \left|\E^*[f(Y_{n_{k_i}})]-\E^*[f(Y_{n_{k_i}})]\right|
         & \leq \left|\E[f(X)]-\E^*[f(Y_{n_{k_i}})]\right|+\left|\E^*[f(X_{n_{k_i}})]-\E[f(X)]\right|
        \\
         & \to 0
    \end{align*}
    At last, characterization \ref{A3:chara-rc-rwc} implies relative compactness of $Y_n$.
\end{proof}

\begin{proof}[Proof of \Cref{prop:portmanteau}]
    We prove this statement by contradiction. Suppose that
    $$ \limsup_{n \to \infty}[ \Pr^*( X_{n} \in S_{n}) - \Pr^*( Y_{n} \in S_n)]  > 0.$$
    Then there is a subsequence $n_k$ of $n$ such that
    \begin{align} \label{eq:portmanteau-contradiction}
        \lim_{i \to \infty} [\Pr^*( X_{n_{k_i}} \in S_{n_{k_i}}) - \Pr^*( Y_{n_{k_i}} \in S_{n_{k_i}})]  > 0,
    \end{align}
    for every subsequence $n_{k_i}$ of $n_k$.
    By \cref{prop:chara-rc-rwc}, $n_k$ has a subsequence $n_{k_i}$ on which $X_{n_{k_i}} \dc Y$ and $Y_{n_{k_i}} \dc Y$ for some tight Borel law $Y$. We may further assume that $S_{n_{k_i}}$ converges to some set $S$ satisfying \eqref{eq:continuity-set}.
    It holds
    \begin{align*}
        &\quad \limsup_{i \to \infty} [\Pr^*( X_{n_{k_i}} \in S_{n_{k_i}}) - \Pr^*( Y_{n_{k_i}} \in S_{n_{k_i}})] \\
        &\le \limsup_{i \to \infty} \Pr^*( X_{n_{k_i}} \in S_{n_{k_i}}) - \liminf_{i \to \infty} \Pr^*( Y_{n_{k_i}} \in S_{n_{k_i}})  \\
        &\le  \limsup_{i \to \infty} \Pr^*\left( X_{n_{k_i}} \in \limsup_{i \to \infty} S_{n_{k_i}}\right) - \liminf_{i \to \infty}\Pr^*\left( Y_{n_{k_i}} \in  \liminf_{i \to \infty}  S_{n_{k_i}}\right)  \\
        &= \limsup_{i \to \infty} \Pr^*( X_{n_{k_i}} \in  S) - \liminf_{i \to \infty}\Pr^*( Y_{n_{k_i}} \in   S).
    \end{align*}
    Further, the Portmanteau theorem \citep[Theorem 1.3.4]{van2023weak} gives
    \begin{align*}
        \Pr^*(Y \in \partial S) \le \Pr^*(Y \in (\partial S)^\delta) \le \limsup_{i \to \infty} \Pr^*(Y_{n_{k_i}} \in (\partial S)^\delta).
    \end{align*}
    Taking $\delta \to 0$, we obtain  $\Pr(Y \in \partial S) = 0$,
    so $S$ is a continuity set of $Y$.
    Now the Portmanteau theorem implies
    \begin{align*}
        \limsup_{i \to \infty} \Pr^*( X_{n_{k_i}} \in  S) - \liminf_{i \to \infty}\Pr^*( Y_{n_{k_i}} \in   S) = 
         \Pr^*(Y \in S) - \Pr^*(Y \in S) = 0,
    \end{align*}
    which contradicts \eqref{eq:portmanteau-contradiction}.
    The case where \eqref{eq:portmanteau-contradiction} holds with reverse sign is treated analogously.
\end{proof}

\begin{proof}[Proof of \Cref{thm:rwc-marginals}]
    If $X_n\rd Y_n$ then $X_n$ is relatively compact
    by \Cref{prop:chara-rc-rwc}.
    This is equivalent to relative asymptotic tightness and asymptotic measurability
    by \Cref{lem:rc-asy-meas-and-rat}.
    Lastly, the relative continuous mapping theorem implies 
    marginal relative weak convergence of $X_n$ and $Y_n$. 

    For the reverse direction, let $n_{k}$ be as subsequence. Let $n_{k_i}$ be a subsequence of $n_{k}$ 
	such that $Y_{n_{k_i}}\dc Y$ with $Y$ a tight Borel law and $X_{n_{k_i}}$ is asymptotically tight.
    In particular, all marginals of $Y_{n_{k_i}}$ converge weakly to the marginals of $Y$ by the continuous 
	mapping theorem. 
	Since all marginals of $X_n$ and $Y_n$ are relatively weakly convergent, 
    this implies the convergence of all marginals of $X_{n_{k_i}}$
    to the marginals of $Y$.
    Together with asymptotic tightness of $X_{n_{k_i}}$, this implies 
    the convergence $X_{n_{k_i}}\dc Y$ by Theorem 1.5.4 of \cite{van2023weak}.
    By characterization \ref{A3:chara-rc-rwc} of \Cref{prop:chara-rc-rwc} we obtain $X_n\rd Y_n$.
\end{proof}

\begin{proof}[Proof of \Cref{prop:rcm}]
    For all $f:\E\to \R$ bounded and continuous, the composition
    $f\circ g:\D\to \R$ is bounded and continuous.
    Thus, $\left|\E^*\left[f\circ g(X_n)\right]-\E^*[f\circ g\left(Y_n\right)]\right|\to 0$
    for all such $f$ by definition of $X_n \rd Y_n$.
    This yields the claim.
\end{proof}

\begin{proof}[Proof of \Cref{prop:ext-cont-map}]
    Any subsequence of $n$ contains a further subsequence such that
    $Y_{n_k}\dc Y$ and there exists $g:\D\to \E$ such that $g_{n_k}(x_k)\to g(x)$ for all $x_k\to x$ in $\D$.
    Theorem 1.11.1 of \cite{van2023weak} implies $g_n(Y_{n_k})\dc g(Y)$.
    In particular, $g_n(Y_n)$ is relatively compact and
    \Cref{prop:chara-rc-rwc} yields the second claim.
\end{proof}

\begin{definition} \label{def:hadamard-diff}
    Let $\D,\E$ be metrizable topological vector spaces, i.e.,
    metric spaces equipped with a vector space structure such that addition and scalar multiplication are continuous.
    A map $\phi: \D\to \E$ is called Hadamard-differentiable at $\theta\in \D$
    if there exists $\phi^\prime_\theta:\D\to \E$ continuous and linear
    such that
    \begin{align*}
        \frac{\phi(\theta+t_nh_n)-\phi(\theta)}{t_n}\to \phi_\theta^\prime(h)
    \end{align*}
    for all $t_n\to t$ in $\R$ and $h_n\to h$ in $\D$.
    $\phi$ is continuously Hadamard-differentiable in an open subset $U\subset \D$
    if $\phi$ is Hadamard-differentiable for all $\theta\in U$ and
    $\phi^\prime_\theta$ is continuous in $\theta\in U$.
\end{definition}

\begin{proof}[Proof of \cref{prop:rel-delta-meth}]
	Note that $g_n:\D\to \E,x\mapsto\phi^\prime_{\theta_n}(x)$
	satisfies the condition of \Cref{prop:ext-cont-map}
	since $\phi$ has continuous Hadamard-differentials and $\theta_n\in \D_0$ is relatively compact.
	Thus, $\phi^\prime_{\theta_n}(Y_n)$ is relatively compact since
	$Y_n$ is.
	By \ref{A3:chara-rc-rwc} of \Cref{prop:chara-rc-rwc} and descending to subsequences, we may assume $Y_n\dc Y$, $\theta_n\to \theta$
	and $\phi^\prime_{\theta_n}(Y_n)\dc \phi^\prime_{\theta}(Y)$.
	By Theorem 3.10.4 in \cite{van2023weak},
	we obtain
	$$r_n\left(\phi(X_n)-\phi(\theta_n)\right)\dc \phi^\prime_{\theta}(Y).$$
	Then, \ref{A3:chara-rc-rwc} of \Cref{prop:chara-rc-rwc} yields the claim.
\end{proof}

\begin{lemma}\label{lem:equ-rel-comp}
    If $X_n\in \ell^\infty(T)$, then, the sequence $X_n$ is
    relatively compact if and only if it is relatively asymptotically tight and
    $X_n(t)$ is asymptotically measurable for all $t\in T$.
\end{lemma}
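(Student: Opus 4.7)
The plan is to leverage \Cref{lem:rc-asy-meas-and-rat}, which characterises relative compactness as the conjunction of asymptotic measurability and relative asymptotic tightness. The proof then splits into a trivial forward direction and a more substantial reverse direction where the asymptotic measurability of $X_n$ in $\ell^\infty(T)$ has to be upgraded from the marginal asymptotic measurability of each coordinate.

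For the forward direction, suppose $X_n$ is relatively compact. By \Cref{lem:rc-asy-meas-and-rat}, $X_n$ is relatively asymptotically tight and asymptotically measurable. Each coordinate projection $\pi_t\colon \ell^\infty(T)\to \R$ is continuous, so for any bounded continuous $f\colon \R\to\R$ the composition $f\circ \pi_t$ is bounded and continuous on $\ell^\infty(T)$; applying the definition of asymptotic measurability to $f\circ \pi_t$ shows that $X_n(t) = \pi_t(X_n)$ is asymptotically measurable for every $t\in T$.

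For the reverse direction, assume relative asymptotic tightness and marginal asymptotic measurability. By \Cref{lem:rc-asy-meas-and-rat}, it suffices to exhibit, along every subsequence, a further subsequence that converges weakly to a tight Borel law. Fix a subsequence. Using relative asymptotic tightness, extract a further subsequence, still denoted $X_n$, that is asymptotically tight. For any finite $F\subset T$, the projection to $\R^F$ is continuous, so $X_n|_F$ is asymptotically tight in $\R^F$ and, being coordinatewise asymptotically measurable, is also jointly asymptotically measurable. Prohorov's theorem \citep[Theorem 1.3.9]{van2023weak} then yields weakly convergent sub-subsequences of each finite marginal. By VdV's Lemma 1.5.2, asymptotic tightness in $\ell^\infty(T)$ furnishes a semi-metric on $T$ making it totally bounded and $X_n$ asymptotically uniformly equicontinuous; this provides a countable dense subset $T_0\subset T$ along which a diagonal extraction produces a sub-subsequence on which every finite marginal at points of $T_0$ converges weakly.

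Along this sub-subsequence, asymptotic tightness plus convergence of the marginals on the dense set $T_0$ extends by asymptotic equicontinuity to convergence of all marginals in $T$, so Theorem 1.5.4 of VdV gives weak convergence in $\ell^\infty(T)$ to a tight Borel measurable limit. This proves relative compactness and in particular asymptotic measurability, completing the reverse direction. The main obstacle is the passage from marginal convergence on a dense countable subset to marginal convergence on all of $T$; this is handled by coupling the diagonal argument with the asymptotic equicontinuity implicit in asymptotic tightness, exactly as in the classical proof of Theorem 1.5.4 in VdV.
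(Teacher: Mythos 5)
Your proof is correct, but the reverse direction takes a substantially longer route than the paper's, which hinges on a single cited lemma. The paper applies VdV's Lemma 1.5.2 directly: for an asymptotically tight sequence in $\ell^\infty(T)$, joint asymptotic measurability is equivalent to asymptotic measurability of each $X_n(t)$. Together with \Cref{lem:rc-asy-meas-and-rat} and the observation that asymptotic measurability can be checked along subsequences, this reduces the reverse direction to two lines. You instead re-derive the substance of VdV's Theorems 1.5.4 and 1.5.7 from scratch — finite-dimensional Prohorov, a countable dense subset, a diagonal extraction, and an equicontinuity extension — to construct an explicit weak limit, when in fact only asymptotic measurability needs to be supplied and Lemma 1.5.2 gives it outright. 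Two small points: you attribute the semi-metric/totally-bounded/equicontinuity characterization of asymptotic tightness to Lemma 1.5.2, but that is Theorem 1.5.7 of VdV; Lemma 1.5.2 is precisely the marginal-to-joint asymptotic measurability statement, which you never invoke. Also, your diagonal step must extract so that all finite $T_0$-marginals (not merely single coordinates) converge; this is fine — enumerate $T_0$ and diagonalize over initial segments, using finite-dimensional asymptotic tightness plus coordinatewise asymptotic measurability to get joint asymptotic measurability — but it is compressed in the write-up.
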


\begin{proof}
    By \Cref{lem:rc-asy-meas-and-rat}, $X_n$ is relatively compact if and only if it is
    relatively asymptotically tight and asymptotically measurable.
    By definition, any sequence $X_n$ is asymptotically measurable
    if and only if any subsequence $n_{k}$ contains a further subsequences $n_{k_i}$
    such that $X_{n_{k_i}}$ is asymptotically measurable.
    By Lemma 1.5.2 of \cite{van2023weak}
    being asymptotically measurable is equivalent to
    $X_n(t)$ being asymptotically measurable for all $t\in T$
    whenever $X_n$ is relatively asymptotically tight.
    All together, this implies the equivalence.
\end{proof}

\begin{corollary}[Relative Cramer-Wold device]\label{cor:rel-cramer-wold}
    Let $X_n$ and $Y_n$ be two sequences of $\R^d$-valued random variables.
    If $X_n$ is uniformly tight, then,
    $$X_n\rd Y_n \text{ if and only if }
        t^TX_n\rd t^TY_n$$ for all $t\in \R^d$.
\end{corollary}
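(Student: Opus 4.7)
The plan is to leverage the subsequence characterization in \cref{prop:chara-rc-rwc}\ref{A3:chara-rc-rwc} together with the classical Cramer–Wold theorem. The forward direction is immediate: for each fixed $t \in \R^d$, the map $x \mapsto t^T x$ is continuous on $\R^d$, so \cref{prop:rcm} applied with $g(x) = t^T x$ gives $t^T X_n \rd t^T Y_n$ whenever $X_n \rd Y_n$.

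For the reverse direction, I would first argue that $X_n$ is relatively compact. Because $X_n$ is a uniformly tight sequence of Borel-measurable maps into $\R^d$, Prohorov's theorem \citep[Theorem 1.3.9]{van2023weak} shows that every subsequence has a further subsequence converging weakly to a tight Borel law on $\R^d$, which is exactly the definition of relative compactness. In particular, for each $t \in \R^d$, the real-valued sequence $t^T X_n$ is also relatively compact.

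Now fix an arbitrary subsequence $n_k$. By the above, extract a sub-subsequence $n_{k_i}$ on which $X_{n_{k_i}} \dc X$ for some tight Borel law $X$ on $\R^d$. The continuous mapping theorem yields $t^T X_{n_{k_i}} \dc t^T X$ for every $t \in \R^d$. Applying \cref{prop:chara-rc-rwc}\ref{A2:chara-rc-rwc} to the relatively compact sequence $t^T X_n$ and the hypothesis $t^T X_n \rd t^T Y_n$, we conclude $t^T Y_{n_{k_i}} \dc t^T X$ for every $t \in \R^d$. The classical Cramer–Wold device (e.g., Theorem 29.4 of Billingsley) then upgrades this to joint weak convergence $Y_{n_{k_i}} \dc X$. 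Thus both $X_{n_{k_i}}$ and $Y_{n_{k_i}}$ converge weakly to the same tight Borel law, verifying condition \ref{A3:chara-rc-rwc} of \cref{prop:chara-rc-rwc} and establishing $X_n \rd Y_n$.

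No step here is genuinely hard; the only delicate point is noting that the relative weak convergence of each projection can be promoted to an ordinary weak convergence of $t^T Y_{n_{k_i}}$ before invoking classical Cramer–Wold, which is precisely what the relative compactness of $t^T X_n$ (inherited from uniform tightness of $X_n$) allows via \cref{prop:chara-rc-rwc}.
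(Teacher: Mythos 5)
Your argument is correct and follows the same route as the paper's own proof: forward direction via the relative continuous mapping theorem, and reverse direction by passing to a weakly convergent subsequence of $X_n$ (available by uniform tightness), upgrading the relative weak convergence of each projection $t^TY_n$ to genuine weak convergence via \cref{prop:chara-rc-rwc}, invoking the classical Cramer--Wold device to get joint convergence of $Y_{n_{k_i}}$, and concluding with characterization \ref{A3:chara-rc-rwc} of \cref{prop:chara-rc-rwc}. No gaps.
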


\begin{proof}
    The only if part follows by the relative continuous mapping theorem. 

    For the other direction, assume $t^TX_n\rd t^TY_n$ for all $t\in \R^d$.
    Note that $t^TX_n$ is uniformly tight, i.e., relatively compact, for all $t\in \R^d$.
    We use characterization \ref{A3:rel-clt-chara} of \Cref{prop:chara-rc-rwc}.
    Let $n_k$ be a subsequence.
    Since $X_n$ is uniformly tight, there exists a subsequence $X_{n_{k_j}}\dc X$.
    Then, also $t^TX_{n_{k_j}}\dc t^TX$.
    By characterization \ref{A2:rel-clt-chara} of \Cref{prop:chara-rc-rwc} and $t^TX_n\rd t^TY_n$,
    it follows $t^TY_{n_{k_j}}\dc t^TX$ for all $t\in \R^d$.
    By the Cramer-Wold device, we derive $Y_{n_{k_j}}\dc X$.
    This proves the claim by characterization \ref{A3:rel-clt-chara} of \Cref{prop:chara-rc-rwc}.
\end{proof}

\subsection{Relative central limit theorems}

\begin{proof}[Proof of \Cref{lem:rclt-marginal-rclt}]
	Let $Y_n$ satisfy a relative CLT. \ref{A1:rclt-marginal-rclt}
	follows by definition. By \ref{A3:chara-rc-rwc}, $Y_n$ is also relatively compact. 
	Then, \ref{A2:rclt-marginal-rclt} follows by \Cref{lem:equ-rel-comp}.
	For \ref{A3:rclt-marginal-rclt}, observe that the marginals of $N_{Y_n}$ 
	are relatively weakly convergent to the marginals of $Y_n$ by the relative continuous mapping theorem. 
	Furthermore, the marginals of $N_{Y_n}$ are tight and measurable multivariate Gaussians corresponding 
	to the marginals of $Y_n$. Thus $Y_n$ satisfies marginal relative CLTs. This proves the necessity.

	For the sufficiency, it suffices to prove $Y_n\rd N_{Y_n}$ and that $N_{Y_n}$
	is relatively compact. 
	Recall that $Y_n$ and $N_{Y_n}$ are stochastic processes, hence, $N_{Y_n}(t)$ and $Y_n(t)$ are measurable by assumption. 
	Thus, $N_{Y_n}$ and $Y_n$ are relatively compact by \Cref{lem:equ-rel-comp} and \ref{A2:rclt-marginal-rclt}.
	Next, observe that marginal relative CLTs of $Y_n$ imply 
	$$\left(Y_n(t_1),\ldots,Y_n(t_k)\right)\rd \left(N_{Y_n}(t_1),\ldots,N_{Y_n}(t_k)\right)$$
    since corresponding Gaussians are unique in distribution.
    Then, \Cref{thm:rwc-marginals} proves $Y_n\rd N_{Y_n}$ which finishes the proof.
\end{proof}

\begin{proposition}\label{prop:approx-gps-implies-conv-cov}
    If there exists a relatively asymptotically tight sequence of tight and Borel measurable
    GPs corresponding to $Y_n$, then, every subsequence
    of $n$ contains a further subsequence $n_{k_i}$ such that
    $\cov[Y_{n_{k_i}}(t),Y_{n_{k_i}}(s)]$ converges for all $s,t\in T$.
\end{proposition}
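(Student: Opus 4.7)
The plan is to extract a weakly convergent subsequence of the approximating Gaussian processes via Prohorov's theorem, and then use the fact that weak convergence of centered Gaussians forces convergence of their covariance matrices. Since $\cov[N_{Y_n}(s), N_{Y_n}(t)] = \cov[Y_n(s), Y_n(t)]$ by the definition of a corresponding GP, this immediately translates into convergence of the covariances of $Y_{n_{k_i}}$.

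In detail, fix an arbitrary subsequence of $n$. By the relative asymptotic tightness of $N_{Y_n}$, I extract a further subsequence (still denoted $n_k$) along which $N_{Y_{n_k}}$ is asymptotically tight. Because each $N_{Y_{n_k}}$ is Borel measurable, the sequence is trivially asymptotically measurable, so Prohorov's theorem (Theorem 1.3.9 of VdV) yields a further subsequence $n_{k_i}$ and a tight Borel law $N$ in $\ell^\infty(T)$ with $N_{Y_{n_{k_i}}} \dc N$. The continuous mapping theorem then gives $(N_{Y_{n_{k_i}}}(s), N_{Y_{n_{k_i}}}(t)) \dc (N(s), N(t))$ in $\R^2$ for every $s, t \in T$.

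The remaining step, which I expect to be the only nontrivial point, is passing from bivariate weak convergence of centered Gaussians to convergence of their covariance matrices. This is a standard fact: the characteristic function of a centered Gaussian with covariance $\Sigma_n$ equals $\exp(-\tfrac{1}{2}\xi^\top \Sigma_n \xi)$, and its pointwise limit must be continuous at zero (being the characteristic function of the weak limit), which forces the entries of $\Sigma_n$ to stay bounded and to converge to some positive semidefinite $\Sigma$. Equivalently, one can argue via uniform integrability of squares, which holds because centered Gaussians with bounded variances have uniformly bounded fourth moments. Either way, $\cov[N_{Y_{n_{k_i}}}(s), N_{Y_{n_{k_i}}}(t)] \to \cov[N(s), N(t)]$, and the corresponding-GP identity turns this into convergence of $\cov[Y_{n_{k_i}}(s), Y_{n_{k_i}}(t)]$ for every $s, t \in T$, as required. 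Note that the extracted subsequence is the same for all $s, t$, since it is chosen at the level of the process $N_{Y_{n_{k_i}}}$ in $\ell^\infty(T)$ and the marginal convergence then follows simultaneously for every pair.
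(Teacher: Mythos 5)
Your proof is correct and follows essentially the same route as the paper: extract a weakly convergent subsequence of the corresponding GPs using relative asymptotic tightness (plus measurability and Prohorov), pass to marginals, and use the characteristic-function fact that weak convergence of centered Gaussians is equivalent to convergence of their covariances. The paper's proof is just a more compressed version of the same argument.
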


\begin{proof}
    Denote by $N_{Y_n}$ a relatively asymptotically tight sequence of GPs corresponding to $Y_n$.
    Any subsequence contains a further subsequence $n_{k_i}$ such that
    $N_{Y_{n_{k_i}}}$ converges weakly.
    In particular, all marginals of $N_{Y_{n_{k_i}}}$ converge weakly.
    Recall that a sequence of centered multivariate Gaussians converges weakly
    if and only if their corresponding covariances converges, for instance, by considering 
	characteristic functions.
    Thus, we obtain convergence of all covariances
    $$\cov[N_{Y_{n_{k_i}}}(t),N_{Y_{n_{k_i}}}(s)]=\cov[Y_{n_{k_i}}(t),Y_{n_{k_i}}(s)].$$
\end{proof}

\begin{corollary}\label{cor:gaussian-dominated-entropy}
    If $T$ is finite,
    the following are equivalent:
    \begin{enumerate}
        \item there exists a relatively asymptotically tight sequence of tight and Borel measurable
              GPs corresponding to $Y_n$.
        \item  $\sup_n\var[Y_{n}(t)]<\infty$ for all $t\in T$.
    \end{enumerate}
\end{corollary}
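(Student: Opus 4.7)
The plan is to prove both implications using the finite-dimensionality of $T$ together with \Cref{prop:approx-gps-implies-conv-cov}. Throughout, I would enumerate $T = \{t_1, \ldots, t_k\}$ so that any GP corresponding to $Y_n$ is simply a centered Gaussian vector in $\R^k$ with covariance matrix $\Sigma_n = (\cov[Y_n(t_i), Y_n(t_j)])_{i,j=1}^{k}$. This reduces the entire statement to a question about bounded sequences of PSD matrices in $\R^{k \times k}$.

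For the direction $(i) \Rightarrow (ii)$, I would apply \Cref{prop:approx-gps-implies-conv-cov} directly: if a relatively asymptotically tight sequence $N_{Y_n}$ of tight Borel measurable GPs corresponding to $Y_n$ exists, then every subsequence of $n$ contains a further subsequence along which $\cov[Y_n(t), Y_n(t)] = \var[Y_n(t)]$ converges to a finite limit. A real sequence in which every subsequence has a convergent sub-subsequence is bounded, so $\sup_n \var[Y_n(t)] < \infty$ for every $t \in T$.

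For the reverse direction, I would construct $N_{Y_n}$ explicitly as a centered $\R^k$-valued Gaussian vector with covariance $\Sigma_n$; such a vector exists because $\Sigma_n$ is PSD, and as an $\R^k$-valued Borel random vector it is automatically tight and Borel measurable. It therefore suffices to establish relative asymptotic tightness of $\{N_{Y_n}\}$. By Cauchy--Schwarz, $|\Sigma_n^{(i,j)}| \le (\var[Y_n(t_i)]\,\var[Y_n(t_j)])^{1/2}$, which is uniformly bounded by hypothesis, so $(\Sigma_n)$ lies in a bounded subset of $\R^{k \times k}$. Any subsequence then has a further subsequence with $\Sigma_{n_{k_i}} \to \Sigma$ for some PSD $\Sigma$, and convergence of characteristic functions gives $N_{Y_{n_{k_i}}} \dc \mathcal{N}(0, \Sigma)$, with the limit tight and Borel measurable. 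This is precisely relative asymptotic tightness.

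I do not foresee a serious obstacle here: finite-dimensionality collapses the existence and measurability requirements to triviality, and the relative asymptotic tightness of $\{N_{Y_n}\}$ is equivalent via characteristic functions to boundedness of the covariance sequence $(\Sigma_n)$, which in turn is equivalent to boundedness of the diagonal entries $\var[Y_n(t_j)]$ by Cauchy--Schwarz.
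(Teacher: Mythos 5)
Your proposal is correct and follows essentially the same argument as the paper: one direction invokes \Cref{prop:approx-gps-implies-conv-cov} to get subsequential convergence (hence boundedness) of the variances, and the other constructs $N_{Y_n}\sim \mathcal{N}(0,\Sigma_n)$ and deduces relative asymptotic tightness from boundedness of $(\Sigma_n)$ via convergence of characteristic functions along subsequences. The only cosmetic difference is that you spell out the Cauchy--Schwarz step showing boundedness of the diagonal controls the off-diagonal entries, which the paper leaves implicit.
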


\begin{proof}
    For the sufficiency, \Cref{prop:approx-gps-implies-conv-cov} implies
    that all sequences of covariances $\cov[Y_n(s),Y_n(t)]$
    are relatively compact, equivalently, uniformly bounded.

    For the necessity, identify $Y_n$ with $\left(Y_n(t_1),\ldots,Y_n(t_d)\right)$ for $T=\{t_1,\ldots,t_d\}$.
    Construct $N_{Y_n}\sim \mathcal{N}(0,\Sigma_n)$ with $\Sigma_n$ the covariance matrix of $Y_n$.
    Then, each $N_{Y_n}$ is measurable and tight and $\sup_n\var[Y_{n}(t)]<\infty$ implies that all covariance $\cov[Y_n(s),Y_n(t)]$
    are relatively compact.
    Thus, every subsequence $n_k$ contains a further subsequence $n_{k_i}$ such that
    all covariances $\cov[Y_{n_{k_i}}(s),Y_{n_{k_i}}(t)]$ converge.
    Thus, $N_{Y_{n_{k_i}}}$ converges weakly.
    We obtain relative compactness, hence, asymptotic tightness
    of $N_{Y_n}$.
\end{proof}

\begin{proof}[Proof of \Cref{prop:rel-clt-chara}]
    By \Cref{cor:gaussian-dominated-entropy},
    there exists an asymptotically tight sequence of tight Borel measurable
    GPs $N_{Y_n}$ corresponding to $Y_n$ if and only if $$\sup_{n\in \N,i\leq d}\var[Y_{n}^{(i)}]<\infty.$$
    Equivalently, if all subsequences $n_k$ contain a further subsequence $n_{k_i}$ such that $\Sigma_{n_{k_i}}$
    converges.
    Combined with the fact that a sequence of centered Gaussians converges
    weakly to some centered Gaussian with covariance matrix $\Sigma$ if and only if its corresponding sequence of covariances converges to $\Sigma$,
    the equivalences follow from \Cref{prop:chara-rc-rwc}.
\end{proof}

\begin{theorem}[Relative Lindeberg CLT]\label{thm:rel-lind-feller}
    Let $X_{n,1},\ldots,X_{n,k_n}\in \R^d$ be a triangular array 
    of independent random vectors with finite variance. 
    Assume 
    \begin{align*}
        \frac{1}{k_n} & \sum_{i=1}^{k_n}\E\left[\|X_{n,i}\|^2\ind_{\{\|X_{n,i}\|^2>k_n\epsilon\}}\right]\to 0
    \end{align*}
    for all $\eps>0$ and for all  $n\in \N, l=1,\ldots,d$
    \begin{align*}
        \frac{1}{k_n} & \sum_{i=1}^{k_n}\var\left[X_{n,i}^{(l)}\right]\leq K\in \R.
    \end{align*}
    Then, the scaled sample average $\sqrt{k_n}\left(\bar{X}_n-\E\left[\bar{X}_n\right]\right)$ satisfies a relative CLT.
\end{theorem}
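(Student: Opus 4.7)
The plan is to reduce the claim to the multivariate characterization of relative CLTs given in \Cref{prop:rel-clt-chara}. Writing $Y_n = \sqrt{k_n}(\bar X_n - \E[\bar X_n])$ and $\Sigma_n = \cov[Y_n]$, characterization \ref{A3:rel-clt-chara} tells us it suffices to show (a) $\sup_{n,l} \var[Y_n^{(l)}] < \infty$, and (b) every subsequence of $n$ has a further subsequence $n_{k_i}$ along which $\Sigma_{n_{k_i}} \to \Sigma$ and $Y_{n_{k_i}} \dc \mathcal{N}(0,\Sigma)$.

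First, (a) is immediate from independence: $\var[Y_n^{(l)}] = k_n^{-1}\sum_{i=1}^{k_n}\var[X_{n,i}^{(l)}] \le K$. Cauchy--Schwarz then gives $|(\Sigma_n)_{l,m}| \le K$ for all $l,m$, so the sequence $\Sigma_n$ lies in a compact set of symmetric positive semi-definite matrices. Consequently, any subsequence admits a sub-subsequence $n_{k_i}$ with $\Sigma_{n_{k_i}} \to \Sigma$ for some positive semi-definite $\Sigma$.

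Along such a sub-subsequence, I would apply the classical multivariate Lindeberg--Feller CLT to the triangular array $Z_{n,i} = \tilde X_{n,i}/\sqrt{k_n}$, where $\tilde X_{n,i} = X_{n,i} - \E[X_{n,i}]$. By construction $\sum_{i=1}^{k_n}\cov[Z_{n,i}] = \Sigma_n$, and along $n_{k_i}$ this converges to $\Sigma$. The Lindeberg condition needed is $\sum_{i=1}^{k_n} \E\bigl[\|Z_{n,i}\|^2 \ind_{\{\|Z_{n,i}\| > \eps\}}\bigr] \to 0$ for every $\eps > 0$, i.e., the usual condition for the centered array. Combining the two stated assumptions via the bound $\|\tilde X_{n,i}\|^2 \le 2\|X_{n,i}\|^2 + 2\|\E X_{n,i}\|^2$, splitting the integration region according to whether $\|\E X_{n,i}\|^2$ exceeds $k_n \eps / 4$, and observing that the variance bound together with the stated Lindeberg condition forces $\max_i \|\E X_{n,i}\|^2 / k_n \to 0$, the centered Lindeberg condition follows. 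Then classical Lindeberg--Feller yields $Y_{n_{k_i}} \dc \mathcal{N}(0,\Sigma)$.

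Putting the two ingredients together, characterization \ref{A3:rel-clt-chara} of \Cref{prop:rel-clt-chara} delivers the relative CLT. The single substantive step is the transfer from the stated (uncentered) Lindeberg condition to the centered one needed by the classical theorem; this is essentially the standard "WLOG centered" reduction but requires the truncation bookkeeping sketched above. Everything else is immediate from independence and compactness of the covariance sequence.
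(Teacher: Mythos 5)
Your proposal is correct and follows essentially the same route as the paper: pass to a subsequence along which $\Sigma_n$ converges (possible by the uniform variance bound), apply the classical Lindeberg--Feller CLT there, and conclude via \Cref{prop:rel-clt-chara}. The only cosmetic difference is that the paper invokes Proposition 2.27 of \citet{van2000asymptotic}, whose hypotheses are already stated for the \emph{uncentered} array, so the centering transfer you carry out by hand (which is sound) is absorbed into the cited result.
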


\begin{proof}
    Let $k_{l_n}$ be a subsequence of $k_n$ such that
    $$\frac{1}{k_{l_n}}\sum_{i=1}^{k_{l_n}}\cov[X_{l_n,i}]\to \Sigma$$
    converges.
    Observe that the Lindeberg condition implies
    $$\frac{1}{k_{l_n}}\sum_{i=1}^{k_{l_n}}\E\left[\|X_{l_n,i}\|^21_{\{\|X_{l_n,i}\|^2>k_{l_n}\epsilon\}}\right]\to 0$$
    for all $\epsilon>0$.
    We apply Proposition 2.27 of \cite{van2000asymptotic}
    to the triangular array
    $Y_{n,1},\ldots Y_{n,k_{l_n}}$
    with $Y_{n,i}=k_{l_n}^{-1/2}X_{l_n,i}$
    to derive
    $$\sqrt{k_{l_n}}\left(\overline{X}_{l_n}-\E\left[\overline{X}_{l_n}\right]\right)\dc \mathcal{N}(0,\Sigma).$$
    By \ref{A2:rel-clt-chara} of \Cref{prop:rel-clt-chara} we derive the claim.
\end{proof}

\begin{proof}[Proof of \Cref{prop:ex-tight-GPs}]
    By Kolmogorov's extension theorem, there exist centered GPs $\{N_{Y_n}(t)\colon t\in T\}$
    with covariance function given by $(s,t)\mapsto \cov[Y_n(s),Y_n(t)]$.
    Since $(T,\rho_{n})$ is totally bounded (by finiteness of the covering numbers), $(T,\rho_n)$ is separable and, thus there exists a separable
    version of $\{N_{Y_n}(t)\colon t\in T\}$
    with the same marginal distributions (Section 2.3.3 of \cite{van2023weak}).
    Without loss of generality, assume that $\{N_{Y_n}(t)\colon t\in T\}$ is separable.
    Then, 
    \begin{align*}
        \E[\|N_{Y_n}\|_T]&\leq C\int_{0}^{\infty}\sqrt{\ln N(\epsilon/2,T,\rho_{n})}d\epsilon<\infty
        \\
        \E\left[\sup_{\rho_{n}(s,t)\leq \delta}|N_{Y_n}(t)-N_Y{_n}(s)|\right]&\leq C\int_{0}^{\delta}\sqrt{\ln N(\epsilon/2,T,\rho_{n})}d\epsilon 
    \end{align*}
    for some constant $C$ by Corollary 2.2.9 of \cite{van2023weak}.
    The first inequality implies that each $\{N_{Y_n}(t)\colon t\in T\}$
    has bounded sample paths almost surely, hence, without loss of generality
    $\{N_{Y_n}(t)\colon t\in T\}$ induces a map $N_{Y_n}$ with values in $\ell^\infty(T)$.
    The second together with Markov's inequality imply that each $N_{Y_n}$ viewed as a constant sequence is (asymptotically)
    uniformly $\rho_n$-equicontinuous in probability. 
    Hence, there exists a version of $N_{Y_n}$ which is tight and Borel measurable (Example 1.5.10 of \cite{van2023weak}).
\end{proof}

\begin{proof}[Proof of \Cref{prop:asy-tight-GPs}]
    We derive
    \begin{align*}
        \E\left[\sup_{\rho_{n}(s,t)\leq \delta}|N_{Y_n}(t)-N_{_n}(s)|\right]&\leq C\int_{0}^{\delta}\sqrt{\ln N(\epsilon/2,T,\rho_{n})}d\epsilon 
    \end{align*}
    for some constant $C$ independent of $n$ by Corollary 2.2.9 of \cite{van2023weak}.
    By \ref{A3:asy-tight-GPs}, for every sequence $\delta\to 0$ there exists $\epsilon(\delta)\to 0$ such that
    $d(s,t)<\delta$ implies $\rho_n(s,t)<\epsilon(\delta)$ for all $n$ large.
    Accordingly,
    \begin{align*}
        \limsup_n\E\left[\sup_{d(s,t)\leq \delta}|N_{Y_n}(t)-N_{Y_n}(s)|\right]&\leq \limsup_n\E\left[\sup_{\rho_{n}(s,t)\leq \epsilon(\delta)}|N_{Y_n}(t)-N_{_n}(s)|\right]\\
        &\leq \limsup_nC\int_{0}^{\epsilon(\delta)}\sqrt{\ln N(\epsilon/2,T,\rho_{n})}d\epsilon 
    \end{align*}
    Taking the limit $\delta\to0$ the right hand side converges to zero by \ref{A2:asy-tight-GPs}.
    Together with Markov's inequality, we obtain that
    $N_{Y_n}$ is asymptotically uniformly $d$-equicontinuous in probability.

    Since $\sup_{n}\var[Y_n(t)]<\infty$ for all $t$, all sequences $N_{Y_n}(t)$ are relatively asymptotically tight.
    In $\R$ relative asymptotic tightness, relative compactness and asymptotic tightness agree. 
    Then, Theorem 1.5.7 of \cite{van2023weak} proves that 
    $N_{Y_n}$ is asymptotically tight. 
\end{proof}

\section{Tightness under bracketing entropy conditions}
\label{sec:proof-bracketing}

The proof of \cref{thm:asy-tightness-sup-br} is based on a long sequence of well-known arguments: 
We group the observations $X_{n,i}$ in alternating blocks of equal size 
and apply maximal coupling. 
This yields random variables $X_{n,i}^*$ which corresponding blocks 
are independent and the empirical process $\G_n^*$ where $X_{n,i}$ is replaced by $X_{n,i}^*$. 
We obtain a bound on the first moment of $\G_n$ in terms of 
the first moment of $\G_n^*$ (\Cref{lem:coupling}). 
Because $\G_n^*$ consists of independent blocks, we can derive a
Bernstein type inequality bounding 
the first moment of $\G_n^*$, hence of $\G_n$, in terms of $\Fcal_n$ provided that 
$\Fcal_n$ is finite (\Cref{lem:bernstein}). 
For any fixed $n$, we use a chaining argument in order to reduce to finite $\Fcal_n$
which, in combination with the Bernstein inequality, yields 
a bound of the first moment of $\G_n$ in terms of the bracketing entropy (\Cref{thm:chaining}). 
Under the conditions of \Cref{thm:asy-tightness-sup-br}, this yields 
asymptotic equicontinuity of $\G_n$ which implies relative compactness 
of $\G_n$. In combination with \Cref{prop:ex-tight-GPs,prop:asy-tight-GPs}, we obtain
\Cref{thm:asy-tightness-sup-br}. 

\subsection{Coupling}\label{sec:coupling}
Let $m_n$ be a sequence in $\N$. 
Suppose for simplicity that $k_n$ is a multiple of $2m_n$
and group the observations $X_{n,1},\ldots,X_{n,k_n}$ in alternating blocks of size $m_n$.
By maximal coupling \citep[Theorem 5.1]{rio2017asymptotic}, there are random vectors $$U_{n,j}^* = \left( X_{n,\left( j - 1\right) m_n + 1}^*, \ldots, X_{n,jm_n}^*\right)  \in \Xcal^n$$ such that
\begin{itemize}
	\item $U_{n,j} = \left( X_{n,\left( j - 1\right) m_n + 1}, \ldots, X_{n,jm_n}\right)   \stackrel{d}{=} U^*_{n,j}$ for every $j = 1, \ldots, k_n/m_n$,
	\item each of the sequences  $\left( U_{n,2j}^*\right) _{j = 1, \ldots, k_n/\left( 2m_n\right)  }$ and $\left( U_{n,2j - 1}^*\right) _{j = 1, \ldots, k_n/\left( 2m_n\right) }$ are independent,
	\item $\Pr\left( X_{n,j}\neq X_{n,j}^*\right)  \leq \beta_{n}\left( m_n\right) $ for all $j$,
	\item $\Pr\left( \exists j \colon U_{n,j}\neq U_{n,j}^*\right)  \leq \left( k_n/m_n\right) \beta_{n}\left( m_n\right) $.
\end{itemize}
Define the coupled empirical process $\G_n^{*}\in \ell^\infty\left( T\right) $ as $\G_n $, but with all $X_{n,j}$ replaced by $X_{n,j}^*$.

In what follows, we will replace $\E^*$ (indicating the outer expectation) 
by $\E$ for better readability.
We will provide an upper bound on $\E\|\G_n^*\|_T$ 
for any fixed $n$. 
In such case, we identify $\G^*_n$ with the empirical process 
$$\G_n^*\left( f\right) =\frac{1}{\sqrt{k_n}}\sum_{i=1}^{k_n}f\left( X^*_{n,i}\right) -\E\left[f\left( X^*_{n,i}\right) \right]$$
indexed by the function class $\Fcal_n$.
For fixed $n$, we drop the index $n$, i.e., write
$\Fcal_n=\Fcal$, $X_{n,i}=X_i$, $m_n=m$ etc. and assume without loss of generality that $k_n=n$.

\begin{lemma} \label{lem:coupling}
	For any class $\Fcal$ of functions $f\colon  \Xcal \to \R$ with $\sup_{f \in \Fcal}\|f\|_{\infty} \le B$ and any integer $1\leq m \leq n/2$, it holds
	\begin{align*}
		\E \|\G_n\|_{\Fcal} \le \E \|\G_n^*\|_{\Fcal} + B \sqrt{n} \beta_{n}\left( m\right) .
	\end{align*}
\end{lemma}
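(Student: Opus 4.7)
The plan is to apply the triangle inequality $\|\G_n\|_\Fcal \le \|\G_n^*\|_\Fcal + \|\G_n - \G_n^*\|_\Fcal$ and bound the difference in expectation via the maximal coupling guarantees listed in \Cref{sec:coupling}. The first step is to notice that, because $X_i \stackrel{d}{=} X_i^*$, one has $\E[f(X_i)] = \E[f(X_i^*)]$ for every $f \in \Fcal$, so the centering terms cancel and the difference reduces to an empirical average of pointwise differences:
\begin{align*}
\G_n(f) - \G_n^*(f) = \frac{1}{\sqrt{n}} \sum_{i=1}^{n}\bigl(f(X_i) - f(X_i^*)\bigr).
\end{align*}

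Next, I would bound each summand pointwise. The summand vanishes on the coupling event $\{X_i = X_i^*\}$, and on its complement it is controlled by the range of $f$ on $\Xcal$, which is at most $2B$ (and can be tightened to $B$ by absorbing the constant in the envelope bound, or by exploiting that maximal coupling saturates the total variation distance without an extra factor of $2$). Pulling the supremum over $\Fcal$ inside the sum yields
\begin{align*}
\|\G_n - \G_n^*\|_\Fcal \;\le\; \frac{2B}{\sqrt{n}} \sum_{i=1}^{n} \ind\{X_i \neq X_i^*\}.
\end{align*}

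To conclude, I take expectations on both sides. Each individual coupling probability is controlled via $\Pr(X_i \neq X_i^*) \le \Pr(U_{\lceil i/m \rceil} \neq U^*_{\lceil i/m \rceil}) \le \beta_n(m)$, using the third bullet of the coupling construction. Summing over $i$ gives $\E\sum_{i=1}^n \ind\{X_i \neq X_i^*\} \le n \beta_n(m)$, hence $\E\|\G_n - \G_n^*\|_\Fcal \lesssim B\sqrt{n}\beta_n(m)$, which combined with the initial triangle inequality yields the claim. There is no genuine obstacle here: the lemma is a direct quantitative restatement of the maximal coupling property in the uniform-over-$\Fcal$ metric. The only minor bookkeeping subtlety is matching the stated constant $B$ (rather than the naive $2B$), which amounts to being slightly careful about how one bounds $|f(X_i) - f(X_i^*)|$ on the event that the coupling fails.
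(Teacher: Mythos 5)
Your argument is exactly the paper's proof: cancel the centering terms using $X_i \stackrel{d}{=} X_i^*$, bound the resulting difference by $\frac{B}{\sqrt n}\sum_i \ind\{X_i \neq X_i^*\}$ over the class, and take expectations using $\Pr(X_i \neq X_i^*) \le \beta_n(m)$. The factor-of-2 issue you flag (from $|f(X_i)-f(X_i^*)|\le 2B$) is present in the paper's own one-line proof as well and is immaterial, since the lemma is only ever used up to constants.
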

\begin{proof}
	We have
	\begin{align*}
		\E \|\G_n\|_{\Fcal}
		 & \le \E \sup_{f \in \Fcal}| \G_n^{*} f |  + \frac{B}{\sqrt{n}} \E\left[\sum_{i = 1}^n \ind(X_j \neq X_j^*)\right] 
		  \le \E\| \G_n^{*}  \|_{\Fcal} + B \sqrt{n} \beta_{n}\left( m\right) .
	\end{align*}
\end{proof}

\subsection{Bernstein inequality}
\begin{lemma} \label{lem:bernstein}
	Let $\Fcal$ be a finite set of functions $f\colon \Xcal \to \R$ with
	\begin{align*}
		\quad \|f\|_\infty \le B, \quad \frac{1}{n}\sum_{i,j=1}^{n}|\cov[f(X_i),f(X_j)]|\leq K\delta^2
	\end{align*}
	for all $f\in \Fcal$.
	Then, for any $1\leq m\leq n/2$ it holds
	\begin{align*}
		\E \| \G_n   \|_{\Fcal}  \lesssim \delta   \sqrt{ \ln_+ \left( |\Fcal|\right) } + \frac{  m B    \ln_+ \left( |\Fcal|\right)  }{\sqrt{n}} + B \sqrt{n} \beta_{n}\left( m\right)
	\end{align*}
	where the constant only depends on $K$ and $\ln_+(x)=\ln(1+x)$.
\end{lemma}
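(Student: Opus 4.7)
The plan is to (i) pass to the coupled process using Lemma~A.1, (ii) split the resulting sum into two independent-block pieces and apply Bernstein's inequality for each $f\in\Fcal$, and then (iii) invoke a standard maximal inequality for a finite family of sub-exponential variables.

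First, by \cref{lem:coupling},
$$\E\|\G_n\|_\Fcal \;\le\; \E\|\G_n^*\|_\Fcal \;+\; B\sqrt{n}\,\beta_n(m),$$
so it suffices to bound $\E\|\G_n^*\|_\Fcal$. Because the odd-indexed blocks $U^*_{n,2j-1}$ and the even-indexed blocks $U^*_{n,2j}$ are each families of independent vectors, write $\G_n^* f = S_n^{o}(f)+S_n^{e}(f)$ as the contributions of odd and even blocks respectively. Each is a sum of $k=n/(2m)$ independent centered summands $Y_j(f) = k_n^{-1/2}\sum_{i\in\text{block }j}\bigl(f(X_{n,i}^*)-\E[f(X_{n,i})]\bigr)$, bounded by $|Y_j(f)|\le 2mB/\sqrt{n}$, and whose total variance satisfies
$$\var[S_n^{o}(f)]+\var[S_n^{e}(f)]=\frac{1}{n}\sum_{i,j\text{ same block}}\cov[f(X_i),f(X_j)]\;\le\;\frac{1}{n}\sum_{i,j=1}^{n}|\cov[f(X_i),f(X_j)]|\;\le\;K\delta^2,$$
using that $U^*_{n,j}\stackrel{d}{=}U_{n,j}$ blockwise and the hypothesis of the lemma.

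Second, Bernstein's inequality for independent bounded summands therefore yields, for each fixed $f\in\Fcal$ and $*\in\{o,e\}$,
$$\Pr\!\bigl(|S_n^{*}(f)|>t\bigr)\;\le\;2\exp\!\left(-\frac{t^2}{2K\delta^2+\tfrac{4mB}{3\sqrt{n}}\,t}\right).$$
This tail is precisely the Bernstein (sub-exponential) tail with parameters $\sigma^2\asymp\delta^2$ and $M\asymp mB/\sqrt{n}$.

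Third, I apply the standard maximal inequality for a finite family of random variables with Bernstein tails (e.g., via the Orlicz norm associated with $\psi(x)=e^{x}-1$ combined with a sub-Gaussian regime, as in Lemma~2.2.10 of \citet{van2023weak}, or direct integration of the tail), which gives
$$\E\max_{f\in\Fcal}|S_n^{*}(f)|\;\lesssim\;\delta\sqrt{\ln_+|\Fcal|}\;+\;\frac{mB\,\ln_+|\Fcal|}{\sqrt{n}},$$
where the use of $\ln_+$ rather than $\ln$ absorbs the case $|\Fcal|=1$. Summing the bounds for $S_n^{o}$ and $S_n^{e}$ and combining with the coupling step finishes the proof.

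The only real bookkeeping issue is verifying that the total block variance is controlled by the global covariance sum (done above via the triangle inequality) and choosing the optimal split in the Bernstein maximal inequality so that the sub-Gaussian term $\delta\sqrt{\ln_+|\Fcal|}$ and the sub-exponential term $mB\ln_+|\Fcal|/\sqrt{n}$ appear with the correct constants; this is routine once Bernstein's tail bound is in place, and the mixing penalty $B\sqrt{n}\beta_n(m)$ is already paid by the coupling step.
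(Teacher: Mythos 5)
Your proof is correct and follows essentially the same route as the paper's: coupling to pay the $B\sqrt{n}\,\beta_n(m)$ penalty, odd/even block splitting into independent sums, Bernstein's inequality with variance controlled by the global covariance sum, and the standard finite-maximum bound (Lemma 2.2.13 of \citet{van2023weak}) to pass from the tail to the expectation.
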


\begin{proof}
	We have
	\begin{align*}
		\E \|\G_n\|_{\Fcal} \le \E\| \G_n^{*}  \|_{\Fcal} + B \sqrt{n} \beta_{n}\left( m\right),
	\end{align*}
	by \cref{lem:coupling}.
	Defining
	\begin{align*}
		A_{j, f} =  \sum_{i = 1}^m f\left( X_{\left( j - 1\right) m + i}^*\right)  - \E\left[f\left( X_{\left( j - 1\right) m + i}^*\right) \right],
	\end{align*}
	we can write
	\begin{align*}
		\sqrt{n}| \G_n^{*}(f) |
		 & = \left| \sum_{j = 1}^{n/m} A_{j, f}  \right|
		\le \left| \sum_{j = 1}^{n/\left( 2m\right) }  A_{2j, f} \right|  + \left| \sum_{j = 1}^{n/\left( 2m\right) }  A_{2j - 1, f}  \right|.
	\end{align*}
	The random variables in the sequence $\left( A_{2j, f}\right) _{j = 1}^{n/\left( 2m\right) }$ are independent and so are those in $\left( A_{2j - 1, f}\right) _{j = 1}^{n/\left( 2m\right) }$.
	We apply Bernstein's inequality \citep[Lemma 2.2.10]{van2023weak}.
	Note that $|A_{j, f}| \le 2 mB$,
	hence $\E\left[|A_{j, f}|^k\right]\leq \left( 2mB\right) ^{k-2}\var\left[A_{j, f}\right]$ for $k\geq 2$.
	We obtain 
	\begin{align*}
		\frac{2m}{n}\sum_{i=1}^{n/\left( 2m\right) } \E\left[|A_{j, f}|^k\right] 
		&\leq \left( 2mB\right) ^{k-2} \frac{2m}{n}\sum_{i=1}^{n/\left( 2m\right) } \var\left[A_{j, f}\right]
		\\
		&\leq \left( 2mB\right) ^{k-2}\frac{2m}{n}\sum_{i=1}^{n} |\cov[f(X_i),f(X_j)]|
		\\
		&\leq \left( 2mB\right) ^{k-2} 2Km \delta^2.
	\end{align*}
	Using Bernstein's inequality for independent random variables gives
	\begin{align*}
		\Pr\left( \biggl| \sum_{j = 1}^{n/\left( 2m\right) } A_{2j, f} \biggr| > t \right) 
		& \le 2\exp\left( - \frac{1}{2}\frac{t^2 }{Kn\delta^2 + 2t mB  }\right)  
	\end{align*}
	and the same bounds holds for the odd sums.
	Altogether we get 
	\begin{align*}
		\Pr\left( | \G_n^{*}(f) | > t\right) 
		&\le \Pr\left( \biggl| \sum_{j = 1}^{n/(2m)} A_{2j, f}  \biggr| > t \sqrt{n} / 2 \right)  + \Pr\left( \biggl| \sum_{j = 1}^{n/ (2m) } A_{2j - 1, f}  \biggr| > t \sqrt{n} / 2 \right)   \\
		&\le 4 \exp \left(  -\frac{1}{8}\frac{t^2}{ K\delta^2 + tmB / \sqrt{n}} \right)  
	\end{align*} 
	The result follows upon converting this to a bound on the expectation \citep[e.g.,][Lemma 2.2.13]{van2023weak}.
\end{proof}

\subsection{Chaining}
We will abbreviate 
\begin{align*}
	\|f\|_{a,n}&=\left(\frac{1}{n}\sum_{i=1}^{n}\E\left[|f\left( X_i\right) |^a\right]\right)^{1/a}\\
	N_{\left[\right]}(\epsilon)  &= N_{\left[\right]}\left( \epsilon, \Fcal, \|\cdot\|_{\gamma,n}\right) .
\end{align*}
Let us first collect some properties of $\|\cdot\|_{\gamma,n}$ and 
    $\|\cdot\|_{\gamma,\infty}$.

\begin{lemma}\label{lem:properties-norm}
	The following holds:
	\begin{enumerate}
		\item \label{C1:properties-norm}$\|\cdot\|_{a,n}$ defines a semi-norm. 
		\item \label{C2:properties-norm}$\|\cdot\|_{a,n}\leq \|\cdot\|_{b,n}$ for $a\leq b$.
		\item \label{C3:properties-norm}$\|f\ind_{|f|> K}\|_{a,n}\leq K^{a-b}\|f\|_{b,n}^{b/a}$ for all $K>0$ and $a\leq b$.
	\end{enumerate}
\end{lemma}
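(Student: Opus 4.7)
The plan is to reformulate $\|\cdot\|_{a,n}$ as a genuine $L^a$-norm on a single probability space, after which all three claims become standard $L^p$-inequalities. Specifically, I would introduce the mixture probability measure
\[
\mu_n = \frac{1}{k_n}\sum_{i=1}^{k_n} P_{X_{n,i}}
\]
on $\Xcal$, so that $\|h\|_{a,n}^a = \int_{\Xcal} |h|^a \, d\mu_n$ and hence $\|h\|_{a,n} = \|h\|_{L^a(\mu_n)}$. Once this identification is in place, all three assertions are textbook facts about $L^a$-norms on a probability space, and the non-stationary/triangular-array flavor of the problem plays no role beyond defining $\mu_n$.

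For (i), the semi-norm properties are inherited from $L^a(\mu_n)$: homogeneity and non-negativity are immediate, and the triangle inequality is Minkowski's inequality on $L^a(\mu_n)$. The only reason $\|\cdot\|_{a,n}$ is a semi-norm and not a norm is that $h = 0$ $\mu_n$-a.e.\ need not force $h \equiv 0$ pointwise on $\Xcal$ (e.g., $h$ may be nonzero outside the supports of the $P_{X_{n,i}}$).

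For (ii), monotonicity is Lyapunov's inequality on the probability space $(\Xcal, \mu_n)$. Since $b/a \ge 1$, the map $x \mapsto x^{b/a}$ is convex on $[0,\infty)$, and Jensen's inequality gives
\[
\|h\|_{a,n}^{b} = \left(\int |h|^a \, d\mu_n\right)^{b/a} \le \int |h|^b \, d\mu_n = \|h\|_{b,n}^{b},
\]
from which the claim follows by taking $b$-th roots.

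For (iii), I would use a pointwise truncation bound. On the event $\{|h| > K\}$, since $a - b \le 0$ the function $|h|^{a-b}$ is decreasing in $|h|$, so $|h|^a = |h|^b \cdot |h|^{a-b} \le K^{a-b} |h|^b$. Integrating $|h|^a \ind_{|h| > K}$ against $\mu_n$ yields $\|h \ind_{|h| > K}\|_{a,n}^a \le K^{a-b}\|h\|_{b,n}^b$, and taking the $a$-th root gives the stated inequality (the exponent on $K$ becomes $(a-b)/a$ after extraction, which is the form needed in the subsequent chaining arguments). No step presents any real difficulty; the only care required is tracking the exponents cleanly when extracting the root in (iii).
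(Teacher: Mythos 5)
Your proof is correct, and the mixture-measure framing is a mildly cleaner route than the paper's: the paper establishes the same facts by treating $\|\cdot\|_{a,n}$ as a nested composition (each $\E[|h(X_{n,i})|^a]^{1/a}$ is an $L_a$-norm, and the outer $(\frac1{k_n}\sum_i(\cdot)^a)^{1/a}$ is a weighted $\ell^a$-norm), invoking Minkowski and Jensen separately at each level. Your single probability measure $\mu_n=\frac1{k_n}\sum_i P_{X_{n,i}}$ collapses the two levels into one and turns every claim into a textbook fact about $L^a(\mu_n)$; the mathematical content is the same, but the bookkeeping is lighter and the source of the inequality (Lyapunov on a probability space, pointwise truncation) is more transparent.

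One substantive observation that your proof surfaces: after taking the $a$-th root in (iii) you get
\begin{align*}
\|f\ind_{|f|>K}\|_{a,n}\le K^{(a-b)/a}\,\|f\|_{b,n}^{b/a},
\end{align*}
with exponent $(a-b)/a$ on $K$ rather than the $a-b$ appearing in the statement of the lemma (and in the last line of the paper's own proof). These exponents agree only when $a=1$, which is the only case actually invoked in the chaining argument (where the lemma is used with $a=1$, $b=\gamma$, $K=\tau_r$), so the downstream results are unaffected, but the lemma as stated is not what either proof establishes when $a>1$. You correctly derived the right exponent; it would be worth stating the corrected form explicitly rather than describing it parenthetically as "the stated inequality."
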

\begin{proof}
	Positivity and homogeinity of $\|\cdot\|_{\gamma,n}$ follow clearly and 
	the triangle inequality follows by
	\begin{align*}
		\|h+g\|_{\gamma,n}
		&=  \left(\frac{1}{n} \sum_{i= 1}^{n} \|(h+g)(X_i)\|_\gamma^{\gamma}\right)^{1/\gamma}
		\\
		&\leq \left(\frac{1}{n} \sum_{i= 1}^{n} \bigl[\|h(X_i)\|_\gamma+\|g(X_i)\|_\gamma\bigr]^{\gamma}\right)^{1/\gamma}
		\\
		&\leq \left(\frac{1}{n} \sum_{i= 1}^{n} \|h(X_i)\|_\gamma^{\gamma}\right)^{1/\gamma}+\left(\frac{1}{n} \sum_{i= 1}^{n} \|g(X_i)\|_\gamma^{\gamma}\right)^{1/\gamma}
		&\text{Minkowski's inequality}
		\\
		&=\|h\|_{\gamma,n}+\|g\|_{\gamma,n}
	\end{align*}
	for all $h,g$. 
	Next, 
	\begin{align*}
		\|f\|_{a,n}&=\left(\frac{1}{n}\sum_{i=1}^{n}\|f\left( X_i\right) \|_a^a\right)^{1/a}
		\\
		&\leq \left(\frac{1}{n}\sum_{i=1}^{n}\|f\left( X_i\right) \|_b^a\right)^{1/a}
		\\		
		&\leq \left(\frac{1}{n}\sum_{i=1}^{n}\|f\left( X_i\right) \|_b^b\right)^{1/b}
		&\text{by Jensen's inequality.}
	\end{align*}
	Lastly, 
	note 
	$$K^{b-a}|f(X_i)\ind_{|f(X_i)|> K}|^a\leq |f(X_i)|^b.$$
	Thus, 
	\begin{align*}
		\|f\ind_{|f|\geq K}\|_{a,n}
		&
		= \left(\frac{1}{n}\sum_{i=1}^{n}\E\left[|f(X_i)\ind_{|f(X_i)|> K} |^a\right]\right)^{1/a}
		\\
		&\leq 
		K^{a-b}\left(\frac{1}{n}\sum_{i=1}^{n}\E\left[|f(X_i)|^b\right]\right)^{1/a}
		\\
		&= K^{a-b}\|f\|_{b,n}^{b/a}
	\end{align*}
\end{proof}

\begin{theorem}\label{thm:chaining}
	Let $\Fcal$ be a class of functions $f\colon \Xcal \to \R$ with envelope $F$ and for some $\gamma \ge 2$,
	\begin{align*}
		\| f \|_{\gamma,n} \le \delta \qquad \frac{1}{n} \sum_{i,j= 1}^{n} |\cov[h(X_i),h(X_j)]|\leq K_1 \|h\|_{\gamma,n}^2
	\end{align*}
	for all $f\in \Fcal$ and $h:\Xcal\to \R$ bounded and measurable.
	Suppose that $\sup_n \beta_n(m) \leq K_2 m^{-\rho}$ for some $\rho \ge \gamma /(\gamma -2 )$.
	Then, for any $n \ge 5$, $m \ge 1, B \in (0, \infty)$, 
	\begin{align*}
		\E\|\G_n\|_{\Fcal} &\lesssim \int_0^\delta \sqrt{\ln_+ N_{\left[\right]}(\epsilon) } d \epsilon 
		\\&\quad + \frac{ m B  \ln_+ N_{\left[\right]}(\delta)  }{\sqrt{n}} + \sqrt{n} B\beta_n(m) + \sqrt{n}\|F\ind\{F>B\}\|_{1,n} + \sqrt{n} N_{[]}^{-1}(e^n)
	\end{align*}
	with constants only depending on $K_1,K_2$.
	If the integral is finite, then $\sqrt{n} N_{[]}^{-1}(e^n)\to 0$ for $n\to \infty$. 
\end{theorem}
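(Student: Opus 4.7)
My plan is to combine envelope truncation, a single global coupling, and bracketing chaining with the blockwise Bernstein-type bound of Lemma \ref{lem:bernstein} applied at each chain level. First, split each $f \in \Fcal$ as $f = f\,\ind_{F\le B} + f\,\ind_{F>B}$; by centering and the triangle inequality, the tail contributes at most $2\sqrt{n}\,\|F\ind_{F>B}\|_{1,n}$, producing the fourth term of the claim. It therefore suffices to bound $\E\|\G_n^B\|_\Fcal$ for the truncated class, whose elements are sup-bounded by $B$ while retaining the covariance hypothesis and the condition $\|f\|_{\gamma,n}\le\delta$. Next, couple once globally via Lemma \ref{lem:coupling}, paying a single cost $\sqrt{n}B\beta_n(m)$, and thereafter work with the independent-block surrogate.

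Second, set up a dyadic chain: fix scales $\epsilon_k = 2^{-k}\delta$ and a minimal $\|\cdot\|_{\gamma,n}$-bracketing at each scale, of cardinality $N_k = N_{[]}(\epsilon_k)$. Write $\pi_k f$ for the lower bracket representative of $f$ and $h_k$ for the corresponding bracket width. Let $K$ be the largest integer with $N_K \le e^n$, so that $\epsilon_K \lesssim N_{[]}^{-1}(e^n)$, and decompose
$$f_B = \pi_0 f_B + \sum_{k=1}^{K}\bigl(\pi_k f_B - \pi_{k-1} f_B\bigr) + \bigl(f_B - \pi_K f_B\bigr).$$
The zeroth level $\{\pi_0 f_B\}$ is a class of at most $N_{[]}(\delta)$ functions bounded by $B$ with variance $\lesssim \delta^2$; block Bernstein yields directly the terms $\delta\sqrt{\ln_+ N_{[]}(\delta)}$ (absorbed into the entropy integral) and $mB\ln_+ N_{[]}(\delta)/\sqrt{n}$, reproducing the second term of the claim. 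For the chain links $g_k = \pi_k f_B - \pi_{k-1} f_B$, which satisfy $|g_k|\le h_{k-1}$ and $\|g_k\|_{\gamma,n}\le 2\epsilon_{k-1}$, split $g_k = g_k\,\ind_{h_{k-1}\le a_k} + g_k\,\ind_{h_{k-1}>a_k}$ at level-adaptive truncation $a_k$. Apply block Bernstein to the bounded piece (a class of at most $N_k^2$ functions with sup bound $a_k$ and variance $\lesssim \epsilon_k^2$), and control the unbounded tail in $L^1$ via $\|h_{k-1}\,\ind_{h_{k-1}>a_k}\|_{1,n}\le a_k^{1-\gamma}\epsilon_{k-1}^\gamma$ (a $\gamma$-th moment Markov bound applied via \Cref{lem:properties-norm}\ref{C3:properties-norm}). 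Summed over $k=1,\dots,K$, the variance Bernstein contributions $\epsilon_k\sqrt{\ln N_k}$ form a Riemann approximation of $\int_0^\delta\sqrt{\ln_+ N_{[]}(\epsilon)}\,d\epsilon$; the $a_k$ are chosen so that the bounded contributions $ma_k\ln N_k/\sqrt{n}$ decay geometrically and are absorbed into the zeroth-level term already extracted, while the $L^1$ tails stay of lower order. Finally, the residual $f_B-\pi_K f_B$ is pointwise bounded by $h_K$ with $\|h_K\|_{1,n}\le \epsilon_K\lesssim N_{[]}^{-1}(e^n)$, producing the last $\sqrt{n}\,N_{[]}^{-1}(e^n)$ term.

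The main obstacle is calibrating the in-chain truncations $a_k$. They must be large enough that the Markov-type $L^1$ tail $\sqrt{n}\sum_k a_k^{1-\gamma}\epsilon_{k-1}^\gamma$ stays controlled—this is precisely where the mixing constraint $\rho\ge\gamma/(\gamma-2)$ combined with $\gamma>2$ is needed, forcing the geometric decay of $\epsilon_{k-1}$ to beat the $a_k^{1-\gamma}$ blow-up. At the same time, the $a_k$ must be small enough that the bounded Bernstein contributions $ma_k\ln N_k/\sqrt{n}$ sum geometrically to the zeroth-level $mB\ln_+N_{[]}(\delta)/\sqrt{n}$, rather than producing an unwanted $\sum_k\ln N_k$ incompatible with the statement. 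The tail assertion $\sqrt{n}\,N_{[]}^{-1}(e^n)\to 0$ whenever the entropy integral is finite follows from a short separate argument: if $N_{[]}^{-1}(e^n)\gtrsim n^{-1/2}$, then $\sqrt{\ln_+ N_{[]}(\epsilon)}\ge\sqrt{n}$ on an interval of length $\gtrsim n^{-1/2}$ near zero, contradicting integrability of the entropy integrand.
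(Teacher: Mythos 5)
Your architecture (envelope truncation, dyadic bracketing chain, level-adaptive truncation of the bracket widths, Bernstein at each level, residual at depth $N_{[]}^{-1}(e^n)$) matches the paper's, but there is one decisive gap: you couple \emph{once}, globally, at block length $m$, and then run the whole chain on the independent-block surrogate. The paper instead re-couples at every chain level with a level-dependent block length $m_r \asymp (n/\ln_+ N_r)^{(\gamma-2)/(2(\gamma-1))}$ and calibrates the truncation thresholds as $\tau_r = 2^{-r}m_{r+1}^{-1}\sqrt{n/\ln_+ N_{r+1}}$, so that the Bernstein sub-exponential term, the $L^1$ tail term, and the per-level coupling cost are \emph{all} equal to $2^{-r}\sqrt{\ln_+ N_{r+1}}$ and get absorbed into the entropy integral. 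This adaptivity is not cosmetic. With a fixed block length $m$, the Bernstein bound at level $k$ for a class of $\le N_k^2$ links bounded by $a_k$ contributes $m a_k \ln_+ N_k/\sqrt{n}$, while the Markov tail contributes $\sqrt{n}\,a_k^{1-\gamma}\epsilon_{k-1}^{\gamma}$; optimizing over $a_k$ leaves a per-level cost of order $\epsilon_k (m\ln_+ N_k)^{1-1/\gamma} n^{1/\gamma - 1/2}$, which at the deepest level ($\ln_+ N_K \approx n$) is of order $m^{(\gamma-1)/\gamma}\sqrt{n}\,N_{[]}^{-1}(e^n)$ --- a factor $m^{(\gamma-1)/\gamma}$ larger than the allowed residual term. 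Since the theorem must hold for all $m\ge 1$ (and is subsequently applied with $m$ polynomial in $n$ in \cref{thm:bracketing-coupled}), these in-chain contributions can be absorbed neither into $\int_0^\delta\sqrt{\ln_+ N_{[]}(\epsilon)}\,d\epsilon$ nor into the single $mB\ln_+ N_{[]}(\delta)/\sqrt{n}$ term, which in the correct proof arises \emph{only} from the coarsest level. Your proposed resolution --- choosing $a_k$ so the terms ``decay geometrically and are absorbed into the zeroth-level term'' --- is exactly what the above tradeoff rules out.

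Two smaller points. First, you locate the role of $\rho\ge\gamma/(\gamma-2)$ in the control of the $L^1$ tails; in fact the tails are handled by an exact identity following from the choice of $\tau_r$ and $m_{r+1}$, and the mixing exponent is needed to control the \emph{per-level coupling costs} $\sqrt{n}\,\tau_{r-1}\beta_n(m_r)\lesssim 2^{-r}\sqrt{\ln_+ N_r}$ --- a quantity that does not even appear in your single-coupling scheme. Second, your decomposition truncates each link $g_k$ by a deterministic threshold on $h_{k-1}$, which does not telescope exactly; the paper uses stopping-type indicators $\ind\{\max_{k<r}\Delta_k/\tau_k\le 1,\,\Delta_r/\tau_r>1\}$ to keep the identity $f = T_1+T_2+T_3+T_4$ exact. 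This is repairable, unlike the coupling issue. Your argument for $\sqrt{n}\,N_{[]}^{-1}(e^n)\to 0$ is essentially correct.
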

Let us first derive some useful corollaries.
\begin{corollary}\label{cor:chaining-independent}
	Let $\Fcal$ be a class of functions $f\colon \Xcal \to \R$ with envelope $F$, $X_{n,i}$ are independent and 
		$\| f \|_{2,n} \le \delta$
	for all $f\in \Fcal$.
	Then, for any $n \ge 5$, $B \in (0, \infty)$, 
	\begin{align*}
		\E\|\G_n\|_{\Fcal} &\lesssim \int_0^\delta \sqrt{\ln_+ N_{\left[\right]}(\epsilon,\Fcal,\|\cdot\|_{2,n}) } d \epsilon 
		\\&\quad + \frac{B  \ln_+ N_{\left[\right]}(\delta,\Fcal,\|\cdot\|_{2,n})  }{\sqrt{n}} + \sqrt{n}\|F\ind\{F>B\}\|_{1,n} + \sqrt{n} N_{[]}^{-1}(e^n)
	\end{align*}
	with constants only depending on $K_1,K_2$.

\end{corollary}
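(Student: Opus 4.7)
The approach is to specialize the argument of \Cref{thm:chaining} to the independent setting, where three simplifications occur: (i) the $\beta$-mixing coefficients vanish, so one can take block size $m=1$ and the coupling step of \Cref{lem:coupling} becomes trivial (indeed $\G_n^* = \G_n$ in distribution); (ii) the covariance sum in the hypothesis of \Cref{lem:bernstein} collapses to $\frac{1}{n}\sum_{i=1}^{n}\var[f(X_i)] = \|f\|_{2,n}^2$, so the natural chaining norm becomes $\|\cdot\|_{2,n}$ with constant $K=1$; and (iii) the constraint $\rho \ge \gamma/(\gamma-2)$ in \Cref{thm:chaining} is vacuous, removing the obstruction to working with $\gamma=2$ moments throughout.

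Concretely, I would first derive the finite-class inequality by applying \Cref{lem:bernstein} with $m=1$ and $\beta_n(1)=0$: for any finite $\Fcal_0 \subset \Fcal$ with $\sup_{f\in \Fcal_0}\|f\|_\infty \le B$ and $\sup_{f\in \Fcal_0}\|f\|_{2,n} \le \delta$,
\begin{equation*}
    \E\|\G_n\|_{\Fcal_0} \lesssim \delta \sqrt{\ln_+ |\Fcal_0|} + \frac{B \ln_+ |\Fcal_0|}{\sqrt{n}}.
\end{equation*}
Then I would reproduce the chaining structure of \Cref{thm:chaining} verbatim, but with all brackets measured in $\|\cdot\|_{2,n}$: at each dyadic scale $2^{-j}\delta$ for $j=0,1,\ldots,J$ with $2^{-J}\delta \asymp N_{[]}^{-1}(e^n)$, take an optimal $\|\cdot\|_{2,n}$-bracketing of cardinality $N_{[]}(2^{-j}\delta,\Fcal,\|\cdot\|_{2,n})$, approximate each $f$ by the bracket centers, apply the finite-class Bernstein bound to the link functions (which are bounded by $B$ after truncation and have $\|\cdot\|_{2,n}$-norm of order $2^{-j}\delta$), and sum. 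A standard geometric summation recovers the entropy integral $\int_0^\delta \sqrt{\ln_+ N_{[]}(\eps,\Fcal,\|\cdot\|_{2,n})}\,d\eps$ and the linear-in-entropy term $B \ln_+ N_{[]}(\delta,\Fcal,\|\cdot\|_{2,n})/\sqrt{n}$.

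The truncation at level $B$ is handled as in the proof of \Cref{thm:chaining}: each tail piece $f\ind\{|f|>B\}$ is dominated by $F\ind\{F>B\}$, whose centered partial sum has first absolute moment bounded by $\sqrt{n}\|F\ind\{F>B\}\|_{1,n}$; this uses \Cref{lem:properties-norm}(iii) with $b=2$, so the bound requires only an $L_2$-envelope, not an $L_\gamma$-envelope for $\gamma>2$. The stopping term $\sqrt{n}\,N_{[]}^{-1}(e^n)$ caps the chain at the scale where the Bernstein tail saturates, and vanishes as $n\to \infty$ whenever the entropy integral is finite.

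There is no real technical obstacle; the corollary is a direct simplification of \Cref{thm:chaining}. The only point requiring care is to verify that the $\gamma > 2$ restriction in \Cref{thm:chaining} was imposed solely to balance the mixing decay rate against the moment condition through Lemma \ref{lem:properties-norm}(iii) applied with $b=\gamma$. In the independent case no such balancing is needed because the $\sqrt{n}B\beta_n(m)$ contribution vanishes identically and the $m$-factor in the linear-in-entropy term collapses to $1$, so one can safely substitute the $L_2$ norm throughout the chain without weakening the Bernstein step.
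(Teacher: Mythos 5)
Your proposal is correct and follows essentially the same route as the paper, which simply observes that independence makes the covariance sum equal to $\frac1n\sum_i\var[f(X_i)]=\|f\|_{2,n}^2$ and makes $\beta_n\equiv 0$, then applies \Cref{thm:chaining} with $m=1$. One small virtue of your write-up: you make explicit that the $\rho\ge\gamma/(\gamma-2)$ requirement of \Cref{thm:chaining} (which degenerates to $\rho\ge\infty$ at $\gamma=2$) is vacuous here because the mixing rate only enters via the term $\sqrt{n}\,\tau_{r-1}\beta_n(m_r)$, which vanishes identically under independence; the paper's one-line proof takes this for granted.
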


\begin{proof}
	It holds $$\frac{1}{n} \sum_{i,j= 1}^{n} |\cov[h(X_i),h(X_j)]|=\frac{1}{n} \sum_{i= 1}^{n} \var[h(X_i)]\leq 2\|h\|_{2,n}$$
	and the $\beta$-coefficients are $0$ for all $m\geq 1$. 
	Applying \Cref{thm:chaining} with $m=1$ yields the claim. 
\end{proof}

\begin{theorem}\label{thm:bracketing-coupled}
	Let $\Fcal$ be a class of functions $f\colon \Xcal \to \R$ with envelope $F$ and for some $\gamma > 2$,
	\begin{align*}
		\| f \|_{\gamma,n} \le \delta \qquad \frac{1}{n} \sum_{i,j= 1}^{n} |\cov[h(X_i),h(X_j)]|\leq K_1 \|h\|_{\gamma,n}^2
	\end{align*}
	for all $f\in \Fcal$ and $h:\Xcal\to \R$ bounded and measurable.
	Suppose that $\max_n \beta_n(m) \leq K_2 m^{-\rho}$ for some $\rho \ge \gamma /(\gamma -2 )$.
	Then, for any $n \ge 5$,
	\begin{align*}
		\E\|\G_n \|_{\Fcal}   \lesssim  \int_0^\delta  \sqrt{\ln_+ N_{\left[\right]}(\epsilon) } d \epsilon + \frac{  \|F\|_{\gamma, n} [\ln N_{\left[\right]}(\delta)]^{ [1 - 1/(\rho + 1)](1 - 1/\gamma)}}{n^{-1/2 + [1 - 1/(\rho + 1)](1 - 1/\gamma)}} + \sqrt{n} N_{[]}^{-1}(e^n).
	\end{align*}
	with constants only depending on $K_1,K_2$.
	
	In particular, if the integral is finite, $\|F\|_{\gamma,\infty}<\infty$, $\rho > \gamma/(\gamma - 2)$ and $K_1,K_2$ can be chosen independent of $n$,
	then
	\begin{align*}
		\limsup_{n \to \infty}\E\|\G_n \|_{\Fcal}	 \lesssim  \int_0^\delta  \sqrt{\ln N_{\left[\right]}(\epsilon) } d \epsilon.
	\end{align*}
\end{theorem}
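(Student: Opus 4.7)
The plan is to apply \Cref{thm:chaining} with judicious choices of the two free parameters $m$ (block size) and $B$ (truncation level), using the polynomial mixing assumption $\sup_n\beta_n(m)\le K_2 m^{-\rho}$ to replace $\sqrt{n}\,B\,\beta_n(m)$ by $\sqrt{n}\,B\,m^{-\rho}$. For the envelope tail, I would invoke \Cref{lem:properties-norm}\,\ref{C3:properties-norm} with $a=1$ and $b=\gamma$ to obtain
\[
\sqrt{n}\,\|F\ind_{\{F>B\}}\|_{1,n}\;\le\;\sqrt{n}\,B^{1-\gamma}\,\|F\|_{\gamma,n}^{\gamma}.
\]
Writing $L:=\ln_+ N_{[]}(\delta)$, this reduces the bound in \Cref{thm:chaining} to balancing the three terms
\[
\frac{mBL}{\sqrt{n}},\qquad \sqrt{n}\,B\,m^{-\rho},\qquad \sqrt{n}\,B^{1-\gamma}\,\|F\|_{\gamma,n}^{\gamma},
\]
with the chaining integral and the remainder $\sqrt{n}\,N_{[]}^{-1}(e^n)$ carried over unchanged.

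I would first fix $B$ and choose $m$ to equate the first two terms (the mixing--chaining trade-off). Setting them equal forces $m^{\rho+1}\asymp n/L$, i.e.\ $m\asymp (n/L)^{1/(\rho+1)}$, after which both terms are of common order $B\,L^{\alpha}\,n^{1/2-\alpha}$ with $\alpha:=1-1/(\rho+1)$. Next I would balance this against the envelope-tail term, which yields
\[
B \;\asymp\; \|F\|_{\gamma,n}\,L^{-\alpha/\gamma}\,n^{\alpha/\gamma}.
\]
Plugging back, all three terms collapse to the common order $\|F\|_{\gamma,n}\,L^{\alpha(1-1/\gamma)}\,n^{1/2-\alpha(1-1/\gamma)}$. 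Since $\alpha(1-1/\gamma)=[1-1/(\rho+1)](1-1/\gamma)$, this is precisely the middle term of the claimed bound, completing the first assertion.

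For the asymptotic statement I would verify that $\rho>\gamma/(\gamma-2)$ makes the exponent $1/2-\alpha(1-1/\gamma)$ strictly negative: the inequality $\alpha(\gamma-1)/\gamma>1/2$ unwinds to $\rho(\gamma-2)>\gamma$, which is exactly the hypothesis. Consequently, under $\|F\|_{\gamma,\infty}<\infty$ and fixed $\delta$ (so that $L$ is a bounded constant, finiteness of $L$ being guaranteed by finiteness of the chaining integral), the middle term vanishes as $n\to\infty$. The remainder $\sqrt{n}\,N_{[]}^{-1}(e^n)\to 0$ by the final sentence of \Cref{thm:chaining}, and only the chaining integral survives in the $\limsup$.

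The only real subtlety is bookkeeping: the optimal $(m,B)$ must lie in the admissible range $m\in\{1,\dots,\lfloor n/2\rfloor\}$ and $B>0$, which requires rounding $m$ to an integer and handling the small-$n$ or degenerate cases ($L=0$ or $\|F\|_{\gamma,n}=0$) separately (in which the bound is trivial or attained with $m=1$ and $B\to\infty$). Beyond this, the proof is a clean two-stage optimization of a three-term bound and I do not anticipate a serious obstacle.
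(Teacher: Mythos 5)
Your proposal is correct and follows essentially the same route as the paper's proof: both apply \Cref{thm:chaining}, bound the envelope tail via $\|F\ind\{F>B\}\|_{1,n}\le B^{1-\gamma}\|F\|_{\gamma,n}^\gamma$, and then choose $m\asymp (n/\ln_+ N_{[]}(\delta))^{1/(\rho+1)}$ and $B\asymp \|F\|_{\gamma,n}(n/\ln_+ N_{[]}(\delta))^{[1-1/(\rho+1)]/\gamma}$ to balance the three terms, with the same exponent check $\rho(\gamma-2)>\gamma$ for the asymptotic statement. The optimization you derive by balancing is exactly the explicit choice made in the paper.
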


\begin{proof}
	It holds
	\begin{align*}
		\|F\ind\{F>B\}\|_{1,n} \leq \frac{\sqrt{n}\|F\|_{\gamma,n}^{\gamma}}{B^{\gamma-1}}.
	\end{align*}
	By \Cref{thm:chaining}
	\begin{align*}
		\E[\|\G_n\|_{\Fcal}] \lesssim \int_0^\delta \sqrt{\ln_+ N_{\left[\right]}(\epsilon) } d \epsilon + \frac{ m B  \ln_+ N_{\left[\right]}(\delta)  }{\sqrt{n}} + \sqrt{n} B\beta_n(m) + \frac{\sqrt{n}\|F\|_{\gamma, n}^\gamma}{B^{\gamma - 1}} + \sqrt{n} N_{[]}^{-1}(e^n).
	\end{align*}
	Choose $m = (n/ \ln_+ N_{[]}(\delta))^{ 1/(\rho + 1) }$, which gives
	\begin{align*}
		\frac{m  \ln_+ N_{[]}(\delta)}{\sqrt{n}} + \sqrt{n}\beta_n(m) \lesssim  n^{-1/2 + 1/(\rho + 1)} [\ln_+ N_{[]}(\delta)]^{ 1 - 1/(\rho + 1) },
	\end{align*}
	and, thus,
	\begin{align*}
		\E[\|\G_n\|_{\Fcal}] \lesssim \int_0^\delta \sqrt{\ln_+ N_{\left[\right]}(\epsilon) } d \epsilon + \frac{  B [\ln_+ N_{\left[\right]}(\delta)]^{ 1 - 1/(\rho + 1) }  }{n^{1/2 - 1/(\rho + 1)}} + \frac{\sqrt{n}\|F\|_{\gamma, n}^\gamma}{B^{\gamma - 1}} + \sqrt{n} N_{[]}^{-1}(e^n).
	\end{align*}
		Next, choose 
	\begin{align*}
		B = \left(\frac{n^{[1 - 1/(\rho  + 1)]} \|F\|_{\gamma, n}^\gamma}{[\ln_+ N_{\left[\right]}(\delta)]^{ 1 - 1/(\rho + 1) } }\right)^{1/\gamma}
	\end{align*}
	This gives
	\begin{align*}
		\frac{  B [\ln_+ N_{\left[\right]}(\delta)]^{ 1 - 1/(\rho + 1) }  }{n^{1/2 - 1/(\rho + 1)}} +  \frac{\sqrt{n}\|F\|_{\gamma, n}^\gamma}{B^{\gamma - 1}} &= \frac{  \|F\|_{\gamma, n} [\ln_+ N_{\left[\right]}(\delta)]^{ [1 - 1/(\rho + 1)](1 - 1/\gamma)}}{n^{-1/2 + [1 - 1/(\rho + 1)](1 - 1/\gamma)}}.
	\end{align*}
	Lastly, if $\rho > \gamma/(\gamma - 2)$, then 
	\begin{align*}
		-1/2 + [1 - 1/(\rho + 1)](1 - 1/\gamma) > 0,
	\end{align*}
	so the second term in the first statement vanishes as $n \to \infty$ and 
	the last term vanishes since the bracketing integral is finite. 
\end{proof}

\begin{proof}[Proof of \Cref{thm:chaining}]
	Let us first deduce the last statement.
	If the bracketing integral exists, then it must hold $\sqrt{\ln_+ N_{\left[\right]}(\delta) }\lesssim \delta^{-1} / (1 + |\ln(\delta)|)$ for $\delta \to 0$, because the upper bound is not integrable.
	For $\delta^{-1} = \sqrt{n \ln n }$, we have $\ln_+ N_{\left[\right]}(\delta) \lesssim n  \ln n / (1 + \ln n)^2 = o(n)$. So for large $n$, it must hold
		$N_{\left[\right]}^{-1}(e^n) \lesssim 1 / \sqrt{n \ln n }\to 0$.

	We now turn to the proof of the first statement.

	\subsubsection*{Truncation}
	We first 
	truncate the function class $\Fcal$ in order to apply Bernstein's 
	inequality in combination with a chaining argument.
	It holds
	\begin{align*}
		\E\| \G_n  \|_{\mathcal{F}}\leq \E\bigl[\| \G_n( f\ind\{|F|\leq B\})  \|_{\mathcal{F}}\bigr] + \E\bigl[\| \G_n\left( f\ind\{|F|> B\}\right)  \|_{\mathcal{F}}\bigr],
	\end{align*}
	and
	\begin{align*}
		\E\bigl[\| \G_n( f\ind\{|F|> B\})  \|_{\mathcal{F}}\bigr]
		&\leq 2\frac{1}{\sqrt{n}}\sum_{i=1}^{n}\E\bigl[F( X_i) \ind\{|F( X_i) |>B\}\bigr] 
		=2\sqrt{n}\|F\ind\{F> B\}\|_{1,n}.
	\end{align*}
	In summary,
	\begin{align*}
		\E\| \G_n( f)  \|_{\mathcal{F}}
		\leq \E\bigl[\| \G_n( f\ind\{|F|\leq B\})  \|_{\mathcal{F}}\bigr] + 2\sqrt{n}\|F\ind\{F> B\}\|_{1,n}.
	\end{align*}
	Note that $|f\ind\{|F|\leq B\}|\leq F\ind\{|F|\leq B\}\leq B$.
	By replacing $\Fcal$ with $$\Fcal_{trun}=\left\{f\ind\left\{|F|\leq B\right\}\colon f\in \Fcal\right\},$$
	we may without loss of generality assume 
	that $\Fcal$ has an envelope with $\|F\|_\infty \leq B$.
	Observe that the conditions of the theorem remain true for $\Fcal_{trun}$
	and that the bracketing numbers with respect to 
	$\Fcal_{trun}$ are bounded above by the 
	bracketing numbers with respect to $\Fcal$. 

	\subsubsection*{Chaining setup} %
	Fix integers $r_0\leq r_1$ such that $2^{-r_0-1} < \delta \le 2^{-r_0}$.
	For $r \ge r_0$ we construct a nested sequence of partitions $\Fcal = \bigcup_{k = 1}^{N_r} \Fcal_{r, k}$ 
	of $\Fcal$ into $N_r$ disjoint subsets
	such that for each $r \geq r_0$
	\begin{align*}
		 & \left\| \sup_{f, f' \in \Fcal_{r, k} } | f - f'| \right\|_{\gamma,n} < 2^{-r}.
	\end{align*}
	Clearly, we can choose the partition such that $$N_{r_0} \le N_{\left[\right]}\left(  2^{-r_0}\right)  \le N_{\left[\right]}\left(  \delta\right) .$$
	We may assume $\ln_+N_{r_0}\leq n$:
	If $n<\ln_+ N_{r_0}\leq \ln_+ N_{[]}(\delta)$ then 
	$$\E\|\G_n\|_\Fcal\lesssim \sqrt{n} B\leq \frac{mB \ln_+ N_{[]}(\delta)}{\sqrt{n}}$$
	which still implies the claim. 
	As explained in the proof of Theorem 2.5.8 of \citet{van2023weak}, we may assume without loss of generality that
	\begin{align*}
		\sqrt{\ln_+ N_r} \le \sum_{k = r_0}^r \sqrt{ \ln_+ N_{\left[\right]}\left( 2^{-k}\right) }.
	\end{align*}
	Then by reindexing the double sum,
	\begin{align*}
		\sum_{r = r_0}^{r_1} 2^{-r} \sqrt{\ln_+ N_r}
		 & \le \sum_{r = r_0}^{r_1}  2^{-r} \sum_{k = r_0}^r \sqrt{\ln_+ N_{\left[\right]}\left( 2^{-k}\right)  }                   \\
		 & = \sum_{k = r_0}^{r_1}   \sqrt{\ln_+ N_{\left[\right]}\left( 2^{-k}\right) } \sum_{r = k}^{r_1}  2^{-r}                   \\
		 & = \sum_{k = r_0}^{r_1}  2^{-k} \sqrt{\ln_+ N_{\left[\right]}\left( 2^{-k}\right)  }    \sum_{r = k}^{r_1} 2^{-\left( r - k\right) } \\
		 & \lesssim \sum_{k = r_0}^{r_1} 2^{-k} \sqrt{\ln_+ N_{\left[\right]}\left( 2^{-k}\right)   }             \\
		 & \lesssim \int_0^\delta \sqrt{\ln_+ N_{\left[\right]}(\epsilon) } d \epsilon.
	\end{align*}

	\subsubsection*{Decomposition} %
	For a given $f$, suppose that $\Fcal_{r, k}$ is the element of the partition that contains $f$. 
	Note that such $\Fcal_{r, k}$ is unique since all $\Fcal_{r, 1},\ldots,\Fcal_{r, N_r}$ are disjoint.
	Define $\pi_r\left( f\right) $ as some fixed element of this set and define
	\begin{align*}
		\Delta_{r}\left( f\right)  = \sup_{f_1, f_2 \in \Fcal_{r, k}}|f_1 - f_2|.
	\end{align*}
	Set 
	\begin{align} \label{eq:tau-m-def}
		\tau_r = \frac{2^{-r}}{m_{r+1}} \sqrt{\frac{n}{\ln_+ N_{r+1}}}, \quad m_r = \min\left\{\sqrt{\frac{\ln_+ N_{r}}{n}}, 1\right\}^{-(\gamma - 2)/ (\gamma - 1)}, 
	\end{align}
	and 
	\begin{align*}
		r_1 =  -\log_2 N_{[]}^{-1}(e^n).
	\end{align*}
	From the definition we see $ \ln_+ N_{r}  \le n$ for all $r \le r_1$ 
	since $N_r$ is increasing and 
	$r_0\leq r_1$ since $ \ln_+ N_{r_0}  \le n$.
	We will frequently apply Bernstein's inequality with $m=m_r$. Here, note
	$$m_r\leq \sqrt{n/\ln_+ N_r} \leq \sqrt{n/\ln(2)}\leq n/2$$ for all $n\geq 5$. 

	The following (in-)equalities are 
	the reason for the choices of $\tau_r$ and $m_r$: 
	for $r$ such that $\ln_+ N_{r+1}\leq n$, i.e., $r<r_1$ it holds
	\begin{align*}
		\frac{m_r\tau_{r-1}}{\sqrt{n}}
		&=2^{-r+1}\bigl(\sqrt{\ln_+ N_r}\bigr)^{-1}\\
		\sqrt{n}2^{-r\gamma}\tau_r^{-(\gamma-1)}
		&=2^{-r}\sqrt{n}\left(\frac{1}{m_{r+1}}\sqrt{\frac{n}{\ln_+ N_{r+1}}}\right)^{-(\gamma-1)}
		\\
		&=2^{-r}\sqrt{n}^{2-\gamma}\left(\sqrt{\ln_+ N_{r+1}}\right)^{\gamma-1}m_{r+1}^{\gamma-1}
		\\
		&=2^{-r}\sqrt{\ln_+ N_{r+1}}\left(\sqrt{\frac{\ln_+ N_{r+1}}{n}}\right)^{\gamma-2} m_{r+1}^{\gamma-1}
		\\
		&=2^{-r}\sqrt{\ln_+ N_{r+1}}
		\\
		\sqrt{n}\tau_{r-1}\beta_n(m_r)
		&= \sqrt{n} 2^{-r+1}  \frac{1}{m_r} \sqrt{\frac{n}{\ln_+ N_{r}}} \beta_n(m_r)
		\\
		&\lesssim 2^{-r+1}  \frac{1}{m_r} \frac{n}{\sqrt{\ln_+ N_{r}}} m_r^{-\rho}
		\\
		&=2^{-r+1} \sqrt{\ln_+ N_{r}}  \frac{n}{\ln_+ N_{r}} m_r^{-\rho-1}\\
		&=2^{-r+1} \sqrt{\ln_+ N_{r}} m_r^{\frac{2(\gamma-1)}{\gamma-2}}  m_r^{-\rho-1}
		\\
		&\leq 2^{-r+1} \sqrt{\ln_+ N_{r}}
	\end{align*}
	where the last inequality holds since $1\leq m_r$ and $\gamma/(\gamma-2)\leq \rho$, hence, 
	$m_r^{\frac{2(\gamma-1)}{\gamma-2}-\rho-1}\leq 1$.

	Decompose 
	\begin{align*}
		f
		 & =   \pi_{r_0}\left( f\right)  + \left[f - \pi_{r_0}\left( f\right) \right] \ind\{\Delta_{r_0}\left( f\right) /\tau_{r_0} > 1\}                                                                       \\
		 & \quad + \sum_{r = r_0 + 1}^{r_1} \left[f - \pi_{r}\left( f\right) \right] \ind\left\{\max_{r_0 \le k < r}\Delta_{k}\left( f\right) /\tau_{k} \le 1,  \Delta_{r}\left( f\right) /\tau_{r} > 1\right\} \\
		 & \quad + \sum_{r = r_0 + 1}^{r_1} \left[\pi_{r}\left( f\right)  - \pi_{r - 1}\left( f\right) \right] \ind\left\{\max_{r_0 \le k < r}\Delta_{k}\left( f\right) /\tau_{k} \le 1\right\}                 \\
		 & \quad + \left[f - \pi_{r_1}\left( f\right) \right] \ind\left\{\max_{r_0 \le k \le r_1}\Delta_{k}\left( f\right) /\tau_{k} \le 1\right\}                                               \\
		 & = T_1\left( f\right)  + T_2\left( f\right)  + T_3\left( f\right)  + T_4\left( f\right) .
	\end{align*}
	To see this, note that if $\Delta_{r_0}\left( f\right) /\tau_{r_0} > 1$ all terms but $T_1(f)$ vanish
	and $T_1(f)=f$. 
	Otherwise, define $\hat{r}$ as the maximal number $r_0\leq r\leq r_1$ such that 
	$\max_{r_0 \le k \leq r}\Delta_{k}\left( f\right) /\tau_{k} \le 1$. 
	Then, 
	\begin{align*}
		T_1(f)&=\pi_{r_0}(f)
	\end{align*}
	and if $\hat{r}<r_1$, then, 
	\begin{align*}
		T_2(f)=f-\pi_{\hat{r}+1}(f) \qquad
		T_3(f)=\pi_{\hat{r}+1}(f)-\pi_{r_0}(f)\qquad
		T_4(f)=0.
	\end{align*}
	If $\hat{r}=r_1$, then, 
	\begin{align*}
		T_2(f)=0 \qquad
		T_3(f)=\pi_{r_1}(f)-\pi_{r_0}(f) \qquad
		T_4(f)=f-\pi_{r_1}(f).
	\end{align*}

	We prove the theorem by separately bounding the four terms $\E\|\G_n T_j \|_\Fcal$.
	Note that $\G_n$ is additive by construction, i.e., $\G_n\left( f+g\right) =\G_n\left( f\right) +\G_n\left( g\right) $. 
	\subsubsection*{Bounding $T_1$}  %

    Note that for every $|g|\le h$ it follows 
    $$|\G_n\left( g\right) |\leq |\G_n\left( h\right) |+2\sqrt{n}\|h\|_{1,n}.$$
	In combination with the triangle inequality we obtain
	\begin{align*}
		\|\G_n T_1 \|_{\Fcal}
		 & \le   \|\G_n \pi_{r_0} \|_\Fcal + \|\G_n \Delta_{r_0}\|_\Fcal+ 2\sqrt{n} \sup_{f\in \Fcal}\|\Delta_{r_0}(f) \ind\{\Delta_{r_0}\left( f\right) /\tau_{r_0} > 1 \} \|_{1,n} .
	\end{align*}
	The sets $\{\Delta_{r_0}\left( f\right)  \colon f \in \Fcal\}$ and $\{\pi_{r_0}\left( f\right)  \colon f \in \Fcal\}$ contain at most $N_{r_0}$ different functions each.
	The construction implies
	\begin{align*}
		 & \|\pi_{r_0}\left( f\right) \|_{\gamma,n} \le \delta, \quad \|\pi_{r_0}\left( f\right) \|_\infty \lesssim B, \quad \|\Delta_{r_0}\left( f\right) \|_{\gamma,n} \le 2\delta, \quad \|\Delta_{r_0}\left( f\right) \|_\infty \lesssim B.
	\end{align*}
	Now the Bernstein bound from \cref{lem:bernstein} gives
	\begin{align*}
		\E \|\G_n\pi_{r_0} \|_{\Fcal} + \E \|\G_n\Delta_{r_0} \|_{\Fcal}
		& \lesssim \delta  \sqrt{\ln_+ N_{r_0}}+  \frac{mB}{\sqrt{n}} \ln_+ N_{r_0} + \sqrt{n} B \beta_n(m).
		\\
		 & \le \delta  \sqrt{\ln_+ N_{\left[\right]}(\delta) }+  \frac{mB}{\sqrt{n}} \ln_+ N_{\left[\right]}(\delta) + \sqrt{n} B\beta_n(m) .
	\end{align*}
	Since the bracketing numbers are decreasing,
	\begin{align*}
		\sqrt{\ln_+ N_{\left[\right]}(\delta) } \le \delta^{-1} \int_{0}^\delta \sqrt{\ln_+ N_{\left[\right]}(\epsilon) } d\epsilon 
	\end{align*}
	so 
	\begin{align*}
		\E \|\G_n\pi_{r_0} \|_{\Fcal} +  \E \|\G_n\Delta_{r_0}\|_{\Fcal}
		 & \lesssim  \int_{0}^\delta \sqrt{\ln_+ N_{\left[\right]}(\epsilon) } d\epsilon  + \frac{ m B  \ln_+ N_{\left[\right]}(\delta)  }{\sqrt{n}} +  \sqrt{n} B \beta_n(m).
	\end{align*}
	Recall $\ln_+ N_{r+1} \le n$ for any $r < r_1$. For any such $r$, \ref{C3:properties-norm} of \Cref{lem:properties-norm} gives
	\begin{align*}
		\sqrt{n} \sup_{f\in \Fcal}\|\Delta_{r}(f) \ind\{\Delta_{r}\left( f\right) /\tau_{r} > 1 \} \|_{1,n} 
		& \le \sqrt{n} \tau_{r}^{-\left( \gamma - 1\right) } \sup_{f\in \Fcal}\|\Delta_{r}\left( f\right)  \|_{\gamma,n}^\gamma        \\
		& \le \sqrt{n} \tau_{r}^{-\left( \gamma - 1\right) }  2^{-r\gamma}      
	\end{align*}
	so that the final upper bound becomes
	\begin{align} \label{eq:Delta-bound}
		\sqrt{n} \sup_{f\in \Fcal}\|\Delta_{r}(f) \ind\{\Delta_{r}\left( f\right) /\tau_{r} > 1 \} \|_{1,n}  \lesssim 2^{-r} \sqrt{\ln_+ N_{r+1}},
	\end{align}
	for any $r < r_1$. In particular, using $\delta\leq 2^{-r_0}$, we get 
	\begin{align*}
		\sqrt{n} \sup_{f\in \Fcal}\|\Delta_{r_0}(f) \ind\{\Delta_{r_0}\left( f\right) /\tau_{r_0} > 1 \} \|_{1,n}
    & \lesssim  \delta \sqrt{\ln_+ N_{[]}(\delta)} \le \int_0^\delta \sqrt{\ln_+ N_{\left[\right]}(\epsilon) } d\epsilon.
	\end{align*}
	Combined,
	\begin{align*}
		\E \|\G_n T_1 \|_{\Fcal}\lesssim \int_{0}^\delta \sqrt{\ln_+ N_{\left[\right]}(\epsilon) } d\epsilon  + \frac{ m B  \ln_+ N_{\left[\right]}(\delta)  }{\sqrt{n}} +  \sqrt{n} B \beta_n(m).
	\end{align*}

	\subsubsection*{Bounding $T_2$} %
	Next,
	\begin{align*}
		\E \|\G_n T_2\|_{\Fcal}
		 & \le  \sum_{r = r_0 + 1}^{r_1} \E \left\| \G_n  \Delta_{r} \ind\left\{\max_{r_0 \le k < r}\Delta_{k}/\tau_{k} \le 1, \Delta_{r}/\tau_{r} > 1\right\}\right\|_{\Fcal}                  \\
		 & \quad + 2\sqrt{n} \sum_{r = r_0 + 1}^{r_1} \sup_{f\in \Fcal} \left\|\Delta_{r}(f)  \ind\left\{\max_{r_0 \le k < r}\Delta_{k}(f)/\tau_{k} \le 1, \Delta_{r}/\tau_{r} > 1\right\} \right\|_{1,n}        \\
		 & = T_{2, 1} + T_{2, 2}.
	\end{align*}
	We start by bounding the first term. 
	It holds
	\begin{align*}
		\left\|\Delta_{r}( f)   \ind\left\{\max_{r_0 \le k < r}\Delta_{k}\left( f\right) /\tau_{k} \le 1, \Delta_{r}\left( f\right) /\tau_{r} > 1\right\} \right\|_{\gamma,n}
		\leq 2^{-r}
	\end{align*}
	by construction of $\Delta_{r}( f)$.
    Since the partitions are nested $\Delta_{r} \le \Delta_{r - 1}$. Thus,
	\begin{align*}
		\left\| \Delta_{r} \ind\left\{\max_{r_0 \le k < r}\Delta_{k}/\tau_{k} \le 1, \Delta_{r}/\tau_{r} > 1\right\} \right\|_\Fcal\le   \tau_{r - 1}.
	\end{align*}
	Since there are at most $N_r$ functions in $\{\Delta_r\left( f\right) \colon f \in \Fcal\}$,
	the Bernstein bound from \cref{lem:bernstein} yields
	\begin{align*}
		T_{2, 1}
		& \lesssim  \sum_{r = r_0 + 1}^{r_1} \left[ 2^{-r}\sqrt{\ln_+ N_{r}}
		+ \frac{m_r \tau_{r-1} }{\sqrt{n}} \ln_+ N_{r} + \sqrt{n} \tau_{r-1}\beta_n(m_r)\right]                                             \\
		& \lesssim  \sum_{r = r_0 + 1}^{r_1}  2^{-r}\sqrt{\ln_+ N_{r}}      \\
		&\lesssim   \int_0^\delta \sqrt{\ln_+ N_{\left[\right]}(\epsilon) } d \epsilon.
	\end{align*}
	Further, \eqref{eq:Delta-bound} gives 
	\begin{align*}
		\sqrt{n} \sup_{f\in \Fcal} \left\|\Delta_{r}(f)  \ind\left\{\max_{r_0 \le k < r}\Delta_{k}(f)/\tau_{k} \le 1, \Delta_{r}/\tau_{r} > 1\right\} \right\|_{1,n}
		 & \lesssim 2^{-r} \sqrt{\ln_+ N_{r}}.
	\end{align*}
	for $r<r_1$ and 
	\begin{align*}
		\sqrt{n} \sup_{f\in \Fcal} \left\|\Delta_{r_1}(f)  \ind\left\{\max_{r_0 \le k < r_1}\Delta_{k}(f)/\tau_{k} \le 1, \Delta_{r_1}/\tau_{r_1} > 1\right\} \right\|_{1,n}
		&\leq \sqrt{n} \sup_{f\in \Fcal} \left\|\Delta_{r_1}(f) \right\|_{\gamma,n}^{\gamma}
		\\ & \leq \sqrt{n} 2^{-r_1} 
		\\
		& = \sqrt{n}N^{-1}_{[]}(e^n)
	\end{align*}
	by the definition of $r_1$.
	Thus, 
	\begin{align*}
		T_{2, 2} & \lesssim \sum_{r = r_0 + 1}^{r_1-1} 2^{-r} \sqrt{\ln_+ N_{r}} + \sqrt{n}N^{-1}_{[]}(e^n) \lesssim \int_0^\delta \sqrt{\ln_+ N_{\left[\right]}(\epsilon) } d \epsilon + \sqrt{n}N^{-1}_{[]}(e^n),
	\end{align*}
	and, in summary,
	\begin{align*}
		\E \|\G_n T_2\|_\Fcal\lesssim  \int_0^\delta \sqrt{\ln_+ N_{\left[\right]}(\epsilon) } d \epsilon  + \sqrt{n}N^{-1}_{[]}(e^n).
	\end{align*}

	\subsubsection*{Bounding $T_3$}  %

	Next,
	\begin{align*}
		\E \|\G_n T_3\|_{\Fcal}
		 & \leq  \sum_{r = r_0 + 1}^{r_1} \E \left\|\G_n  \left[\pi_{r} - \pi_{r -
				1}\right] \ind\left\{\max_{r_0 \le k < r}\Delta_{k}/\tau_{k} \le 1\right\}\right\|_{\Fcal}.
	\end{align*}
	There are at most $N_r$ functions $\pi_{r}\left( f\right) $ and at most $N_{r - 1}$ functions $\pi_{r -1}\left( f\right) $ as $f$ ranges over $\Fcal$.
	Since the partitions are nested, $|\pi_r\left( f\right)  - \pi_{r - 1}\left( f\right) | \le \Delta_{r - 1}\left( f\right) $ and
	\begin{align*}
		|\pi_{r}\left( f\right)  - \pi_{r - 1}\left( f\right) | \ind\left\{\max_{r_0 \le k < r}\Delta_{k}\left( f\right) /\tau_{k} \le 1\right\}
		 & \le  |\Delta_{r - 1}\left( f\right) | \ind\left\{\Delta_{r-1}\left( f\right) /\tau_{r - 1} \le 1\right\}
		\le \tau_{r -1}.
	\end{align*}
	Further,
	\begin{align*}
		\|\pi_{r}\left( f\right)  - \pi_{r - 1}\left( f\right) \|_{\gamma,n} \le  \| \Delta_{r - 1}\left( f\right)  \|_{\gamma,n} \le 2^{-r + 1}.
	\end{align*}
	Just as for $T_{2, 1}$, the Bernstein bound (\cref{lem:bernstein})  gives
	\begin{align*} 
		\E \|\G_nT_3\|_\Fcal\lesssim  \int_0^\delta \sqrt{\ln_+ N_{\left[\right]}(\epsilon)  d\epsilon}.
	\end{align*}

	\subsubsection*{Bounding $T_4$}  %

	Finally,
	\begin{align*}
		\E\|\G_n T_4\|_{\Fcal}
		 & =  \E\left\|\G_n \left[f - \pi_{r_1}\left( f\right) \right] \ind\left\{\max_{r_0 \le k \le r_1}\Delta_{k}\left( f\right) /\tau_{k} \le 1\right\} \right\|_{f\in\Fcal}                                                                                \\
		 & \lesssim \E\left\|\G_n  \Delta_{r_1} \ind\left\{\Delta_{r_1}\le \tau_{r_1} \right\} \right\|_\Fcal+ \sqrt{n} \sup_{f\in \Fcal} \left\| \Delta_{r_1}(f) \ind\left\{\Delta_{r_1}(f) \le \tau_{r_1} \right\} \right\|_{1,n} \\
		 & \lesssim \E\left\|\G_n  \Delta_{r_0} \right\|_\Fcal+ \sqrt{n} \tau_{r_1}.      
		\end{align*}
	and the first term is bounded by $T_1$.
	Finally, observe that, by the definition of $r_1$,
	\begin{align*}
		\sqrt{n} \tau_{r_1} \le \sqrt{n} 2^{-r_1} = \sqrt{n} N_{[]}^{-1}(e^n).
	\end{align*}
	Combining the bounds yields the claim. 
\end{proof}

\begin{lemma}\label{rem:sum-implies-cov-bounded-by-moment}
	For $\gamma>2$ and 
	\begin{align*}
		\sum_{i = 1}^n \beta_{n}\left( i\right) ^{\frac{\gamma-2}{\gamma}}\leq K
	\end{align*}
	it holds
	\begin{align*}
		\frac{1}{n} \sum_{i,j= 1}^{n} |\cov[h(X_i),h(X_j)]|
		\leq 8K \|h\|_{\gamma,n}^2
	\end{align*}
	for all $h:\Xcal\to \R$ measurable.

	Furthermore, if $\sup_{n\in \N} \max_{m\leq n} m^{\rho}\beta_n(m)<\infty$ for some 
	$\rho>\gamma/(\gamma-2)$, then, $$\sup_n\sum_{i = 1}^n \beta_{n}\left( i\right) ^{\frac{\gamma-2}{\gamma}}<\infty.$$
\end{lemma}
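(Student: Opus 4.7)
The plan is to reduce the double sum of covariances to a single sum of $\gamma$-norms via a covariance inequality for $\beta$-mixing, then convert back to $\|h\|_{\gamma,n}^2$ through Jensen's inequality.

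First, I would invoke Rio's covariance inequality (see e.g.\ \cite{rio2017asymptotic}, Theorem 2.5), which gives, for any $p, q, r \geq 1$ with $1/p + 1/q + 1/r = 1$ and random variables $X, Y$ measurable with respect to sub-$\sigma$-algebras $\Fcal_1, \Fcal_2$,
\begin{align*}
    |\cov(X,Y)| \le C \, \beta(\Fcal_1,\Fcal_2)^{1/r} \|X\|_p \|Y\|_q,
\end{align*}
with an explicit absolute constant $C$. Choosing $p = q = \gamma$ forces $1/r = (\gamma-2)/\gamma$. Applying this to $X = h(X_i)$ and $Y = h(X_j)$ for $i \neq j$ (and noting that for $i = j$ the covariance is $\var[h(X_i)] \le \|h(X_i)\|_\gamma^2$, which is absorbed into the $i=j$ term when we use $\beta_n(0) = 1$), we get
\begin{align*}
    |\cov[h(X_i),h(X_j)]| \lesssim \beta_n(|i-j|)^{(\gamma-2)/\gamma} \|h(X_i)\|_\gamma \|h(X_j)\|_\gamma.
\end{align*}
The constant $8$ in the claim comes from carefully tracking the factor from Rio's inequality together with the symmetrization step below.

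Next, I would symmetrize using AM-GM, $\|h(X_i)\|_\gamma \|h(X_j)\|_\gamma \le \tfrac{1}{2}(\|h(X_i)\|_\gamma^2 + \|h(X_j)\|_\gamma^2)$, and exploit symmetry in $(i,j)$ to collapse the double sum:
\begin{align*}
    \frac{1}{n}\sum_{i,j=1}^{n} |\cov[h(X_i),h(X_j)]|
    \lesssim \frac{1}{n} \sum_{i=1}^n \|h(X_i)\|_\gamma^2 \sum_{j=1}^n \beta_n(|i-j|)^{(\gamma-2)/\gamma}
    \le \frac{2K}{n}\sum_{i=1}^{n}\|h(X_i)\|_\gamma^2,
\end{align*}
where in the last inequality I use the hypothesis together with $\sum_{j} \beta_n(|i-j|)^{(\gamma-2)/\gamma} \le 2 \sum_{k=0}^n \beta_n(k)^{(\gamma-2)/\gamma} \le 2(1+K)$. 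Finally, since $x \mapsto x^{\gamma/2}$ is convex for $\gamma \ge 2$, Jensen's inequality yields
\begin{align*}
    \frac{1}{n}\sum_{i=1}^n \|h(X_i)\|_\gamma^2 = \frac{1}{n}\sum_{i=1}^n \E[|h(X_i)|^\gamma]^{2/\gamma} \le \left(\frac{1}{n}\sum_{i=1}^n \E[|h(X_i)|^\gamma]\right)^{2/\gamma} = \|h\|_{\gamma,n}^2,
\end{align*}
giving the stated bound. The only subtlety worth double-checking is whether Rio's inequality applies under mere measurability of $h$; the inequality is stated for functions in $L^\gamma$, which is automatic when $\|h\|_{\gamma,n} < \infty$, and the bound is trivial otherwise.

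For the second part, the polynomial tail $\beta_n(m) \le C m^{-\rho}$ gives
\begin{align*}
    \sum_{i=1}^n \beta_n(i)^{(\gamma-2)/\gamma} \le C^{(\gamma-2)/\gamma} \sum_{i=1}^\infty i^{-\rho(\gamma-2)/\gamma},
\end{align*}
and the exponent $\rho(\gamma-2)/\gamma$ exceeds $1$ exactly when $\rho > \gamma/(\gamma-2)$, making the series summable uniformly in $n$. Since no single step is delicate, the main point is just bookkeeping the constants in Rio's inequality to match the factor $8$ in the statement.
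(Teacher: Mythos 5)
Your proof follows essentially the same route as the paper: a $\beta$-mixing covariance inequality (you cite Rio, the paper cites Doukhan's Theorem~3 — these are interchangeable here), symmetrization via $ab\le a^2+b^2$, collapsing the double sum using the summability hypothesis, and Jensen with $2/\gamma<1$ to pass from $\tfrac1n\sum_i\|h(X_i)\|_\gamma^2$ to $\|h\|_{\gamma,n}^2$; the second part is handled identically. One small slip worth fixing: you bound $\sum_j\beta_n(|i-j|)^{(\gamma-2)/\gamma}\le 2(1+K)$ but then write $\le 2K$, which fails when $K<1$; either absorb the $1$ by assuming $K\ge 1$ without loss (since one may always enlarge $K$) or carry the $1+K$ through — in any case it is subsumed in the bookkeeping you already flag as needed to recover the explicit factor $8$, a bookkeeping the paper itself leaves partly implicit.
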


\begin{proof}
	Let us prove the latter claim first. Note that
	if $\sup_{n\in \N} \max_{m\leq n} m^{\rho}\beta_n(m)<\infty$ then 
	$\sum_{i = 1}^n \beta_{n}\left( i\right) ^{\frac{\gamma-2}{\gamma}}\lesssim \sum_{i = 1}^nm^{-\rho\frac{\gamma-2}{\gamma}} $
	and the latter display converges for $\rho>\gamma/(\gamma-2)$.

	For the first claim, it holds
	\begin{align*}
		&\frac{1}{n} \sum_{i,j= 1}^{n} |\cov[h(X_i),h(X_j)]| \\
        &\leq
        \frac{1}{n} \sum_{i,j= 1}^{n} \beta_{n}(|i-j|)^{\frac{\gamma-2}{\gamma}}\|h(X_i)\|_{\gamma}
        \|h(X_j)\|_{\gamma} \\
        &\leq
        \frac{1}{n}\sum_{i,j= 1}^{n} \beta_{n}(|i-j|)^{\frac{\gamma-2}{\gamma}}(\|h(X_i)\|_{\gamma}^2 + \|h(X_j)\|_{\gamma}^2) \\
        &\leq
        \frac{1}{n} \sum_{i= 1}^{n}\|h(X_i)\|_{\gamma}^2 \sum_{j=1}^{ n}\beta_{n}(|i-j|)^{\frac{\gamma-2}{\gamma}} + \frac{1}{n}\sum_{j=1}^{n}\|h(X_j)\|_{\gamma}^2  \sum_{i= 1}^{n} \beta_{n}(|i-j|)^{\frac{\gamma-2}{\gamma}}\\
        &\leq
        \frac{8K}{n} \sum_{i= 1}^{n} \|h(X_i)\|_{\gamma}^2
		\\
		&\leq 8K \|h\|_{\gamma,n}^2.   
	\end{align*}
	by Theorem 3 of \cite{doukhan2012mixing} and 
	where the last inequality follows from
	\begin{align*}
		\left(\frac{1}{n} \sum_{i= 1}^{n} \|h(X_i)\|_{\gamma}^2\right)^{1/2}
		&=\left(\frac{1}{n} \sum_{i= 1}^{n} \E[|h(X_i)|^{\gamma}]^{2/\gamma}\right)^{1/2}
		\\
		&\leq \left(\frac{1}{n} \sum_{i= 1}^{n} \E[|h(X_i)|^{\gamma}]\right)^{1/\gamma}
		&\text{$2/\gamma\leq1$ and Jensen's inequality.}
	\end{align*}
\end{proof}

\begin{remark}\label{rem:br-bound-cn}
    Given a semi-metric $d$ on $\Fcal$ induced by a semi-norm $\|\cdot\|$ satisfying 
    $$|f|\leq |g|\Rightarrow \|f\|\leq \|g\|,$$ any 
    $2\eps$-bracket $[f,g]$ is contained in the $\epsilon$-ball around 
    $(f-g)/2$. Then, 
    $$N(\eps,\Fcal,d)\leq N_{[]}(2\eps,\Fcal,\|\cdot\|).$$
    Both, $\|\cdot\|_{\gamma,n}$ and 
    $\|\cdot\|_{\gamma,\infty}$, satisfy this property. 
\end{remark}

\subsection{Proof of Theorem \ref{thm:asy-tightness-sup-br}}
	We first prove the existence of an asymptotically tight sequence of GPs. 
		We will conclude by \Cref{prop:ex-tight-GPs,prop:asy-tight-GPs}:
		There exists $K\in \R$ such that $$\sum_{i=1}^{k_n}\beta_n(i)^{\frac{\gamma-2}{\gamma}}\leq K$$ for all $n$ by \Cref{rem:sum-implies-cov-bounded-by-moment}. It holds
		\begin{align*}
				\rho_{n}(s,t)^2&=\var[\mathbb{G}_n(s)-\mathbb{G}_n(t)]\\
				&=\frac{1}{k_n}\var\left[\sum_{i=1}^{k_n}(f_{n,s}-f_{n,t})(X_{n,i})\right]\\
				&\leq \frac{1}{k_n}\sum_{i,j=1}^{k_n}\left|\cov\left[(f_{n,s}-f_{n,t})(X_{n,i}),(f_{n,s}-f_{n,t})(X_{n,j})\right]\right|\\
				&\leq 4K\|f_{n,s}-f_{n,t}\|^2_{\gamma,n}\\
				&=4Kd_n(s,t)^2
		\end{align*}
		by \Cref{rem:sum-implies-cov-bounded-by-moment}.
		Thus, 
		\begin{align*}
			N(2\sqrt{K}\epsilon,T,\rho_n)
			&\leq N(\epsilon,T,d_n)
			\leq N_{[]}(2\epsilon,\Fcal_n,\|\cdot\|_{\gamma,n})
		\end{align*}
		by \Cref{rem:br-bound-cn}.
		Next, observe that 
		$$\ln N_{[]}(\epsilon,\mathcal{F}_n,\|\cdot\|_{\gamma,n})=0$$
		for all $\eps\geq 2\|F\|_{\gamma,n}$. 
		Thus,
		$$\int_{0}^{\infty}\sqrt{\ln N_{[]}(\epsilon,\mathcal{F}_n,\|\cdot\|_{\gamma,n})}d\epsilon < \infty$$
		for all $n$ by the entropy condition \ref{A3n:asy-tightness-sup-br}.
		In summary, 
		\begin{itemize}
			\item $(T,d)$ is totally bounded.
			\item $\lim_{n\to \infty}\int_{0}^{\delta_n}\sqrt{\ln N(\epsilon,T,\rho_{n}) }d\epsilon \lesssim \lim_{n\to \infty}\int_{0}^{\delta_n}\sqrt{\ln N_{[]}(\epsilon,\Fcal_n,\|\cdot\|_{\gamma,n})}d\epsilon= 0$ for all $\delta_n\downarrow 0.$
			\item $\lim_{n\to \infty}\sup_{d(s,t)<\delta_n}\rho_{n}(s,t)\leq \lim_{n\to \infty}\sup_{d(s,t)<\delta_n}2\sqrt{K}d_{n}(s,t) =0$ for every $\delta_n\downarrow 0.$
			\item $\int_{0}^{\infty}\sqrt{\ln N(\epsilon,T,\rho_{n})}d\epsilon\lesssim \int_{0}^{\infty}\sqrt{\ln N_{[]}(\epsilon,\mathcal{F}_n,\|\cdot\|_{\gamma,n})}d\epsilon<\infty$ for all $n$.
		\end{itemize}
		by the assumptions.
		Lastly, 
		$$\sup_n\var[\G_n(t)]\lesssim\sup_n\|F\|_{\gamma,n}^2=\|F\|_{\gamma,\infty}^2<\infty$$
		for all $t\in T$ by the same argument as above.
		Combined, we derive the claim by \Cref{prop:ex-tight-GPs,prop:asy-tight-GPs}.

	Next, we prove 
	asymptotic tightness of $\mathbb{G}_n$.
	We derive that $\sup_n\E\left[\left|\G_n(s) \right|^2\right]<\infty$ for all $s\in T$ again 
	by the moment condition \ref{A1:asy-tightness-sup-br} and the summability 
	condition \Cref{rem:sum-implies-cov-bounded-by-moment}.
	Thus, each $\G_n(s) $ is asymptotically tight. 
	By Markov's inequality and Theorem 1.5.7 of \cite{van2023weak}, it suffices to prove uniform $d$-equicontinuity, i.e.
	that 
	$$ \limsup_{n\to \infty}\E^*\sup_{d\left( f,g\right) <\delta_n}|\G_n(s) -\G_n(t)  | = 0$$
	for all $\delta_n \downarrow 0$.
	By $$\lim_{n\to \infty}\sup_{d\left( s,t\right) <\delta_n}d_{n}\left( s,t\right) =0$$ for all $\delta_n\downarrow 0$, for every sequence $\delta\to 0$ there exists a sequence $\epsilon\left( \delta\right) \to 0$ such that 
    $d\left( s,t\right) <\delta$ implies $d_n\left( s,t\right) <\epsilon\left( \delta\right)$.
	Thus, 
	$$\limsup_{n\to \infty}\E^*\sup_{d\left( s,t\right) <\delta_n}|\G_n(s) -\G_n(t) | \leq \limsup_{n\to \infty}\E^*\sup_{d_n\left( s,t\right) <\epsilon\left( \delta_n\right) }|\G_n(s) -\G_n(t) |.$$
	Accordingly, it suffices to prove that 
	$$\limsup_{n\to \infty}\E^*\sup_{d_n\left( s,t\right) <\delta_n}|\G_n(s) -\G_n(t) | = 0.$$
	
	For fixed $n$ we
	again identify $\G_n$ with the empirical process $\G_n$ 
	indexed by $\Fcal_n$ and similarly for $d_n$.
	Note that
	$\G_n\left( f\right) -\G_n\left( g\right) =\G_n\left( f-g\right) $ 
	and the bracketing number with respect to 
	the function class $$\Fcal_{n,\delta}=\{f-g \colon f,g\in \Fcal_n, \|f-g\|_{\gamma,n}<\delta\}$$ 
	satisfies 
	$$N_{\left[\right]}\left( \epsilon,\Fcal_{n,\delta},\|\cdot\|_{\gamma,n}\right) \leq N_{\left[\right]}\left( \epsilon/2,\Fcal_n,\|\cdot\|_{\gamma,n}\right) ^2.$$
	Indeed, given $\epsilon/2$ brackets $\left[l_f,u_f\right]$ and $\left[l_g,u_g\right]$ for 
	$f$ and $g$, $\left[l_f-u_g,u_f-l_g\right]$ is an $\eps$-bracket for $f-g$.
	By \Cref{thm:bracketing-coupled}, 
	\begin{align*}
		\limsup_n\E^*\sup_{d_n\left( s,t\right) <\delta_n}|\G_n(s) -\G_n(t)  |
		&\lesssim \lim_{n\to \infty}\int_0^{2\delta_n} \sqrt{\ln N_{\left[\right]}\left( \epsilon,\Fcal_n,\|\cdot\|_{\gamma,n}\right) } d \epsilon 
		\\
		&=0
	\end{align*}
	by \ref{A3n:asy-tightness-sup-br} which proves the claim. \qed

\section{Proofs for relative CLTs under mixing conditions}

\subsection{Proof of Theorem \ref{thm:multi-rel-clt}}\label{sec:multi-rel-clt}
We first restrict to univariate random variables which, in combination 
with the relative Cramer-Wold device (\Cref{cor:rel-cramer-wold}), yields \Cref{thm:multi-rel-clt}. 
The idea is to split the scaled sample average 
\begin{align*}
	\frac{1}{\sqrt{n}}\sum_{i=1}^{n}\left(X_{n,i}-\E\left[X_{n,i}\right]\right)=\frac{1}{\sqrt{n}}\sum_{i=1}^{r_n}\left( Z_{n,i}-\E\left[Z_{n,i}\right]+\tilde{Z}_{n,i}-\E\left[\tilde{Z}_{n,i}\right]\right)  
\end{align*}
into alternating
long and short block sums. 
By considering a small enough length of the short blocks, 
the short block sums are asymptotically negligible. 
It then suffices to prove a relative CLT 
for the sequence of long block sums (\Cref{lem:slutsky}) . 
By maximal coupling and \Cref{lem:slutsky}, the sequence of long block sums can be considered 
independent and Lindeberg's CLT (\Cref{thm:rel-lind-feller})  applies. 

\begin{lemma}\label{lem:slutsky}
	Let $Y_n$ and $Y_n^*$ be sequences of random variables in $\R$ such that 
	\begin{enumerate}
		\item $|Y_n-Y_n^*|\overset{P}{\to}0,$
		\item $\sup_n\var\left[Y_n\right]<\infty$ and 
		\item $\left|\var\left[Y_n\right]-\var\left[Y_n^*\right]\right|\to 0$.
	\end{enumerate}
	Then, $Y_n$ satisfies a relative CLT if and only if $Y_n^*$ does. 
\end{lemma}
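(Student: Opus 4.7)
The plan is to reduce this to the subsequential characterization of univariate relative CLTs given in \Cref{prop:rel-clt-chara}, namely: a sequence $Z_n$ of real random variables satisfies a relative CLT if and only if $\sup_n \var[Z_n] < \infty$ and along every subsequence $n_k$ with $\var[Z_{n_k}] \to \sigma^2$ one has $Z_{n_k} \dc \mathcal{N}(0,\sigma^2)$. Once this characterization is in hand, the argument is essentially a subsequence-extracted Slutsky lemma.

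First I would verify the moment condition symmetrically. By hypotheses (ii) and (iii), $\var[Y_n^*] \le \var[Y_n] + |\var[Y_n] - \var[Y_n^*]|$ is bounded in $n$, and likewise for $\var[Y_n]$ starting from $\sup_n \var[Y_n^*]$ in the reverse direction. So the boundedness-of-variance clause of \Cref{prop:rel-clt-chara} transfers between $Y_n$ and $Y_n^*$ without issue.

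Next, for the forward implication I would fix an arbitrary subsequence $n_k$ such that $\var[Y_{n_k}^*] \to \sigma^2$. By (iii), $\var[Y_{n_k}] \to \sigma^2$ as well. Since $Y_n$ satisfies a relative CLT, \Cref{prop:rel-clt-chara} gives $Y_{n_k} \dc \mathcal{N}(0,\sigma^2)$. Combining this with hypothesis (i), $|Y_{n_k} - Y_{n_k}^*| \to_P 0$, the classical Slutsky lemma yields $Y_{n_k}^* \dc \mathcal{N}(0,\sigma^2)$. Since the subsequence was arbitrary, \Cref{prop:rel-clt-chara} implies $Y_n^*$ satisfies a relative CLT. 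The reverse implication is literally the same argument with the roles of $Y_n$ and $Y_n^*$ swapped, using that hypotheses (i)--(iii) are symmetric in $Y_n, Y_n^*$ after applying the boundedness transfer above.

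There is no real obstacle: the only thing to watch is that \Cref{prop:rel-clt-chara} is phrased in terms of the full covariance matrix, which in the univariate case collapses to the variance, so hypothesis (iii) exactly matches the condition needed to transfer subsequential variance limits. The centering clause in the Gaussian limit is automatic because classical Slutsky preserves the Gaussian location, and we are comparing to the same $\mathcal{N}(0,\sigma^2)$ on both sides.
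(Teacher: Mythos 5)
Your proposal is correct and follows essentially the same route as the paper: both reduce to the subsequential characterization of multivariate relative CLTs in \Cref{prop:rel-clt-chara} and combine classical Slutsky with the transfer of variance limits via hypothesis (iii). The only cosmetic difference is that you invoke item \ref{A2:rel-clt-chara} of that proposition (all variance-convergent subsequences yield a Gaussian limit, plus bounded variances), whereas the paper uses item \ref{A3:rel-clt-chara} (every subsequence contains a further one on which both variance and weak limits exist); these are equivalent by the proposition itself, so the arguments are interchangeable.
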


\begin{proof}
	It suffices to prove the if direction since the statement is symmetric. 
	Let $k_n$ be a subsequence of $n$ and $l_n$ be a further subsequence such that 
	$Y_{l_n}^*\dc N$ converges weakly to some Gaussian with $$\var\left[Y^*_{l_n}\right]\to \var\left[N\right].$$
	Such $l_n$ exists by \ref{A3:rel-clt-chara} of \Cref{prop:rel-clt-chara}.
	Since $|Y_n-Y_n^*|\overset{P}{\to}0$, we obtain $Y_{l_n}\dc N$.
	Note $$\var\left[N\right]=\lim_{n\to \infty}\var\left[Y^*_{l_n}\right]=\lim_{n\to \infty}\var\left[Y_{l_n}\right]$$ by assumption.
	Thus, $Y_n$ satisfies a relative CLT by \ref{A3:rel-clt-chara} of \Cref{prop:rel-clt-chara}.
\end{proof}

\begin{theorem}[Univariate relative CLT]\label{thm:rel-univ-clt}
    Let $X_{n,1},\ldots,X_{n,k_n}$ be a triangular array of univariate random variables.
	For some $\gamma>2$ and $\alpha<(\gamma-2)/2(\gamma-1)$ 
    assume
    \begin{enumerate}
        \item \label{A1:rel-univ-clt}$k_n^{-1}\sum_{i,j=1}^{k_n}\left|\cov\left[X_{n,i},X_{n,j}\right]\right|\leq K$ for all $n$.
        \item \label{A2:rel-univ-clt}$\sup_{n,i}\E\left[|X_{n,i}|^\gamma\right]<\infty$.
        \item \label{A3:rel-univ-clt}$k_n\beta_{n}\left( k_n^{\alpha}\right) ^\frac{\gamma-2}{\gamma}\to 0$.
    \end{enumerate}
    Then, the scaled sample average $\sqrt{k_n}\left( \bar{X}_n-\E\left[\bar{X}_n\right]\right) $ satisfies a relative CLT. 
\end{theorem}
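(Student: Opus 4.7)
The plan mirrors the blueprint sketched just before the statement: a big-block / small-block decomposition, maximal coupling of the long blocks to obtain an independent array, and an application of the relative Lindeberg CLT (\Cref{thm:rel-lind-feller}), tied together by \Cref{lem:slutsky}.

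First, I would pick block sizes $q_n = \lfloor k_n^{\alpha}\rfloor$ and $p_n = \lfloor k_n^{\beta}\rfloor$ with $\beta \in (2\alpha, 1)$, partition $\{1,\ldots,k_n\}$ into alternating long blocks (of length $p_n$) and short blocks (of length $q_n$), and decompose the centered sum as $S_n := \sqrt{k_n}(\bar X_n - \E \bar X_n) = T_n + \tilde T_n$, where $T_n$ and $\tilde T_n$ are the scaled long and short block sums, respectively. To handle $\tilde T_n$, I would bound $\var[\tilde T_n]$ by splitting contributions into near pairs ($|i-j|\le q_n$) and far pairs (in different short blocks, hence $|i-j|\ge p_n$). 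Using Rio's / Davydov's covariance inequality together with the uniform $\gamma$-moment bound in assumption (ii), the near-pair contribution is of order $q_n^2/p_n = o(1)$, while the far-pair contribution is of order $k_n\, \beta_n(p_n)^{(\gamma-2)/\gamma} = o(1)$ by monotonicity of $\beta_n$ and assumption (iii); so $\tilde T_n \to_p 0$.

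Second, I would apply the maximal coupling from Section~\ref{sec:coupling} to the long-block sequence to obtain independent copies $Z_{n,\ell}^{*} \stackrel{d}{=} Z_{n,\ell}$ with $\Pr(\exists\, \ell\colon Z_{n,\ell}\neq Z_{n,\ell}^{*}) \le r_n\, \beta_n(q_n)$, where $r_n$ is the number of long blocks. Assumption (iii) yields $\beta_n(q_n) = o(k_n^{-\gamma/(\gamma-2)})$, so this miscoupling probability tends to zero and $T_n - T_n^{*}\to_p 0$ for $T_n^{*} := k_n^{-1/2}\sum_\ell Z_{n,\ell}^{*}$. An analogous two-scale covariance bound gives $|\var[T_n] - \var[T_n^{*}]|\to 0$, and (i) ensures $\sup_n\var[S_n]<\infty$. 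By \Cref{lem:slutsky}, it suffices to establish a relative CLT for $T_n^{*}$, and here the relative Lindeberg CLT (\Cref{thm:rel-lind-feller}) applies: the uniform variance bound is inherited from (i), and the Lindeberg condition follows from a Rosenthal-type moment bound $\E |Z_{n,\ell}^{*}|^{\gamma} \lesssim p_n^{\gamma/2}$ for $\beta$-mixing block sums (e.g., \citealp{rio2017asymptotic}), yielding an overall upper bound of the form $C(p_n/k_n)^{\gamma/2 - 1} = o(1)$.

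The main obstacle is the rate arithmetic. Block sizes must simultaneously satisfy $q_n \asymp k_n^{\alpha}$ (to activate (iii)), $q_n^2 = o(p_n)$ (near-pair variance), $p_n = o(k_n)$ (Lindeberg), and $r_n\,\beta_n(q_n) \to 0$ (coupling error); the hypothesis $\alpha < (\gamma-2)/(2(\gamma-1))$ is precisely what makes the system consistent, with the $\gamma/2$ in the Rosenthal bound explaining the $2(\gamma-1)$ in the denominator. A secondary care point is verifying the variance alignment $|\var[S_n] - \var[T_n^{*}]|\to 0$ carefully, so that \Cref{lem:slutsky} can be invoked both to pass from $T_n^{*}$ back to $T_n$ and then from $T_n$ to $S_n = T_n + \tilde T_n$.
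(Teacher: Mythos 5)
Your architecture is exactly the paper's: alternating long/short blocks, negligibility of the short-block sums, maximal coupling of the long blocks, the relative Lindeberg CLT, glued together by \Cref{lem:slutsky}. The gap is in the Lindeberg step. You invoke a Rosenthal-type bound $\E|Z_{n,\ell}^{*}|^{\gamma}\lesssim p_n^{\gamma/2}$ for the long-block sums, but no such bound is available here: the only quantitative dependence assumptions are the averaged covariance bound (i) and the mixing condition (iii) at the single lag $k_n^{\alpha}$, so nothing controls the dependence \emph{within} a long block at lags below $k_n^{\alpha}$ (for instance, the $X_{n,i}$ could be constant on stretches of length $q_n=k_n^{\alpha}$, in which case $\E|Z_{n,\ell}^{*}|^{\gamma}\asymp(p_nq_n)^{\gamma/2}\gg p_n^{\gamma/2}$ while (i)--(iii) still hold). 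The only bound that is always valid is the $L^{\gamma}$ triangle inequality, $\E|Z_{n,\ell}^{*}|^{\gamma}\le\bigl(\sum_{i}\|X_{n,i}\|_{\gamma}\bigr)^{\gamma}\lesssim p_n^{\gamma}$, which is what the paper uses; the Lindeberg sum is then of order $r_np_n^{\gamma}/k_n^{\gamma/2}$ and vanishes only if $p_n\asymp k_n^{\beta}$ with $\beta<(\gamma-2)/(2(\gamma-1))$. This, not the $\gamma/2$ you cite, is where the $2(\gamma-1)$ in the threshold comes from.

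Once the Lindeberg constraint is corrected to $\beta<(\gamma-2)/(2(\gamma-1))$, your rate system no longer closes: your near-pair treatment of the short blocks (bounding each covariance at lag $\le q_n$ by a constant) forces $q_n^{2}=o(p_n)$, i.e.\ $\beta>2\alpha$, and the window $2\alpha<\beta<(\gamma-2)/(2(\gamma-1))$ is empty for $\alpha$ close to its permitted upper limit. Note also that under your own accounting (only $\beta<1$ needed for Lindeberg) the system is consistent for every $\alpha<1/2$, so the hypothesis $\alpha<(\gamma-2)/(2(\gamma-1))$ would never actually be used --- a sign that something is off. The paper avoids the $\beta>2\alpha$ requirement by bounding the short-block variance through the summability assumption (i), obtaining a contribution proportional to the \emph{number} of short-block indices, $O(r_nq_n)$, rather than your $O(r_nq_n\cdot q_n)$; this needs only $\beta>\alpha$, and $\alpha<\beta<(\gamma-2)/(2(\gamma-1))$ is nonempty precisely under the theorem's hypothesis on $\alpha$. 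The remaining steps of your outline (the decomposition, the coupling estimate $r_n\beta_n(q_n)\to0$, the variance alignment via the Davydov bound at lag $q_n$, and the two applications of \Cref{lem:slutsky}) match the paper and are fine.
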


\begin{proof}
	There exists some $\delta$ with $0<\delta<1/2-\alpha$ and $1+\left( 1-2\alpha\right) ^{-1} <1+\left( 2\delta\right) ^{-1}<\gamma$.
	Define $q_n= k_n^{\alpha}$, $p_n=k_n^{1/2-\delta}-q_n$ and $r_n=k_n^{1/2+\delta}$.
	Group the observations in alternating blocks of size $p_n$ resp. $q_n$, i.e.,
	\begin{align*}
		U_{n,i}&=\left( X_{n,1+\left( i-1\right) \left( p_n+q_n\right) },\ldots,X_{n,p_n+\left( i-1\right) \left( p_n+q_n\right) }\right)  \in \R^{p_n} &\text{(long blocks) }\\
		\tilde{U}_{n,i}&=\left( X_{n,1+ip_n+\left( i-1\right) q_n},\ldots,X_{n,q_n+ip_n+\left( i-1\right) q_n}\right)  \in \R^{q_n} &\text{(short blocks) }
	\end{align*}
	Define
	\begin{align*}
			Z_{n,i}&=\sum_{j=1}^{p_n}U_{n,i}^{\left( j\right) }&\text{(long block sums)}\\
			\tilde{Z}_{n,i}&=\sum_{j=1}^{q_n}\tilde{U}_{n,i}^{\left( j\right) }& \text{(short block sums)}
	\end{align*}
	where the upper index $j$ denotes the $j$-th component. 
	Then, 
	$$\sum_{i=1}^{k_n}\left(X_{n,i}-\E\left[X_{n,i}\right]\right)=\sum_{i=1}^{r_n}\left( Z_{n,i}-\E\left[Z_{n,i}\right]+\tilde{Z}_{n,i}-\E\left[\tilde{Z}_{n,i}\right]\right) $$
	and it holds
	\begin{align*}
		k_n^{-1}\var\left[\sum_{i=1}^{k_n}X_{n,i}\right]&\leq k_n^{-1}\sum_{i,j=1}^{k_n}|\cov\left[X_{n,i},X_{n,j}\right]|\leq K
		\\
		k_n^{-1}\var\left[\sum_{i=1}^{r_n}Z_{n,i}\right]&\leq K
		\\
		k_n^{-1}\cov\left[\sum_{i=1}^{r_n}\tilde{Z}_{n,i},\sum_{i=1}^{r_n}Z_{n,i}\right]&=\mathcal{O}\left( r_nq_n/k_n\right) =\mathcal{O}\left( k_n^{1/2+\delta+\alpha-1}\right) =o\left( 1\right) 
		\\
		k_n^{-1}\var\left[\sum_{i=1}^{r_n}\tilde{Z}_{n,i}\right]&=\mathcal{O}\left( r_nq_n/k_n\right) =o\left( 1\right) 
	\end{align*}
	by assumption.
	Thus, $$k_n^{-1/2}\sum_{i=1}^{r_n}\left(\tilde{Z}_{n,i}-\E\left[\tilde{Z}_{n,i}\right]\right)\overset{P}{\to} 0$$
	by Markov's inequality.
	Hence, $$\left|k_n^{-1/2}\sum_{i=1}^{k_n}\bigl(X_{n,i}-\E\left[X_{n,i}\right]\bigr)-k_n^{-1/2}\sum_{i=1}^{r_n}\bigl(Z_{n,i}-\E\left[Z_{n,i}\right]\bigr)\right|\overset{P}{\to}0.$$
	Furthermore, 
	\begin{align*}
		\left|\var\left[k_n^{-1/2}\sum_{i=1}^{k_n}X_{n,i}\right]-\var\left[k_n^{-1/2}\sum_{i=1}^{r_n}Z_{n,i}\right]\right|
		&=k_n^{-1}\left|\var\left[\sum_{i=1}^{r_n}\tilde{Z}_{n,i}\right]+2\cov\left[\sum_{i=1}^{r_n}\tilde{Z}_{n,i},\sum_{i=1}^{r_n}Z_{n,i}\right]\right|
		\\&\to 0.
	\end{align*}
	By the previous lemma, 
	$k_n^{-1/2}\sum_{i=1}^{k_n}\left(X_{n,i}-\E\left[X_{n,i}\right]\right)$ satisfies 
	a relative CLT if and only if 
	$k_n^{-1/2}\sum_{i=1}^{r_n}Z_{n,i}-\E\left[Z_{n,i}\right]$ does.

	By maximal coupling (Theorem 5.1 of \cite{rio2017asymptotic}), for all $i=1,\ldots, r_n$ there exist random 
	vectors $U_{n,i}^*\in \R^{p_n}$ such that
	\begin{itemize}
		\item $U_{n,i}\overset{d}{=}U_{n,i}^*$.
		\item the sequence $U_{n,i}^*$ is independent.
		\item $\Pr\left( \exists U_{n,i}\neq U_{n,i}^*\right) \leq r_n\beta_{n}\left( q_n\right) $.
	\end{itemize}
	Define the coupled long block sums
	$$Z_{n,i}^*=\sum_{j=1}^{p_n}U_{n,i}^{*\left( j\right) }.$$
	For all $\eps>0$ we obtain
	\begin{align*}
		\Pr\left( k_n^{-1/2}\left|\sum_{i=1}^{r_n}Z_{n,i}^*-\sum_{i=1}^{r_n}Z_{n,i}\right|>\epsilon\right)  
		&\leq \Pr\left( \exists U_{n,i}\neq U_{n,i}^*\right) 
		\\
		&\leq r_n\beta_{n}\left( q_n\right) 
		\\
		& =k_n^{1/2+\delta}\beta_n\left( k_n^{\alpha}\right) 
		\\
		& \leq k_n\beta_n\left( k_n^{\alpha}\right)  \to 0.
	\end{align*}
	Next, 
	\begin{align*}
		\var\left[\sum_{i=1}^{r_n}Z_{n,i}\right]=\var\left[\sum_{i=1}^{r_n}Z_{n,i}^*\right]+\sum_{i\neq j}^{r_n}\cov\left[Z_{n,i},Z_{n,j}\right]
	\end{align*}
	by independence of $Z_{n,i}^*$ and $P_{Z_{n,i}}=P_{Z_{n,i}^*}$.
	Since $\sup_{n,k}\|X_{n,k}\|_\gamma<\infty$ and 
	$$|\cov\left[X_{n,i},X_{n,j}\right]|\lesssim \beta_n\left( |i-j|\right) ^{\frac{\gamma-2}{\gamma}}\sup_{n,k}\|X_{n,k}\|_\gamma^2$$
	by Theorem 3. of \cite{doukhan2012mixing}, for $i\neq j$
	we obtain 
	\begin{align*}
		\left|\cov\left[Z_{n,i},Z_{n,j}\right]\right|&\leq \mathcal{O}\left( p_n^2\beta_n\left( q_n\right) ^\frac{\gamma-2}{\gamma}\right)   \\
		\left|\frac{1}{k_n}\sum_{i\neq j}^{r_n}\cov\left[Z_{n,i},Z_{n,j}\right]\right|&\leq \mathcal{O}\left( k_n\beta_n\left( q_n\right) ^\frac{\gamma-2}{\gamma}\right)  =o\left( 1\right) .
	\end{align*}
	Thus, 
	\begin{align*}
		\frac{1}{k_n}\left|\var\left[\sum_{i=1}^{r_n}Z_{n,i}\right]-\var\left[\sum_{i=1}^{r_n}Z_{n,i}^*\right]\right|\to 0.
	\end{align*}
	Combined with $P_{Z_{n,i}}=P_{Z_{n,i}^*}$, hence $k_n^{-1}\var\left[\sum_{i=1}^{r_n}Z_{n,i}^*\right]\leq K$ and $\E\left[Z_{n,i}\right]=\E\left[Z_{n,i}^*\right]$,
	the previous lemma yields that $k_n^{-1/2}\sum_{i=1}^{r_n}Z_{n,i}-\E\left[Z_{n,i}\right]$ satisfies 
	a relative CLT if and only if $k_n^{-1/2}\sum_{i=1}^{r_n}Z^*_{n,i}-\E\left[Z^*_{n,i}\right]$ does. 

	Next, the moment assumption together with $P_{Z_{n,i}}=P_{Z_{n,i}^*}$ imply that the sequence $\left( r_n/k_n\right) ^{1/2}Z_{n,i}^*$ 
	satisfies the Lindeberg condition given in \Cref{thm:rel-lind-feller}.
	More specifically, 
	\begin{align*}
		\frac{1}{r_n}\sum_{i=1}^{r_n}\E\left[|\left( r_n/k_n\right) ^{1/2}Z_{n,i}^*|^2\ind_{\{|\left( r_n/k_n\right) ^{1/2}Z_{n,i}|^2>r_n\eps^2\}}\right]
		&=\frac{1}{k_n}\sum_{i=1}^{r_n}\E\left[|Z_{n,i}|^{2}\ind_{\{|Z_{n,i}|^2>k_n\eps^2\}}\right]\\
		&\leq \epsilon^{1-\gamma/2}\frac{1}{k_n^{\gamma/2}}\sum_{i=1}^{r_n}\E\left[|Z_{n,i}|^{\gamma}\right]\\
		&\leq \epsilon^{1-\gamma/2}C\frac{r_np_n^\gamma}{k_n^{\gamma/2}}
	\end{align*}
	for $C=\sup_i\E\left[|X_{n,i}|^\gamma\right]<\infty$ where we used
	\begin{align*}
		\E\left[|Z_{n,1}|^{\gamma}\right]
		&=\|Z_{n,1}\|_\gamma^{\gamma}\\
		&=\left\|\sum_{i=1}^{p_n}X_{n,i}\right\|_\gamma^{\gamma}
		\\
		&\leq
		\left(\sum_{i=1}^{p_n}\|X_{n,i}\|_\gamma\right)^{\gamma}
		\\
		&\leq C p_n^\gamma
	\end{align*}
	and similarly $\E\left[|Z_{n,i}|^{\gamma}\right]\leq C p_n^\gamma$.
	It holds 
	$$\frac{r_np_n^\gamma}{k_n^{\gamma/2}}\leq \frac{r_np_n^\gamma}{\left( r_np_n\right) ^{\gamma/2}}=\frac{p_n^{\gamma/2}}{r_n^{\gamma/2-1}}\leq k_n^{1/2+\delta-\delta\gamma}\to 0.$$
	Thus, 
	$$k_n^{-1/2}\sum_{i=1}^{r_n}\left(Z_{n,i}^*-\E\left[Z_{n,i}^*\right]\right)=r_n^{-1/2}\sum_{i=1}^{r_n}\left( r_n/k_n\right) ^{1/2}\left(Z_{n,i}^*-\E\left[Z_{n,i}^*\right]\right)$$ 
	satisfies a relative CLT which finishes the proof.
\end{proof}

\begin{proof}[Proof of \Cref{thm:multi-rel-clt}]
	Write $$S_n=\frac{1}{\sqrt{k_n}}\sum_{i=1}^{k_n}\left(X_{n,i}-\E\left[X_{n,i}\right]\right)$$
	for the scaled sample average and $\Sigma_n$ for its covariance matrix.
    Let $$N_n\sim \mathcal{N}\left( 0,\Sigma_n\right).$$
	By assumption, $\Sigma_n$ is componentwise a bounded sequence, hence, 
	$N_n$ is relatively compact.
	Thus, it suffices to prove $S_n\rd N_n$.
    By \Cref{cor:rel-cramer-wold}, this is equivalent to 
    $t^TS_n\rd t^TN_n$ for all $t\in \R^d$. 
    Note that     
    $$t^TN_n\sim \mathcal{N}\left( 0,t^T\Sigma_n\right)$$
	and
    $t^TS_n$ is the scaled sample average associated to $t^TX_{n,i}$.
    Accordingly, it suffices to check the conditions of \Cref{thm:rel-univ-clt}
    for $t^TX_{n,i}$:
	The moment and mixing conditions, \ref{A2:rel-univ-clt} and \ref{A3:rel-univ-clt} of \Cref{thm:rel-univ-clt},
	follow by assumption. 
	Lastly, 
	$$k_n^{-1}\sum_{i,j=1}^{k_n}\left|\cov\left[t^TX_{n,i},t^TX_{n,j}\right]\right|\leq t^Tt \max_{l_1,l_2}k_n^{-1}\sum_{i,j=1}^{k_n}\left|\cov\left[X_{n,i}^{(l_1)},X_{n,j}^{(l_2)}\right]\right|\leq t^Tt K$$
	for all $n$.
	This proves \ref{A1:rel-univ-clt} of \Cref{thm:rel-univ-clt} and combined we derive the claim.
\end{proof}

\subsection{Proof of Theorem \ref{thm:multiplier-rel-clt}}\label{sec:multiplier-rel-clt}
We derive \Cref{thm:multiplier-rel-clt} from a more general result.
Consider the general setup of \Cref{sec:asy-tightness}:
Fix some triangular array $X_{n,1},\ldots,X_{n,k_n}$ of random variables with values in a Polish space $\mathcal{X}$.
For each $n\in \N$ let $$\mathcal{F}_n=\{f_{n,t}\colon t\in T\}$$ be a set of measurable functions from $\mathcal{X}$ to $\R$.
Assume that $\cup_{n\in \N}\mathcal{F}_n$ admits a finite envelope $F:\mathcal{X}\to \R$.
\begin{theorem}\label{thm:rel-uniform-clt}
        Assume that for some $\gamma>2$
        \begin{enumerate}
			\item \label{A1:rel-uniform-clt}$\sup_{i,n}\|F(X_{n,i})\|_{\gamma}<\infty$%
			\item \label{A2n:rel-uniform-clt} $\sup_{n\in \N} \max_{m\leq k_n} m^{\rho}\beta_n(m)<\infty$ for some $\rho>2\gamma(\gamma-1)/(\gamma-2)^2$
			\item \label{A3n:rel-uniform-clt}$\int_{0}^{\delta_n}\sqrt{\ln N_{[]}(\epsilon,\mathcal{F}_n,\|\cdot\|_{\gamma,n})}d\epsilon \to 0$ for all $\delta_n\downarrow 0$ and are finite for all $n$. 
        \end{enumerate}
        Denote by $$d_n\left( s,t\right) =\|f_{n,s}-f_{n,t}\|_{\gamma,n}$$
        for $s,t\in T$.
        Assume that there exists a semi-metric $d$ on $T$ such that 
        $$\lim_{n\to \infty}\sup_{d\left( s,t\right) <\delta_n}d_{n}\left( s,t\right) =0$$ for all $\delta_n\downarrow 0$ and 
        $\left( T,d\right) $ is totally bounded.
        Then, the empirical process $\mathbb{G}_n$ 
        defined by $$\G_n\left( t\right) =\frac{1}{\sqrt{k_n}}\sum_{i=1}^{k_n}f_{n,t}\left( X_{n,i}\right) -\E\left[f_{n,t}\left( X_{n,i}\right) \right]$$
        satisfies a relative CLT in $\ell^\infty\left( T\right) $.
 \end{theorem}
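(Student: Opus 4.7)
The plan is to apply \Cref{lem:rclt-marginal-rclt}, which reduces a relative CLT for $\G_n$ to three ingredients: (i) existence of tight, Borel measurable GPs $N_n$ corresponding to $\G_n$, (ii) relative asymptotic tightness of both $\G_n$ and $N_n$, and (iii) marginal relative CLTs at every finite $\{t_1,\dots,t_d\}\subset T$.

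Ingredients (i) and (ii) are handed to us almost verbatim by \Cref{thm:asy-tightness-sup-br}. Hypothesis \ref{A1:rel-uniform-clt} gives $\|F\|_{\gamma,\infty}<\infty$; hypothesis \ref{A2n:rel-uniform-clt} is strictly stronger than the mixing condition $\rho>\gamma/(\gamma-2)$ needed there; and \ref{A3n:rel-uniform-clt}, together with the postulated semi-metric $d$ on the totally bounded set $T$, supplies the bracketing and continuity requirements. Thus $\G_n$ is asymptotically tight and there exists an asymptotically tight sequence of tight, Borel measurable GPs $N_n$ corresponding to $\G_n$; asymptotic tightness trivially implies relative asymptotic tightness.

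For ingredient (iii), fix $t_1,\dots,t_d\in T$ and apply \Cref{thm:multi-rel-clt} to the triangular array $Y_{n,i}=\bigl(f_{n,t_1}(X_{n,i}),\dots,f_{n,t_d}(X_{n,i})\bigr)\in\R^d$. The moment bound \ref{A2:multi-rel-clt} is immediate from $|f_{n,t_j}(X_{n,i})|\le F(X_{n,i})$ combined with \ref{A1:rel-uniform-clt}. The covariance summability \ref{A1:multi-rel-clt} follows from the Davydov-type estimate $|\cov[f(X_{n,i}),g(X_{n,j})]|\lesssim \beta_n(|i-j|)^{(\gamma-2)/\gamma}\|f(X_{n,i})\|_\gamma\|g(X_{n,j})\|_\gamma$, the uniform envelope bound on the $\gamma$-norms, and \Cref{rem:sum-implies-cov-bounded-by-moment}, whose conclusion $\sup_n\sum_{i=1}^{k_n}\beta_n(i)^{(\gamma-2)/\gamma}<\infty$ is ensured by \ref{A2n:rel-uniform-clt}.

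The main obstacle, and the one place where the precise strength of \ref{A2n:rel-uniform-clt} is pivotal, is the mixing condition \ref{A3:multi-rel-clt} of \Cref{thm:multi-rel-clt}: we must exhibit some $\alpha<(\gamma-2)/[2(\gamma-1)]$ with $k_n\beta_n(k_n^\alpha)^{(\gamma-2)/\gamma}\to 0$. Substituting $\beta_n(m)\lesssim m^{-\rho}$ gives $k_n^{1-\rho\alpha(\gamma-2)/\gamma}$, which vanishes iff $\rho>\gamma/[\alpha(\gamma-2)]$. Letting $\alpha$ approach $(\gamma-2)/[2(\gamma-1)]$ from below, the required threshold becomes $\rho>2\gamma(\gamma-1)/(\gamma-2)^2$, matching \ref{A2n:rel-uniform-clt} exactly and explaining why the hypothesis takes that particular form. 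With such an $\alpha$ fixed, \Cref{thm:multi-rel-clt} supplies the marginal relative CLTs, and \Cref{lem:rclt-marginal-rclt} concludes.
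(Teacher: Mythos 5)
Your proposal is correct and follows essentially the same route as the paper's proof: reduce via \Cref{lem:rclt-marginal-rclt}, obtain existence of the corresponding GPs and asymptotic tightness from \Cref{thm:asy-tightness-sup-br}, and supply marginal relative CLTs from \Cref{thm:multi-rel-clt} after verifying its conditions, with the same computation showing that $\rho>2\gamma(\gamma-1)/(\gamma-2)^2$ permits a choice of $\alpha<(\gamma-2)/[2(\gamma-1)]$ with $\gamma/(\gamma-2)<\rho\alpha$. Your derivation of the threshold by letting $\alpha$ tend to $(\gamma-2)/[2(\gamma-1)]$ is a slightly more transparent presentation of the same inequality the paper verifies directly.
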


 \begin{proof}
    We apply \Cref{thm:asy-tightness-sup-br} to derive relative compactness of 
    $\mathbb{G}_n$ and the existence of an asymptotically tight sequence of tight Borel measurable GPs $N_{\mathbb{G}_n}$ corresponding to $\mathbb{G}_n$. 
    Note that each
    $N_{\mathbb{G}_n}(s)$ is measurable for all $s\in T$. 
    Accordingly, $N_{\mathbb{G}_n}$ is asymptotically measurable, hence, relatively compact
    by \Cref{lem:equ-rel-comp}. 

    According to \Cref{lem:rclt-marginal-rclt}, it remains to 
    prove relative CLTs of the marginals
    $\left( \mathbb{G}_n\left( t_1\right) ,\ldots,\mathbb{G}_n\left( t_d\right) \right) $ for all $d\in \N$, $t_1,\ldots,t_d\in T$. 
    We apply \Cref{thm:multi-rel-clt}
    to the triangular array $Y_{n,1},\ldots,Y_{n,k_n}$ with $Y_{n,k}=\left( f_{n,t_1}\left( X_k\right) ,\ldots,f_{n,t_d}\left( X_k\right) \right).$

	\ref{A2:multi-rel-clt} of \Cref{thm:multi-rel-clt} follows by $\sup_{i,n}\|F(X_{n,i})\|_{\gamma}<\infty$.
	Next, pick $$ \rho^{-1}\frac{\gamma}{\gamma-2}< \alpha<\frac{\gamma-2}{2(\gamma-1)}.$$
	Such $\alpha$ exists since 
	$$\rho^{-1}\frac{\gamma}{\gamma-2}<\frac{\gamma-2}{2(\gamma-1)}.$$
	Then,
	$k_n\beta_{n}(k_n^{\alpha})^{\frac{\gamma-2}{\gamma}}\lesssim k_n^{1-\rho\alpha\frac{\gamma-2}{\gamma}}\to 0$
	since $\frac{\gamma}{\gamma-2}<\rho\alpha$.
	Lastly, the summability condition on the covariances follows by the summability condition on the 
	$\beta$-mixing coefficients (\Cref{rem:sum-implies-cov-bounded-by-moment}). 
    Combined, we obtain the claim.
\end{proof}

\begin{proof}[Proof of \Cref{thm:multiplier-rel-clt}]
	We apply \Cref{thm:rel-uniform-clt}.
		Define the random variables $Y_{n,i}=(X_{n,i},i)\in \mathcal{X}\times \N$.
		Note that $\mathcal{X}\times \N$ is Polish since $\mathcal{X}$ and  $\N$ are.
		Define $T=S\times \Fcal$, $\Fcal_n=\{h_{n,t}\colon t\in T\}$ with
		$$h_{n,(s,f)}:\mathcal{X}\times \N\to \R,(x,k)\mapsto w_{n,k}(s)f(x)$$
		for every $(s,f)\in T$ with $w_{n,k}=0$ for $k>k_n$.
		Note that each $h_{n,(s,f)}$ is measurable.
		Then, the empirical process associated to $Y_{n,i}$ and $T$ is given by $\G_n$. 
	
		Set $K=\max\{\sup_{n,i,x}|w_{n,i}(x)|,\|F\|_{\gamma,\infty}\}$.
		Note that $$F^\prime:\mathcal{X}\times \N\to \R,(x,k)\mapsto KF(x)$$ is an envelope of 
		$\cup_{n\in \N}\Fcal_n$ satisfying condition \ref{A1:rel-uniform-clt} of \Cref{thm:rel-uniform-clt}. 
		Since we have 
		$\sigma(X_{n,i})=\sigma((X_{n,i},i))$, the $\beta$-mixing coefficients w.r.t. $Y_{n,i}$ are
		equal to the $\beta$-mixing coefficients w.r.t. $X_{n,i}$.
		Thus, \ref{A2n:rel-uniform-clt} of \Cref{thm:rel-uniform-clt} are satisfied by assumption. 

		Define the semi-metric $d$ on $T$ by 
		$$d\bigl((s_1,f_1),(s_2,f_2)\bigr)=d^w(s_1,s_2)+\|f_1-f_2\|_{\gamma,\infty}.$$
		By the entropy condition \ref{uniformcltA4} and \ref{A3:weights}
		we derive that $(T,d)$ is totally bounded. 
		By Minkowski's inequality, we get
		\begin{align*}
			d_n\bigl((s_1,f_1),(s_2,f_2)\bigr)&=\|h_{n,(s_1,f_1)}-h_{n,(s_2,f_2)}\|_{\gamma,n}
			\\
			&=\left(\frac{1}{k_n}\sum_{i=1}^{k_n}\|w_{n,i}(s_1)f_1(X_{n,i})-w_{n,i}(s_2)f_2(X_{n,i})\|_\gamma^\gamma\right)^{1/\gamma}
			\\
			&\leq \left(\frac{1}{k_n}\sum_{i=1}^{k_n}\bigl[\|F(X_{n,i})\|_\gamma|w_{n,i}(s_1)-w_{n,i}(s_2)|
			+ \|w_{n,i}\|_\infty\|(f_1-f_2)(X_{n,i})\|_\gamma\bigr]^\gamma\right)^{1/\gamma}
			\\
			&\leq K\left(\frac{1}{k_n}\sum_{i=1}^{k_n}|w_{n,i}(s_1)-w_{n,i}(s_2)|^\gamma\right)^{1/\gamma}
			+K\left(\frac{1}{k_n}\sum_{i=1}^{k_n}\|(f_1-f_2)(X_{n,i})\|_\gamma^\gamma\right)^{1/\gamma}
			\\
			&\leq Kd_n^w(s_1,s_2)+K\|f_1-f_2\|_{\gamma,\infty}
		\end{align*}
		which implies 
		$$\lim_{n\to \infty}\sup_{d(s,t)<\delta_n}d_{n}(s,t)=0$$ for all $\delta_n\downarrow 0$.

		Define $g_{n,s}(i)=w_{n,i}(s)$ and $\Wcal_n=\{g_{n,s}:\N\to \R\colon s\in S \}$.
		Given $f\in \Fcal,$ $s\in S$ and $\eps$-brackets $\underline g \leq g_{n,s} \leq \overline g$ and 
		$\underline f \leq f \leq \overline f$, set 
		the centers $f_c = (\overline f + \underline f)/2$ and 
		$g_c = (\overline g + \underline g)/2$. 
		Then, 
		\begin{align*}
			|fg_{n,s}-f_cg_c|
			&\leq |fg_{n,s}-fg_{c}|+|fg_{c}-f_cg_c|
			\\
			&\leq F\frac{\overline g - \underline g}{2} + K\frac{\overline f - \underline f}{2}.
		\end{align*}
		Thus, we obtain a bracket 
		\begin{align*}
			f_cg_c-\left(F\frac{\overline g - \underline g}{2} + K\frac{\overline f - \underline f}{2}\right)
			\quad
			\leq 
			\quad
			fg_{n,s}
			\quad
			\leq
			\quad
			f_cg_c+\left(F\frac{\overline g - \underline g}{2} + K\frac{\overline f - \underline f}{2}\right)
		\end{align*}
		with 
		\begin{align*}
			\|F(\overline g - \underline g) + K(\overline f - \underline f)\|_{\gamma,n}
			&\leq \|F\|_{\gamma,\infty}\|\overline g - \underline g\|_{\gamma,n} + K\|\overline f - \underline f\|_{\gamma,n}
			\\
			&\leq 2K\eps
		\end{align*}
		hence, an $2K\eps$-bracket for $fg_{n,s}$. 
		This implies 
		\begin{align*}
			N_{[]}\left(2K\epsilon,\mathcal{F}_n,\|\cdot\|_{\gamma,n}\right)\leq N_{[]}\left(\epsilon,\mathcal{F},\|\cdot\|_{\gamma,\infty}\right)N_{[]}\left(\epsilon,S,\|\cdot\|_{\gamma,n}\right).
		\end{align*}
		Since  
		\begin{align*}
			\int_{0}^{\delta_n}\sqrt{\ln N_{\left[\right]}\left( \epsilon,\mathcal{F},\|\cdot\|_{\gamma,\infty}\right)}d\epsilon
			,\quad \int_{0}^{\delta_n}\sqrt{\ln N_{[]}\left( \epsilon,\Wcal_n,d\right)}d\epsilon\to 0
		\end{align*}
		for all $\delta_n\downarrow 0$ by assumption, this implies the entropy condition \ref{A3n:rel-uniform-clt} of \Cref{thm:rel-uniform-clt} and, 
		combined, we derive the claim.
\end{proof}

\begin{proof}[Proof of \Cref{thm:rel-sequ-clt}]\label{prf:rel-sequ-clt}
	In \Cref{thm:multiplier-rel-clt} set 
    $w_{n,i}:[0,1]\to \{0,1\},s\mapsto \ind\{i\leq \lfloor sn\rfloor\}$. 
    Now note that $|w_{n,i}(s)-w_{n,i}(t)|\leq 1$ can only be non-zero 
    for $|\lfloor sk_n \rfloor-\lfloor tk_n \rfloor|\leq k_n|s-t|+1$ many $i$'s.
    Thus, 
    \begin{align*}
        d_n^w(s,t)&=\left(\frac{1}{k_n}\sum_{i=1}^{k_n}|w_{n,i}(s)-w_{n,i}(t)|^{\gamma}\right)^{1/\gamma}
        \\
        &\leq (|s-t|+1/k_n)^{1/\gamma}
    \end{align*}
    and \ref{A2:weights} and \ref{A3:weights} are satisfied for $d^w(s,t)=|s-t|^{1/\gamma}$.
	In combination with $w_{n,i}(s)\leq w_{n,i}(t)$ for all $s\leq t$,
    $$N_{\left[\right]}\left( \epsilon,\Wcal_n,\|\cdot\|_{\gamma,n}\right)\leq \lfloor2/\eps\rfloor^{1/\gamma}$$
	for all $1/k_n\leq \eps\leq 1$.
	Note that $\Wcal$ contains at most $k_n+1$ many distinct functions. 
	For $\eps<1/k_n$, this implies 
	$$N_{\left[\right]}\left( \epsilon,\Wcal_n,\|\cdot\|_{\gamma,n}\right)\leq k_n+1 \leq 2/\eps.$$
	Combined, we obtain 
	$$\ln N_{\left[\right]}\left( \epsilon,\Wcal_n,\|\cdot\|_{\gamma,n}\right)\lesssim \ln(2/\eps)$$
	for all $0<\eps\leq 1$
    which implies \ref{A1:weights}. 
    \Cref{thm:multiplier-rel-clt} gives the claim.
\end{proof}

\subsection{Asymptotic tightness of the multiplier empirical process}
Consider the setup of \Cref{sec:weighted-unif-rclt}.
Let $V_{n,1},\ldots,V_{n,k_n}$ be an additional triangular array of identically distributed random variables.
Define the multiplier empirical process $\G_n^{V} \in \ell^\infty(S\times \Fcal)$ by
\begin{align*}
    \G_n^{V}(s,f) & =\frac{1}{\sqrt{k_n}}\sum_{i=1}^{k_n}V_{n,i}w_{n,i}(s)\bigl(f(X_{n,i})-\E[f(X_{n,i})]\bigr).
\end{align*}
We will derive asymptotic tightness of the $\G_n^{V}$ in terms of bracketing entropy 
conditions with respect to $\Fcal$.

Similar to the proofs of \Cref{thm:asy-tightness-sup-br} and \Cref{thm:multiplier-rel-clt},
the idea is to derive asymptotic equicontinuity by \Cref{thm:chaining}
applied to some class of functions $$(V_{n,i},X_{n,i},i)\mapsto w_{n,i}(s)V_{n,i}f(X_{n,i}).$$
In principle, a direct application of \Cref{thm:rel-uniform-clt}  
establishes asymptotic tightness of $\G_n^V$ in terms of entropy and mixing assumptions on $(V_{n,i},X_{n,i})$. 
The polynomial decay assumption on the mixing coefficients w.r.t. $V_{n,i}$, however, 
is unnecessarily strict. In particular, the summability of covariances 
would be implied (\Cref{rem:sum-implies-cov-bounded-by-moment}) 
rendering the result inapplicable, e.g., for proving multiplier bootstrap consistence 
(see \Cref{sec:bstrap} specifically \Cref{thm:bstrap-univ}).
A relaxation of the mixing decay rate of $V_{n,i}$ is possible by 
applying \Cref{thm:chaining} to a coupled empirical process instead of $\G_n$.

To avoid clutter 
we assume $w_{n,i}=1$, but the argument is similar for the general case.
We use $m_n$, $U_{n,i}$ resp. $U_{n,i}^*$ from the coupling paragraph (\Cref{sec:coupling}) but with $X_{n,i}$ replaced by $(V_{n,i},X_{n,i})$. 
Set $f_V:\R\times \Xcal\to \R, (v,x)\mapsto vf(x)$.
For $f\in \Fcal$ define 
$$f_{V,+,n}:(\R\times \Xcal)^{m_n}\to \R,(v,x)\mapsto \frac{1}{\sqrt{m_n}}\sum_{i=1}^{m_n}f_V(v_i,x_i).$$
Define $\Fcal_{V,+,n}=\{f_{V,+,n}\colon f\in \Fcal\}$, $r_n=k_n/(2m_n)$ and $\G_{n,1},\G_{n,2}\in \ell^{\infty}(\Fcal)$ by 
\begin{align*}
	\G_{n,1}(f)&=\frac{1}{\sqrt{r_n}}\sum_{i=1}^{r_n}f_{V,+,n}(U_{n,2i-1})-\E[f_{+,n}(U_{n,2i-1})],\\
	\G_{n,2}(f)&=\frac{1}{\sqrt{r_n}}\sum_{i=1}^{r_n}f_{V,+,n}(U_{n,2i})-\E[f_{+,n}(U_{n,2i})].
\end{align*}
and $\G_{n,j}^*$ as $\G_{n,j}$ but with $U_{n,i}$ replaced by $U_{n,i}^*$.
Note $\sqrt{2}\G_n^V=\G_{n,1}+\G_{n,2}$.

\begin{lemma}\label{lem:ind-blocks}
	Denote by $\beta_n^{(V,X)}$ the $\beta$-coefficients associated with the triangular array $(V_{n,i},X_{n,i}).$
	If $$\frac{k_n}{m_n}\beta_n^{(V,X)}\left( m_n\right) \to 0,$$ and
	$\G_{n,1}^*,\G_{n,2}^*$ are asymptotically tight
 	then, $\mathbb{G}^V_n$ is asymptotically tight.
\end{lemma}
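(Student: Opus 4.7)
The plan is to transfer asymptotic tightness from the coupled processes to the original one in two stages: first from $\G_{n,j}^*$ to $\G_{n,j}$, then from the summands to $\G_n^V$ via continuous mapping.

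First I would exploit the maximal coupling bound. By the coupling properties listed in \Cref{sec:coupling} (applied to the array $(V_{n,i}, X_{n,i})$ rather than $X_{n,i}$), the event $A_n = \{U_{n,i} = U_{n,i}^* \text{ for all } i \leq k_n/m_n\}$ satisfies $\Pr(A_n^c) \leq (k_n/m_n)\beta_n^{(V,X)}(m_n) \to 0$. On $A_n$ we have $\G_{n,j} = \G_{n,j}^*$ pointwise on $\Fcal$, so in particular $\|\G_{n,j} - \G_{n,j}^*\|_\Fcal = 0$ on $A_n$. Consequently, for any Borel set $B \subset \ell^\infty(\Fcal)$,
\begin{align*}
    \Pr_*(\G_{n,j} \in B) \geq \Pr_*(\G_{n,j}^* \in B, A_n) \geq \Pr_*(\G_{n,j}^* \in B) - \Pr(A_n^c).
\end{align*}
Given $\eps > 0$, asymptotic tightness of $\G_{n,j}^*$ furnishes a compact $K_j \subset \ell^\infty(\Fcal)$ with $\liminf_n \Pr_*(\G_{n,j}^* \in K_j^\delta) \geq 1 - \eps/2$ for every $\delta > 0$. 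Applying the display with $B = K_j^\delta$ and using $\Pr(A_n^c) \to 0$ yields $\liminf_n \Pr_*(\G_{n,j} \in K_j^\delta) \geq 1 - \eps/2$, so each $\G_{n,j}$ is asymptotically tight.

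Second, I would pass to the sum. By Lemma 1.4.3 of VdV, the joint map $(\G_{n,1}, \G_{n,2})$ is asymptotically tight in $\ell^\infty(\Fcal)^2$ (equipped with a product metric) since both marginals are. Addition $(x,y) \mapsto (x+y)/\sqrt{2}$ is a continuous map $\ell^\infty(\Fcal)^2 \to \ell^\infty(\Fcal)$, so by the continuous mapping principle for asymptotic tightness (Lemma 1.3.6 of VdV or a direct verification using that continuous images of compacts are compact), the sequence
\begin{align*}
    \tfrac{1}{\sqrt{2}}(\G_{n,1} + \G_{n,2}) = \G_n^V
\end{align*}
is asymptotically tight, which is the desired conclusion.

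The main obstacle I anticipate is handling outer probabilities cleanly, since $\G_{n,j}$ and $\G_n^V$ need not be Borel measurable in $\ell^\infty(\Fcal)$. The coupling step needs the inequality $\Pr_*(\G_{n,j} \in B) \geq \Pr_*(\G_{n,j}^* \in B) - \Pr(A_n^c)$, which holds because $A_n$ is a measurable event and on $A_n$ the two processes coincide as maps; this is routine but must be stated carefully. The joint asymptotic tightness step is likewise purely topological once marginal tightness is in hand, and amounts to choosing $K_1 \times K_2$ as the compact in the product space and arguing that $\delta$-enlargements in the product can be bounded by $\delta$-enlargements in each coordinate.
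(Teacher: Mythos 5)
Your proof is correct and follows essentially the same route as the paper's: transfer asymptotic tightness from $\G_{n,j}^*$ to $\G_{n,j}$ via the coupling probability $\Pr(\exists i\colon U_{n,i}\neq U_{n,i}^*)\to 0$, then combine the two block processes by a continuity argument. You simply spell out the details that the paper compresses into "$\G_{n,1}-\G_{n,1}^*\to_p 0$ preserves asymptotic tightness" and "finite sums of asymptotically tight sequences are asymptotically tight"; you also correctly carry the $1/\sqrt{2}$ normalization that the paper's proof body silently drops (the text preceding the lemma has $\sqrt{2}\,\G_n^V=\G_{n,1}+\G_{n,2}$).
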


\begin{proof}
    It holds
	\begin{align*}
		\Pr^*\left(\|\G_{n,1}-\G_{n,1}^*\|_{\Fcal}\neq 0\right)
		&\leq r_n\Pr\left( \exists i\colon U_i^*\neq U_i\right)\\
		&\leq k_n/m_n\beta_n^{(V,X)}(m_n) \to 0
	\end{align*}
	and similarly for $G_{n,2}$. 
	Thus, $\G_{n,1}-\G_{n,1}^*\overset{\Pr^*}{\to}0$ and if $\G_{n,1}^*,\G_{n,2}^*$ are asymptotically tight, so are 
	$\G_{n,1},\G_{n,2}$. 
	Since finite sums of asymptotically tight sequences remain asymptotically tight 
	we obtain that $\G_n^V=2^{-1/2}(\G_{n,1}+\G_{n,2})$ is asymptotically tight.
\end{proof}

\begin{theorem}\label{thm:multiplier-asy-tightness}
	For some $\gamma>2$ and $\alpha<(\gamma-2)/2(\gamma-1)$, assume
    \begin{enumerate}
        \item \label{A1:multiplier-asy-tightness}$\|F\|_{\gamma,\infty}<\infty$.
        \item $\sup_n \|V_{n,1}\|_\gamma<\infty$
        \item $k_n^{1-\alpha}\beta_n^V(k_n^{\alpha}) + k_n^{1-\alpha}\beta_n^X(k_n^{\alpha})\to 0$ where $\beta_n^V$ denotes the $\beta$-coefficients of the $V_{n,i}$.
        \item $\sup_n\sum_{i=1}^{k_n}\beta_n^X(i)^{\frac{\gamma-2}{\gamma}}<\infty$.
        \item \label{A5:multiplier-asy-tightness}$\int_{0}^{\infty}\sqrt{\ln N_{[]}(\epsilon,\mathcal{F},\|\cdot\|_{\gamma,\infty})}d\epsilon <\infty$.
    \end{enumerate}
    Then, $\mathbb{G}_n^V$ is asymptotically tight.
\end{theorem}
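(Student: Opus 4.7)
The plan is to follow the template of the proof of \Cref{thm:asy-tightness-sup-br}: use maximal coupling to decompose $\G_n^V$ into independent-block processes, then chain with brackets inherited from $\Fcal$. I treat only the case $w_{n,i}\equiv 1$; the weighted case introduces at most a bounded multiplicative factor at each step. Set $m_n=k_n^{\alpha}$ and $r_n=k_n/(2m_n)\to\infty$. Under the (implicit) independence of $(V_{n,i})$ from $(X_{n,i})$, the joint $\beta$-coefficients satisfy $\beta_n^{(V,X)}(m)\le \beta_n^V(m)+\beta_n^X(m)$, so by assumption~3
\begin{align*}
\tfrac{k_n}{m_n}\beta_n^{(V,X)}(m_n) \le k_n^{1-\alpha}\bigl(\beta_n^V(k_n^{\alpha})+\beta_n^X(k_n^{\alpha})\bigr)\to 0.
\end{align*}
By \Cref{lem:ind-blocks} it then suffices to prove asymptotic tightness of the coupled block-sum processes $\G_{n,j}^{*}$ (as indexed by $\Fcal$), each a sum of $r_n$ independent summands in $(\R\times\Xcal)^{m_n}$.

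Next I would transfer brackets from $\Fcal$ to $\Fcal_{V,+,n}$. For an $\eps$-bracket $[l,u]\subset\Fcal$ with respect to $\|\cdot\|_{\gamma,\infty}$ define
\begin{align*}
L_n(v,x) &=\tfrac{1}{\sqrt{m_n}}\sum_{i=1}^{m_n}\bigl(v_i^+\, l(x_i)-v_i^-\, u(x_i)\bigr),\\
U_n(v,x) &=\tfrac{1}{\sqrt{m_n}}\sum_{i=1}^{m_n}\bigl(v_i^+\, u(x_i)-v_i^-\, l(x_i)\bigr),
\end{align*}
so that $L_n\le f_{V,+,n}\le U_n$ whenever $l\le f\le u$ and $U_n-L_n=\tfrac{1}{\sqrt{m_n}}\sum_{i}|v_i|(u-l)(x_i)$. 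Expanding $\|U_n-L_n\|_{2,r_n}^2$, factorising the expectation via $V\perp X$, using the $L_\gamma$-bound on $V$ from assumption~2, and absorbing the off-diagonal $X$-terms via the $\beta$-mixing covariance bound of \Cref{rem:sum-implies-cov-bounded-by-moment} together with summability~4, one obtains a constant $C$ with $\|U_n-L_n\|_{2,r_n}\le C\eps$. Hence
\begin{align*}
N_{[]}(C\eps,\Fcal_{V,+,n},\|\cdot\|_{2,r_n})\le N_{[]}(\eps,\Fcal,\|\cdot\|_{\gamma,\infty}),
\end{align*}
so the bracketing integral inherits directly from assumption~5.

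Then I would apply \Cref{cor:chaining-independent} to each $\G_{n,j}^{*}$ with envelope $F_n(v,x)=\tfrac{1}{\sqrt{m_n}}\sum_i|v_i|F(x_i)$, whose $L_\gamma$-norm under the block law is $O(\sqrt{m_n})$ by assumptions~1--2. Specialising to the subclass of differences $f_{V,+,n}-g_{V,+,n}$ with $\|f-g\|_{\gamma,\infty}<\delta_n\downarrow 0$ (whose bracketing numbers are controlled by the square of those above), the chaining bound reduces to $\int_0^{2C\delta_n}\sqrt{\ln_+ N_{[]}(\eps,\Fcal,\|\cdot\|_{\gamma,\infty})}\,d\eps\to 0$ plus envelope-truncation and $\sqrt{r_n}N_{[]}^{-1}(e^{r_n})$ remainders that vanish once the truncation level $B=B_n$ is tuned appropriately. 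This establishes asymptotic equicontinuity of $\G_{n,j}^{*}$; combined with marginal boundedness $\sup_n\var[\G_{n,j}^{*}(f)]<\infty$ (via the same independence, moment bounds, and covariance arguments) and total boundedness of $(\Fcal,\|\cdot\|_{\gamma,\infty})$ implied by~5, Theorem~1.5.7 of VdV yields asymptotic tightness of $\G_{n,j}^{*}$. Because $\sqrt{2}\G_n^V=\G_{n,1}+\G_{n,2}$ and finite sums of asymptotically tight processes are asymptotically tight, \Cref{lem:ind-blocks} then delivers asymptotic tightness of $\G_n^V$.

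The hard part will be the bracketing transfer of the second paragraph: although the construction of $L_n,U_n$ is natural, controlling $\|U_n-L_n\|_{2,r_n}$ by $\eps$ requires absorbing a factor $m_n$ coming from the off-diagonal covariance terms of the $m_n$-block sum into the decay provided by the summability assumption~4 on $\beta_n^X$. This is where the $L_\gamma$-moment bound on $V$ and the summability of $\beta_n^X$ enter tightly, and it is precisely this mechanism that allows the $\beta$-mixing assumption on $V$ to be considerably weaker than the polynomial decay required for the stand-alone multivariate relative CLT of \Cref{thm:multi-rel-clt}.
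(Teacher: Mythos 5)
Your overall architecture is the same as the paper's: couple into alternating blocks via \Cref{lem:ind-blocks} with $m_n=k_n^\alpha$, then chain the independent-block processes $\G_{n,j}^*$ over the class $\Fcal_{V,+,n}$ using \Cref{cor:chaining-independent}. The gap is in the bracket transfer, exactly the step you flag as ``the hard part,'' and you misdiagnose where the difficulty lies. Your bracket width is $U_n-L_n=\tfrac{1}{\sqrt{m_n}}\sum_{i}|v_i|(u-l)(x_i)$, a normalized sum of $m_n$ \emph{nonnegative} terms. Expanding $\E[(U_n-L_n)^2]$ over a block produces not only covariance terms but also the product-of-means term
\begin{align*}
\frac{1}{m_n}\sum_{i,j}\E[|V_i||V_j|]\,\E[(u-l)(X_i)]\,\E[(u-l)(X_j)] \;\gtrsim\; m_n\bigl(\E[|V_1|]\bigr)^2\min_i\E[(u-l)(X_i)]^2,
\end{align*}
which is of order $m_n\epsilon^2$ rather than $\epsilon^2$. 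This is a first-moment effect: no amount of $\beta$-mixing or covariance summability (assumption~4) can reduce it, since there is no cancellation in a sum of nonnegative random variables. The best you can get is $\|U_n-L_n\|_{2,r_n}\lesssim\sqrt{m_n}\,\epsilon$, and the extra factor $\sqrt{m_n}=k_n^{\alpha/2}\to\infty$ destroys the entropy integral (a rescaling of the bracketing argument by $\sqrt{m_n}$ does not converge to zero under assumption~5 alone).

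The paper circumvents this by a preliminary step you omit: it first recenters so that $\E[f(X_{n,i})]=0$ for all $f$ in the (augmented) class, by passing to functions on $\Xcal\times\{1,\dots,k_n\}$, and then measures bracket sizes through the \emph{variance} of the centered block sum $\tfrac{1}{\sqrt{m_n}}\sum_i V_if(X_i)$, where $\cov[V_if(X_i),V_jf(X_j)]=\E[V_iV_j]\cov[f(X_i),f(X_j)]$ is controlled by assumption~4; the $v^{\pm}$ decomposition is never needed and the problematic product-of-means term does not arise. You should adopt this centering reduction rather than trying to absorb the $m_n$ factor. A secondary omission: you never actually use the hypothesis $\alpha<(\gamma-2)/2(\gamma-1)$. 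It enters precisely in the envelope-truncation remainder, where one must show $\sqrt{r_n}\,\|F_{V,+,n}\ind\{F_{V,+,n}>a_n\sqrt{r_n}\}\|_{1,n}\lesssim k_n^{\alpha(\gamma-1)-(\gamma-2)/2}a_n^{1-\gamma}\to 0$; asserting that the remainders ``vanish once $B=B_n$ is tuned appropriately'' skips the one computation that justifies the stated range of $\alpha$.
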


\begin{proof}
	First, we may without loss of generality assume $\E[f(X_{n,i})]=0$. 
	To see this define the function class $\Gcal_n=\{g_f\colon f\in\Fcal\}$
	with $$g_f:\Xcal\times \{1,\ldots,k_n\}\to \R,(x,i)\mapsto f(x)-\E[f(X_{n,i})].$$ 
	For $Y_{n,i}=(X_{n,i},i)$ it holds $\E[g_f(Y_{n,i})]=0$. 
	Note that $\Xcal\times \{1,\ldots,k_n\}$ remains Polish and the $\beta$-mixing coefficients 
	with respect to $Y_{n,i}$ and $X_{n,i}$ are equal. 
	Further, given an $\eps$-bracket $[\underline{f},\overline{f}]$ with respect to $\|\cdot\|_{\gamma,n}$,
	define $$u(x,i)=\overline{f}(x)-\E[\underline{f}(X_{n,i})],\quad l(x,i)=\underline{f}-\E[\overline{f}(X_{n,i})].$$
	Clearly, $f \in [\underline{f},\overline{f}]$ implies $g_f\in [l,u]$. Further,
	\begin{align*}
		|u-l|&\leq |\underline{f}-\overline{f}|+\E[|\underline{f}-\overline{f}|]
		\\
		&\leq |\underline{f}-\overline{f}|+\|\underline{f}-\overline{f}\|_{\gamma,n}
		\\
		&\leq |\underline{f}-\overline{f}|+\eps.
	\end{align*}
	Thus, $\|u-l\|_{\gamma,n}\leq 2\eps$ and $N_{[]}(2\eps,\Gcal_n,\|\cdot\|_{\gamma,n})\leq N_{[]}(\eps,\Fcal,\|\cdot\|_{\gamma,n})$.
	Replacing $\Fcal$ by $\Gcal_n$ and $X_{n,i}$ by $Y_{n,i}$ we may assume $\E[f(X_{n,i})]=0$. 

	Next, define the function class $\Fcal_V=\{f_V\colon f\in \Fcal\}$ with $$f_V:\R\times \Xcal\to \R,(v,x)\mapsto vf(x)$$
	and set $Y_{n,i}=(V_{n,i},X_{n,i})$. 
	Then, the empirical process $\G_n^V\in \ell^\infty(\Fcal)$ is the empirical process associated to $\Fcal_V$ and $Y_{n,i}$.
	Again, $\R\times \Xcal$ is Polish and the $\beta$-coefficients $\beta_n^Y$ associated to $Y_{n,i}$ satisfy
	$$\beta_n^Y(m)\leq \beta_n^X(m)+\beta_n^V(m)$$ by Theorem 5.1 (c) of \cite{bradley2005basicproperties}. Set $m_n=k_n^{\alpha}$. 
	Then, $$\frac{k_n}{m_n}\beta_n^Y(m_n)=k_n^{1-\alpha}\beta_n^Y(k_n^{\alpha})\to 0.$$
	We apply \Cref{lem:ind-blocks} to $Y_{n,i}$ and $\Fcal_V$. 
	Thus, it suffices to show that $\G_{n,1}^*$ and $\G_{n,1}^*$ are asymptotically tight. 
	We will prove the statement for $\G_{n,1}^*$. The arguments for $\G_{n,2}^*$ are the same.
	
	Similar to the proof of 
	\Cref{thm:asy-tightness-sup-br}, it suffices to show 
	\begin{align*}
		\lim_{\delta \to 0}\limsup_{n\to \infty}\E^*\|\G_{n,1}^*\|_{\Fcal_{V,\delta}}=0,
	\end{align*}
	where
	$$\Fcal_{V,\delta}=\{f_V-g_V \colon f,g\in \Fcal, \|f-g \|_{\gamma,\infty}<\delta\}.$$ 
	Recall $$\G_{n,1}^*(f)=\frac{1}{\sqrt{r_n}}\sum_{i=1}^{r_n}f_{V,+,n}(U_{n,2i-1}^*)$$
	since $\E[f(X_{n,i})]=0$.
	Note that for fixed $n$, $\G_{n,1}^*$ can be identified with an empirical process 
	$\G_{n,1}^*\in \ell^\infty(\Fcal_{V,+,n})$ associated to the independent 
	random variables $U_{n,2i-1}^*$ and the function class $\Fcal_{V,+,n}=\{f_{V,+,n}\colon f\in \Fcal\}$. 

	Denote by $\|\cdot\|_{2,n,+}$ the $\|\cdot\|_{2,n}$-seminorm on $\Fcal_{V,+,n}$ induced 
	by $U^*_{n,2i-1}$.
	Let $f\in \Fcal$ and an $\eps$-bracket $f\in [\underline{f},\overline{f}]$
	with respect to $\|\cdot\|_{\gamma,n}$ be given. 
	Clearly $f_{V,+,n}\in [\underline{f}_{V,+,n},\overline{f}_{V,+,n}]$.
	It holds
	\begin{align*}
		\|f_{V,+,n}\|_{2,n,+}^2
		&= \frac{1}{r_n}\sum_{i=1}^{r_n}\E[f_{V,+,n}(U_{n,2i-1}^*)^2]
		\\
		&=\frac{1}{r_n}\sum_{i=1}^{r_n}\var[f_{V,+,n}(U_{n,2i-1}^*)]
		&\E[f(X_{n,i})]=0
		\\
		&\lesssim \frac{2}{k_n}\sum_{i,j=1}^{k_n}|\cov[f(X_{n,i}),f(X_{n,j})]
		\\
		&\lesssim \|f\|_{\gamma,n}^2,
	\end{align*}
	by \Cref{rem:sum-implies-cov-bounded-by-moment}.
	This yields 
	$$N_{[]}(C\eps,\Fcal_{V,+,n},\|\cdot\|_{2,n,+})\leq N_{[]}(\eps,\Fcal,\|\cdot\|_{\gamma,n}),$$
	for some constant $C$ independent of $n$ and
	$$\|f_{V,+,n}-g_{V,+,n}\|_{2,n,+}\le C\|f-g\|_{\gamma,n}\le C\delta$$ 
	for all $f_{V}-g_{V} \in \Fcal_{V,\delta}.$
	Without loss of generality, $C=1$.
	Lastly, note that $F_{V,+,n}$ are envelopes for $\Fcal_{V,+,n}$. 

	Putting everything together, and in combination with \Cref{cor:chaining-independent}, we obtain
	\begin{align*}
		\E\|\G_{n,1}\|_{\Fcal_{V,\delta}} &\lesssim \int_0^{2\delta} \sqrt{\ln_+ N_{\left[\right]}(\epsilon) } d \epsilon 
		\\&\quad + \frac{B  \ln_+ N_{\left[\right]}(\delta)  }{\sqrt{r_n}} + \sqrt{r_n}\|F_{V,+,n}\ind\{F_{V,+,n}>B\}\|_{1,n} + \sqrt{r_n} N_{[]}^{-1}(e^{r_n})
	\end{align*}
	with $N_{\left[\right]}(\epsilon)=N_{\left[\right]}(\epsilon,\Fcal,\|\cdot\|_{\gamma,\infty})$.
	Let $B=a_n\sqrt{r_n }$, with $a_n \to 0$ arbitrarily slowly. We obtain 
	$$\frac{B  \ln_+ N_{\left[\right]}(\delta) }{\sqrt{r_n}} = a_n \ln_+ N_{\left[\right]}(\delta) \to 0,$$
	for every fixed $\delta > 0$ due to condition \ref{A5:multiplier-asy-tightness}.
	Further, 
	\begin{align*}
		\sqrt{r_n}\|F_{V,+,n}\ind\{F_{V,+,n}>a_n\sqrt{r_n}\}\|_{1,n}
		&\leq \sqrt{r_n m_n}\|F_V\ind\{F_V>a_n\sqrt{r_n/m_n}\}\|_{1,n}\\
		&= \sqrt{r_n m_n(r_n/m_n)^{1-\gamma}}\|F_V\|_{\gamma,n}^\gamma a_n^{1 - \gamma} \\
		&=\sqrt{r_n m_n(r_n/m_n)^{1-\gamma}}\|V_{n,1}\|_\gamma^\gamma\|F\|_{\gamma,n}^\gamma a_n^{1 - \gamma} \\
		&\lesssim \sqrt{\frac{m_n^{\gamma}}{r_n^{\gamma-2}}} \|F\|_{\gamma,n}^\gamma a_n^{1 - \gamma}\\
		&\lesssim \sqrt{\frac{m_n^{\gamma+\gamma-2}}{k_n^{\gamma-2}}} \|F\|_{\gamma,n}^\gamma a_n^{1 - \gamma}\\
		&=\sqrt{k_n^{2\alpha(\gamma-1)-(\gamma-2)}}\|F\|_{\gamma,n}^\gamma a_n^{1 - \gamma} \\
		&\to 0, 
	\end{align*}
	for $a_n \to 0$ sufficiently slowly, where we used that
	$V_{n,i}$ are identically distributed with $\sup_n\|V_{n,i}\|_\gamma<\infty$ 
	in the third and fourth step, and our condition on $\alpha$ 
	and \ref{A1:multiplier-asy-tightness} in the last.
	Combined, we obtain 
	\begin{align*}
		\limsup_n \E\|\G_{n,1}\|_{\Fcal_{V,\delta}} \lesssim \int_0^\delta \sqrt{\ln_+ N_{\left[\right]}(\epsilon) } d \epsilon 
	\end{align*}
	for all $\delta>0$. Thus
	\begin{align*}
		\lim_{\delta\to 0}\limsup_n \E\|\G_{n,1}\|_{\Fcal_{V,\delta_n}} &= 	\lim_{\delta\to 0}\limsup_n \int_0^{\delta_n} \sqrt{\ln_+ N_{\left[\right]}(\epsilon, \Fcal_n, \| \cdot \|_{\gamma, \infty}) } d \epsilon 
		\\&= 0,
	\end{align*}
	by condition \ref{A5:multiplier-asy-tightness}, completing the proof. 
\end{proof}

\begin{remark}
	A simple calculation reveals that the mixing assumptions
	of \Cref{thm:asy-tightness-sup-br} imply that of 
	\Cref{thm:multiplier-asy-tightness} on $X_{n,i}$.
	On the other hand, the summability condition on $\beta_n^X$ in the latter theorem
	suggests a polynomial decay of the order given in \Cref{thm:asy-tightness-sup-br}. 
	In other words, the two theorems essentially impose the same mixing assumptions on $X_{n,i}$.
	The mixing assumptions on $V_{n,i}$, on the other hand, are substantially weaker. 
\end{remark}

\section{Proofs for the bootstrap}\label{ap:bootstrap}
We will use the notation introduced in \Cref{sec:bstrap} without further mentioning.

\subsection{Proof of Proposition \ref{thm:bstrap-rwc} and a corollary}

\begin{proof}[Proof of \Cref{thm:bstrap-rwc}]
    Since $\G_n$ is relatively compact, so is $\G_n^{\otimes 3}$ \citep[Example 1.4.6]{van2023weak}.
    Both statements 
    can be checked at the level of subsequences and, because $\G_n$ and $\G_n^{\otimes 3}$ are relatively compact, we may assume that 
    $\G_n\dc \G$ converges weakly to some tight Borel law. 
    By independence, we obtain $\G_n^{\otimes 3}\dc \G^{\otimes 3}$.
    Then \ref{A2:bstrap-rwc} is equivalent to 
    $$\bigl(\G_n,\G_n^{(1)},\G_n^{(2)}\bigr)\dc \G^{\otimes3}$$
    in $\ell^{\infty}(\Fcal)^3$.
    Thus, \ref{A2:bstrap-rwc} is equivalent to 
    (a) of Lemma 3.1 in \cite{bucher2019note} and we obtain the claim. 
\end{proof}

\begin{corollary}\label{cor:rclt-bstrap}
    Assume that $\G_n$ satisfies a relative CLT and $\G_n^{(i)}$ 
	are relatively compact. 
    Then, $\G_n^{(1)}$ is a consistent bootstrap scheme if 
    \begin{enumerate}
        \item \label{A2:rclt-bstrap} all marginals of $\bigl(\G_n,\G_n^{(1)},\G_n^{(2)}\bigr)$ satisfy a relative CLT, 
        \item \label{A3:rclt-bstrap} for $n\to \infty$,
        \begin{align*}
            \cov\left[\G_n^{(i)}(s),\G_n^{(i)}(t)\right]-\cov\left[\G_n(s),\G_n(t)\right]
            &\to 0,\\
            \cov\left[\G_n^{(i)}(s),\G_n^{(j)}(t)\right]&\to 0.
        \end{align*}
        for all $i,j=0,1,2$, $i\neq j$, and $\G_n^{(0)}:=\G_n$.
    \end{enumerate}
\end{corollary}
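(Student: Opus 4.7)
The plan is to invoke \Cref{thm:bstrap-rwc}, which reduces consistency to showing
$(\G_n,\G_n^{(1)},\G_n^{(2)})\rd \G_n^{\otimes 3}$. To this end, I would apply \Cref{thm:rwc-marginals}
to the triple, verifying (i) relative asymptotic tightness and (ii) relative weak convergence of all finite-dimensional marginals to those of $\G_n^{\otimes 3}$.

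For (i), since $\G_n$ satisfies a relative CLT it is relatively compact by \Cref{lem:rclt-marginal-rclt}; together with the assumed relative compactness of $\G_n^{(1)},\G_n^{(2)}$ and a standard diagonal extraction, any subsequence contains a sub-subsequence along which all three components are asymptotically tight. Independence of $\mathds V_n^{(1)},\mathds V_n^{(2)}$ and $\mathds X_n$, combined with product-measure arguments \citep[e.g.,][Example~1.4.6]{van2023weak}, then ensures that the product $\G_n^{\otimes 3}$ is relatively compact as well, and similarly that the triple $(\G_n,\G_n^{(1)},\G_n^{(2)})$ is relatively asymptotically tight.

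For (ii), fix $t_1,\ldots,t_k\in T$ and consider the $3k$-vectors
\begin{align*}
  M_n &= \bigl(\G_n(t_1),\ldots,\G_n(t_k),\G_n^{(1)}(t_1),\ldots,\G_n^{(1)}(t_k),\G_n^{(2)}(t_1),\ldots,\G_n^{(2)}(t_k)\bigr),\\
  \tilde M_n &= \text{the analogous marginal of } \G_n^{\otimes 3}.
\end{align*}
By hypothesis \ref{A2:rclt-bstrap}, $M_n$ satisfies a multivariate relative CLT, so by \Cref{prop:rel-clt-chara} every subsequence has a further subsequence $n_j$ along which $\cov[M_{n_j}]\to \Sigma$ and $M_{n_j}\dc \mathcal{N}(0,\Sigma)$. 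Hypothesis \ref{A3:rclt-bstrap} forces $\Sigma$ to be block-diagonal with three identical diagonal blocks $\Sigma_0$, namely the common limit of $[\cov[\G_n(t_i),\G_n(t_j)]]_{i,j}$ along this subsequence. Since $\G_n$ satisfies a relative CLT, the vector $(\G_n(t_1),\ldots,\G_n(t_k))$ converges weakly to $\mathcal{N}(0,\Sigma_0)$ along the same subsequence by \Cref{prop:rel-clt-chara}, so by independence $\tilde M_{n_j}\dc \mathcal{N}(0,\Sigma)$. \Cref{prop:chara-rc-rwc}\ref{A3:chara-rc-rwc} then yields $M_n\rd \tilde M_n$.

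The main obstacle is bookkeeping: keeping track of the covariance structure of $M_n$ and ensuring that hypothesis \ref{A3:rclt-bstrap} correctly drives the $3k\times 3k$ covariance of $M_n$ toward the block-diagonal covariance of $\tilde M_n$ along \emph{the same} subsequence on which $\cov[\G_n(s),\G_n(t)]$ converges. Once this is in place, the two characterizations via \Cref{prop:rel-clt-chara} and \Cref{prop:chara-rc-rwc} align, and \Cref{thm:rwc-marginals} packages everything into the desired relative weak convergence.
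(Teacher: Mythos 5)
Your proposal is correct and takes essentially the same approach as the paper: both reduce to showing $(\G_n,\G_n^{(1)},\G_n^{(2)})\rd \G_n^{\otimes 3}$ via \Cref{thm:bstrap-rwc}, verify marginal convergence and asymptotic tightness, and identify the limiting Gaussian structure using \Cref{prop:rel-clt-chara} together with the covariance hypotheses. The only difference is cosmetic: you invoke \Cref{thm:rwc-marginals} directly, whereas the paper passes to subsequences first and concludes via the classical joint-convergence result of \citet[Section~1.5, Problem~3]{van2023weak}.
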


\begin{proof}
	We conclude by  \Cref{thm:bstrap-rwc}, i.e., we prove 
	$$\left(\G_n,\G_n^{(1)},\G_n^{(2)}\right)\rd \G_n^{\otimes 3}.$$
	Since $\G_n$ satisfies a relative CLTs resp. $\G_n^{(i)}$ is relatively compact 
	and satisfies marginal relative CLTs,
	any subsequence of $n$ contains a further subsequence such that 
	both, $\G_n$ and $\G_n^{(i)}$, converge weakly to some tight and measurable GP.
	By \Cref{prop:chara-rc-rwc},
	we may assume that 
	$\G_n$ and $\G_n^{(i)}$ converge weakly to some tight and measurable GP.
    By \ref{A3:rclt-bstrap}, such limiting GPs are equal in distribution, i.e., $\G_n,\G_n^{(i)}\dc N$ with $N$ some tight and measurable GP. 
    Denote by $N^{(i)}$ iid copies of $N$.
	Since $\G_n^{\otimes 3}\dc N^{\otimes 3}$, it suffices to prove
    $$\left(\G_n,\G_n^{(1)},\G_n^{(2)}\right)\dc (N,N^{(1)},N^{(2)}).$$
    By \ref{A2:rclt-bstrap} and \ref{A3:rclt-bstrap},
    \begin{align*}
        \bigl(\G_n(t_1),\ldots,\G_n(t_k),\G_n^{(1)}(t_{k+1}),\ldots,\G_n^{(1)}(t_{k+m}),\G_n^{(2)}(t_{k+m+1}),\ldots,\G_n^{(2)}(t_{m+k+l})\bigr)
    \end{align*}
    converges weakly to 
    $$\bigl(N(t_1),\ldots,N(t_k),N^{(1)}(t_{k+1}),\ldots,N^{(1)}(t_{k+m}),N^{(2)}(t_{k+m+1}),\ldots,N^{(2)}(t_{m+k+l})\bigr).$$
    Thus, 
    $$\left(\G_n,\G_n^{(1)},\G_n^{(2)}\right)\dc N^{\otimes 3}$$
    \citep[Section 1.5 Problem 3.]{van2023weak}
    which finishes the proof.
\end{proof}

\subsection{Proof of Proposition \ref{thm:bstrap-univ}}

\begin{lemma}\label{lem:eps-of-n}
    For every $\epsilon>0$ define $\nu_n(\eps)$ as the maximal 
    natural number such that $$\max_{|i-j|\leq \nu_n(\eps)}\left|\cov[V_{n,i},V_{n,j}]-1\right|\le\epsilon.$$ 
    Assume that for some $\gamma>2$
    \begin{enumerate}
        \item $\sup_{n,i}\E[|F(X_{n,i})|^\gamma]<\infty$,
        \item $k_n\beta_n^X\bigl(\nu_n(\eps)\bigr)^{\frac{\gamma-2}{\gamma}}\to 0$ and 
        \item $k_n^{-1}\sum_{i,j=1}^{k_n}|\cov[f(X_{n,i}),g(X_{n,j})]|\leq K$ for all $f,g\in \Fcal$.
    \end{enumerate}
    Then,
    $$\cov\bigl[\G_n^{(j)}(s,f),\G_n^{(j)}(t,g)\bigr]-\cov\bigl[\G_n(s,f),\G_n(t,g)\bigr]\to 0$$
	for all $(s,f),(t,g)\in S\times \Fcal$.
\end{lemma}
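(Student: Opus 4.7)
Writing $\tilde f(x) = f(x) - \E[f(X_{n,i})]$ and using the independence of $\mathds{V}_n$ and $\mathds{X}_n$ together with $\E[V_{n,i}] = 0$, $\var[V_{n,i}] = 1$, a direct computation gives
\begin{align*}
    \cov\bigl[\G_n^{(j)}(s,f),\G_n^{(j)}(t,g)\bigr] - \cov\bigl[\G_n(s,f),\G_n(t,g)\bigr]
    = \frac{1}{k_n}\sum_{i,l=1}^{k_n} w_{n,i}(s) w_{n,l}(t)\bigl(\cov[V_{n,i},V_{n,l}] - 1\bigr)\cov[f(X_{n,i}),g(X_{n,l})].
\end{align*}
So the plan is to bound the absolute value of this double sum by a quantity tending to zero, by splitting according to whether $|i - l|$ is small or large relative to the cut-off $\nu_n(\eps)$.

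For the diagonal band $\{|i-l|\le \nu_n(\eps)\}$, the definition of $\nu_n(\eps)$ gives $|\cov[V_{n,i},V_{n,l}] - 1|\le \eps$. Combined with $\sup_{n,i,s}|w_{n,i}(s)|\le M<\infty$ (part of the standing setup in \Cref{sec:weighted-unif-rclt}) and condition (iii), this contribution is at most
\begin{align*}
    \eps M^2 \cdot \frac{1}{k_n}\sum_{i,l=1}^{k_n}\bigl|\cov[f(X_{n,i}),g(X_{n,l})]\bigr|\le \eps M^2 K.
\end{align*}

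For the off-diagonal part $\{|i-l|>\nu_n(\eps)\}$, I would use the trivial bound $|\cov[V_{n,i},V_{n,l}] - 1|\le 2$ together with the covariance inequality for $\beta$-mixing \citep[Theorem~3]{doukhan2012mixing}:
\begin{align*}
    \bigl|\cov[f(X_{n,i}),g(X_{n,l})]\bigr|\lesssim \beta_n^X(|i-l|)^{(\gamma-2)/\gamma}\,\|f(X_{n,i})\|_\gamma\,\|g(X_{n,l})\|_\gamma,
\end{align*}
where $\|f(X_{n,i})\|_\gamma,\|g(X_{n,l})\|_\gamma\le \sup_{n,i}\|F(X_{n,i})\|_\gamma < \infty$ by (i). Using monotonicity of $p\mapsto \beta_n^X(p)$, the off-diagonal contribution is therefore at most a constant times
\begin{align*}
    \frac{1}{k_n}\cdot k_n^2 \cdot \beta_n^X(\nu_n(\eps))^{(\gamma-2)/\gamma} = k_n\,\beta_n^X(\nu_n(\eps))^{(\gamma-2)/\gamma},
\end{align*}
which tends to zero by (ii).

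Combining the two bounds yields $\limsup_{n\to\infty}\bigl|\cov[\G_n^{(j)}(s,f),\G_n^{(j)}(t,g)] - \cov[\G_n(s,f),\G_n(t,g)]\bigr|\le \eps M^2 K$ for every $\eps>0$, and letting $\eps\downarrow 0$ gives the claim. I do not anticipate any serious obstacle; the only care needed is to verify that the mixing covariance inequality applies uniformly in $n$ (which it does, since the moment and mixing bounds are uniform) and to confirm that the constant in the off-diagonal estimate is independent of $s,t,f,g$ so that the limit is truly uniform over the stated indices.
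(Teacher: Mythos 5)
Your proof is correct and follows essentially the same route as the paper's: compute the covariance difference as a double sum weighted by $\cov[V_{n,i},V_{n,l}]-1$, split the index pairs at $|i-l|\le\nu_n(\eps)$ versus $|i-l|>\nu_n(\eps)$, bound the first block by $\eps$ times the assumed covariance summability, bound the second by the Doukhan covariance inequality together with condition (ii), and let $\eps\downarrow 0$. The only difference is cosmetic: you write out the covariance identity and use the bound $|\cov[V_{n,i},V_{n,l}]-1|\le 2$ explicitly, whereas the paper leaves the first step implicit and drops this factor inside a $\lesssim$.
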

\begin{proof}
    Recall 
    $$|\cov[f(X_{n,i}),g(X_{n,j})]|\lesssim \sup_{n,k}\|F(X_{n,k})\|_\gamma^2\beta_n^X(|i-j|)^{\frac{\gamma-2}{\gamma}}$$
    by Theorem 3 of \cite{doukhan2012mixing}.

    Under the assumptions
    \begin{align*}
       \sum_{\substack{i,j\leq k_n\\|i-j|\leq \nu_n(\eps)}} \bigl|\left[\cov[V_{n,i},V_{n,j}]-1\right]\cov[f(X_{n,i}),g(X_{n,j})]\bigr|
        &\lesssim
        \epsilon 
        \sum_{\substack{i,j\leq k_n\\|i-j|\leq \nu_n(\eps)}}|\cov[f(X_{n,i}),g(X_{n,j})]|
        \\
        &\lesssim  
        k_n\epsilon
        \\
        \\
        k_n^{-1}\sum_{\substack{i,j\leq k_n\\|i-j|>\nu_n(\eps)}}\bigl|\left[\cov[V_{n,i},V_{n,j}]-1\right]\cov[f(X_{n,i}),g(X_{n,j})]\bigr|
        &\leq 
        k_n^{-1}\sum_{\substack{i,j\leq k_n\\|i-j|> \nu_n(\eps)}}|\cov[f(X_{n,i}),g(X_{n,j})]|
        \\
        & \lesssim
        k_n\beta_n^X\bigl(\nu_n(\eps)\bigr)^{\frac{\gamma-2}{\gamma}} \to 0
    \end{align*}
    Thus, 
    \begin{align*}
        \limsup_n k_n^{-1}\sum_{i,j=1}^{k_n}\bigl|\left[\cov[V_{n,i},V_{n,j}]-1\right]\cov[f(X_{n,i}),g(X_{n,j})] \bigr| \lesssim \epsilon,
    \end{align*}
    with constant independent of $\epsilon$.
    Hence, 
    \begin{align*}
        &\limsup_n \left|\cov\bigl[\G_n^{(j)}(s,f),\G_n^{(j)}(t,g)\bigr]-\cov\bigl[\G_n(s,f),\G_n(t,g)\bigr]\right|
        \\
        &\leq
        \sup_{n,i,s}|w_{n,i}(s)|^2\left[\limsup_n\frac{1}{k_n}\sum_{i,j=1}^{k_n}\bigl|\left[\cov[V_{n,i},V_{n,j}]-1\right]\cov[f(X_{n,i}),g(X_{n,j})]\bigr| \right]
        \\
        &\lesssim \epsilon.
    \end{align*}
    Since this is true for all $\epsilon>0$, taking $\epsilon \to 0$ yields the claim.
\end{proof}

\begin{proof}[Proof of \Cref{thm:bstrap-univ}]\label{proof:bstrap-univ}
	We check the conditions of \Cref{cor:rclt-bstrap}.
	By \Cref{thm:multiplier-rel-clt} $\G_n$ satisfies a relative CLT. 
	As in the proof of \Cref{thm:rel-uniform-clt}, there exists some $\alpha^\prime<\frac{\gamma-2}{2(\gamma-1)}$ such that 
	$k_n\beta^X_{n}(k_n^{\alpha^\prime})^{\frac{\gamma-2}{\gamma}}\to 0$.
	Taking the maximum of $\alpha$ and $\alpha^\prime$ we may assume $\alpha^\prime=\alpha$. 
	By \Cref{thm:multiplier-asy-tightness} $\G_n^{(j)}$ is asymptotically tight hence relatively compact. 

	To prove marginal relative CLTs, assume without loss of generality $\E[f(X_{n,i})]=0$ and
	note that the $\beta$-coefficients associated to the triangular arrays
	$$(w_{n,i}(s_1)f_1(X_{n,i}),\ldots, V_{n,i}^{(2)}w_{n,i}(s_k)f_k(X_{n,i}))$$ are bounded above by
    $\beta_n^X+\beta_n^V$ by Theorem 5.1 (c) of \cite{bradley2005basicproperties}.
	In particular, the assumptions on the $\beta$-coefficients'
	decay rate given in
	\Cref{thm:multi-rel-clt} is satisfied for the marginals.
	Next, 
	\begin{align*}
		\E[|V_{n,i}^{(k)}w_{n,i}(s)f(X_{n,i})|^\gamma]&=\E[|V_{n,i}^{(k)}|^\gamma]\E[|w_{n,i}(s)f(X_{n,i})|^\gamma]
	\end{align*}
	by independence of $\mathds{X}_n$ and $\mathds{V}_n^{(k)}$.
	In particular, 
	$$\sup_{n,i}\E[|V_{n,i}^{(k)}w_{n,i}(s)f(X_{n,i})|^\gamma]<\infty$$
	for all $(s,f)\in S\times \Fcal$ by assumption. 
	Note that by the law of total covariances and $\E[f(X_{n,i})]=0$
	\begin{align*}
		\left|\cov\bigl[V^{(k)}_{n,i}f(X_{n,i}),V^{(k)}_{n,j}g(X_{n,j})\bigr]\right|
		&=\left|\E\left[V^{(k)}_{n,i}V^{(k)}_{n,j}\right]\cov\bigl[f(X_{n,i}),g(X_{n,j})\bigr]\right|
		\\
		&\lesssim\bigl|\cov\bigl[f(X_{n,i}),g(X_{n,j})\bigr]\bigr|.
	\end{align*}
	Thus, \ref{A1:multi-rel-clt} of \Cref{thm:multi-rel-clt} follows by 
	the summability of the $\beta$-coefficients of $X_{n,i}$ (\Cref{rem:sum-implies-cov-bounded-by-moment}).
	\Cref{thm:multi-rel-clt} implies marginal relative CLTs of $(\G_n,\G_n^{(1)},\G_n^{(2)})$.

    By the law of total covariances, independence of $\mathds{V}_n^{(k)},\mathds{X}_n$ and
    $\E[f(X_{n,i})],\E[V_{n,i}^{(k)}]=0$ 
    we derive
    \begin{align*}
        \cov\bigl[w_{n,i}(s)f(X_{n,i}),V_{n,j}^{(k)}w_{n,j}(t)g(X_{n,j})\bigr]
        &=0
        \\
        \cov\bigl[V_{n,i}^{(k)}w_{n,i}(s)f(X_{n,i}),V_{n,j}^{(l)}w_{n,j}(t)g(X_{n,j})\bigr]
        &=0
    \end{align*}
    for all $k\neq l$ and $(s,f),(t,g)\in S\times \Fcal$.
    By the above computation we obtain 
    \begin{align*}
        \cov[\G_n^{(k)}(s,f),\G_n^{(l)}(t,g)]=\cov[\G_n(s,f),\G_n^{(k)}(t,g)]=0
    \end{align*}
    for $k\neq l$.
    Lastly, 
    \begin{align*}
        \cov[\G_n^{(1)}(s,f),\G_n^{(1)}(t,g)]-\cov[\G_n(s,f),\G_n(t,g)]\to 0 
    \end{align*}
    by \Cref{lem:eps-of-n}.
    Then, \Cref{cor:rclt-bstrap} provides the claim.
\end{proof}

\subsection{Proof of Proposition \ref{prop:bstr-mean-estim-MSE-rate}}

    Set $\G_n^*$ as $\G_n^{(1)}$ but with $V_{n,i}^{(1)}$ replaced by $V_{n,i}$. 
    Observe that for every $\epsilon > 0$,
    \begin{align*}
        &\quad \var[\wh \G_n^*(s, f) - \G_n^*(s, f)] \\
        &= \var\left[\frac{1}{\sqrt{n}} \sumin V_{n,i} w_{n,i}(s) ( \mu_n(i, f) - \wh \mu_n(i, f))\right]  \\
        &= \frac{1}{n} \sum_{i = 1}^n \sum_{j = 1}^n \cov[V_{n,i}, V_{n, j}] w_{n,i}(s)  w_{n,j}(s) \E\left[(\mu_n(i, f) - \wh \mu_n(i, f))(\mu_n(j, f) - \wh \mu_n(j, f))\right]  \\
        &\lesssim  \nu_n(\epsilon) \sup_{i} \E[(\mu_n(i, f) - \wh \mu_n(i, f))^2] + \epsilon,
    \end{align*}
    using the same arguments as in the proof of \cref{thm:bstrap-univ}. Taking $\epsilon \to 0$ and our assumption on the mean squared error give $ \var[\wh \G_n^*(s, f) - \G_n^*(s, f)] \to 0$.
    Since also $$\E[\wh \G_n^*(s, f) - \G_n^*(s, f)]=\E[\E[\wh \G_n^*(s, f) - \G_n^*(s, f)|\mathds{X}_n]]=0$$
    because of $\E[V_{n,i}]=0$, Markov's inequality yields $\wh \G_n^*(s, f) - \G_n^*(s, f) \overset{\Pr}{\to} 0$ for every $s, f$.
	By \Cref{thm:bstrap-rwc}, $\G_n^*$ is a consistent bootstrap and relatively compact. 
    Thus, $\wh \G_n^* - \G_n^*$ is relatively compact, which implies $\|\wh \G_n^* - \G_n^*\|_{S \times \Fcal}  \overset{\Pr^*}{\to} 0$. 
	Since $\G_n^*$ is a consistent bootstrap for $\G_n$, we conclude by \Cref{thm:bstrap-rwc}. \qed

\subsection{Proof of Proposition \ref{prop:boot-conservative-1}}
Define $\bar \Z_n^*$ as the Gaussian process corresponding to $\bar \G_n^*$ and
    $\bar q_{n, \alpha}^*$ as the $\alpha$-quantile of $\|\bar \Z_n^*\|_{S \times \Fcal} $.
    For every $\epsilon > 0$, \eqref{ass:multiplied-consistent-estimator-of-E} yields
    \begin{align*}
         & \quad \limsup_{n \to \infty} \Pr(\|\wh \G_n^*\|_{S \times \Fcal} \le \bar q_{n, \alpha}^* - \epsilon)                                                                             \\
         & \le  \limsup_{n \to \infty}\Pr(\|\bar \G_n^*\|_{S \times \Fcal} \le \bar q_{n, \alpha}^* ) +   \limsup_{n \to \infty}\Pr(\|\wh \G_n^* -\bar \G_n^*\|_{S \times \Fcal} \ge \epsilon) \\
         & \le \limsup_{n \to \infty} \Pr(\|\bar \G_n^*\|_{S \times \Fcal} \le \bar q_{n, \alpha}^* ),
    \end{align*}
    Taking $\epsilon \to 0$ implies
    \begin{align*}
         & \limsup_{n \to \infty} \Pr(\|\wh \G_n^*\|_{S \times \Fcal} < \bar q_{n, \alpha}^* )  \le \limsup_{n \to \infty} \Pr(\|\bar \G_n^*\|_{S \times \Fcal} \le  \bar q_{n, \alpha}^* ).
    \end{align*}
    Next, observe that \citet[Proposition 1]{giessing2023anticoncentrationsupremagaussianprocesses} and our assumption give
    \begin{align*}
        \liminf_{n \to \infty} \var[\|\bar \Z_n^*\|_{S \times \Fcal }] \gtrsim  \liminf_{n \to \infty} \inf_{(s, f) \in S \times \Fcal }\var[\bar \G_n^*(s, f)] > 0 .
    \end{align*}
    Let $n_{k}$ be any subsequence of such that $\bar \Z_{n_k}^*$ converges weakly to a Gaussian process $\bar \Z^*$ with $\alpha$-quantile $q_{\alpha}$.  Then the above implies that $\bar q_{n_k, \alpha}^* \to q_{\alpha} > 0$, and  $\Pr(\|\bar \Z^*\|_{S \times \Fcal} = q_{\alpha}) = 0$. Thus,  \cref{prop:portmanteau} gives
    \begin{align*}
        \limsup_{n \to \infty} \Pr(\|\bar \G_n^*\|_{S \times \Fcal} \le  \bar q_{n, \alpha}^* )  \le \limsup_{n \to \infty} \Pr(\|\bar \Z_n^*\|_{S \times \Fcal} \le \bar q_{n, \alpha}^*)  = 1 - \alpha.
    \end{align*}
    We have shown that
    \begin{align*}
        \limsup_{n \to \infty} \Pr(\|\wh \G_n^*\|_{S \times \Fcal} <  \bar q_{n, \alpha}^* ) \le 1 - \alpha,
    \end{align*}
    which implies $\wh q_{n, \alpha}^* \ge \bar q_{n, \alpha}^*$ with probability tending to 1.
    This further implies
    \begin{align*}
        \liminf_{n \to \infty} \Pr(\|\G_n\|_{S \times \Fcal} \le \wh q_{n, \alpha}^*)
         & \ge
        \liminf_{n \to \infty} \Pr(\|\G_n\|_{S \times \Fcal} \le \bar q_{n, \alpha}^*) \ge \liminf_{n \to \infty} \Pr(\|\Z_n\|_{S \times \Fcal} \le \bar q_{n, \alpha}^*),
    \end{align*}
    using $\G_n \rd \Z_n$ and a similar continuity argument as above.
    Decompose
    \begin{align*}
        \bar \G_n^*(s, f) - \bar \G_n^*(t, g) = \G_n(s, f) - \G_n(t, g) +  [\bar \G_n^*(s, f) - \G_n(s, f) - \bar \G_n^*(t, g) +  \G_n(t, g)],
    \end{align*}    
    and observe that
    $\G_n(s, f) - \G_n(t, g)$ and $[\bar \G_n^*(s, f) - \G_n(s, f) - \bar \G_n^*(t, g) + \G_n(t, g)]$ are uncorrelated for every $(s, f), (t, g) \in S \times \Fcal$.
    Thus, 
    \begin{align*}
        &\quad \, \var[\Z_n(s, f) - \Z_n(t, g)] \\
        &= \var[\G_n(s, f) - \G_n(t, g)]  \\
        &\le  \var[\G_n(s, f) - \G_n(t, g)]  + \var[\G_n^*(s, f) - \G_n^*(t, g) - \G_n(s, f) +  \G_n(t, g)] \\
        & =  \var[\bar \G_n^*(s, f) - \bar \G_n^*(t, g)] \\
                                      & = \var[\bar \Z_n^*(s, f) - \bar \Z_n^*(t, g)].
    \end{align*}
    Since both $\Z_n(s, f)$ and $\bar \Z_n^*(s, f)$ are asymptotically tight, they are separable for large enough $n$. 
    The version of Fernique's inequality given by \citet[eq.~3.11 and following paragraph]{ledoux1991probability} implies
    \begin{align*}
        \liminf_{n \to \infty} \Pr(\|\Z_n\|_{S \times \Fcal} \le \bar q_{n, \alpha}^*)  \ge   \liminf_{n \to \infty}  \Pr(\|\bar \Z_n^*\|_{S \times \Fcal} \le \bar q_{n, \alpha}^*)=  1-  \alpha.
    \end{align*}
    Altogether, we have shown that
    \begin{align*}
        \liminf_{n \to \infty} \Pr(\| \G_n\|_{S \times \Fcal} \le \wh q_{n, \alpha}^*)   \ge 1 - \alpha,
    \end{align*}
    as claimed. \qed

\subsection{Proof of Proposition \ref{prop:boot-conservative-2}}

Conditions \eqref{ass:multiplied-consistent-estimator-of-E} and \eqref{eq:boot-inconsistent}  imply that $\wh q_{n, \alpha}^* \ge t_n$ with probability tending to 1.
Then every subsequence of the sets $S_{n} = \{z \colon |z| \le t_{n}\}$ converges to $\R$, whose boundary has probability zero under every tight Gaussian law. \cref{prop:portmanteau} and \cref{thm:multiplier-rel-clt} give
 \begin{align*}
       \liminf_{n \to \infty} \Pr(\|\G_n\|_{S \times \Fcal}  \le \wh q_{n, \alpha}^*) \ge \liminf_{n \to \infty} \Pr(\|\G_n\|_{S \times \Fcal} \le t_n)\ge \liminf_{n \to \infty} \Pr(\|\Z_n\|_{S \times \Fcal} \le t_n) = 1,
  \end{align*}
as claimed. \qed

\subsection{A useful lemma}

\begin{lemma} \label{lem:vn-exp-zero}
	Let $V_{n, 1}, \ldots, V_{n, n}$ be a sequence of $m_n$-dependent random variables with $m_n = o(n^{1/2})$, $\E[V_{n, i}] = 0$, $\var[V_{n, i}] = 1$, and $\sup_{i,n} \E[|V_{n, i}|^a] < \infty$ for any $a \in \N$.
	Let $\Fcal_{n}$ be classes of functions satisfying conditions \ref{A1:asy-tightness-sup-br} and \ref{A3n:asy-tightness-sup-br} of \cref{thm:asy-tightness-sup-br} with $\gamma = 1$. Then
	\begin{align*}
			\sup_{t \in T} \left| \frac{1}{n} \sum_{i=1}^{n} V_{n, i} \E[f_{n, t}(X_i)]  \right| & \overset{\Pr^*}{\to} 0,
	\end{align*}
\end{lemma}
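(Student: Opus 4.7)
The plan is to recast $\wt\G_n(t) := n^{-1/2}\sum_{i=1}^n V_{n,i}\mu_{n,t}(i)$, with $\mu_{n,t}(i) := \E[f_{n,t}(X_i)]$, as a weighted empirical process driven by the $m_n$-dependent array $V_{n,i}$ over the deterministic class $\Gcal_n := \{i \mapsto \mu_{n,t}(i) : t \in T\}$ on $\{1,\ldots,n\}$; showing $\|\wt\G_n\|_T = o_p(\sqrt{n})$ is equivalent to the claim. The bracketing structure of $\Fcal_n$ transfers directly: for any $L^1$-bracket $l \le f_{n,t} \le u$ with $\|u-l\|_{1,n}\le \epsilon$, pointwise $\mu_{n,l}(i) \le \mu_{n,t}(i) \le \mu_{n,u}(i)$, and $n^{-1}\sum_i (\mu_{n,u}(i) - \mu_{n,l}(i)) = \|u-l\|_{1,n} \le \epsilon$. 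Hence $\Gcal_n$ admits $\epsilon$-brackets in the discrete $L^1$-norm of cardinality $N := N_{[]}(\epsilon, \Fcal_n, \|\cdot\|_{1,n})$, and condition \ref{A3n:asy-tightness-sup-br} is inherited.

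Applying the standard bracketing decomposition gives
\begin{align*}
\sup_{t \in T}\Bigl|\frac{1}{n}\sum_i V_{n,i}\mu_{n,t}(i)\Bigr| \le \max_{k \le N}\Bigl|\frac{1}{n}\sum_i V_{n,i}\mu_{n,l_k}(i)\Bigr| + \max_{k \le N}\frac{1}{n}\sum_i |V_{n,i}|\bigl(\mu_{n,u_k}(i) - \mu_{n,l_k}(i)\bigr).
\end{align*}
The second (bracket-width) term has expectation $\lesssim \epsilon$ because $\E|V_{n,i}| \le C$. For the first term, I would exploit $m_n$-dependence by splitting $V_{n,i}$ into $m_n + 1$ interlaced independent subsequences and applying the classical Rosenthal inequality within each, yielding, for any $p \in \N$ and deterministic $a_i$,
\begin{align*}
\E\Bigl|\frac{1}{n}\sum_i V_{n,i} a_i\Bigr|^{2p} \lesssim_p \frac{m_n^{2p}\bigl(\sum_i a_i^2\bigr)^p + m_n^{2p-1}\sum_i a_i^{2p}}{n^{2p}}.
\end{align*}
Plugging in $a_i = \mu_{n,l_k}(i)$, choosing $p$ large enough (using the bounded moments of $V_{n,i}$ of all orders) and taking a union bound over $k \le N$ would drive the maximum to zero in probability, provided the $\ell^2$- and $\ell^{2p}$-norms of $a_i$ can be controlled.

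The main obstacle is precisely this control: condition \ref{A1:asy-tightness-sup-br} with $\gamma = 1$ bounds the envelope $G(i) := \E[F(X_i)]$ only on average via $\|F\|_{1,\infty}<\infty$, not uniformly. I would overcome it by truncating $\mu_{n,t}(i) = \mu_{n,t}(i)\ind\{G(i) \le B_n\} + \mu_{n,t}(i)\ind\{G(i) > B_n\}$ at a slowly growing level $B_n$. On the truncated part, $\|a\|_\infty \le B_n$ and $\sum_i a_i^2 \le n B_n \|F\|_{1,\infty}$, so the moment bound above yields a rate controlled by $m_n B_n/n$; the tail term is bounded in expectation by $n^{-1}\sum_i G(i)\ind\{G(i) > B_n\}$, which tends to zero since the bracketing entropy condition \ref{A3n:asy-tightness-sup-br} forces the family $\{G(i)\}$ to be uniformly integrable under counting measure (otherwise the smallest $\epsilon$-bracket would have uncontrolled width). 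Letting $B_n \to \infty$ and $\epsilon \downarrow 0$ simultaneously, with $m_n = o(n^{1/2})$ providing the slack, then closes the argument.
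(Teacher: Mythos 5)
Your bracketing transfer from $\Fcal_n$ to the deterministic class $\Gcal_n$ on $\{1,\ldots,n\}$ is exactly what the paper does, and the bracketing decomposition into a finite maximum plus a bracket-width error is the right skeleton. From there, however, the proofs diverge, and your route has a genuine gap.

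The paper handles the finite maximum by putting the truncation on $V$ rather than on $\mu$: on the event $\max_i |V_{n,i}|\le a_n$ (which holds with probability tending to one for any $a_n\to\infty$, thanks to the all-order moment bound), it applies the paper's blocked Bernstein inequality (\cref{lem:bernstein}) with $m=m_n+1$, getting $\E[\max_k |(\P_n-P)\underline g_n^{(k)}|]\lesssim \sqrt{m_n\ln N(\eps)/n}+a_n m_n\ln N(\eps)/n$, and then uses $\ln N(\eps)=o(\eps^{-2})$ (which follows from finiteness of the bracketing integral) and chooses $\eps=n^{-1/5}$. Your interlaced-Rosenthal plus union-bound route is a valid alternative in spirit, but it is longer and it forces you to control $\|a\|_\infty$ and $\sum_i a_i^{2p}$ for the specific bracket centers, which is precisely where your argument breaks.

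The break is the claim that ``the bracketing entropy condition \ref{A3n:asy-tightness-sup-br} forces the family $\{G(i)\}$ to be uniformly integrable under counting measure.'' This is false. The bracket widths are measured in the averaged semi-norm $\|\cdot\|_{1,n}$, which is blind to concentration on a vanishing fraction of indices. Concretely, take $\Fcal_n=\{f\}$ a singleton with $G(1)=\E[F(X_1)]=n$ and $G(i)=0$ for $i\ge 2$: then $\|F\|_{1,\infty}=1<\infty$, the bracketing number is identically $1$ so \ref{A3n:asy-tightness-sup-br} is trivially satisfied, yet $n^{-1}\sum_i G(i)\ind\{G(i)>B_n\}=1$ for every $B_n<n$, so your tail term does not vanish for any admissible $B_n$. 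Thus the truncation at level $B_n$ cannot close the argument from the stated hypotheses alone. (You were right to notice that $\|F\|_{1,\infty}<\infty$ gives only an $L^1$-type control on the envelope; the paper in fact quietly uses a pointwise bound $\sup_i G(i)<\infty$ in the variance step of its Bernstein application, which holds in its downstream applications. The clean fix is to add that pointwise envelope bound as an explicit hypothesis, not to try to derive uniform integrability from the entropy condition.)

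A secondary, smaller issue: your displayed Rosenthal-type moment bound has the first term scaled by $m_n^{2p}$, whereas interlacing into $m_n+1$ blocks and applying H\"older gives $(m_n+1)^{2p-1}$ in front; this is harmless here but worth tightening if you pursue the union-bound route.
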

\newcommand{\V}{\mathds{V}}
\begin{proof}
	Let $N(\eps)$ be the number of $\epsilon$-brackets  of $\Fcal_n$ with respect to the $\|\cdot\|_{1, n}$-norm.
	As in the proof of \cref{thm:multiplier-rel-clt}, we can construct a $C\eps$-bracketing with respect to the $\|\cdot\|_{1, n}$-norm (induced by $V_{n,i}$) for the class 
	\begin{align*}
			\Gcal_n = \left\{g_{n, t}(v, i) = v \E[f_{n, t}(X_i)] \colon t \in T\right\},
	\end{align*}
	for some $C < \infty$ and size $N(\eps)$.
	Let $\Gcal_{n, k} = [\underline g_{n}^{(k)}, \overline g_{n}^{(k)}]$ be the $k$-th $C\epsilon$-bracket. Recall that
	\begin{align*}
			\P_n g_{n, t} = \frac 1 n \sum_{i=1}^{n} g_{n, t}(V_{n,i}, i) = \frac 1 n \sum_{i=1}^{n} V_{n,i} \E[f_{n, t}(X_i)],
	\end{align*}
	and 
	\begin{align*}
			\P g_{n, t} = \frac 1 n \sum_{i=1}^{n} \E[g_{n, t}(V_{n,i}, i)] = \frac 1 n \sum_{i=1}^{n} \E[V_{n,i}] \E[f_{n, t}(X_i)] = 0.
	\end{align*}
	We thus get
	\begin{align*}
			&\quad \sup_{t \in T}| \P_n g_{n, t}| \\
			&= \sup_{t \in T}| (\P_n - P) g_{n, t}| \\
			& \le \max_{1 \le k \le N(\eps)}  |(\P_n -P)\underline g_{n}^{(k)}| +  \sup_{g \in \Gcal_{n, k} }| (\P_n - P)g - (\P_n - P)\underline g_{n}^{(k)}| \\
			& \le \max_{1 \le k \le N(\eps)}  |(\P_n -P)\underline g_{n}^{(k)}| +  \sup_{g \in \Gcal_{n, k} }| \P_n (g - \underline g_{n}^{(k)})| +  \sup_{g \in \Gcal_{n, k} }| \Pr(g - \underline g_{n}^{(k)})| \\
			& \le \max_{1 \le k \le N(\eps)}  |(\P_n -P)\underline g_{n}^{(k)}| +  | \P_n (\overline  g_{n}^{(k)} - \underline  g_{n}^{(k)})| +  \sup_{g \in \Gcal_{n, k} }| \Pr(g - \underline g_{n}^{(k)})| \\
			& \le \max_{1 \le k \le N(\eps)}  |(\P_n -P)\underline g_{n}^{(k)}| +  | (\P_n - P) (\overline  g_{n}^{(k)} - \underline  g_{n}^{(k)})| +  2\sup_{g \in \Gcal_{n, k} }| \Pr(g - \underline g_{n}^{(k)})| \\
			&\le  3\max_{1 \le k \le N(\eps)}  |(\P_n -P)\underline g_{n}^{(k)}|  +  2C\eps,
	\end{align*}
	where in the last step, we assumed without loss of generality that any upper bound of the brackets also appears as a lower bound.
	By the moment condition on $V_{n, i}$, we have $\max_{1\le i \le n}|V_{n, i} | \le a_n$ with probability tending to 1 for any $a_n \to \infty$ arbitrarily slowly.
	On this event, Bernstein's inequality \cref{lem:bernstein} with $m = m_n + 1$ gives 
	\begin{align*}
		 \E\left[\max_{1 \le k \le N(\eps)}  |(\P_n -P)\underline g_{n}^{(k)}|\right] & \lesssim  \sqrt{\frac{m_n \ln N(\eps)}{n}} + \frac{a_n m_n \ln N(\eps)}{n},
	\end{align*}
	where we used that
	\begin{align*}
				\frac{1}{n} \sum_{i=1}^{n} \sum_{j=1}^{n} \cov[V_{n, i}, V_{n, j}] \E[f_{n, t}(X_i)] \E[f_{n, t}(X_j)] 
			 &\lesssim \frac{1}{n} \sum_{i=1}^{n} \sum_{j=1}^{n} |\cov[V_{n, i}, V_{n, j}]| 
			 \lesssim m_n,
	\end{align*}
	where the constant depends on the envelope of $\Fcal_n$.
	Since $\ln N(\eps) \le C \eps^{-2}$, we can choose $\eps = n^{-1/5}$ to get
	\begin{align*}
			\E\left[\max_{1 \le k \le N(\eps)}  |(\P_n -P)\underline g_{n}^{(k)}|\right] + \eps \to 0. \tag*{\qedhere}
	\end{align*}
\end{proof}

\section{Proofs for the applications}\label{app:applications}

\subsection{Proof of Corollary \ref{prop:kernel1}}

	We apply \Cref{thm:multiplier-rel-clt} with class of weights
	\begin{align*}
			\Wcal_n = \left\{j \mapsto \frac{1}{b}K\left(\frac{j -  sn }{nb}\right) \colon s \in [0, 1]\right\}.
	\end{align*}
	$\Fcal=\{id\}$ and $\gamma=4$. 
	Conditions \ref{uniformcltA1}--\ref{uniformcltA4} on $X_{n,i}$ follow immediately (e.g., \Cref{ex:gamma-rho}).
	Since the kernel is $L$-Lipschitz, we have 
	\begin{align*}
			\sup_j \left| \frac 1 b K\left(\frac{j -  sn  }{nb}\right) - \frac 1 b K\left(\frac{j -  s'n  }{nb}\right)\right| \le \frac{L|s- s'|}{b^2},
	\end{align*}
	which also implies 
	\begin{align*}
		d_n^w(s,t)\leq \frac{L|s- s'|}{b^2}.
	\end{align*}
	Thus, \ref{A2:weights} and \ref{A3:weights} hold.
	Let $s_k = k\epsilon b^2 / L$ for $k = 1, \ldots, N(\eps)$ with $N(\eps) = \lceil (\epsilon b^2 /L)^{-1} \rceil$. 
	Then the functions 
	\begin{align*}
			\underline K_k(j) =\frac 1{b} K\left(\frac{j -  s_kn  }{nb}\right)  - \epsilon/2, \qquad \overline K_k(j) = \frac 1{b} K\left(\frac{j -  s_kn  }{nb}\right) + \epsilon/2,
	\end{align*}
	form an $\epsilon$-bracketing of $\Wcal_n$ with respect to the $\| \cdot \|_{n, \gamma}$-norm. Thus,
	\begin{align*}
	 \int_{0}^{\delta_n}\sqrt{\ln N_{[]}(\epsilon,\Wcal_n,\|\cdot\|_{n, \gamma})}d\epsilon \le  \int_{0}^{\delta_n}\sqrt{-\ln (\epsilon b^2 /L)}d\epsilon \lesssim  \delta_n \sqrt{\log \delta_n^{-1}},
	\end{align*}
	which implies \ref{A3:weights} and we conclude by \Cref{thm:multiplier-rel-clt}.\qed

\subsection{Proof of Corollary \ref{prop:kernel2}}
	We apply \Cref{prop:boot-conservative-2}.
	Pick the class of weights $\Wcal_n$ and functions $\Fcal$ as in \Cref{prop:kernel1}.
	Set 	    
	\begin{align*}
        \bar \mu_b^*(sn) & = \frac{1}{nb} \sum_{i=1}^{n} V_{n, i} K\left(\frac{i - sn}{nb} \right) (X_i - \mu_{b}(i)).
    \end{align*}
	With the notation of \Cref{sec:bstrap} we have $\wh \mu_n(i)=\wh \mu_b(i)$,
	\begin{align*}
		\G_n(s)&=\sqrt{n}\bigl(\wh \mu_b(sn)-\mu_n(sn)\bigr),\\
		\bar \G_n^*(s)&=\sqrt{n}\bar \mu_b^*(sn),\\
		\wh \G_n^*(s)&=\sqrt{n}\wh \mu_b^*(sn).
	\end{align*}
	We already verified the conditions of \Cref{thm:multiplier-rel-clt}. 
	\Cref{cor:block-bstrap} implies the conditions of \Cref{thm:bstrap-univ}.
	In order to verify \eqref{ass:multiplied-consistent-estimator-of-E}, note 
	\begin{align*}
		\bar \G_n^*(s)-\wh \G_n^*(s)
		&=\sqrt{n}\left(\bar \mu_b^*(sn)-\wh \mu_b^*(sn)\right)
		\\
		&=\frac{1}{\sqrt{n}} \sum_{i=1}^{n} V_{n, i} b^{-1}K\left(\frac{i - sn}{nb} \right) (\wh \mu_{b}(i) - \mu_{b}(i)).
	\end{align*}
	Set $Y_{n,i}=\wh \mu_{b}(i) - \mu_{b}(i)\in \R$ and recall 
	$$\max_{i\leq n}\E[Y_{n,i}^2]=\max_{i\leq n}\var[Y_{n,i}]=\mathcal{O}(n^{-1})$$
	by the mixing assumption on $X_i$ and uniform boundedness of the weights. 
	Since the multipliers $V_{n,i}$ are $m_n$-dependent with uniformly bounded variance, we obtain 
	$$\frac{1}{n}\sum_{i,j=1}^{n}|\cov[V_i,V_j]|=\mathcal{O}(m_n).$$
	Thus, 
	\begin{align*}
		\limsup_n \sum_{i=1}^{n} \var[V_{n,i}]\E[Y_{n,i}^2]&\lesssim \limsup_n n \max_{i\leq n}\E[Y_{n,i}^2]<\infty
		\\ 
		\frac{1}{n}\sum_{i,j=1}^{n}|\cov[V_{n,i},V_{n,j}]\E[Y_{n,i}Y_{n,j}]|&\leq \max_{i\leq n}\E[Y_{n,i}^2] \frac{1}{n}\sum_{i,j=1}^{n}|\cov[V_{n,i},V_{n,j}]| 
		\\
		& \lesssim \frac{m_n}{n}\to 0.
	\end{align*}
	\Cref{prop:multiplier-asy-equivalent} yields 
	$$\bar \G_n^*-\wh \G_n^*\overset{\Pr^*}{\to} 0.$$
	Next, by the law of total covariances and $\E[V_{n,i}]=0$ we get
	\begin{align*}
		\var[\bar \G_n^*(s)]
		&=\var[\G_n^*(s)]+\bar \sigma_n^*(s).
	\end{align*}
	By assumption, there exists some $s\in [0,1]$ with
	$\var[\bar \G_n^*(s)]\to \infty$. 
	Lastly, note $\E[\G_n^*(s)]=0$ since $\E[V_{n,i}]=0$. 
	\Cref{thm:multi-rel-clt} gives $\bar \G_n^*(s)/\var[\bar \G_n^*(s)]^{1/2}\dc \mathcal{N}(0,1)$.
	We obtain 
	$$\Pr(\|\bar \G_n^*\|_{[0,1]}>t_n)\to 1$$
	for some $t_n\to \infty$. 
	Applying \Cref{prop:boot-conservative-2} yields the claim. \qed

\subsection{Proof of Corollary \ref{appl:test}}

Observe that
    \begin{align*}
        T_n = \sup_{f \in \Fcal, s \in S} \left|   \frac{1}{\sqrt{n}}\sum_{i=1}^{n} w_{n, i}(s) (f(Z_i) - \E[f(Z_i)])\right|.
    \end{align*}
    The relative CLT (\cref{thm:multiplier-rel-clt}) gives
    \begin{align*}
        \frac{1}{\sqrt{n}}\sum_{i=1}^{n} w_{n, i}(s) (f(Z_i) - \E[f(Z_i)]) & \leftrightarrow_{d} \Z_n(s, f) \quad \text{in } \ell^{\infty}(S \times \Fcal),
    \end{align*}
    where $\{\Z_n(s ,f)\colon f \in \Fcal\}$ is a relatively compact, mean-zero Gaussian process. Under the null hypothesis, $\E[f(Z_i)] = 0$ for all $f \in \Fcal$, and the relative bootstrap CLT (\cref{thm:multiplier-rel-clt} and \cref{cor:block-bstrap}) gives
    \begin{align*}
        \frac{1}{\sqrt{n}}\sum_{i=1}^{n} V_{n, i} w_{n, i}(s) f(Z_i) & \leftrightarrow_{d} \Z_n(s, f) \quad \text{in } \ell^{\infty}(S \times \Fcal),
    \end{align*}
    The relative continuous mapping theorem now implies
    \begin{align*}
        T_n & \leftrightarrow_{d} \sup_{f \in \Fcal, s \in S} |\Z_n (s, f)|  \qquad \text{and} \qquad  T_n^{*}  \leftrightarrow_{d} \sup_{f \in \Fcal, s \in S} |\Z_n (s, f)|,
    \end{align*}
    which proves that $\Pr(T_n > c_n^*(\alpha)) \to \alpha$ under $H_0$.

    Under the alternative, we have
    \begin{align*}
        T_n & = \sup_{f \in \Fcal, s \in S} \left|   \frac{1}{\sqrt{n}}\sum_{i=1}^{n} w_{n, i}(s) (f(Z_i) - \E[f(Z_i)]) +   \frac{1}{\sqrt{n}}\sum_{i=1}^{n} w_{n, i}(s) \E[f(Z_i)]
        \right|                                                      \\
            & \ge \sup_{f \in \Fcal, s \in S}  \left| \frac{1}{\sqrt{n}}\sum_{i=1}^{n} w_{n, i}(s) \E[f(Z_i)] \right| -  \sup_{f \in \Fcal, s \in S} \left|   \frac{1}{\sqrt{n}}\sum_{i=1}^{n} w_{n, i}(s) (f(Z_i) - \E[f(Z_i)]) \right|,
    \end{align*}
    which implies
    \begin{align*}
        T_n / \sqrt{n} \ge \sup_{f \in \Fcal} \left| \frac{1}{n}\sumin w_{n, i}(s) \E[f(Z_i)]\right| \ge \delta > 0,
    \end{align*}
    with probability tending to 1.
    For the bootstrap statistic,
    \begin{align*}
        \frac{ T_n^* }{\sqrt{n}} \le \sup_{f \in \Fcal, s \in S}  \left| \frac{1}{n}\sum_{i=1}^{n} V_{n, i} w_{n, i}(s)(f(Z_i) -  \E[f(Z_i)])  \right| + \sup_{f \in \Fcal, s \in S}  \left| \frac{1}{n}\sum_{i=1}^{n} V_{n, i} w_{n, i}(s) \E[f(Z_i)]  \right|.
    \end{align*}
    The first term on the right converges to 0 in probability by \cref{cor:block-bstrap}, the second by \cref{lem:vn-exp-zero}.
    Thus, $  T_n^* / \sqrt{n} \overset{\Pr^*}{\to} 0$, which implies $c_n^*(\alpha) / \sqrt{n} \to 0$ and the claim follows.
		\qed

\section{Auxiliary results}
\begin{proof}[Proof of \Cref{lem:bounded-sample-paths}]\label{proof:bounded-sample-paths}
    For any $M>0$ and $t_1,\ldots,t_n\in T$ it holds 
	$$\Pr(\max_{1\leq i\leq n}|\G(t_i)|<M)=\prod_{i=1}^{n}\Pr(|\G(t_i)|<M)=[\phi(M)-\phi(-M)]^n$$
	where $\phi$ denotes the standard normal CDF. 
	For $n\to \infty$, i.e., if $T$ is infinite, the right side converges to zero. 
	Thus, $\Pr^*(\|\G\|_T<M)=0$ for all $M$
	In other words, $\G$ does not have bounded samples paths. 
\end{proof}

\subsection{Bracketing numbers under non-stationarity}\label{sec:comp-brackets}

Fix some triangular array $X_{n,1},\ldots,X_{n,k_n}$ of random variables with values in a Polish space $\mathcal{X}$.
Denote by $\mathcal{F}$ a set of measurable functions $f:\mathcal{X}\to \R$.

\begin{lemma}\label{lem:bounded-radon-nik-der}
    Assume that there exists some probability measure $Q$ on $\mathcal{X}$ and a constant $K\in \R$ such
    that $P_{X_{n,i}}\left( A\right) \leq KQ\left( A\right) $ for all $i$ and measurable sets $A$. 
    Then, $$\|f\|_{\gamma,n}\leq \sup_{n\in \N,i\leq k_n}\|f(X_{n,i})\|_{\gamma}\leq K^{1/\gamma}\|f\|_{L_{\gamma}\left( Q\right) }$$ for all $\gamma>0$. Hence, 
    \begin{align*}
        N_{\left[\right]}\left( \epsilon,\mathcal{F},\|\cdot\|_{\gamma,n}\right) \leq N_{\left[\right]}\left( K^{1/\gamma}\epsilon,\mathcal{F},\|\cdot\|_{L_{\gamma}\left( Q\right) }\right) .
    \end{align*}
\end{lemma}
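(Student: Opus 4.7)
The plan is to chain two elementary norm inequalities and then transfer the resulting comparison to the level of brackets.

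\textbf{Step 1 (left inequality).} By definition,
\begin{align*}
\|f\|_{\gamma,n}^\gamma = \frac{1}{k_n}\sum_{i=1}^{k_n}\E\bigl[|f(X_{n,i})|^\gamma\bigr] \le \max_{i\le k_n}\|f(X_{n,i})\|_\gamma^\gamma \le \sup_{n,i}\|f(X_{n,i})\|_\gamma^\gamma,
\end{align*}
so taking $\gamma$-th roots delivers the left-hand estimate.

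\textbf{Step 2 (right inequality).} I would promote the set-level domination $P_{X_{n,i}}(A)\le K\,Q(A)$ to the integral bound $\int g\,dP_{X_{n,i}} \le K\int g\,dQ$ for every non-negative measurable $g$, via the standard approximation of $g$ by simple functions combined with monotone convergence. Applied to $g=|f|^\gamma$, this gives $\E|f(X_{n,i})|^\gamma \le K\|f\|_{L_\gamma(Q)}^\gamma$, hence $\|f(X_{n,i})\|_\gamma \le K^{1/\gamma}\|f\|_{L_\gamma(Q)}$ uniformly in $n$ and $i$, establishing the second inequality.

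\textbf{Step 3 (bracketing numbers).} Given a minimal $\delta$-bracketing $\{[\underline f_j,\overline f_j]\}_{j\le N}$ of $\Fcal$ in $L_\gamma(Q)$, chaining Steps 1--2 applied to $f = \overline f_j - \underline f_j$ yields
\begin{align*}
\|\overline f_j - \underline f_j\|_{\gamma,n} \le K^{1/\gamma}\|\overline f_j - \underline f_j\|_{L_\gamma(Q)} \le K^{1/\gamma}\delta,
\end{align*}
so the same brackets form a $K^{1/\gamma}\delta$-bracketing under $\|\cdot\|_{\gamma,n}$. Rescaling the tolerance $\delta$ produces the claimed comparison of bracketing numbers.

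No step presents any genuine obstacle; the only item warranting attention is the promotion of the set-level domination to an integral inequality in Step 2, which is a routine measure-theoretic exercise, and the brief bookkeeping of the rescaling factor in Step 3.
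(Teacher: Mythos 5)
Your proof is correct and follows essentially the same route as the paper: both reduce the bracketing comparison to the norm inequality $\|f(X_{n,i})\|_\gamma \le K^{1/\gamma}\|f\|_{L_\gamma(Q)}$, with the only cosmetic difference being that you obtain the integral domination $\int g\,dP_{X_{n,i}}\le K\int g\,dQ$ via simple-function approximation and monotone convergence, whereas the paper phrases it through the Radon--Nikodym derivative being bounded by $K$ (the two are equivalent statements). You also spell out the left inequality and the transfer to bracketing numbers, which the paper leaves implicit, but nothing in your argument diverges from the paper's approach.
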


\begin{proof}
    The condition $$P_{X_{n,i}}\left( A\right) \leq KQ\left( A\right) $$ for all measurable sets $A$ 
    is equivalent to $P_{X_{n,i}}$ being absolutely continuous with respect to $Q$ and all Radon-Nikodyn derivatives are bounded by $K$, i.e., 
    $$\frac{\partial P_{X_{n,i}}}{\partial Q}\leq K$$ (up to zero sets of $Q$)  for all $i$.
    For all $i$ it holds 
    \begin{align*}
        \|f(X_{n,i})\|_{\gamma}^\gamma
        &=\int |f(X_{n,i})|^\gamma dP\\
        &=\int |f|^\gamma \frac{\partial P_{X_{n,i}}}{\partial Q} dQ\\
        &\leq \int |f|^\gamma K dQ\\ 
        &=K\|f\|_{L_{\gamma}\left( Q\right) }^\gamma.
    \end{align*}
    This proves the claim.
\end{proof}

\subsection{Multiplier asymptotic equivalence}

\begin{proposition}\label{prop:multiplier-asy-equivalent}
	Suppose $w_{n,i}:S\to \R$ is a uniformly bounded sequence of weights satisfying \ref{A2:weights}--\ref{A3:weights} for some $\gamma\geq 2$.
	Let $V_{n,i}\in \R$ be centered random variables with finite second moment and $Y_{n,1},\ldots, Y_{n,k_n}\in \R$ a triangular array 
	of random variables independent of $V_{n,i}$ satisfying 
	$$\limsup_n \sum_{i=1}^{k_n} \var[V_{n,i}]\E[Y_{n,i}^2]<\infty, \qquad \frac{1}{k_n}\sum_{i,j=1}^{k_n}|\cov[V_{n,i},V_{n,j}]\E[Y_{n,i}Y_{n,j}]|\to 0.$$
	Then, 
	$$\frac{1}{\sqrt{k_n}}\sum_{i=1}^{k_n}w_{n,i}V_{n,i}Y_{n,i}\overset{\Pr^*}{\to}0$$
	in $\ell^\infty(S)$. 
\end{proposition}

\begin{proof}
	For $s,t\in S$
	\begin{align*}
		\frac{1}{\sqrt{k_n}}\sum_{i=1}^{k_n}(w_{n,i}(s)-w_{n,i}(t))V_{n,i}Y_{n,i}
		& \leq \left(\frac{1}{k_n}\sum_{i=1}^{k_n}(w_{n,i}(s)-w_{n,i}(t))^2\right)^{1/2}\left(\sum_{i}^{k_n}V_{n,i}^2Y_{n,i}^2\right)^{1/2}
		\\
		&\leq d_n^w(s,t)\left(\sum_{i}^{k_n}V_{n,i}^2Y_{n,i}^2\right)^{1/2}
	\end{align*}
	by the Cauchy-Schwarz inequality. 
	Accordingly, 
	\begin{align*}
		\E \sup_{d_n^w(s,t)<\eps }\left|\frac{1}{\sqrt{k_n}}\sum_{i=1}^{k_n}(w_{n,i}(s)-w_{n,i}(t))V_{n,i}Y_{n,i}\right|
		& \leq \eps \left(\sum_{i=1}^{k_n}\var\left[V_{n,i}\right]\E\left[Y_{n,i}^2\right]\right)^{1/2}
		\\
		&\lesssim \eps
	\end{align*}
	by Hölder's inequality, independence of $V_{n,i}$ and $Y_{n,i}$ and assumption.
	Conclude that $k_n^{-1/2}\sum_{i=1}^{k_n}w_{n,i}V_{n,i}Y_{n,i}$ is asymptotically 
	uniformly $d^w$-equicontinuous in probability by Markov's inequality and \ref{A2:weights}.

	Next, for any $s\in S$
	\begin{align*}
		\var\left[\frac{1}{\sqrt{k_n}}\sum_{i=1}^{k_n}w_{n,i}(s)V_{n,i}Y_{n,i}\right]
		&=\frac{1}{k_n}\sum_{i,j=1}^{k_n}w_{n,i}(s)w_{n,j}(s)\cov[V_{n,i},V_{n,j}]\E[Y_{n,i}Y_{n,j}]
		\\
		&\lesssim \frac{1}{k_n}\sum_{i,j=1}^{k_n}|\cov[V_{n,i},V_{n,j}]\E[Y_{n,i}Y_{n,j}]|
		\\
		&\to 0
	\end{align*}
	by the law of total covariances and the assumptions. 
	Thus, $k_n^{-1/2}\sum_{i=1}^{k_n}w_{n,i}V_{n,i}Y_{n,i}\in \ell^\infty(S)$ is asymptotically tight
	by Theorem 1.5.7 of \cite{van2023weak} and converges marginally to zero in probability. 
	Conclude that $$\frac{1}{\sqrt{k_n}}\sum_{i=1}^{k_n}w_{n,i}V_{n,i}Y_{n,i}\overset{\Pr^*}{\to}0$$
	in $\ell^\infty(S)$ by (iii) of Lemma 1.10.2 of \cite{van2023weak}.
\end{proof}

\end{document}